\theoremstyle{definition}
\newtheorem{para}{}[section]
\newtheorem{remark}[para]{Remark}
\newtheorem{remarks}[para]{Remarks}
\newtheorem{notation}[para]{Notation}
\newtheorem{convention}[para]{Convention}
\newtheorem{definition}[para]{Definition}
\newtheorem{definitions}[para]{Definitions}
\newtheorem{claim}[equation]{Claim}
\newtheorem*{fact}{Fact}
\newtheorem{example}[para]{Example}
\newcommand\Alternatives{\begin{enumerate}[(i)]}
\newcommand\EndAlternatives{\end{enumerate}}
\newcommand\Conditions{\begin{enumerate}[(1)]}
\newcommand\EndConditions{\end{enumerate}}
\theoremstyle{plain}
\newtheorem{theorem}[para]{Theorem}
\newtheorem{lemma}[para]{Lemma}
\newtheorem{proposition}[para]{Proposition}
\newtheorem{corollary}[para]{Corollary}
\newtheorem{conjecture}[para]{Conjecture}
\numberwithin{equation}{para}
\numberwithin{figure}{section}
\newcommand\Number{\begin{para}}
\newcommand\EndNumber{\end{para}}
\newcommand\Definition{\begin{definition}}
\newcommand\EndDefinition{\end{definition}}
\newcommand\Definitions{\begin{definitions}}
\newcommand\EndDefinitions{\end{definitions}}
\newcommand\Theorem{\begin{theorem}}
\newcommand\EndTheorem{\end{theorem}}
\newcommand\Conjecture{\begin{conjecture}}
\newcommand\EndConjecture{\end{conjecture}}
\newcommand\Remark{\begin{remark}}
\newcommand\EndRemark{\end{remark}}
\newcommand\Remarks{\begin{remarks}}
\newcommand\EndRemarks{\end{remarks}}
\newcommand\Convention{\begin{convention}}
\newcommand\EndConvention{\end{convention}}
\newcommand\Notation{\begin{notation}}
\newcommand\EndNotation{\end{notation}}
\newcommand\Lemma{\begin{lemma}}
\newcommand\EndLemma{\end{lemma}}
\newcommand\Proposition{\begin{proposition}}
\newcommand\EndProposition{\end{proposition}}
\newcommand\Corollary{\begin{corollary}}
\newcommand\EndCorollary{\end{corollary}}
\newcommand\Claim{\begin{claim}}
\newcommand\EndClaim{\end{claim}}
\newcommand\Proof{\begin{proof}}
\newcommand\EndProof{\end{proof}}
\newcommand\Equation{\begin{equation}}
\newcommand\EndEquation{\end{equation}}
\newcommand\Bullets{\begin{itemize}}
\newcommand\EndBullets{\end{itemize}}
\newcommand\CenteredPoly{\cite[Definition 1.1]{DeB_cyclic_geom}}
\newcommand\CyclicallyOrderedV{\cite[Definition 1.3]{DeB_cyclic_geom}}
\newcommand\PtsToPoly{\cite[Lemma 1.4]{DeB_cyclic_geom}}
\newcommand\OneSide{\cite[Lemma 1.5]{DeB_cyclic_geom}}
\newcommand\IsoscelesDecomp{\cite[Lemma 1.6]{DeB_cyclic_geom}}
\newcommand\ParameterFunction{\cite[Lemma 1.7]{DeB_cyclic_geom}}
\newcommand\MonotonicParameterFunction{\cite[Lemma 1.8]{DeB_cyclic_geom}}
\newcommand\LongestSide{\cite[Corollary 1.11]{DeB_cyclic_geom}}
\newcommand\Ad{\cite[Lemma 2.3]{DeB_cyclic_geom}}
\newcommand\SufficientParameters{\cite[Lemma 2.4]{DeB_cyclic_geom}}
\newcommand\CyclicallyOrderedP{\cite[Definition 2.5]{DeB_cyclic_geom}}
\newcommand\UniqueParameters{\cite[Proposition 2.7]{DeB_cyclic_geom}}
\newcommand\CenteredSpace{\cite[Definition 3.1]{DeB_cyclic_geom}}
\newcommand\FundamentalDomain{\cite[Lemma 3.2]{DeB_cyclic_geom}}
\newcommand\CenteredClosure{\cite[Lemma 3.3]{DeB_cyclic_geom}}
\newcommand\BCn{\cite[Lemma 3.4]{DeB_cyclic_geom}}
\newcommand\RadiusFunction{\cite[Lemma 3.6]{DeB_cyclic_geom}}
\newcommand\InCenteredClosure{\cite[Lemma 3.9]{DeB_cyclic_geom}}
\newcommand\HCn{\cite[Lemma 3.10]{DeB_cyclic_geom}}
\newcommand\VerticalRay{\cite[Corollary 3.11]{DeB_cyclic_geom}}
\newcommand\SmoothAnglesRadius{\cite[Proposition 4.1]{DeB_cyclic_geom}}
\newcommand\RadiusDeriv{\cite[Lemma 4.5]{DeB_cyclic_geom}}
\newcommand\RadiusUpNDown{\cite[Lemma 4.7]{DeB_cyclic_geom}}
\newcommand\PolyConvergence{\cite[Proposition 4.10]{DeB_cyclic_geom}}
\newcommand\Defect{\cite[Definition 5.1]{DeB_cyclic_geom}}
\newcommand\FullSector{\cite[Lemma 5.2]{DeB_cyclic_geom}}
\newcommand\PackingVsDecomp{\cite[Lemma 5.3]{DeB_cyclic_geom}}
\newcommand\DefectFunction{\cite[Lemma 5.4]{DeB_cyclic_geom}}
\newcommand\DefectDerivative{\cite[Proposition 5.5]{DeB_cyclic_geom}}
\newcommand\Order{\cite[Definition 5.7]{DeB_cyclic_geom}}
\newcommand\Monotonicity{\cite[Corollary 5.8]{DeB_cyclic_geom}}
\newcommand\HorocyclicBound{\cite[Corollary 5.11]{DeB_cyclic_geom}}
\newcommand\TriDefectRadius{\cite[Lemma 6.1]{DeB_cyclic_geom}}
\newcommand\TriCenter{\cite[Lemma 6.2]{DeB_cyclic_geom}}
\newcommand\CenteredDefectBound{\cite[Lemma 6.6]{DeB_cyclic_geom}}
\newcommand\HalfSymmQuadDefect{\cite[Lemma 6.8]{DeB_cyclic_geom}}
\newcommand\CenteredBoundaryBound{\cite[Lemma 6.9]{DeB_cyclic_geom}}
\newcommand\calp{{\mathcal P}}
\newcommand\cald{{\mathcal D}}
\newcommand\calAC{\mathcal{AC}}
\newcommand\calBC{\mathcal{BC}}
\newcommand\calHC{\mathcal{HC}}
\newcommand\calc{{\mathcal C}}
\newcommand\cale{\mathcal{E}}
\newcommand\calf{\mathcal{F}}
\newcommand\calh{{\mathcal H}}
\newcommand\calm{{\mathcal M}}
\newcommand\cals{\mathcal{S}}
\newcommand\calt{{\mathcal T}}
\newcommand\calv{\mathcal{V}}
\newcommand\co{\colon\thinspace}
\newcommand\bb{\mathbf{b}}
\newcommand\bd{\mathbf{d}}
\begin{document}

\title{Tessellations of hyperbolic surfaces}

\author{Jason DeBlois}
\address{Department of Mathematics\\Stanford University}
\email{jdeblois@math.stanford.edu}
\thanks{Partially supported by NSF grant DMS-1007175}

\begin{abstract}  A finite subset $S$ of a closed hyperbolic surface $F$ canonically determines a \textit{centered dual decomposition} of $F$: a cell structure with vertex set $S$, geodesic edges, and $2$-cells that are unions of the corresponding Delaunay polygons.  Unlike a Delaunay polygon, a centered dual $2$-cell $Q$ is not determined by its collection of edge lengths; but together with its combinatorics, these determine an \textit{admissible space} parametrizing geometric possibilities for the Delaunay cells comprising $Q$.  We illustrate its application by using the centered dual decomposition to extract combinatorial information about the Delaunay tessellation among certain genus-$2$ surfaces, and with this relate injectivity radius to covering radius here.\end{abstract}

\maketitle

%%%%%%%%%%%%%%%%%%%%
%\section{Introduction}
%%%%%%%%%%%%%%%%%%%%

A finite subset $\cals$ of a closed hyperbolic surface $F$ canonically determines a \textit{Voronoi tessellation} $V$ and \textit{Delaunay tessellation} $P$, polygonal decompositions of $F$ that are dual in a certain sense.  Let us briefly outline this construction.  Fix a locally isometric universal covering $\pi\co \mathbb{H}^2\to F$ and let $\widetilde{\cals} = \pi^{-1}(\cals)$.  The \textit{Voronoi tessellation} $\widetilde{V}$ of $\mathbb{H}^2$ determined by $\widetilde{\cals}$ is a cell complex structure where each $x\in \cals$ determines a polygonal $2$-cell $V_x$ defined by:\begin{align}\label{Voronoi cell} 
  V_x = \{p\in\mathbb{H}^2\,|\, d(p,x) \leq d(p,y)\ \mbox{for each}\ y \in \cals - \{x\}\} \end{align}
Then $\widetilde{V}^{(1)} = \bigcup\ \{ V_x\cap V_y\,|\,x\in\widetilde{\cals},\ y\in\widetilde{\cals}-\{x\}\}$, and each point of $\widetilde{V}^{(0)}$ is equidistant from at least $3$ points of $\widetilde{\cals}$ (see Section \ref{Voronoi dfn}).  The \textit{geometric dual} to an edge $V_x\cap V_y$ of $\widetilde{V}$ is the geodesic arc $\gamma_{xy}$ in $\mathbb{H}^2$ joining $x$ to $y$, and the set of geometric duals to edges of $\widetilde{V}$ is the edge set of the Delaunay tessellation $\widetilde{P}$ determined by $\widetilde{\cals}$.   The covering action of $\pi_1 F$ on $\mathbb{H}^2$ leaves $\widetilde{V}$ and $\widetilde{P}$ invariant, and these descend to the tessellations $V$ and $P$ of $F$.  

$P$ and $V$ are dual in the sense that their edge sets are canonically bijective, as is the vertex set of each with the face set of the other.  However, in some cases an edge $e = V_x \cap V_y$ of $\widetilde{V}$ is not \textit{centered} (see Definition \ref{centered edge}): $\mathit{int}\,e$ does not intersect the geometric dual $\gamma_{xy}\subset\widetilde{P}$ to $e$.  We will regard this as a pathology of $P$, and ``fix'' it with the centered dual decomposition.  Before we outline this construction, here is a sample application of our methods:

\newcommand\MainThm{Let $r_{\beta} = d_{\beta}/2 > 0$, where $\cosh d_{\beta}$ is the real root of $x^3-14x^2-15x-4$.  The Delaunay tessellation of a closed, orientable hyperbolic surface $F$ of genus $2$ determined by $\{x\}$ has all edges centered if $F$ has injectivity radius $r\geq r_{\beta}$ at $x$.  It is a triangulation unless $r = r_{\beta}$ and each edge has length $d_{\beta}$, in which case it has a single quadrilateral $2$-cell.}
\begin{theorem}\label{main}\MainThm\end{theorem}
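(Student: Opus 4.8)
The plan is to read strong combinatorial constraints off Euler's formula and Gauss--Bonnet, bound the defect of each centered dual $2$-cell below by its value on a regular model via the admissible-space analysis of \cite{DeB_cyclic_geom}, and compare with the total defect, a fixed decreasing function of $r$; the arithmetic then forces the exceptional decomposition and the cubic falls out of Gauss--Bonnet. First, fix a lift $\widetilde x\in\pi^{-1}(x)$ and pass to the centered dual decomposition of $F$ determined by $\{x\}$: it has single vertex $x$, its edges are geodesic loops at $x$ (so of length $\ge 2r$), and its $2$-cells $Q_1,\dots,Q_k$ are embedded geodesic polygons, each a union of Delaunay cells. If $n_i$ is the number of sides of $Q_i$, then from $\chi(F)=1-E+k=-2$ and $\sum_i n_i=2E$ we get $\sum_i(n_i-2)=6$; so $k\le6$, with $k=6$ exactly when the decomposition is a triangulation --- and then it is assembled from individual Delaunay triangles, so no edge was deleted and every edge is centered. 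Thus it remains to treat the case that some $n_i\ge4$, which subsumes the case of a non-centered edge (deleting such an edge merges two Delaunay cells into one with $\ge4$ sides), and for this it is enough to prove that then $r=r_\beta$, exactly one $Q_i$ is a quadrilateral with the rest equilateral triangles, every edge has length $d_\beta$, and hence every edge is centered.

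The core is a sharp lower bound for the defect $\bd(Q)$ of a centered dual cell $Q$ with $n$ sides and all edges $\ge 2r$. For a triangle $T$ this is \TriDefectRadius\ with monotonicity of the defect in the side lengths: $\bd(T)\ge\delta_3(r)$, the defect of the equilateral triangle of side $2r$. For $n\ge4$ one parametrizes the realizations of $Q$ by its admissible space and uses \DefectFunction, \DefectDerivative\ and \Monotonicity\ to move the infimum of $\bd$ to the boundary of that space, then invokes \CenteredDefectBound, \CenteredClosureDefect, \HalfSymmQuadDefect\ and \CenteredBoundaryBound\ to get $\bd(Q)\ge\delta_n(r)$, the defect of the \emph{regular cyclic} $n$-gon of side $2r$ --- it is here that centeredness of the edges of $Q$ does its work, preventing a union of Delaunay cells from degenerating into a configuration of smaller defect, so this step also forces out non-centered edges. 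Since $\sum_i\bd(Q_i)$ depends only on $r$ and decreases in it --- by \PackingVsDecomp\ it is $\mathrm{area}(F)$ minus the area $2\pi(\cosh r-1)$ of the embedded radius-$r$ disk, i.e.\ $2\pi(3-\cosh r)$ --- and each $\delta_n(r)$ increases in $r$, a cell with $n_i\ge4$ forces $\sum_i\delta_{n_i}(r)\le 2\pi(3-\cosh r)$; equivalently, using $\mathrm{area}(Q)=\bd(Q)+(\cosh r-1)\big((n-2)\pi-\mathrm{area}(Q)\big)$ and $\sum_i\mathrm{area}(Q_i)=4\pi$, this reads $\sum_i A_{n_i}(r)\le4\pi$ with $A_n(r)$ the area of the regular $n$-gon of side $2r$.

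There are only ten partitions of $6$ into the $n_i-2$ with some part $\ge2$, and for each $\sum_i A_{n_i}(r_\beta)\ge4\pi$, with equality only for the partition giving one quadrilateral and four triangles; since each $A_n$ strictly increases in $r$, the inequality with $r\ge r_\beta$ forces $r=r_\beta$, that partition, every edge of length exactly $d_\beta$, each triangle equilateral and the quadrilateral regular and cyclic (so all edges centered). To identify $d_\beta$, let $\alpha$ be the angle of an equilateral triangle of side $d_\beta$, so $\cos\alpha=\cosh d_\beta/(\cosh d_\beta+1)$, and $\beta$ that of a regular cyclic quadrilateral of side $d_\beta$, so $\sin\tfrac\beta2=(\cosh d_\beta+1)^{-1/2}$. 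Gauss--Bonnet on the exceptional decomposition reads $4(\pi-3\alpha)+(2\pi-4\beta)=4\pi$, i.e.\ $6\alpha+2\beta=\pi$, so $\sin\beta=\cos3\alpha$; writing $c=\cosh d_\beta$ and using the triple-angle formula, $\cos3\alpha=4\cos^3\alpha-3\cos\alpha=c(c^2-6c-3)/(c+1)^3$ and $\sin\beta=2\sin\tfrac\beta2\cos\tfrac\beta2=2\sqrt c/(c+1)$, so $\sqrt c\,(c^2-6c-3)=2(c+1)^2$; squaring and simplifying gives $c^5-16c^4+14c^3+12c^2-7c-4=0$, which factors as $(c-1)^2(c^3-14c^2-15c-4)=0$, and as $c>1$, $\cosh d_\beta$ is the real root of $x^3-14x^2-15x-4$.

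I expect the main obstacle to be the sharp per-cell defect bound for non-triangular cells: proving that centeredness of a cell's edges forces its defect up to that of the regular cyclic polygon of the prescribed minimal side, which is simultaneously what excludes non-centered edges and what makes the exceptional quadrilateral cyclic. This is exactly the admissible-space analysis packaged in Lemmas~6.6--6.9 of \cite{DeB_cyclic_geom}, resting on the defect-derivative and monotonicity results of its Section~5; granting it, what remains is Euler-characteristic bookkeeping, a finite numerical check of the ten partition inequalities, and the triple-angle computation above.
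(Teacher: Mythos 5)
Your framework (Euler characteristic bookkeeping, per-cell defect bounds compared with the total remaining area $2\pi(3-\cosh r)$, then the triple-angle identification of $d_\beta$) is the right shape, and the cubic computation at the end is correct and agrees with the paper's. But there is a genuine gap at the step you flag as the ``core'': the claimed uniform bound $\bd(Q)\ge\delta_n(r)$ for every centered dual $2$-cell $Q$ with $n\ge4$ sides of length $\ge 2r$. When $Q$ is itself a centered Delaunay polygon this is just \Monotonicity. But when $Q$ contains a component $T$ of $V^{(1)}_n$, i.e.\ is a union of several Delaunay cells glued along non-centered edges, the admissible-space machinery of Proposition \ref{two-edge} gives three boundary alternatives for where the infimum of $D_R(T,\cdot)$ lies, and only in Case (3) (equal radii) does the infimum equal the defect of a genuine centered $n$-gon (Lemma \ref{centered equal radii}, Corollary \ref{equal radii bound}). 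In Cases (1) and (2) the resulting lower bounds are different and, as the paper's Table \ref{non-ctrd 5-edge defects} records, can be strictly smaller than $D_{r}(P_n(d_r))$: for the tree $T_{3,\mathbf{3},3}$ the Case (1) bound is $1.15527$ versus $D_{r_1}(P_5(d_1))=1.22041$. So the bound you assert does not follow from \DefectDerivative, \Monotonicity, \CenteredDefectBound\ etc.\ as stated, and the weaker bounds the machinery does produce are precisely what the paper's proof is built around.

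The paper circumvents this. It first rules out $n\ge5$ cells by noting that every lower bound in Tables \ref{symm ctrd areas} and \ref{non-ctrd 5-edge defects} already exceeds $1.07$, the area of $F$ outside the embedded disk of radius $r_1$ --- no sharp $\delta_n$ bound is needed, only ``bigger than the available complementary area.'' It then handles the remaining $n=4$ possibility separately in Lemma \ref{d_1 non-centered quad}, showing that a centered dual quadrilateral cell carrying a non-centered tree with all frontier sides $\ge d_\beta$ has defect strictly exceeding $D_{r_\beta}(P_4(d_\beta))$, and this \emph{strict} inequality, together with the exact area bookkeeping from the model surface $F_\beta$, is what forces the exceptional configuration to be the centered one. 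Your proposal needs both this strictness in the quadrilateral case (to conclude the quad is cyclic and its edges centered) and a per-cell bound for $5$- and $6$-edged non-centered cells; absent the sharp $\delta_n(r)$ bound you would have to run the explicit case-by-case lower bound computations that the paper does in Section \ref{bounds} and Table \ref{non-ctrd 5-edge defects}. Your reduction to a single clean partition inequality $\sum_i A_{n_i}(r_\beta)\ge4\pi$ is appealing but not available with the tools at hand.
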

\newtheorem*{mainthm}{Theorem \ref{main}}

The numerical value of $r_{\beta}$ is roughly $1.7006$, whereas Bor\"oczky's theorem \cite{Bor} implies a  universal upper bound of $r_{\alpha} \cong 1.7191$ on the injectivity radius of a genus-$2$ hyperbolic surface at a point $x$ (see Lemma \ref{Boroczky}).  Example \ref{sharp Boroczky} describes a surface $F_{\alpha}$ with injectivity radius $r_{\alpha}$ at some $x_{\alpha}\in F_{\alpha}$, showing that Bor\"oczky's upper bound is sharp.  Example \ref{sharp DeBlois} describes a surface $F_{\beta}$ with injectivity radius $r_{\beta}$ at $x_{\beta}\in F_{\beta}$ and a quadrilateral $2$-cell in the Delaunay tessellation determined by $\{x_{\beta}\}$, and Example \ref{dull DeBlois} describes arbitrarily small deformations of $F_{\beta}$ with Delaunay tessellations that have non-centered edges.

For surfaces satisfying its hypotheses, Theorem \ref{main} has the following geometric consequence:

\newcommand\MainCor{The space $\calm^{(r_{\beta})}_2$ of closed, orientable hyperbolic surfaces $F$ with injectivity radius at least $r_{\beta}$ at some $x\in F$ is compact.  If $F\in\calm^{(r_{\beta})}_2$ has injectivity radius $r\geq r_{\beta}$ at $x$, the covering radius $J$ of $F$ at $x$ satisfies $\sinh J \leq \sqrt{2}\sinh r$.}
\begin{theorem}\label{inj to cov}\MainCor\end{theorem}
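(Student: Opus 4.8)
The plan is to derive both statements from Theorem~\ref{main}, treating the inequality first. I would begin by recalling that the covering radius $J$ of $F$ at $x$ is the largest value of $d(\cdot,x)$ on $F$, that this value is attained at a vertex $v$ of the Voronoi tessellation $V$, and that $d(v,x)$ is exactly the circumradius $\rho_Q$ of the Delaunay $2$-cell $Q$ dual to $v$; hence $\sinh J=\max_Q\sinh\rho_Q$ over the $2$-cells $Q$ of $P$. Fix a $Q$ attaining the maximum. Since $2r$ equals the minimal distance between distinct lifts of $x$ and the nearest-neighbour graph of $\widetilde{\cals}$ sits inside the Delaunay graph, every edge of $P$ has length $\geq 2r$, with at least one edge of length exactly $2r$. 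By Theorem~\ref{main}, $Q$ is a centered Delaunay triangle, except when $r=r_\beta$ and every edge of $P$ has length $d_\beta$, in which case $Q$ may be taken to be the unique quadrilateral, all of whose edges then have length $d_\beta=2r_\beta$.

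Next I would cut $Q$ along the geodesics from its circumcenter $v$ to its vertices, writing $Q$ as a union of isosceles triangles with apex $v$; because $Q$ is centered, $v\in Q$ and the apex angles sum to $2\pi$. If an edge of $Q$ has length $\ell$ and apex angle $\theta$, then $\sinh(\ell/2)=\sinh\rho_Q\sin(\theta/2)$, so $\sin(\theta/2)\geq\sinh r/\sinh\rho_Q$ for every edge of $Q$. In the quadrilateral case the four edges are equal, forcing each apex angle to be $\pi/2$, whence $\sinh r_\beta=\sinh\rho_Q\sin(\pi/4)$ and $\sinh J=\sinh\rho_Q=\sqrt2\,\sinh r$, with equality. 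In the triangle case the three apex angles sum to $2\pi$ and each is $<\pi$, and what must be shown is $\sin(\theta/2)\geq1/\sqrt2$ for each apex angle, equivalently $\sinh\rho_Q\leq\sqrt2\,\sinh r$. \textbf{This circumradius estimate is the main obstacle.} The edge-length bound $\ell\geq2r\geq2r_\beta$ and the angle-sum identity do not by themselves bound $\rho_Q$ --- a ``long and thin'' triangle with two nearly straight apex angles evades them --- so the proof must exploit how the finitely many centered cells fit together on a genus-$2$ surface with a single vertex: the combinatorial types of such tessellations are constrained by Euler characteristic, and the admissible-space and defect estimates of \cite{DeB_cyclic_geom} (in the spirit of \CenteredDefectBound, \HalfSymmQuadDefect, \CenteredBoundaryBound) rule out the large-circumradius configurations. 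Granting this for each $Q$, $\sinh J\leq\sqrt2\,\sinh r$ in every case.

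Finally I would deduce compactness. Combining the inequality with the Bor\"oczky bound (Lemma~\ref{Boroczky}), which gives $r\leq r_\alpha$, we get $\sinh J\leq\sqrt2\,\sinh r_\alpha$, a bound independent of $F$; since $F$ is covered by the ball $B(x,J)$, the diameter of every $F\in\calm^{(r_\beta)}_2$ is at most $2J\leq2\operatorname{arcsinh}(\sqrt2\,\sinh r_\alpha)$. A simple closed geodesic of length $\ell$ on $F$ has an embedded collar of half-width $w$ with $\sinh w=1/\sinh(\ell/2)$, and a point on the boundary of this collar lies at distance $w$ from the core geodesic, so $w\leq\operatorname{diam}F$; hence $\ell$ is bounded below by a positive constant $\varepsilon$ independent of $F$, i.e.\ $\operatorname{sys}(F)\geq\varepsilon$ on $\calm^{(r_\beta)}_2$. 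By Mumford's compactness criterion the set $\calm^{(r_\beta)}_2$ is contained in a compact subset of the moduli space $\calm_2$. It is also closed there: injectivity radius is continuous in the surface and the point, so $F\mapsto\max_{x\in F}\operatorname{injrad}_F(x)$ is continuous on $\calm_2$ (the maximum being over the compact surface $F$), and $\calm^{(r_\beta)}_2$ is its superlevel set $\{\,\geq r_\beta\,\}$. A closed subset of a compact set is compact.
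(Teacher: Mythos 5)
Your outline correctly identifies the right geometric picture: by Lemma~\ref{radius to cov} the covering radius is $\max_v J_v$, by Theorem~\ref{main} each Delaunay $2$-cell is centered, and the isosceles decomposition together with the formula $\sinh(\ell/2)=\sinh\rho_Q\sin(\theta/2)$ reduces the bound to a statement about apex angles. Your quadrilateral computation is correct and does recover equality on $F_\beta$, matching the paper's sharpness claim.

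However, the step you flag as ``the main obstacle'' is a genuine gap, and filling it is essentially the content of Lemmas~\ref{edge upper bound}, \ref{no local max}, and~\ref{yes local max}. You have correctly diagnosed that no local argument about a single $Q$ can work: a centered cyclic triangle with all sides $\geq 2r$ has unbounded circumradius (take $b=c$ large and the third side $a\to b_0(b,c)$ from below; then $J\to b_0(b,c)/2\to\infty$). The constraint that forces $\sinh\rho_Q\leq\sqrt2\sinh r$ is global: the Delaunay triangles have total area $4\pi$, and since every triangle with all sides $\geq 2r$ has radius-$0$ defect at least $D_{0,3}(2r)$, there is only a limited ``area budget'' left to lengthen any single edge. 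Lemma~\ref{edge upper bound} makes this quantitative ($d_i\leq d_1(r)\leq b_0(2r,2r)$); Lemma~\ref{no local max} shows that the maximum of $J_\iota$ over the area-constrained model space, for fixed minimal edge length, cannot occur in the interior; and Lemma~\ref{yes local max} computes the boundary value, which is $\leq\sqrt2\sinh r$ precisely because $d_1(r)\leq b_0(2r,2r)$ and $\sinh(b_0(2r,2r)/2)=\sqrt2\sinh r$. Identifying the relevant tools is not the same as supplying the argument; as it stands the inequality is unproven.

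For compactness you take a genuinely different route: covering radius bounds diameter, the collar lemma converts a diameter bound into a systole lower bound, Mumford compactness confines $\calm_2^{(r_\beta)}$ to a compact subset, and closedness follows from (upper semi)continuity of $F\mapsto\max_{x}\operatorname{injrad}_F(x)$. This is shorter and more conceptual than the paper's construction of the model spaces $\calp_\iota$ (Definition~\ref{to M_2}, Lemmas~\ref{compact model}, \ref{continuous model}, Corollary~\ref{funny mumford}), which realizes $\calm_2^{(r_\beta)}$ as a finite union of continuous images of explicit compact sets. But note the logical cost: your compactness proof is downstream of the covering-radius inequality, so it inherits the gap above, whereas the paper's compactness argument is logically independent of the covering-radius bound. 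If you want to keep the Mumford-style argument, you would still need the quantitative defect analysis to establish the covering-radius inequality first.
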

\newtheorem*{maincor}{Theorem \ref{inj to cov}}

We regard $\calm_2^{(r_{\beta})}$ above as a subspace of the moduli space $\calm_2$ of closed, orientable hyperbolic surfaces of genus $2$, given its usual topology, see eg.~\cite{FaMa}.  By the \textit{covering radius} of $F$ at $x$ we refer to the infimum of $r > 0$ such that $F$ is contained in the open $r$-neighborhood of $x$.  The bound above is sharp, realized on $F_{\beta}$ from Example \ref{sharp DeBlois}, at $x_{\beta}$.

Let us briefly recall the well-known Mumford compactness criterion \cite{Mumford}, that for any $r>0$, the set of surfaces with injectivity radius at least $r$ at \textit{every} point is compact in $\calm_2$.  The analogous generalization of Corollary \ref{inj to cov} does not hold, since if $F$ has small enough injectivity radius at $x$ it can by deformed by making a distant curve arbitrarily short while keeping the injectivity radius at $x$ constant.  Note that this increases the covering radius at $x$ to infinity.

The main ``result'' of the paper is really the construction of a \textit{centered dual tessellation}, and the attachment of \textit{admissible spaces} to its $2$-cells, a process we now outline.  In Section \ref{Voronoi dfn} we introduce terminology and define the Voronoi and Delaunay tessellations determined by a finite subset $\cals$ of a surface $F$.  This material is standard.  Section \ref{examples} gives a series of examples to motivate what follows, including a Delaunay tessellation that is not a triangulation (see Corollary \ref{sharp DeBlois Delaunay}), and another with a non-centered edge (see Lemma \ref{dull DeBlois Delaunay}).

Section \ref{dirichlet dual} gives deeper information on this failure of duality.  If an edge $e$ of the Voronoi tessellation $V$ is not centered, one may orient it pointing ``away'' from $\gamma$.  Lemma \ref{vertex polygon centered} asserts that a Delaunay $2$-cell $P_v$ is centered if and only if the associated vertex $v\in\calv^{(0)}$ is not the initial vertex of any non-centered edge.  Moreover each component of the union $\calv^{(1)}_n$ of non-centered edges is a tree with a canonical root vertex, by Lemma \ref{tree components}.  

We define the \textit{centered dual graph} $P^{(1)}_c$ to the Voronoi tessellation to be the union of edges of $P$ geometrically dual to centered edges of $V$, and show in the remainder of Section \ref{dirichlet dual} that $P^{(1)}_c$ is the one-skeleton of a cell decomposition $P_c$, the \textit{centered dual decomposition} of $F$, with vertex set $\cals$ (see Definition \ref{centered dual}).  By Proposition \ref{dual decomp}, each $2$-cell $Q$ of $P_c$ is the union of Delaunay cells $P_v$ such that $v\in Q\cap V^{(0)}$.  This is either $T^{(0)}$ for a component $T$ of $V^{(1)}_n$ or a vertex $v$ not contained in any such component, by Lemma \ref{centered or not}.  

Each $2$-cell of the Delaunay tessellation is \textit{cyclic}: all its vertices are equidistant from the center of the corresponding vertex of $V$ (see Lemma \ref{vertex radius}).  It follows that such cells are each determined up to isometry by their side length collections \cite{Schlenker} (cf.~\cite{Walter_poly1} or \cite{DeB_cyclic_geom}).  This does not hold for a centered dual $2$-cell $Q$ containing a component $T$ of $V^{(1)}_n$. Instead, in Section \ref{moduli} we will describe an \textit{admissible space} $\mathit{Ad}(\bd_{\calf})$, determined by $T$ and the side length collection $\bd_{\calf}$ of $Q$, that in some sense parametrizes all possible combinations of Delaunay cells that can comprise $Q$ with side length collection $\bd_{\calf}$.  In particular see Lemma \ref{admissible Voronoi}.

Our main application of the centered dual/admissible space construction is a machine for turning lower bounds on the side lengths of a centered dual $2$-cell with few edges into a good lower bound on its area, described in Section \ref{bounds}.  The corresponding problem for Delaunay $2$-cells is complicated by non-centeredness: as we observed in \cite{DeB_cyclic_geom}, the area of a non-centered cyclic polygon decreases as the length of its longest side increases.  

By Lemma \ref{admissible closure}, given a rooted tree $T$ and side length collection $\bd_{\calf}$, the sum of areas of the Delaunay polygons comprising the corresponding centered dual $2$-cell determines a continuous function on the closure $\overline{\mathit{Ad}}(\bd_{\calf})$ of $\mathit{Ad}(\bd_{\calf})$.  We show that if $T$ has only one or two edges, the minimum of this function occurs at one of a few tightly-prescribed places.  Section \ref{bounds} describes an algorithm  that produces lower bounds for the values at such locations, given lower bounds on the coordinates of $\bd_{\calf}$.

In fact we work in Sections \ref{moduli} and \ref{bounds} with the \textit{radius-$R$ defect}, which in best cases records the area of the region in a polygon but outside the union of disks of radius $R$ centered at its vertices.  Section \ref{centered packing} is devoted to establishing Proposition \ref{2-cell defect}, which asserts that this does hold for a centered dual $2$-cell $Q$.  This proves convenient in applications.  

In Section \ref{computations} we make some computations and prove Theorem \ref{main}.  We prove Theorem \ref{inj to cov} in Section \ref{consequences}.

%%%%%%%%%%%%%%%%%%%%%
\subsection*{Acknowledgments}
%%%%%%%%%%%%%%%%%%%%%

This paper answers an objection of Marc Culler.  The author thanks Peter Shalen and Steve Kerckhoff for helpful conversations.

%%%%%%%%%%%%%%%%%%%%%
\section{The Voronoi and Delaunay tessellations}\label{Voronoi dfn}
%%%%%%%%%%%%%%%%%%%%% 

This section gives a self-contained introduction to the Voronoi and Delaunay tessellations determined by a finite subset of a hyperbolic surface.  Let us first establish some notation.  

If $\gamma$ is a geodesic in $\mathbb{H}^2$, a \textit{half-space bounded by $\gamma$} is the closure of a component of $\mathbb{H}^2 - \gamma$.  We will say the \textit{frontier} of $K\subset \mathbb{H}^2$ is $\mathit{fr}(K)\doteq K\cap \overline{\mathbb{H}^2-K}$.  Thus for instance $\gamma$ is the frontier of either half-space bounded by $\gamma$.  If $\{\calh_i\}$ is a collection of half-spaces, each with bounding geodesic $\gamma_i$, we say that $P = \bigcap_i \calh_i$ is a \textit{convex polygon} if it is nonempty and the collection $\{\gamma_i\}$ is locally finite; ie, for each $p\in P$ there is an open set $U$ and a finite collection $\{\gamma_{i_1},\hdots,\gamma_{i_n}\}$ of boundary geodesics such that $U\cap(\bigcup \gamma_i) \subset \bigcup_{j=1}^n \gamma_{i_j}$.  

An \textit{edge} (or \textit{side}) of $P = \bigcap \calh_i$ is $\gamma_i\cap P$ for some $i$ such that this intersection is non-empty or a singleton, and the \textit{boundary} $\partial P$ of $P$ is the union of its edges.  One finds that $\partial P$ is the topological frontier $P\cap\overline{\mathbb{H}^2 - P}$ of $P$ in $\mathbb{H}^2$.  A \textit{vertex} of $P$ is the nonempty intersection of two edges.  We will say $P$ is \textit{cyclic} if all its vertices are equidistant from some $v\in \mathbb{H}^2$, the \textit{center} of $P$, and $P$ is \textit{centered} if $v\in\mathit{int}\, P$.  The \textit{radius} of a cyclic polygon $P$ is $J$ such that $d(v,x) = J$ for each vertex $x$ of $P$, where $v$ is the center of $P$.

By the \textit{injectivity radius} of $\cals\subset\mathbb{H}^2$ we refer to the supremum of  the set of $r\geq 0$ such that $d(x,y)>2r$ for all distinct $x$ and $y$ in $\cals$.  If $\cals$ has injectivity radius $R > 0$, then for any distinct $x$ and $y$ in $\cals$ the open disk $B_{R}(x) \doteq \{p\in\mathbb{H}^2\,|\,d(x,p) < R\}$ is disjoint from $B_{R}(y)$.

\begin{fact}  If $\cals\subset\mathbb{H}^2$ has injectivity radius $r >0$ then $\cals \cap K$ is finite for any bounded set $K \subset \mathbb{H}^2$.\end{fact}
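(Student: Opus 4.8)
The plan is a standard packing estimate. Since $\cals$ has injectivity radius $r > 0$, the open disks $B_r(x)$ for $x \in \cals$ are pairwise disjoint, as observed in the paragraph preceding the statement. Given a bounded set $K \subset \mathbb{H}^2$, I would first fix $p \in \mathbb{H}^2$ and $\rho > 0$ with $K \subset B_\rho(p)$. Then for each $x \in \cals \cap K$ the triangle inequality yields $B_r(x) \subset B_{\rho + r}(p)$, so $\{B_r(x) : x \in \cals \cap K\}$ is a pairwise disjoint family of disks all contained in the single disk $B_{\rho + r}(p)$.

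To finish, I would compare areas: a hyperbolic disk of radius $r$ has area $2\pi(\cosh r - 1) > 0$ independent of its center, whereas $B_{\rho+r}(p)$ has finite area $2\pi(\cosh(\rho+r) - 1)$. By additivity of area over the disjoint family above, the cardinality of $\cals \cap K$ is at most $(\cosh(\rho+r)-1)/(\cosh r - 1)$, hence finite.

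I do not expect a genuine obstacle here; the argument is routine. The only inputs beyond the definitions are that a hyperbolic disk of fixed radius has a fixed positive finite area and that area is additive over disjoint Borel sets, both standard. One could instead argue purely metrically — a maximal $r$-separated subset of the compact set $\overline{B_\rho(p)}$ is finite, and $\cals \cap K$ is $2r$-separated, hence $r$-separated — but the volume comparison is more transparent and yields an explicit bound on $\#(\cals \cap K)$.
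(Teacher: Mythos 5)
Your argument is correct and is essentially the one the paper gives: the paper observes that the $r$-neighborhood of $K$ has finite area and therefore cannot contain infinitely many disjoint disks of fixed positive area, which is exactly your area comparison with $B_{\rho+r}(p)$ playing the role of that neighborhood. The explicit bound $(\cosh(\rho+r)-1)/(\cosh r - 1)$ and the remark about $r$-separated sets are fine but add nothing the paper needed.
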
 

This is because the $R$-neighborhood of $K$ has finite area and so cannot contain infinitely many disjoint disks with a fixed positive area.  It follows that if $\cals\subset\mathbb{H}^2$ has positive injectivity radius then it is closed and discrete.  The converse is not true, but a closed and discrete set $\cals$ does satisfy the fact above.  This will suffice to define the Voronoi and Delaunay tessellations.

For distinct $x$ and $y$ in $\mathbb{H}^2$, we will often refer by $\gamma_{xy}$ to the unique geodesic arc joining $x$ to $y$, and by $\gamma_{xy}^{\perp}$ to its \textit{perpendicular bisector}: the hyperbolic geodesic intersecting $\gamma_{xy}$ at its midpoint $m$, at right angles.  For each $p\in\gamma_{xy}^{\perp}$ the hyperbolic law of cosines gives:
$$ \cosh d(x,p) = \cosh d(x,m)\cosh d(p,m) = \cosh d(y,p) $$
Thus the points of $\gamma_{xy}^{\perp}$ are equidistant from $x$ and $y$; in fact $\gamma_{xy}^{\perp} = \{p\in\mathbb{H}^2\,|\, d(p,x) = d(p,y)\}$.

\begin{lemma}\label{Voronoi poly}  If $\cals\subset \mathbb{H}^2$ is closed and discrete then for each $x\in\cals$, $V_x$ as defined in (\ref{Voronoi cell}) is a convex polygon in $\mathbb{H}^2$, and if $\cals$ has injectivity radius $R > 0$ then $\overline{B_R(x)} \subset V_x$.  \end{lemma}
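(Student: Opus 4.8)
The plan is to realize $V_x$ as an intersection of closed half-spaces, verify the two clauses in the definition of a convex polygon, and then dispatch the ball containment with a one-line triangle-inequality estimate. First, for each $y\in\cals-\{x\}$ I would set $\calh_{y}=\{p\in\mathbb{H}^2\,|\,d(p,x)\leq d(p,y)\}$. Since $p\mapsto d(p,x)-d(p,y)$ is continuous and, by the computation preceding the lemma, vanishes exactly on $\gamma_{xy}^{\perp}$, the set $\calh_y$ is precisely the closed half-space bounded by $\gamma_{xy}^{\perp}$ that contains $x$ (indeed $x\in\calh_y$, and $x\notin\gamma_{xy}^{\perp}$ since $d(x,y)>0$). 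By definition $V_x=\bigcap_{y\in\cals-\{x\}}\calh_y$, which is nonempty because $x\in V_x$. (The case $\cals=\{x\}$, where $V_x=\mathbb{H}^2$ is the empty intersection, is degenerate; assume $\cals\neq\{x\}$.)

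Next I would check that the family $\{\gamma_{xy}^{\perp}\,|\,y\in\cals-\{x\}\}$ is locally finite. Fix $p\in\mathbb{H}^2$ and take $U=B_1(p)$. If $\gamma_{xy}^{\perp}$ meets $U$, choose $q\in\gamma_{xy}^{\perp}\cap U$; then $d(q,x)=d(q,y)$, so $d(p,y)\leq d(p,q)+d(q,y)=d(p,q)+d(q,x)\leq 2\,d(p,q)+d(p,x)<2+d(p,x)$. Thus every such $y$ lies in the bounded set $\overline{B_{2+d(p,x)}(p)}$, which meets the closed discrete set $\cals$ in a finite set by the Fact; hence only finitely many $\gamma_{xy}^{\perp}$ meet $U$. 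Together with nonemptiness this shows $V_x$ is a convex polygon. I expect this local-finiteness step to be the only one needing genuine care: the point is to bound $d(p,y)$ itself — not merely the distance from $p$ to the bisector $\gamma_{xy}^{\perp}$ — and then appeal to discreteness of $\cals$, rather than to any a priori control on how the bisectors accumulate.

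Finally, suppose $\cals$ has injectivity radius $R>0$, so $d(x,y)\geq 2R$ for all distinct $x,y\in\cals$. For $p\in\overline{B_R(x)}$ and any $y\in\cals-\{x\}$ we get $d(p,y)\geq d(x,y)-d(x,p)\geq 2R-R=R\geq d(x,p)=d(p,x)$, so $p\in\calh_y$. As $y$ was arbitrary, $p\in V_x$, and therefore $\overline{B_R(x)}\subset V_x$.
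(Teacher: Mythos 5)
Your proof is correct and follows essentially the same strategy as the paper's: realize $V_x$ as an intersection of half-spaces bounded by perpendicular bisectors, verify local finiteness by showing that any bisector meeting a small ball around $p$ forces the corresponding $y\in\cals$ into a bounded set (then invoke discreteness), and obtain the ball containment from a triangle-inequality estimate. One marginal improvement on your side: by taking the fixed-radius ball $U=B_1(p)$ for arbitrary $p\in\mathbb{H}^2$, you sidestep the paper's choice $U=B_{d(p,x)/2}(p)$, which implicitly requires $p\neq x$ and establishes local finiteness only at points of $V_x$.
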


\begin{proof}  For $y \in \cals-\{x\}$, let $\calh_{xy}$ be the half-space containing $x$ and bounded by $\gamma_{xy}^{\perp}$.  Then $\calh_{xy} = \{p\,|\,d(p,x) \leq d(p,y)\}$.  It follows immediately that (\ref{Voronoi cell}) may be rewritten as:
$$V_x = \bigcap_{y\,\in\,\cals-\{x\}} \calh_{xy} $$
Fix $p\in V_x$ and let $J = d(x,p)$.  If $K =B_{3J}(p)$ then as we pointed out above the lemma, $K\cap \cals$ is a finite set $\{x,y_1,\hdots,y_n\}$.  If $U = B_{J/2}(p)$, then for $q\in U$ the triangle inequality gives $d(x,q) < 3J/2$, so for $y$ outside of $B_{3J}(p)$ it follows that $\gamma_{xy}^{\perp}\cap U = \emptyset$.  Therefore: 
$$U \cap \left(\bigcup_{y \in \cals - \{x\}} \gamma_{xy}^{\perp} \right) \subset \bigcup_{i=1}^n \gamma_{xy_i}^{\perp},$$
and $V_x$ is a convex polygon.  

If $\cals$ has injectivity radius $R>0$, then since the open disk $B_{R}(x)$ does not intersect $B_{R}(y)$ for any $y\in\cals-\{x\}$, the points of $B_{R}(x)$ are closer to $x$ than any $y\in\cals-\{x\}$.  Thus $B_{R}(x) \subset V_x$; in particular, $V_x$ is nonempty, and since it is closed it contains $\overline{B_{R}(x)}$.\end{proof}

It is easy to show that $\mathit{int}\, V_x = \{ p\,|\,d(p,x) < d(p,y)\ \mbox{for each}\ y\in \cals - \{x\}\}$, and that $\partial V_x = \bigcup_{y\in\cals-\{x\}} V_x\cap V_y$.  Furthermore, for any $y\in\cals-\{x\}$ such that $V_x\cap V_y$ is nonempty, it is contained in the equidistant locus $\gamma_{xy}^{\perp}$.  This gives:  

\begin{fact} If $\cals\subset \mathbb{H}^2$ is closed and discrete, then for distinct $x$, $y$, and $z$ in $\cals$, $V_x\cap V_y\cap V_z$ contains at most a single point.\end{fact}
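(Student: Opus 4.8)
The plan is to argue by contradiction: suppose $V_x\cap V_y\cap V_z$ contains two distinct points $p$ and $q$, and derive that $y=z$. The only ingredient needed is the identification, recorded just above the statement, of the equidistant loci: $\gamma_{xy}^{\perp}=\{w\in\mathbb{H}^2\,|\,d(w,x)=d(w,y)\}$, and likewise for the other pairs, together with the observation that $V_x\cap V_y\subset\gamma_{xy}^{\perp}$ whenever it is nonempty. Since $p,q\in V_x\cap V_y$ we get $p,q\in\gamma_{xy}^{\perp}$, and since $p,q\in V_x\cap V_z$ we get $p,q\in\gamma_{xz}^{\perp}$.

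Next I would invoke the fact that a geodesic in $\mathbb{H}^2$ is determined by any two of its points: as $\gamma_{xy}^{\perp}$ and $\gamma_{xz}^{\perp}$ both contain the distinct points $p$ and $q$, they coincide as a single geodesic $\ell$. By the equidistant description, every $w\in\ell$ then satisfies $d(w,x)=d(w,y)=d(w,z)$. Now let $\rho$ denote the reflection of $\mathbb{H}^2$ across $\ell$. Because $\ell=\gamma_{xy}^{\perp}$ meets $\gamma_{xy}$ orthogonally at its midpoint, $\rho$ carries $\gamma_{xy}$ to itself, reversing orientation and fixing the midpoint, and hence interchanges $x$ and $y$; note $x\notin\ell$, since otherwise $d(x,x)=d(x,y)$ would force $x=y$. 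The same reasoning applied to $\gamma_{xz}^{\perp}=\ell$ shows $\rho$ interchanges $x$ and $z$. Therefore $y=\rho(x)=z$, contradicting the hypothesis that $x,y,z$ are distinct, so $V_x\cap V_y\cap V_z$ contains at most one point.

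I do not anticipate a genuine obstacle; the only step meriting a sentence of justification is that the geodesic $\gamma_{xy}^{\perp}$ is exactly the fixed locus of the isometry swapping $x$ and $y$. If one prefers to avoid reflections, the same conclusion follows by noting that $y$ and $z$ both lie on the circle of radius $d(p,x)$ about $p$ and on the circle of radius $d(q,x)$ about $q$; these are distinct circles since $p\neq q$, hence meet in at most two points, one of which is $x$, and since $y\neq x\neq z$ both $y$ and $z$ must equal the other intersection point.
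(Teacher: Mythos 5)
Your proof is correct and takes essentially the same route as the paper: both arguments come down to observing that each point of $V_x\cap V_y\cap V_z$ lies on both $\gamma_{xy}^{\perp}$ and $\gamma_{xz}^{\perp}$, and that two distinct geodesics in $\mathbb{H}^2$ meet in at most one point. The paper simply asserts that $y\neq z$ forces $\gamma_{xy}^{\perp}\neq\gamma_{xz}^{\perp}$, whereas you supply the justification (via the reflection across the bisector, or equivalently the two-circles argument); this is a welcome amplification of a step the paper leaves to the reader, not a genuinely different proof.
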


This is because each point of $V_x\cap V_y\cap V_z$ is in both $\gamma_{xy}^{\perp}$ and $\gamma_{xz}^{\perp}$, and since $y\neq z$ these are distinct geodesics which therefore meet transversely in a single point (if at all).

\begin{lemma}\label{vertex radius}  For $\cals\subset \mathbb{H}^2$ closed and discrete, define $V^{(0)} = \{ V_x\cap V_y\cap V_z\,|\,x,y,z\in\cals\ \mbox{distinct}\}$.  For $v\in V^{(0)}$, there exists $J_v > 0$ such that  $v \in V_x$ if and only if $d(v,x) = J_v$ for each $x \in \cals$.  In particular, $\cals\cap B_{J_v}(v) = \emptyset$.  \end{lemma}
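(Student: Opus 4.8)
The plan is to read $J_v$ straight off the defining inequalities of the Voronoi cells. Fix $v \in V^{(0)}$; by definition $v \in V_x \cap V_y \cap V_z$ for some distinct $x, y, z \in \cals$, and by the second Fact above this intersection is the single point $v$. Unwinding $v \in V_x$ gives $d(v,x) \le d(v,w)$ for every $w \in \cals - \{x\}$, and likewise for $y$ and $z$. Pairing the inequality coming from $v \in V_x$ with the one coming from $v \in V_y$ yields $d(v,x) = d(v,y)$, and similarly $d(v,x) = d(v,z)$; set $J_v$ to be this common value. It is positive: the points $x, y, z$ are distinct, so $v$ equals at most one of them, and if $v$ were one of them then $J_v = 0$ would force $v$ to coincide with the other two as well, a contradiction.

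Next I would establish the biconditional for an arbitrary $w \in \cals$. For the forward direction, if $v \in V_w$ and $w \in \{x, y, z\}$ there is nothing to prove; otherwise combining $v \in V_w$ with $v \in V_x$ gives both $d(v,w) \le d(v,x)$ and $d(v,x) \le d(v,w)$, hence $d(v,w) = J_v$. For the reverse direction, suppose $d(v,w) = J_v$. Since $v \in V_x$ we have $J_v = d(v,x) \le d(v,u)$ for every $u \in \cals - \{x\}$, and trivially $J_v \le d(v,x)$, so $d(v,w) = J_v \le d(v,u)$ for every $u \in \cals$---in particular for every $u \in \cals - \{w\}$---which is exactly the condition $v \in V_w$. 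The closing assertion $\cals \cap B_{J_v}(v) = \emptyset$ then follows from the same inequality: $d(v,u) \ge J_v$ for all $u \in \cals$, so no point of $\cals$ lies in the open ball $B_{J_v}(v)$.

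There is no serious obstacle here. The only points needing a little care are the bookkeeping of which cell $V_x$ serves as the ``reference'' when $w$ may or may not be one of $x, y, z$, and checking the strict inequality $J_v > 0$; both are handled above.
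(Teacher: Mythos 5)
Your proof is correct and follows essentially the same route as the paper's: read $J_v$ off as the common distance $d(v,x)$, use $v\in V_x$ to get $d(v,u)\ge J_v$ for all $u\in\cals$, and observe that this inequality simultaneously gives the reverse implication of the biconditional and the emptiness of $\cals\cap B_{J_v}(v)$. Your write-up is slightly more explicit than the paper's (in particular spelling out $J_v>0$ and the reverse implication, both of which the paper leaves implicit), but the underlying argument is the same.
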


%Furthermore, there exists $\epsilon > 0$ such that $d(v,y) \geq J_v+\epsilon$ for each $y\in \cals$ such that $v\notin V_y$.  

\begin{proof}  Suppose $x\in \cals$ has $v\in V_x$, and let $J_v = d(x,v)$.  It follows directly from the definition (\ref{Voronoi cell}) that $d(v,y) \geq J_v$ for all $y\in\cals- \{x\}$.  For some such $y$, if $v\in V_y$ then $d(v,x) \geq d(v,y)$ by the definition of $V_y$, so we must have $d(v,y) = J_v$.  This proves the lemma.\end{proof}

\begin{corollary}\label{discrete V^0}  For $\cals\subset\mathbb{H}^2$ closed and discrete, $V^{(0)}$ from Lemma \ref{vertex radius} is closed and discrete.\end{corollary}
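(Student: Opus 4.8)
The plan is to prove the stronger assertion that $V^{(0)}\cap K$ is finite for every bounded $K\subset\mathbb{H}^2$. This at once gives the corollary: no point of $\mathbb{H}^2$ is then an accumulation point of $V^{(0)}$, so $V^{(0)}$ is discrete, and any sequence in $V^{(0)}$ converging in $\mathbb{H}^2$ is eventually constant with limit in $V^{(0)}$, so $V^{(0)}$ is closed.

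If $V^{(0)}=\emptyset$ there is nothing to prove, so assume otherwise; then $\cals$ is nonempty, and I would fix once and for all a point $x_0\in\cals$. Given a bounded $K$, enlarge it to a closed ball $\overline{B_M(p)}$ and consider $v\in V^{(0)}\cap K$, say $v = V_x\cap V_y\cap V_z$ for distinct $x,y,z\in\cals$. The key estimate is a uniform bound on the radius $J_v$ furnished by Lemma \ref{vertex radius}: since $v\in V_x$, definition (\ref{Voronoi cell}) gives $d(v,w)\geq J_v$ for all $w\in\cals$, and taking $w = x_0$ yields $J_v \leq d(v,x_0)\leq M + d(p,x_0)$, a bound independent of $v$. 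Hence $x$, $y$, and $z$ each lie within distance $M + J_v \leq 2M + d(p,x_0)$ of $p$, i.e.\ in the set $\cals\cap\overline{B_{2M + d(p,x_0)}(p)}$, which is finite because $\cals$ is closed and discrete (the Fact preceding Lemma \ref{Voronoi poly}).

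Since each unordered triple of distinct points of $\cals$ determines at most one element of $V^{(0)}$ (the Fact following Lemma \ref{Voronoi poly}), and there are finitely many such triples in a finite set, $V^{(0)}\cap K$ is finite, as desired. The only real content is the bound on $J_v$; I expect that to be the one point needing care, since without it a vertex lying in the bounded region $K$ could a priori be the center of an arbitrarily large empty disk, and everything else is bookkeeping. A variant avoiding the case split and the choice of $x_0$ would instead run a sequential argument directly, bounding $J_{v_n}$ for a hypothetical accumulating sequence $v_n$, but the approach above seems cleanest.
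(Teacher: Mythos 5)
Your proof is correct and rests on the same two ingredients as the paper's: a bound on the radius $J_v$ of any Voronoi vertex lying in a bounded region, combined with the fact that a triple of $\cals$-points determines at most one vertex and that $\cals$ meets any bounded set in a finite set. The only real difference is cosmetic: you bound $J_v$ uniformly over an arbitrary bounded $K$ using a fixed basepoint $x_0\in\cals$, whereas the paper centers its ball at a point $v\in V^{(0)}$ and uses $J_v$ itself to control the radii $J_{v'}$ of nearby vertices; your formulation makes the deduction of both closedness and discreteness slightly more immediate, but the mechanism is the same.
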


\begin{proof}  For $v\in V^{(0)}$ let $J_v > 0$ be prescribed by Lemma \ref{vertex radius}.  Each point of $B_{J_v}(v)$ is within $2J_v$ of a point of $\cals$, so for $v'\in V^{(0)}\cap B_{J_v}(v)$ we have $J_{v'} < 2J_v$.  Thus for such $v'$, $B_{3J_v}(v)$ contains the set of $x\in\cals$ such that $v'\in V_x$.  The fact above Lemma \ref{vertex radius} implies that $v'$ is determined by this set, so since $\cals\cap B_{3J_v}(v)$ is finite there can be only finitely many $v'\in V^{(0)}\cap B_{J_v}(v)$.\end{proof}

\begin{fact} For $\cals\subset\mathbb{H}^2$ closed and discrete, $x\in\cals$, and each edge $e = V_x\cap V_y$ of $V_x$, $\mathit{int}(e)\cap V_z = \emptyset$ for each $z\in\widetilde{\cals}-\{x,y\}$.\end{fact}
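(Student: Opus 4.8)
The plan is to argue by contradiction: suppose $p\in\mathit{int}(e)\cap V_z$ for some third point $z\in\cals-\{x,y\}$ (if $\mathit{int}(e)=\emptyset$ there is nothing to prove), and derive that a whole sub-arc of $\gamma_{xy}^{\perp}$ through $p$ fails to lie in $V_x$ — contradicting the fact that $p$ is interior to the edge $e\subset V_x$.

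First I would extract the relevant equidistance relations. Since $p\in V_x$ and $p\in V_z$, the defining inequality (\ref{Voronoi cell}) for each of the two cells gives $d(p,x)\le d(p,z)$ and $d(p,z)\le d(p,x)$, hence $d(p,x)=d(p,z)$; likewise $p\in V_x\cap V_y$ gives $d(p,x)=d(p,y)$. Using the identification $\gamma_{xw}^{\perp}=\{q\,|\,d(q,x)=d(q,w)\}$ recorded just before Lemma~\ref{Voronoi poly}, this means $p\in\gamma_{xy}^{\perp}\cap\gamma_{xz}^{\perp}$. Because $y\ne z$, these perpendicular bisectors are distinct geodesics — if they coincided, reflecting $x$ across the common geodesic would force $y=z$ — so, exactly as in the Fact preceding Lemma~\ref{vertex radius}, they meet transversally, and $p$ is their unique common point.

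Now I would use that $p$ is interior to the edge $e=V_x\cap V_y\subseteq\gamma_{xy}^{\perp}$. Since this intersection is a closed convex subset of the geodesic $\gamma_{xy}^{\perp}$, its relative interior is relatively open there, so some sub-arc $N\subseteq\gamma_{xy}^{\perp}$ about $p$ satisfies $N\subseteq e\subseteq V_x$. Transversality of the crossing at $p$ says that $\gamma_{xy}^{\perp}$ is not contained in either closed half-space bounded by $\gamma_{xz}^{\perp}$; hence, after shrinking $N$, there is a point $q\in N$ lying strictly inside the open half-space bounded by $\gamma_{xz}^{\perp}$ that does \emph{not} contain $x$, i.e.\ with $d(q,z)<d(q,x)$. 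This contradicts $q\in V_x$, which forces $d(q,x)\le d(q,z)$. Therefore $\mathit{int}(e)\cap V_z=\emptyset$.

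I expect the only point needing care is this last step: one must know that the relative interior of the edge is genuinely relatively open in $\gamma_{xy}^{\perp}$ (clear from the description of edges as $\gamma_i\cap V_x$ with $V_x$ convex) and that transversality really does produce points of $N$ strictly on the far side of $\gamma_{xz}^{\perp}$ from $x$ — which is precisely the statement that the two geodesics cross rather than being tangent or disjoint, already guaranteed by $y\ne z$. Everything else is bookkeeping with the inequalities defining (\ref{Voronoi cell}).
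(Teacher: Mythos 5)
Your argument is correct and follows the same route as the paper's one-sentence justification: identify the hypothetical point $p$ as the (unique, transverse) intersection of the distinct geodesics $\gamma_{xy}^{\perp}$ and $\gamma_{xz}^{\perp}$, then note that on one side of $\gamma_{xz}^{\perp}$ points of $\gamma_{xy}^{\perp}$ are strictly closer to $z$ than to $x$, so $p$ cannot be interior to $e\subset V_x$. You have simply written out the details the paper leaves implicit.
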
 

This is because $v = e\cap V_z$ is an intersection of the geodesics $\gamma_{xy}^{\perp}$ and $\gamma_{xz}^{\perp}$, being equidistant from $x$, $y$, and $z$, and so one interval of $\gamma_{xy}^{\perp} - v$ consists entirely of points closer to $z$ than $x$.  

By the fact above, the set of edges of Voronoi polygons has the structure of an embedded graph in $\mathbb{H}^2$ with vertex set $V^{(0)}$.

\begin{definition} For $\cals\subset \mathbb{H}^2$ closed and discrete, the \textit{Voronoi tessellation $V$ determined by $\cals$} is the cell complex structure with $2$-cells of the form $V_x$ for $x\in\cals$, and with $V^{(1)} = \bigcup\ \{ V_x\cap V_y\,|\,\mbox{distinct}\ x,y\in\cals\}$ and $V^{(0)} = \bigcup\ \{ V_x\cap V_y\cap V_z\,|\, x, y, z \in\cals\ \mbox{distinct}\}$.  \end{definition}

We caution that this definition does not imply that the Voronoi tessellation has trivalent one-skeleton; only that each vertex is contained in at least three $2$-cells.

\begin{definition}\label{cyclically ordered V}  Let $V$ be the Voronoi tesselation determined by $\cals\subset \mathbb{H}^2$ closed and discrete.  For $v\in V^{(0)}$, say the collection $\{e_0,\hdots,e_{n-1}\}$ of edges of $V$ containing $v$ is \textit{cyclically ordered} if for each $i$ there exists $x_i \in\cals$ so that $e_i$ and $e_{i+1}$ are edges of $V_{x_i}$ (taking $i+1$ modulo $n$).\end{definition}

Note that if the edges containing $v\in V^{(0)}$ are cyclically ordered $e_0,\hdots,e_{n-1}$ then $e_i = V_{x_i}\cap V_{x_{i-1}}$ for each $i$ (with $i-1$ taken modulo $n$), where the $x_i$ are as in the definition above.

\begin{lemma}\label{cyclically ordered vertices}  Let $V$ be the Voronoi tesselation determined by $\cals\subset \mathbb{H}^2$ closed and discrete, and fix $v\in V^{(0)}$.  Let $C = \{ p\in\mathbb{H}^2\,|\,d(v,p) = J_v\}$, let the edges containing $v$ be cyclically ordered $\{e_0,\hdots,e_{n-1}\}$ as in Definition \ref{cyclically ordered V}, and let $\{x_i\} \subset \cals$ be the associated collection with $e_i$ and $e_{i+1}\subset V_{x_i}$ for each $i$.   Then $\{x_i\}_{i=0}^{n-1}=\{x\in\cals\,|\,v\in V_x\}$ is cyclically ordered in the sense of \CyclicallyOrderedV. \end{lemma}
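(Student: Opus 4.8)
The plan is to describe how the Voronoi cells meeting $v$ fit together in a small disk about $v$, and then read off both the set equality and the cyclic ordering from that picture. Write $C$ for the circle of radius $J_v$ about $v$, as in the statement; by Lemma \ref{vertex radius} we have $\{x\in\cals : v\in V_x\}=\cals\cap C$ and $\cals\cap B_{J_v}(v)=\emptyset$, and by Lemma \ref{Voronoi poly} each $V_x$ is a convex polygon. The crucial local fact I would establish is: whenever $v\in V_x$, the point $v$ is a vertex of $V_x$ — the intersection of exactly two of its edges — and the sector that $V_x$ cuts out near $v$ is precisely one of the $n$ elementary sectors into which the rays at $v$ of $e_0,\dots,e_{n-1}$ divide a small disk about $v$; moreover distinct cells meeting $v$ occupy distinct sectors.

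To prove this local fact: since $v\in V^{(0)}$ it lies in at least three distinct Voronoi cells, so it cannot lie in $\inter V_x$ (else $v\notin\partial V_x=\bigcup_{y}V_x\cap V_y$, forcing $v\notin V_y$ for every $y\neq x$), nor in the relative interior of an edge $V_x\cap V_y$ (which, by the Fact preceding the definition of $V$, meets no $V_z$ with $z\neq x,y$). By local finiteness of the bounding geodesics of $V_x$, inside a small enough disk $U$ about $v$ the only bounding geodesics met are those $\gamma_{xy}^{\perp}$ passing through $v$, equivalently (Lemma \ref{vertex radius}) those with $v\in V_y$; thus $V_x\cap U$ is the intersection with $U$ of half-planes through $v$, i.e. $V_x$ agrees near $v$ with a convex cone at $v$. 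By the two exclusions above this cone is a sector of angle in $(0,\pi)$ bounded by two edges of $V$ at $v$, and no further edge-ray at $v$ can enter its interior, since a point in the relative interior of an edge of $V$ lies in two cells and hence not in $\inter V_x$; so these two edges are consecutive among $e_0,\dots,e_{n-1}$ and bound an elementary sector. Finally, two cells occupying a common elementary sector have overlapping interiors, hence coincide.

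Granting the local fact, the set equality is immediate: $v\in V_{x_i}$ because $v\in e_i\subset V_{x_i}$, giving $\{x_i\}\subseteq\{x:v\in V_x\}$; conversely if $v\in V_x$ then the two edges of $V_x$ at $v$ form a consecutive pair $e_i,e_{i+1}$, so $V_x$ and $V_{x_i}$ occupy the same elementary sector and $x=x_i$; and $i\mapsto x_i$ is injective because $x_i=x_j$ would force $\{e_i,e_{i+1}\}=\{e_j,e_{j+1}\}$, impossible for $i\neq j$ with the $e_k$ distinct. Hence $\{x_i\}_{i=0}^{n-1}=\{x:v\in V_x\}$ (so in particular $n\geq 3$), and $e_0,\dots,e_{n-1}$ runs through the edges at $v$ in their genuine cyclic order about $v$, with $V_{x_i}$ the elementary sector between the rays $r_i,r_{i+1}$ of $e_i,e_{i+1}$. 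For the cyclic order of the $x_i$ on $C$: the geodesic segment $[v,x_i]$ lies in $V_{x_i}$ by convexity, hence near $v$ lies in the sector between $r_i$ and $r_{i+1}$, so the direction from $v$ to $x_i$ lies between those two rays; and that direction is not $r_i$ itself, for otherwise $x_i$ would lie on the geodesic carrying $e_i\subset\gamma_{x_{i-1}x_i}^{\perp}$, whence $d(x_i,x_{i-1})=0$, contradicting that the $x_i$ are distinct; similarly it is not $r_{i+1}$. Since $r_0,\dots,r_{n-1}$ occur in this cyclic order about $v$ and $x_i$ points strictly into the sector between $r_i$ and $r_{i+1}$ at distance $J_v$, the points $x_0,\dots,x_{n-1}$ occur in this cyclic order on $C$ — which is exactly what it means for $\{x_i\}$ to be cyclically ordered in the sense of \CyclicallyOrderedV.

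The main obstacle is the local analysis of the second paragraph: making rigorous the intuitively clear ``fan of $n$ wedges'' picture of $V$ near $v$ requires combining convexity of the Voronoi cells with the two Facts about them to exclude the degenerate configurations ($v$ interior to a cell, $v$ interior to an edge, two distinct cells sharing a wedge) and to identify cells meeting $v$ with elementary sectors. Once that structure is pinned down, the set equality, the geometric cyclic order of the edges, and the cyclic order of the $x_i$ are all bookkeeping.
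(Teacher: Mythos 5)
Your proof is correct, and it rests on the same geometric picture as the paper's: near $v$ the Voronoi cells containing $v$ form a fan of sectors, the segment $\lambda_i=[v,x_i]$ lies in $V_{x_i}$ by convexity and therefore points into the $i$-th sector, and reading along a small circle about $v$ transports this cyclic ordering to the circle $C$. The difference is one of organization rather than substance. You distill the key input into an explicit ``local fact'' — each cell $V_x$ with $v\in V_x$ meets a small disk about $v$ in exactly one elementary sector, distinct cells in distinct sectors — and then deduce both the set equality and the cyclic order as immediate corollaries. The paper avoids asserting the full fan decomposition: it proves the set equality directly from the frontier description of $V_y$, and for the ordering it constructs, for a fixed $i$, the two isosceles-ish triangles $T_{i-1},T_i$ inside $V_{x_{i-1}},V_{x_i}$, shows their union cuts a $C_\epsilon$-arc bounded by $\lambda_{i-1}\cap C_\epsilon$ and $\lambda_i\cap C_\epsilon$, and argues the other $\lambda_j$ land outside because $\lambda_j\subset V_{x_j}$. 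Your version makes the vertex structure of $V$ at $v$ more explicit (in particular that $v$ is a genuine two-edge vertex of every cell containing it), at the cost of a little extra care in the second paragraph: when you say the cone $V_x\cap U$ is ``bounded by two edges of $V$ at $v$,'' one more sentence is warranted to observe that a boundary ray of the cone lies on some $\gamma_{xy}^\perp$ and any point $q$ of it in $V_x$ satisfies $d(q,x)=d(q,y)\le d(q,z)$ for all $z$, so $q\in V_x\cap V_y$, which is therefore an edge of $V$ through $v$. With that filled in the argument is complete.
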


\begin{proof}  For $y\in\cals$, if $d(v,y) = J_v$ then $v$ is as close to $y$ as to any other element of $\cals$; hence $v\in V_y$.   Since $d(v,y) = J_v=d(v,x_i)$ for each $i$, $v$ is in the frontier of $V_y$ and hence is in an edge $e$ of $V_y$.  By hypothesis there is an $i$ such that $e=e_i$, so by the observation above the lemma $y = x_i$ or $x_{i-1}$.  This shows that $\{x\in\cals\,|\,v\in V_x\}=\{x_i\}_{i=0}^{n-1}$.

For each $i$, since $V_{x_i}$ is convex it contains the geodesic arc $\lambda_i$ joining $x_i$ to $v$.  Now fix some $i \in \{0,\hdots,n-1\}$ and a point $w\in e_i \cap B_{J_v}(v)$ near $v$.  Then by convexity of $V_{x_i}$, the triangle $T_i$ determined by $v$, $w$, and $x_i$ is entirely contained in $V_{x_i}$.  Also $T_i \subset \overline{B_{J_v}(v)}$, again by convexity (since $x_i \in C = \partial \overline{B_{J_v}(v)}$).  By the same argument, the triangle $T_{i-1}$ determined by $v$, $w$, and $x_{i-1}$ is contained in $V_{x_{i-1}}\cap\overline{B_{J_v}(v)}$.  (See Figure \ref{cyclic vertices}.)

\begin{figure}
\input{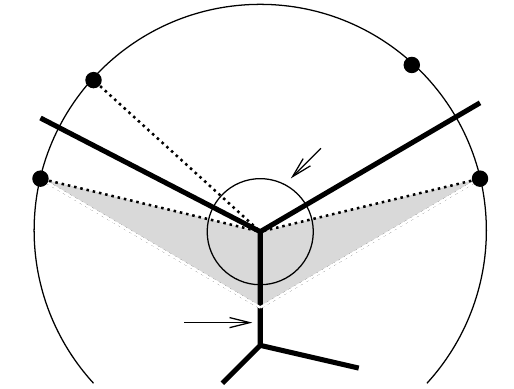_t}
\caption{Some objects from the proof of Lemma \ref{cyclically ordered vertices}.  (Edges of $V$ are bold.)}
\label{cyclic vertices}
\end{figure}

Let $\alpha_{i-1}$ be the vertex angle of $T_{i-1}$ at $v$; since $w\in\mathit{int}(e_i)$ this is the angle in $V_{x_{i-1}}$ between $\lambda_{i-1}$ and $e_i$.  Similarly, the angle $\alpha_i$ of $T_i$ at $v$ is the angle in $V_{x_i}$ between $e_i$ and $\lambda_i$.  If $\epsilon > 0$ is less than the distance from $v$ to the side $[x_i,w]$ of $T_i$ joining $x_i$ and $w$, then $T_i$ contains the entire sector of $B_{\epsilon}(v)$, with angle $\alpha_i$, determined by $e_i$ and $\lambda_i$.  If $\epsilon<d(v,[x_{i-1},w])$ then the analogous assertion holds for the sector of $B_{\epsilon}(v)$ determined by $\lambda_{i-1}$ and $e_i$.

Fix $\epsilon > 0$ satisfying the requirements of the paragraph above and let $C_{\epsilon}$ be the circle of radius $\epsilon$ centered at $v$.  The above implies that $C_{\epsilon}\cap (T_{i-1}\cup T_i)$ is an interval of $C_{\epsilon}$ with angle measure $\alpha_{i-1}+\alpha_i$ and endpoints $\lambda_{i-1}\cap C_{\epsilon}$ and $\lambda_i\cap C_{\epsilon}$.  There is a homeomorphism $C\to C_{\epsilon}$ that takes $x\in C$ to $C_{\epsilon}\cap \lambda_x$, where $\lambda_x$ is the geodesic arc joining $x$ to $v$.  In particular, $x_j$ maps to $\lambda_j \cap C_{\epsilon}$ for each $j\in\{0,\hdots,n-1\}$.  Let $I$ be the preimage of $C_{\epsilon}\cap(T_{i-1}\cup T_i)$ in $C$, a closed subinterval bounded by $x_{i-1}$ and $x_i$.  For $j \neq i$ or $i-1$, $x_j$ maps to a point outside $C_{\epsilon}\cap(T_{i-1}\cup T_i)$ since $\lambda_j \subset V_{x_j}$, and so $x_j \in C - I$, a component of $C-\{x_{i-1},x_i\}$.
\end{proof}

We will define the Delaunay tessellation $P$ determined by $\cals\subset\mathbb{H}^2$ as a sort of ``dual'' to the Voronoi tessellation determined by $\cals$.  In particular we take $P^{(0)} = \cals$, in $1-1$ correspondence with the set of $2$-cells of $V$.  The edges of $P$ are also determined by edges of $V$:

\begin{definition}  Let $V$ be the Voronoi tesselation determined by $\cals\subset \mathbb{H}^2$ closed and discrete.  For an edge $e = V_x\cap V_y$ of $V$, the \textit{geometric dual} to $e$ is the geodesic arc $\gamma_{xy}$ joining $x$ to $y$. \end{definition}

We define $P^{(1)}$ to be the union of geometric duals to edges of $V$.  The lemma below establishes that $P^{(1)}$ has the structure of an embedded graph in $\mathbb{H}^2$ with vertex set $\cals$.

\begin{lemma}\label{embedded P^1}  Let $V$ be the Voronoi tesselation determined by $\cals\subset \mathbb{H}^2$ closed and discrete.  If $e = V_x\cap V_y$ and $e' = V_{x'} \cap V_{y'}$ are distinct edges of $V$, then their geometric duals satisfy $\gamma_{xy}\cap\gamma_{x'y'} = \{x,y\}\cap\{x',y'\}$.  \end{lemma}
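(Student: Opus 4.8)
The plan is to show the two inclusions separately, with the interesting content in the forward direction. First note that $\{x,y\}\subset\gamma_{xy}$ and $\{x',y'\}\subset\gamma_{x'y'}$, so $\{x,y\}\cap\{x',y'\}\subset\gamma_{xy}\cap\gamma_{x'y'}$ is immediate. For the reverse inclusion, suppose $p\in\gamma_{xy}\cap\gamma_{x'y'}$; I must show $p\in\{x,y\}\cap\{x',y'\}$, i.e.\ that $p$ is a common vertex of the two arcs. The key geometric input is that the interior of the geometric dual to an edge of $V$ meets the Voronoi one-skeleton only in a controlled way. Specifically, I would first establish the following claim: for an edge $e=V_x\cap V_y$ of $V$, the open arc $\mathit{int}\,\gamma_{xy}$ is disjoint from $V_z$ for every $z\in\cals-\{x,y\}$. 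Indeed, a point $q$ in the interior of $\gamma_{xy}$ has $d(q,x)<d(q,x)+d(q,y)\geq$ \dots — more carefully, $q$ lies strictly between $x$ and $y$, and since $e=V_x\cap V_y\neq\emptyset$ lies on $\gamma_{xy}^\perp$, the whole arc $\gamma_{xy}$ crosses $\gamma_{xy}^\perp$ at the midpoint $m$; for $q\in\mathit{int}\,\gamma_{xy}$ on the $x$-side of $m$ we get $d(q,x)<d(q,m)+d(m,x)$ is not quite it either, so the clean statement is: any $q$ in the interior of $\gamma_{xy}$ satisfies $d(q,x)<\max\{d(q',x):q'\in\gamma_{xy}\}$, and more usefully $q$ is closer to $x$ or to $y$ than to any other point of $\cals$, because $q$ lies in the segment from $x$ to $y$ and both $x$ and $y$ are the two closest points of $\cals$ to the midpoint $m$ (each at distance $J$, the radius associated to a vertex of $e$, or to any point of $e$). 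I would phrase this using the \emph{empty disk property}: since $e\subset\gamma_{xy}^\perp$ is a nonempty edge, pick $v\in e$; by Lemma \ref{vertex radius} (or its reasoning) $\cals\cap B_{d(v,x)}(v)=\emptyset$ while $x,y$ lie on the bounding circle, and the arc $\gamma_{xy}$ lies inside $\overline{B_{d(v,x)}(v)}$ with only its endpoints on the boundary; hence no third point $z\in\cals$ can be equidistant-or-closer, giving $\mathit{int}\,\gamma_{xy}\cap V_z=\emptyset$.

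Granting the claim, the argument finishes quickly. Take $p\in\gamma_{xy}\cap\gamma_{x'y'}$. Since $p\in\gamma_{x'y'}$, either $p\in\{x',y'\}$ or $p\in\mathit{int}\,\gamma_{x'y'}$; in the latter case the claim (applied to the edge $e'=V_{x'}\cap V_{y'}$) forces $p\notin V_z$ for any $z\notin\{x',y'\}$, and in particular $p\notin V_x\cup V_y$ unless $\{x,y\}\cap\{x',y'\}\neq\emptyset$. But $p\in\gamma_{xy}\subset V_x\cap V_y$ — here I use that $V_x$ and $V_y$ are convex (Lemma \ref{Voronoi poly}) and contain $x$ and $y$ respectively, and since $x,y\in V_x$ (trivially $x\in V_x$, and $y\in V_x$ because $y$ lies on $\gamma_{xy}^\perp\cap\dots$) — actually the cleanest statement is $\gamma_{xy}\subset V_x$ because $V_x$ is convex and contains both $x$ and every point of the edge $e\subset\gamma_{xy}$, and similarly $\gamma_{xy}\subset V_y$, so $\gamma_{xy}\subset V_x\cap V_y$. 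Thus $p\in V_x\cap V_y$, contradicting $p\in\mathit{int}\,\gamma_{x'y'}$ unless $\{x,y\}\cap\{x',y'\}\neq\emptyset$. So either $p\in\{x',y'\}$, or $p$ is a point shared by $V_x\cap V_y$ and the interior of $\gamma_{x'y'}$ which can only happen when, say, $x\in\{x',y'\}$ and then $p$ must be that shared vertex. Running the symmetric argument with the roles of $(e,e')$ swapped shows $p\in\{x,y\}$ as well, hence $p\in\{x,y\}\cap\{x',y'\}$.

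The main obstacle I anticipate is the bookkeeping in making the claim airtight — precisely, verifying that $\gamma_{xy}$ lies in $\overline{B_{J}(v)}$ with interior disjoint from the boundary circle, and that this truly excludes a third Voronoi region from meeting $\mathit{int}\,\gamma_{xy}$, rather than merely excluding it from being equidistant at $p$. The subtlety is that $V_z\ni p$ only requires $d(p,z)\leq d(p,w)$ for all $w$, so I need that every interior point of $\gamma_{xy}$ is \emph{strictly} closer to $x$ (or to $y$) than to any $z\in\cals-\{x,y\}$; this follows because an interior point $p$ of $\gamma_{xy}$ lies strictly inside the open disk $B_J(v)$ centered at a point $v\in e$ of radius $J=d(v,x)=d(v,y)$ — using the convexity of that disk and the fact that the chord's endpoints $x,y$ are on its boundary — so $d(p,x)<2J$ is not enough; rather I should compare $d(p,x)$ directly: parametrize, WLOG $p$ between $x$ and $m$ where $m$ is the midpoint, then $d(p,x)<d(p,y)$, and for any $z\neq x,y$ the empty-disk condition at $v$ gives $d(p,z)\geq d(v,z)-d(v,p)>$ hmm this needs the right disk — so I'll instead invoke that $p\in\mathit{int}\,V_x$ would follow if $p$ is not on $\gamma_{xy}^\perp$, which holds since $p\neq m$; and handle $p=m$ (the one interior point on $\gamma_{xy}^\perp$) by noting $m\in\mathit{int}\,e\subset\mathit{int}(V_x\cap V_y)$ relative to that edge, whence $m\notin V_z$ for $z\neq x,y$ by the Fact preceding the definition of the Voronoi tessellation. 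Once this case analysis for the claim is organized correctly, the rest of the proof is purely formal.
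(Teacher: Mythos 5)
Your argument rests on a Claim---that for an edge $e = V_x \cap V_y$, the open arc $\mathit{int}\,\gamma_{xy}$ misses $V_z$ for every $z \in \cals - \{x,y\}$---which is simply false, and the failure occurs precisely where you flag your own uneasiness. Here is a counterexample (Euclidean for readability, but the same phenomenon occurs in $\mathbb{H}^2$): take $\cals = \{x,y,z\}$ with $x = (1,0)$, $y = (-1,0)$, $z = (0,1/2)$. The edge $e = V_x \cap V_y = \{(0,w) : w \le -3/4\}$ is nonempty; its vertex is the circumcenter $v = (0,-3/4)$ with $J_v = 5/4$, and indeed $\cals \cap B_{J_v}(v) = \emptyset$ as you observe. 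But the midpoint $m = (0,0) \in \mathit{int}\,\gamma_{xy}$ satisfies $d(m,z) = 1/2 < 1 = d(m,x) = d(m,y)$, so $m \in \mathit{int}\,V_z$. The ``empty disk'' condition at $v$ does not transfer to a strict distance comparison at an interior chord point; that inference is the gap. Your fallbacks also break down: $m$ need not lie on $e$ at all (here $m \notin e$; this is exactly the non-centered phenomenon the paper confronts from Definition \ref{centered edge} onward), and the step ``$p \notin \gamma_{xy}^\perp$ implies $p \in \mathit{int}\,V_x$'' is a non sequitur, since being closer to $x$ than to $y$ says nothing about the distance to $z$. Consequently the intermediate assertion $\gamma_{xy} \subset V_x \cap V_y$ (which also conflates $e \subset \gamma_{xy}^\perp$ with $e \subset \gamma_{xy}$) is wrong, and even the weakened $\gamma_{xy} \subset V_x \cup V_y$ fails in the example since $m \in V_z$.

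The paper's proof circumvents all of this by exploiting constraints on \emph{both} geometric duals symmetrically and splitting into cases. When $e$ and $e'$ share a Voronoi vertex $v$, both duals are chords of the circle of radius $J_v$ about $v$, and Lemma \ref{cyclically ordered vertices} guarantees their endpoint pairs do not interleave on that circle, so the chords cannot cross. When $e$ and $e'$ are disjoint, the paper proves an elementary claim: if chords of two distinct circles intersect, one chord must have an endpoint strictly inside the open disk complementary to the other circle. All four endpoints $x,y,x',y'$ lie in $\cals$, and Lemma \ref{vertex radius} forbids any point of $\cals$ from lying strictly inside either disk; this two-sided constraint precludes the crossing. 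Your approach uses the empty-disk information for only one of the duals, which is genuinely insufficient---the symmetric use of both disks (or the cyclic-ordering argument in the shared-vertex case) is the essential missing ingredient.
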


\begin{proof}  If $x = x'$, say, then $y\neq y'$ since $e$ and $e'$ are distinct, so since $\gamma_{xy}$ and $\gamma_{x'y'}$ are geodesic arcs they intersect only at $x$.  We will thus assume that $\{x,y\}\cap\{x',y'\}=\emptyset$.  

If $e$ and $e'$ share a vertex $v$, then upon cyclically ordering the edges containing $v$ as $e_0,\hdots,e_{n-1}$ as in Definition \ref{cyclically ordered V}, we have $e = e_i$ and $e' = e_{i'}$ for distinct $i$ and $i'$ in $\{0,\hdots,n-1\}$.  Then $\{x,y\} = \{x_{i-1},x_i\}$ and $\{x',y'\} = \{x_{i'-1},x_{i'}\}$, as we observed above Lemma \ref{cyclically ordered vertices}, and Lemma \ref{vertex radius} implies that $\gamma_{xy}$ and $\gamma_{x'y'}$ are chords of the circle $C$ of radius $J_v$ centered at $v$.   

Chords of $C$ with distinct endpoints intersect if and only if the endpoints of one separate the endpoints of the other on $C$.  But Lemma \ref{cyclically ordered vertices} implies that $x_{i'-1}$ and $x_{i'}$ share a component of $C - \{x_i,x_{i-1}\}$, so $\gamma_{xy}\cap\gamma_{x'y'}= \emptyset$ in this case.  We therefore assume below that $e\cap e' = \emptyset$.

Let $v$ be the nearer of the two endpoints of $e$ to $x$ and $y$, and let $D_{v}$ be the disk of radius $J_v$ centered at $v$.  The same construction yields $v'$ and a disk $D_{v'}$ of radius $J_{v'}$ associated to $\gamma_{x'y'}$.  By Lemma \ref{vertex radius}, $\gamma_{xy}$ is a chord of the circle $\partial D_v$, and $\gamma_{x'y'}$ is a chord of $\partial D_{v'}$.  If $\gamma_{xy}$ intersects $\gamma_{x'y'}$, their intersection point is contained in $D_{v} \cap D_{v'}$. 

\begin{claim}  If distinct circles in $\mathbb{H}^2$ have intersecting chords with distinct endpoints, then one chord has an endpoint in the open disk in $\mathbb{H}^2$ complementary to the other circle.  \end{claim}

\begin{proof}[Proof of claim]  Let $C$ and $C'$ be distinct circles with chords $\gamma$ and $\gamma'$, respectively, that intersect.  If $C$ is contained in the open disk determined by $C'$, then the claim is immediate.  The same holds if $C'$ is contained in the open disk determined by $C$, so we will assume that neither of these possibilities occurs.  Then $C$ must intersect $C'$, since each chord of $C$ is contained in the disk that it bounds, and similarly for $C'$.  

If $C\cap C'$ is a single point, this must also be $\gamma\cap\gamma'$, an endpoint of each, so this cannot occur.  It follows that $C\cap C'$ consists of two points.  We may assume that neither endpoint of $\gamma$ is contained in the open disk complementary to $C'$, since otherwise the claim holds.  Since $\gamma$ intersects $\gamma'$, it nonetheless intersects $C'$.  If $\gamma\cap C'$ is a single point, then this is also $\gamma\cap\gamma'$, an endpoint of $\gamma'$ and therefore not an endpoint of $\gamma$.  Since this endpoint of $\gamma'$ is in the interior of $\gamma$, the claim holds in this case.

Let us now suppose that $\gamma\cap C'$ consists of two points, and let $\gamma_0$ be the closed subarc of $\gamma$ bounded by $\gamma\cap C'$.  Then $\gamma\cap\gamma' \subset \gamma_0$.  The geodesics $\gamma$ and $\gamma'$ intersect transversely, so $\gamma'$ has one endpoint in each of the open sub-arcs of $C'$ complementary to $C'\cap \gamma$.  Since one of these is contained in the open disk complementary to $C$, the claim holds.  \end{proof}

Using the claim, we may assume that the endpoint $x$ of $\gamma_{xy}$ is contained in the open disk with radius $J_{v'}$ centered at $v'$.  But this contradicts Lemma \ref{vertex radius}, and the result follows.  \end{proof}

The $2$-cells of the Delaunay tessellation are associated to $V^{(0)}$ by the lemma below.  Recall that a compact, convex polygon $P$ is \textit{cyclic} if its vertices are equidistant from a fixed point, its \textit{center} (see \cite{DeB_cyclic_geom}), and that the \textit{radius} of $P$ is the distance from its center to the vertices.

\begin{lemma}\label{vertex polygon}  Let $V$ be the Voronoi tesselation determined by $\cals\subset\mathbb{H}^2$ closed and discrete.  For each $v\in V^{(0)}$ there is a cyclic polygon $P_v$ in $\mathbb{H}^2$ with center $v$ and radius $J_v$ (as supplied by Lemma \ref{vertex radius}), such that: \begin{itemize}
\item  If the edges of $V$ containing $v$ are cyclically ordered $e_0,\hdots,e_{n-1}$, the vertex set of $P_v$ is the collection $\{x_i\}_{i=0}^{n-1}$ from Definition \ref{cyclically ordered V}.
\item  The edge set of $P_v$, cyclically ordered in the sense of \CyclicallyOrderedP, is $\{\gamma_i\}_{i=0}^{n-1}$, where $\gamma_i$ is the geometric dual to $e_i$ for each $i$.  Furthermore, $P_v \cap P^{(1)} = \gamma_{0}\cup\hdots \cup \gamma_{n-1}$.  \end{itemize}
For $v\neq w$, $\mathit{int}\,P_v\cap P_w = \emptyset$, and $P_v$ and $P_w$ share an edge if and only if $v$ and $w$ are opposite endpoints of an edge of $V$.  \end{lemma}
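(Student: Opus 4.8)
The plan is to define $P_v$ intrinsically as a convex hull and then read off all its structure from the earlier lemmas. By Lemma~\ref{vertex radius} the set $\cals_v \doteq \{x\in\cals\,|\,v\in V_x\}$ equals $\cals\cap\overline{B_{J_v}(v)}$, so it is finite (as $\cals$ is discrete) and has at least three elements (as $v\in V^{(0)}$); let $P_v$ be its convex hull, a compact convex polygon. Since a geodesic meets a circle in at most two points, no three elements of $\cals_v\subseteq\partial B_{J_v}(v)$ are collinear, so $P_v$ is two-dimensional and every point of $\cals_v$ is a vertex of it; and since $d(v,x)=J_v$ for every $x\in\cals_v$, the polygon $P_v$ is cyclic with center $v$ and radius $J_v$. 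Now fix a cyclic ordering $e_0,\dots,e_{n-1}$ of the edges of $V$ at $v$, with associated points $x_i$ as in Definition~\ref{cyclically ordered V}. By Lemma~\ref{cyclically ordered vertices}, $\cals_v=\{x_i\}_{i=0}^{n-1}$, cyclically ordered, so these are the vertices of $P_v$ listed in cyclic order; the edge of $P_v$ joining $x_{i-1}$ to $x_i$ is $\gamma_{x_{i-1}x_i}$, which by the observation after Definition~\ref{cyclically ordered V} (that $e_i=V_{x_{i-1}}\cap V_{x_i}$) is the geometric dual $\gamma_i$ of $e_i$; and consecutive edges $\gamma_{i-1},\gamma_i$ share the vertex $x_{i-1}$, so the $\gamma_i$ are cyclically ordered in the sense of \CyclicallyOrderedP. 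This gives both bullet points except for the identity $P_v\cap P^{(1)}=\gamma_0\cup\dots\cup\gamma_{n-1}$.

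The crux is to show that no geometric dual meets $\mathit{int}\, P_v$. Fix an edge $e=V_a\cap V_b$ of $V$, with geometric dual $\gamma_{ab}$; if $e=e_i$ for some $i$ then $\gamma_{ab}=\gamma_i\subseteq\partial P_v$, so assume $e\neq e_i$ for all $i$. From Lemma~\ref{vertex radius} I record that $\cals\cap B_{J_v}(v)=\emptyset$, so, as $P_v\subseteq\overline{B_{J_v}(v)}$, the only points of $\cals$ lying in $P_v$ are the vertices $\cals_v$, and every $x\in\cals\setminus\cals_v$ has $d(x,v)>J_v$ and hence lies outside $P_v$. Next, Lemma~\ref{embedded P^1} applied to the distinct edges $e$ and $e_i$ gives $\gamma_{ab}\cap\gamma_i=\{a,b\}\cap\{x_{i-1},x_i\}$ for each $i$, so $\gamma_{ab}\cap\partial P_v\subseteq\{a,b\}\cap\cals_v$. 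I then split on the size of $\{a,b\}\cap\cals_v$. If it is empty, $\gamma_{ab}$ is connected, disjoint from $\partial P_v$, and has both endpoints outside $P_v$, so it misses $\mathit{int}\, P_v$. If it is a single point $x_j$, the same argument applies to the connected set $\gamma_{ab}\setminus\{x_j\}$, which still contains an endpoint of $\gamma_{ab}$ lying outside $P_v$. If $\{a,b\}\subseteq\cals_v$, then $v\in V_a\cap V_b=e$; but $v$ is a vertex of the graph $V^{(1)}$ and so, by the Fact preceding the definition of the Voronoi tessellation, is an endpoint of $e$, forcing $e$ to be one of $e_0,\dots,e_{n-1}$ — contrary to assumption. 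Hence $\mathit{int}\, P_v\cap P^{(1)}=\emptyset$, and combining with $\partial P_v=\bigcup_i\gamma_i\subseteq P^{(1)}$ yields $P_v\cap P^{(1)}=\gamma_0\cup\dots\cup\gamma_{n-1}$.

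For the final sentence, fix $v\neq w$ in $V^{(0)}$ and apply the previous paragraph to each: since $\partial P_v,\partial P_w\subseteq P^{(1)}$, this gives $\mathit{int}\, P_v\cap\partial P_w=\emptyset$ and $\mathit{int}\, P_w\cap\partial P_v=\emptyset$. If $\mathit{int}\, P_v\cap\mathit{int}\, P_w$ were nonempty it would equal $\mathit{int}\, P_v\cap P_w$, a nonempty open subset of the connected set $\mathit{int}\, P_v$ which is also closed in $\mathit{int}\, P_v$ (its closure lies in $\overline{\mathit{int}\, P_w}=P_w$, and $P_w\cap\mathit{int}\, P_v=\mathit{int}\, P_w\cap\mathit{int}\, P_v$ using $\partial P_w\cap\mathit{int}\, P_v=\emptyset$); so it would be all of $\mathit{int}\, P_v$, and symmetrically all of $\mathit{int}\, P_w$, forcing $P_v=\overline{\mathit{int}\, P_v}=\overline{\mathit{int}\, P_w}=P_w$. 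But three or more non-collinear points are equidistant from at most one point of $\mathbb{H}^2$, so this would force $v=w$. Hence $\mathit{int}\, P_v\cap P_w=\emptyset$. Finally, if $v$ and $w$ are the two endpoints of an edge $e=V_a\cap V_b$ of $V$, then $e$ is an edge of $V$ at both $v$ and $w$, so $\gamma_{ab}$ is a common edge of $P_v$ and $P_w$; conversely a common edge $\gamma$ of $P_v$ and $P_w$ is, by the first paragraph, the geometric dual of $e=V_a\cap V_b$ where $a,b$ are the endpoints of $\gamma$, and $e$ is an edge of $V$ containing both $v$ and $w$, so (again by the Fact preceding the definition of $V$) $v$ and $w$ are its two endpoints.

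I expect the second paragraph to be the main obstacle — in particular the case in which $\gamma_{ab}$ passes through a vertex of $P_v$, where one must rule out that it re-enters $\mathit{int}\, P_v$. This works cleanly only because Lemma~\ref{embedded P^1} already constrains how distinct geometric duals intersect; without it, controlling $P^{(1)}\cap\mathit{int}\, P_v$ directly would be considerably more delicate.
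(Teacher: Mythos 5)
Your proof is correct, and it takes a genuinely different route in a couple of places. You build $P_v$ directly as the convex hull of $\cals_v = \cals\cap\overline{B_{J_v}(v)}$ and verify its cyclic-polygon structure by hand, whereas the paper invokes \PtsToPoly\ applied to the cyclically ordered $\{x_i\}$; your route is more self-contained, the paper's shorter. More interestingly, for the identity $P_v\cap P^{(1)}=\bigcup_i\gamma_i$ the paper first argues, using convexity of the Voronoi cells and the observation that $e_i$ and $e_{i+1}$ are the only edges of $V_{x_i}$ through $v$, that $V_{x_i}\cap V_{x_j}=\{v\}$ for non-consecutive $i,j$, so no edge of $P^{(1)}$ can join such a pair of vertices of $P_v$, and then cites Lemma \ref{embedded P^1}. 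You instead fix a general dual $\gamma_{ab}\notin\{\gamma_i\}$, bound $\gamma_{ab}\cap\partial P_v\subseteq\{a,b\}\cap\cals_v$ by Lemma \ref{embedded P^1}, and split on the size of $\{a,b\}\cap\cals_v$; the one subtle case, $\{a,b\}\subseteq\cals_v$, you dispatch by placing $v$ on $V_a\cap V_b$ and invoking the Fact about interiors of Voronoi edges to force $v$ to be an endpoint, so that $V_a\cap V_b$ must already be one of the $e_i$. This uses a different lever than the paper's, and it is more explicit about exactly where $\gamma_{ab}$ could possibly re-enter $\mathit{int}\,P_v$. Your clopen-in-connected argument for disjointness of interiors also replaces the paper's geodesic-ray argument, though the two are of comparable difficulty, and the edge-sharing criterion is handled essentially the same way. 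All steps check out.
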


\begin{proof} Let $v$ and the collections $\{e_0,\hdots,e_{n-1}\}$, and $\{\gamma_0,\hdots,\gamma_{n-1}\}$ be as described in the hypotheses of the lemma.  The observation above Lemma \ref{cyclically ordered vertices} implies that for each $i$, $\gamma_i$ joins $x_{i-1}$ to $x_{i}$, where $\{x_i\}_{i=0}^{n-1}\subset \cals$ is the collection from Definition \ref{cyclically ordered V}.  Since the collection $\{x_i\}$ is cyclically ordered by Lemma \ref{cyclically ordered vertices}, Lemma 1.4 of \cite{DeB_cyclic_geom} asserts there is a cyclic $n$-gon $P_v$ center $v$, radius $J_v$, vertex set $\{x_i\}$ and edge set $\{\gamma_i\}$.  Furthermore, since the $x_i$ are cyclically ordered, the $\gamma_i$ are as well (see \CyclicallyOrderedP).

For any $x$ and $y\in\cals$, since $V_x$ and $V_y$ are convex their intersection is connected.  This implies in particular that for $i$ and $j \in \{0,\hdots, n-1\}$, if $V_{x_i}$ shares an edge $e$ with $V_{x_j}$ then $v\in e$.  Since the only edges of $V_{x_i}$ that contain $v$ are $e_i$ and $e_{i+1}$, it follows that $V_{x_i} \cap V_{x_j} = \{v\}$ unless $j = i\pm 1$ or $i$.  Therefore by definition, no edge of $P^{(1)}$ joins $x_i$ to $x_j$ for $j\neq i\pm 1$ (mod $n$).  It thus follows from Lemma \ref{embedded P^1} that $P_v \cap P^{(1)} = \bigcup_{i=0}^{n-1}\gamma_i$. 

If $p\in\mathit{int}\,P_v\cap P_w$, then since $\mathit{int}\, P_v\cap P^{(1)} = \emptyset$ and $\partial P_w\subset P^{(1)}$, $p\in\mathit{int}\, P_w$ and each geodesic ray from $p$ intersects $\partial P_v$ at or nearer to $p$ than its point of intersection with $\partial P_w$.  But since $\mathit{int}\, P_w\cap P^{(1)} = \emptyset$ and $\partial P_v\subset P^{(1)}$, each such point is in $\partial P_w$.  It follows that $\partial P_v = \partial P_w$, and hence that $P_v = P_w$ (again see \PtsToPoly).  This implies that $v = w$, since $P_v$ is cyclic and a circle (and hence also its center) is determined by three points on it.

If $P_{v}$ shares the edge $\gamma$ of $P^{(1)}$ with $P_{w}$, then by construction the edge $e$ of $V$ dual to $\gamma$ contains $v$ and $w$.  On the other hand, if $v$ and $w$ are vertices of an edge $e$ of $V$, then again by construction the edge of $P^{(1)}$ dual to $e$ is contained in $P_{v}$ and $P_{w}$.   \end{proof}

\begin{definition}\label{Delaunay tessellation}  Suppose $\cals\subset \mathbb{H}^2$ is closed and discrete.  We take the \textit{Delaunay tessellation} determined by $\cals$ to be the $2$-complex $P$ with vertex set $\cals$, edge set the geometric duals to edges of the Voronoi tessellation $V$, and $2$-cells $P_v$ supplied by Lemma \ref{vertex polygon}, for $v\in V^{(0)}$.  For such $v$ we will refer to $P_v$ as the associated \textit{vertex polygon}.  \end{definition}

The Delaunay tessellation $P$ is ``dual'' to the Voronoi tessellation $V$ in the sense that there is a canonical one-to-one correspondence between its $k$-cells and the $(2-k)$-cells of $V$ for each $k\in\{0,1,2\}$.  However, it is not necessarily dual in the sense of the intersection pairing: there is no reason in general that an edge of $V$ should intersect its geometric dual, or that $v\in V^{(0)}$ should be in $P_v$.

The Delaunay tessellation is sometimes defined using ``circumscribed circles,''  but this has its problems in the hyperbolic setting, as the example below will demonstrate.

\begin{figure}
\input{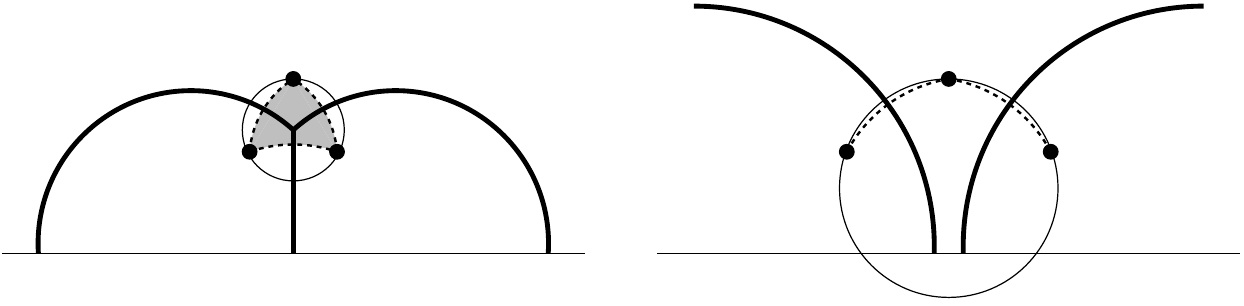_t}
\caption{Combinatorial possibilities for the Voronoi and Delaunay tessellations determined by $\{x,y,z\}\subset\mathbb{H}^2$.  (Edges of $V$ are bold and of $P$, dashed.)}
\label{3 point Voronoi}
\end{figure}

\begin{example}\label{3 pt Vor possibilities}  Let $x, y, z\in\mathbb{H}^2$ be distinct, and suppose that $d(y,z) \geq \max\{d(x,y),d(x,z)\}$.  It follows from \SufficientParameters\ that $x$, $y$, and $z$ lie on a circle in $\mathbb{H}^2$ if and only if\begin{align}
\label{3 pt cyclic} \sinh(d(y,z)/2) < \sinh(d(x,y)/2)+\sinh(d(x,z)/2) \end{align}
We motivate this fact with Figure \ref{3 point Voronoi}, which uses the ``upper half-plane'' model for $\mathbb{H}^2$: the set of complex numbers with positive imaginary coordinate, equipped with the hyperbolic Riemannian metric.  It is well-known that in this model, each hyperbolic circle is also a Euclidean circle in $\mathbb{C}$, although with a different center and radius.  However, some $x$, $y$, and $z$ in $\mathbb{H}^2$ determine a Euclidean circle that does not lie entirely in $\mathbb{H}^2$, as illustrated on the right-hand side of the figure, even if they do not lie on a hyperbolic geodesic.

If (\ref{3 pt cyclic}) does not hold, then the equidistant locus $V_x \cap V_y$ does not intersect $V_z\cap V_y$, as on the right-hand side of Figure \ref{3 point Voronoi}.  In this case $V_x$ and $V_z$ are each a single half-plane, $V_y$ is bounded by two disjoint geodesics, and the Delaunay ``tessellation'' (as we have defined it) is the union of the dotted geodesic arcs $\gamma_{xy}$ and $\gamma_{xz}$.  If (\ref{3 pt cyclic}) does hold, then the Euclidean circle $C$ containing $x$, $y$, and $z$ lies in $\mathbb{H}^2$ and $V_x$, $V_y$, and $V_z$ intersect at its (hyperbolic) center $v$.  This is pictured on the left side of Figure \ref{3 point Voronoi}, with the vertex polygon $P_v$ shaded.
\end{example}

As in Example \ref{3 pt Vor possibilities}, the Delaunay tessellation determined by $\cals$ does not necessarily cover $\mathbb{H}^2$; indeed, if $\cals$ is finite then Lemma \ref{vertex polygon} implies that it is compact.  However, we are primarily concerned here with tessellations that arise from closed hyperbolic surfaces --- those which admit a locally isometric covering from $\mathbb{H}^2$.

\begin{lemma}\label{Voronoi Delaunay cover} Let $F$ be a closed hyperbolic surface, $\cals\subset F$ a finite set, and $\pi\co\mathbb{H}^2\to F$ a locally isometric universal covering.  Then $\widetilde{\cals}\doteq\pi^{-1}(\cals)$ has positive injectivity radius, and the Voronoi tessellation $\widetilde{V}$ and Delaunay tessellation $\widetilde{P}$ determined by $\widetilde{\cals}$ are invariant under the $\pi_1 F$-action on $\mathbb{H}^2$ by covering transformations.  Furthermore, $V_x$ as defined in (\ref{Voronoi cell}) is a compact polygon for each $x\in\widetilde{\cals}$, and $\widetilde{P}$ covers $\mathbb{H}^2$.  \end{lemma}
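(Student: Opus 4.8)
The plan is to treat the four assertions in turn, beginning with the injectivity radius since this is what licenses applying Lemmas \ref{Voronoi poly}--\ref{vertex polygon} and Corollary \ref{discrete V^0} to $\widetilde{\cals}$. For distinct $\tilde x,\tilde y\in\widetilde{\cals}$, either $\pi(\tilde x)=\pi(\tilde y)$, in which case $\tilde y=\phi\tilde x$ for a nontrivial deck transformation $\phi$ and $d(\tilde x,\tilde y)$ is at least twice the injectivity radius of $F$ (positive since $F$ is compact), or $\pi(\tilde x)\neq\pi(\tilde y)$, in which case $d(\tilde x,\tilde y)\geq d_F(\pi\tilde x,\pi\tilde y)$, which is at least the minimum distance $\delta>0$ between distinct points of the finite set $\cals$. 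Hence $\widetilde{\cals}$ has positive injectivity radius (the lemma being vacuous if $\cals=\emptyset$). For invariance, each deck transformation $\phi$ is an isometry with $\pi\circ\phi=\pi$, so $\phi(\widetilde{\cals})=\widetilde{\cals}$; since $V_x$ is cut out by distance inequalities to $\widetilde{\cals}$ we get $\phi(V_x)=V_{\phi x}$, so $\phi$ permutes the $2$-cells of $\widetilde V$ and thus preserves $\widetilde V^{(1)}$ and $\widetilde V^{(0)}$; it carries the geometric dual $\gamma_{xy}$ to $\gamma_{\phi x,\phi y}$, preserving $\widetilde P^{(1)}$ and $\widetilde P^{(0)}=\widetilde{\cals}$; and since $J_{\phi v}=J_v$ and $P_v$ is determined by its center, radius and combinatorics (Lemma \ref{vertex polygon}), $\phi(P_v)=P_{\phi v}$.

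For compactness of $V_x$: because $F$ is compact and $\cals\neq\emptyset$ there is $R_0<\infty$ with every point of $F$ within $R_0$ of $\cals$, and lifting minimizing geodesics shows every point of $\mathbb{H}^2$ is within $R_0$ of $\widetilde{\cals}$. Thus for $p\in V_x$ we have $d(p,x)=d(p,\widetilde{\cals})\leq R_0$, so $V_x\subseteq\overline{B_{R_0}(x)}$; a closed convex polygon (Lemma \ref{Voronoi poly}) inside a compact ball is compact, with finitely many edges. The same estimate gives $J_v\leq R_0$ for every $v\in V^{(0)}$, so each $P_v\subseteq\overline{B_{R_0}(v)}$.

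For $\widetilde P$ covering $\mathbb{H}^2$, set $W=\bigcup_{v\in V^{(0)}}P_v$ and $Z=\mathbb{H}^2\setminus W$. Since $P_v\subseteq\overline{B_{R_0}(v)}$ and $V^{(0)}$ is discrete (Corollary \ref{discrete V^0}), the family $\{P_v\}$ is locally finite, so $W$ is closed, $Z$ is open, and $Z\neq\mathbb{H}^2$ (any $V_x$, being a compact polygon, has a vertex, so $V^{(0)}\neq\emptyset$). It then suffices to show $\partial Z=\emptyset$, as $\mathbb{H}^2$ is connected. I would first prove $\partial Z\subseteq\widetilde{\cals}$: a point of $\partial Z$ lies in the closed set $W$ but in no $\inter P_v$, hence in $\bigcup_v\partial P_v=P^{(1)}$; it cannot lie in the interior of an edge $\gamma=\gamma_{xy}$, since the dual Voronoi edge $V_x\cap V_y$ is a compact segment with endpoints $v\neq w$ in $V^{(0)}$, so $P_v$ and $P_w$ share $\gamma$ (Lemma \ref{vertex polygon}), lie on opposite sides of its axis (because $\inter P_v\cap P_w=\emptyset$), and together cover a neighborhood of $\inter\gamma$; and no point of $\widetilde{\cals}$ lies in the interior of an edge of any $P_v$, that interior being a chord of $\partial B_{J_v}(v)$ and hence contained in $B_{J_v}(v)$, which misses $\widetilde{\cals}$ by Lemma \ref{vertex radius}. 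So $\partial Z\subseteq P^{(1)}$ minus all edge interiors, which is $\widetilde{\cals}$.

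Given $\partial Z\subseteq\widetilde{\cals}$, suppose $x\in\partial Z$ and choose $\epsilon>0$ so that $N=B_\epsilon(x)$ meets $\widetilde{\cals}$, hence $\partial Z$, only at $x$. Then $N\setminus\{x\}$ is connected and is the disjoint union of the open sets $N\cap Z$ and $N\cap\inter W$, so one of them is empty; but $x\in\inter V_x$, which has a vertex $v_0\in V^{(0)}$, and then $x$ is a vertex of the nondegenerate cyclic polygon $P_{v_0}$ (Lemma \ref{vertex polygon}), so $\inter P_{v_0}\cap(N\setminus\{x\})\neq\emptyset$, forcing $N\cap Z=\emptyset$; thus $x\in\inter W$, contradicting $x\in\partial Z$. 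Hence $\partial Z=\emptyset$ and $W=\mathbb{H}^2$. The crux of the whole argument is this last step: the obstacle it avoids is a direct verification that the Delaunay cells incident to a point of $\widetilde{\cals}$ cover a neighborhood of it (equivalently, that their interior angles there sum to $2\pi$), which the punctured-disk connectedness argument circumvents.
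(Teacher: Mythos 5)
Your proof is correct, and its overall structure matches the paper's: establish positive injectivity radius of $\widetilde{\cals}$, equivariance from the distance-defining formula $(\ref{Voronoi cell})$, compactness of $V_x$ from a uniform covering radius $R_0$ for $\cals$ in $F$, and then show $\widetilde{P}=\bigcup_v P_v$ is both closed and open in $\mathbb{H}^2$ and conclude by connectedness. The closedness step (local finiteness of $\{P_v\}$) and the argument that a neighborhood of an edge interior lies in $P_v\cup P_w$ appear in both proofs. The one genuine departure is the final step. The paper proves openness at each $x\in\widetilde{\cals}$ directly: it cyclically enumerates the edges of $V_x$, observes that adjacent $P_{v_i}$ share a ray at $x$, and argues that $\bigcup_i(P_{v_i}\cap C)$ is open and closed in the circle $C=\partial\overline{B_x(\epsilon)}$, hence all of $C$. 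You instead set $Z=\mathbb{H}^2\setminus W$, deduce $\partial Z\subseteq\widetilde{\cals}$ from the edge-interior coverage, and then kill $\partial Z$ by a connectedness argument on the punctured disk $B_\epsilon(x)\setminus\{x\}$ (which would be disconnected into $N\cap Z$ and $N\cap\inter W$, the latter nonempty since $x\in\overline{\inter P_{v_0}}$). Both proofs hinge on a connectedness trick; the paper applies it on the circle $C$, you apply it on the punctured disk. Yours has the modest advantage that it never needs the cyclic enumeration of the edges of $V_x$ nor the adjacency of the sectors of the $P_{v_i}$ at $x$ — just that $x$ lies on the boundary of at least one $P_v$ — so it is a cleaner packaging of the same underlying geometry rather than a new argument. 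Two small points worth keeping in mind if you write this up: the conclusion that $\widetilde{P}$ covers $\mathbb{H}^2$ does require $\cals\neq\emptyset$ (you flagged this), and your observation that $\widetilde{\cals}$ misses edge interiors, while true and tidy, is not actually needed for $\partial Z\subseteq\widetilde{\cals}$ — it already follows that the set $P^{(1)}$ minus edge interiors consists of edge endpoints, which lie in $\widetilde{\cals}$.
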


\begin{proof}  Since $\cals$ is finite and $F$ is compact there is a lower bound $r>0$ on the lengths of non-constant geodesic arcs in $F$ with endpoints in $\cals$.  The injectivity radius of $\widetilde{\cals}$ is then $r/2>0$.  Since $\widetilde{\cals}$ is invariant under the action of $\pi_1 F$, and this action is by isometries, it follows from (\ref{Voronoi cell}) that $p\in V_x$ if and only if $g.p \in V_{g.x}$ for $p\in \mathbb{H}^2$,  $x\in\widetilde{\cals}$, and $g\in\pi_1 F$.  Therefore $g.V_x = V_{g.x}$, and $g.(V_x\cap V_y) = V_{g.x}\cap V_{g.y}$ for $y\in\widetilde{\cals}-\{x\}$.  It follows that $\widetilde{V}$ is $\pi_1 F$-invariant, and from this that $\widetilde{P}$ is as well.  

Since $F$ is compact, there exists $R > 0$ such that $\bigcup_{y\in\cals} B(y,R)$ covers $F$ (here $B(y,R)$ is the open $R$-neighborhood of $y$ in the hyperbolic metric on $F$).  Then $V_x \subset B(x,R) \subset \mathbb{H}^2$ for each $x\in\widetilde{\cals}$, and hence it is compact.  In particular, $V_x$ has only finitely many edges and vertices.

We will show that $\widetilde{P}$ is open and closed in $\mathbb{H}^2$, and hence that it is all of $\mathbb{H}^2$.  First we claim that the collection of vertex polygons is locally finite: for any given $v$, Lemma \ref{vertex polygon} implies that $\mathit{int}(P_v)$ is disjoint from any other vertex polygon, and that the interior of an edge intersects exactly one other vertex polygon.  Each vertex of $P_v$ is some $x\in\widetilde{\cals}$, and since $V_x$ has only finitely many vertices $x$ is in only finitely many $P_v$.  The claim follows, and therefore $\widetilde{P} = \bigcup_{v\in \widetilde{V}^{(0)}} P_v$ is closed in $\mathbb{H}^2$.

We claim that $\widetilde{P}$ contains an open neighborhood of each $x\in\widetilde{\cals}$.  For such $x$, enumerate the edges of $V_x$ as $e_0,\hdots,e_{n-1}$ so that $e_i$ intersects $e_{i+1}$ in a vertex $v_i$ for each $i$, taking $i+1$ modulo $n$.  Then $x\in P_{v_i}$ for each $i$, and $P_{v_i}$ intersects $P_{v_{i-1}}$ along the geometric dual $\gamma_i$ to $e_i$ and $P_{v_{i+1}}$ along the geometric dual $\gamma_{i+1}$ to $e_{i+1}$ (here again take $i\pm 1$ modulo $n$).  For each $i$ there exists $\epsilon_i>0$ so that $P_{v_i}$ intersects $\overline{B_x(\epsilon_i)}$ in the full sector determined by $\gamma_i$ and $\gamma_{i+1}$, and we define $\epsilon =\min_i \{\epsilon_i\}$ and $C = \partial \overline{B_x(\epsilon)}$.  The claim follows from the fact that $C = \bigcup_{i=0}^{n-1} (P_{v_i} \cap C)$, since by the above this set is itself open and closed in $C$.  

Edges of $\widetilde{V}$ are compact, so for each $v\in \widetilde{V}^{(0)}$ and edge $\gamma$ of $P_v$, Lemma \ref{vertex polygon} gives $\gamma = P_v \cap P_w$, where $w$ is the other endpoint of the edge of $\widetilde{V}$ geometrically dual to $\gamma$.  Therefore any point in the interior of $\gamma$ has an open neighborhood in $\mathbb{H}^2$ contained in $P_v \cup P_w$.  The claim above implies that each vertex of $P_v$ also has an open neighborhood contained in $\widetilde{P}$, and it follows that $\widetilde{P}$ is open in $\mathbb{H}^2$.
\end{proof}

\begin{definition}\label{Delaunay down}  For a closed hyperbolic surface $F$, a locally isometric universal covering $\pi\co \mathbb{H}^2 \to F$, and $\cals \subset F$ finite, let $\widetilde{\cals} = \pi^{-1}(\cals)$ and take $V = \pi(\widetilde{V})$ and $P = \pi(\widetilde{P})$ to be the \textit{Voronoi tessellation} and \textit{Delaunay tessellation determined by $\cals$}, respectively, where $\widetilde{V}$ and $\widetilde{P}$ are as in Lemma \ref{Voronoi Delaunay cover}.\end{definition}

Since a convex polygon is homeomorphic to a disk, and $\pi$ takes the interior of each edge or $2$-cell of $\widetilde{V}$ or $\widetilde{P}$ isometrically to $F$, $V$ and $P$ have the structure of cell decompositions of $F$.  Note also that $\cals = \pi(\widetilde{\cals})$ is the vertex set of $P$.

%%%%%%%%%%%%%%%%%%%%
\section{Examples and tools for recognition}\label{examples}
%%%%%%%%%%%%%%%%%%%%

In this section we will take advantage of tools from \cite{DeB_cyclic_geom} for understanding the geometry of cyclic polygons, so let us begin by recalling some of its notation.

\begin{definition}[\cite{DeB_cyclic_geom}, Definition 2.1]\label{centered space}  For $n\geq 3$, let $\sigma\co\mathbb{R}^n\to\mathbb{R}^n$ be given by $\sigma(d_0,\hdots,d_{n-1})=(d_1,\hdots,d_{n-1},d_0)$, and refer by $\mathbb{R}^n/\mathbb{Z}_n$ to the quotient by the action of $\mathbb{Z}_n\doteq\langle\sigma\rangle$, and by $[d_0,\hdots,d_{n-1}]$ to the equivalence class in $\mathbb{R}^n/\mathbb{Z}_n$ of $(d_0,\hdots,d_{n-1})$.  Define:\begin{align*}
  \widetilde{\calAC}_n & = \left\{(d_0,\hdots,d_{n-1})\in(\mathbb{R}^+)^n\,|\,\sinh(d_i/2) < \sum_{j\ne i}\sinh(d_j/2)\ \mbox{for each}\ i\in\{0,\hdots,n-1\}\right\}  \\
  \widetilde{\calc}_n & = \left\{(d_0,\hdots,d_{n-1})\in(\mathbb{R}^+)^n\,|\,\sum_{j=0}^{n-1} A_{d_j}(d_i/2) > 2\pi,\ \mbox{where}\ d_i\geq d_j \forall j\in\{0,\hdots,n-1\}\right\}\end{align*}
Let $\calAC_n = \widetilde{\calAC}_n/\mathbb{Z}_n\subset\mathbb{R}^n/\mathbb{Z}_n$ and $\calc_n = \widetilde{\calc}_n/\mathbb{Z}_n\subset\calAC_n$.\end{definition}

The point of this definition is that by \UniqueParameters, each $(d_0,\hdots,d_{n-1})\in\widetilde{\calAC}_n$ determines a cyclic $n$-gon with cyclically ordered side length collection given by its entries; this $n$-gon is unique up to isometry of $\mathbb{H}^2$; and two such points determine the same (oriented) $n$-gon if and only if they have the same class in $\calAC_n$.  We will say a cyclic $n$-gon is \textit{represented} by $(d_0,\hdots,d_{n-1})\in\widetilde{\calAC}_n$ if this tuple describes its cyclically ordered side length collection.  \UniqueParameters\ further implies that each cyclic $n$-gon is represented by a point of $\widetilde{\calAC}_n$.

The function $A_d(J)$ used in the definition of $\widetilde{\calc}_n$ is defined in \ParameterFunction.  Each point in $\widetilde{\calc}_n$ determines a \textit{centered} $n$-gon, a cyclic polygon $P$ with center $v\in\mathit{int}\,P$ (recall from the beginning of Section \ref{Voronoi dfn} that the \textit{center} of $P$ is the center of the circle containing its vertices).  Conversely, if $(d_0,\hdots,d_{n-1})\in\widetilde{\calAC}_n$ represents a centered $n$-gon then it is in $\widetilde{\calc}_n$. (It is not immediately obvious that $\widetilde{\calc}_n\subset\widetilde{\calAC}_n$, but this is proved in \Ad.)

\begin{example}\label{sharp Boroczky}  Taking $D_0\co\widetilde{\calAC}_3\to\mathbb{R}^+$ as in \Defect, determine $d_{\alpha} > 0$ by:
$$ 4\pi = 6\cdot D_0(d_{\alpha},d_{\alpha},d_{\alpha}) = 6\cdot D_{0,3}(d_{\alpha}) = 6\cdot \left[ \pi - 6\sin^{-1}\left(\frac{1}{2\cosh(d_{\alpha}/2)}\right)\right] $$
The latter equalities above follow from \CenteredDefectBound.  Let $r_{\alpha} = d_{\alpha}/2$.  Rearranging the equation above and taking sines of both sides gives:\begin{align*}
  & \cosh r_{\alpha} = \frac{1}{2\sin(\pi/18)} \cong 2.8794 &
  & \cosh d_{\alpha} = \frac{1}{1-\cos(\pi/9)} - 1 \cong 15.5817 \end{align*}
By \UniqueParameters\ there is a centered triangle in $\mathbb{H}^2$, unique up to isometry, with all side lengths $d_{\alpha}$.  Six copies $T_1,\hdots,T_6$ of this triangle may be arranged in $\mathbb{H}^2$ so that they share a vertex and have disjoint interiors, and $T_i$ shares an edge with $T_{i\pm 1}$ for $1<i<6$.  Their union is thus an octahedron $O_{\alpha}$, with all side lengths $d_{\alpha}$ and area $4\pi$ by construction.  The Gauss-Bonnet formula implies that $O_{\alpha}$ has total angle defect $2\pi$, so its quotient by some scheme for pairing edges that reverses boundary orientations and identifies all vertices is a genus-$2$ surface $F_{\alpha}$.  Let $x_{\alpha}\in F_{\alpha}$ be the projection of the vertices of $O_{\alpha}$.

Since the angle measures of the $T_i$ total $2\pi$, each has angle $\pi/9$ at each vertex.  For each $i$, open disks of radius $r_{\alpha}$ centered at the vertices of $T_i$ do not intersect (see \PackingVsDecomp), and each intersects $T_i$ in a full sector of angle measure $\pi/9$.  Since the six non-overlapping $T_i$ comprise $O_{\alpha}$, a collection of open disks of radius $r_{\alpha}$ centered at each vertex of $O_{\alpha}$ intersects it in the non-overlapping union of $18$ sectors of angle measure $\pi/9$.  This projects to a hyperbolic disk embedded in $F_{\alpha}$, with center $x_{\alpha}$ and radius $r_{\alpha}$.  Since $O_{\alpha}$ has edge lengths $d_{\alpha} = 2r_{\alpha}$, $F_{\alpha}$ has injectivity radius $r_{\alpha}$ at $x_{\alpha}$.  \end{example}

Bor\"oczky's Theorem \cite{Bor} implies that $r_{\alpha}$ is the largest injectivity radius possible at any point in any genus-two hyperbolic surface.

\begin{lemma}\label{Boroczky}  A closed, orientable hyperbolic surface $F$ of genus $2$ has injectivity radius at most $r_{\alpha}$ at any $x\in F$.  \end{lemma}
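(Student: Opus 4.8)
The plan is to exhibit a lower bound on the area of $F$ coming from a decomposition into triangles with controlled side lengths, and to play it off against the fixed area $4\pi$ of a genus-$2$ surface; this is Bor\"oczky's hyperbolic packing estimate \cite{Bor}, here packaged through the Delaunay tessellation. First I would fix a locally isometric universal covering $\pi\co\mathbb{H}^2\to F$ and set $\widetilde{\cals}=\pi^{-1}(x)$. The injectivity radius $r$ of $F$ at $x$ is half the minimal distance between distinct points of $\widetilde{\cals}$, so $\widetilde{\cals}$ has injectivity radius $r$ in the sense of Section \ref{Voronoi dfn}; in particular any two points of $\widetilde{\cals}$ are at distance at least $2r$. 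By Lemma \ref{Voronoi Delaunay cover} the Delaunay tessellation $\widetilde{P}$ determined by $\widetilde{\cals}$ covers $\mathbb{H}^2$ and is $\pi_1 F$-invariant, so it descends to a cell decomposition $P$ of $F$ with a single vertex, $P^{(0)}=\{x\}$.

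Next I would run the Euler characteristic bookkeeping for $P$. Writing $E$ for its number of edges and $f$ for its number of faces, $1-E+f=\chi(F)=-2$, while each face is an $n_i$-gon with $n_i\ge 3$ and $\sum_i n_i=2E$; hence $\sum_i(n_i-2)=2E-2f=6$. Each Delaunay face $P_v$ is a convex cyclic polygon, so subdividing it by non-crossing diagonals produces $n_i-2$ triangles, all of whose sides --- edges or diagonals of $P_v$ --- are geodesic arcs between points of (a lift of) $\widetilde{\cals}$ and therefore have length at least $2r$. Carrying this out equivariantly upstairs and pushing down, and using that $\pi$ is an isometry on the interior of each $2$-cell of $\widetilde P$, the faces of $P$ exhibit $F$ as a union of exactly $\sum_i(n_i-2)=6$ triangles with pairwise disjoint interiors, each having all three side lengths at least $2r$.

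The input from \cite{Bor} (a two-dimensional fact, and closely related to the defect estimates of \cite{DeB_cyclic_geom}) is that a hyperbolic triangle with all sides at least $2r$ has area at least that of the equilateral triangle of side $2r$, namely $D_{0,3}(2r)=\pi-6\sin^{-1}\!\big(\tfrac{1}{2\cosh r}\big)$ --- the point being that shrinking any one side toward $2r$, with the two others held fixed, only decreases the area. Summing the areas of the $6$ triangles of the previous paragraph therefore gives
\[ 4\pi \;\ge\; 6\,D_{0,3}(2r) \;=\; 6\pi - 36\sin^{-1}\!\big(\tfrac{1}{2\cosh r}\big), \]
so $\sin^{-1}\!\big(\tfrac{1}{2\cosh r}\big)\ge \pi/18$, i.e.\ $\cosh r\le \tfrac{1}{2\sin(\pi/18)}=\cosh r_\alpha$, the value of $\cosh r_\alpha$ being the one recorded in Example \ref{sharp Boroczky}; since $\cosh$ is increasing this forces $r\le r_\alpha$.

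The routine-but-essential step is this area-minimization input, for which I would cite \cite{Bor}; equivalently one can phrase the whole argument as a density estimate --- the lifted packing of $\mathbb{H}^2$ by radius-$r$ disks is periodic, hence has a well-defined density $\frac{2\pi(\cosh r-1)}{4\pi}$, which is at most Bor\"oczky's simplicial bound $\frac{3\theta(r)(\cosh r-1)}{\pi-3\theta(r)}$ with $\theta(r)=2\sin^{-1}(\tfrac{1}{2\cosh r})$; cancelling the positive factor $\cosh r-1$ yields $\theta(r)\ge\pi/9$ and the same conclusion. The only other point requiring a little care is that $P$ really is a cell decomposition in which every face is a polygon with at least three sides, so that the count $\sum_i(n_i-2)=6$ is legitimate; this is built into the construction of $\widetilde P$ and its descent to $F$ in Section \ref{Voronoi dfn}. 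I do not expect any serious obstacle beyond cleanly stating the extremal-triangle fact.
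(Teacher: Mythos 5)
The final inequality you derive is correct, and your parenthetical ``density'' reformulation at the end is exactly the paper's proof. But your primary argument --- triangulating the Delaunay cells and bounding each triangle's area --- has a genuine gap, and it is worth seeing precisely where.

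The claim ``a hyperbolic triangle with all sides at least $2r$ has area at least $D_{0,3}(2r)$'' is false, and the justification you give (``shrinking any one side toward $2r$, with the two others held fixed, only decreases the area'') is false as well. Fix $b=c=2r$ and let the third side $a$ increase toward the degenerate value $4r$. The triangle flattens: the angle opposite $a$ tends to $\pi$ and the other two tend to $0$, so the angle defect --- which is the hyperbolic area --- tends to $0$, dropping well below $D_{0,3}(2r)>0$. In particular, with $b,c$ held fixed, the area of the triangle $(a,b,c)$ is not monotone in $a$: it increases for a while and then decreases, with the turnover at exactly the point where the circumcenter exits the triangle. This is the ``non-centered'' pathology that Section~\ref{dirichlet dual} and \Order, \Monotonicity\ are designed to manage; at the point in the paper where Lemma~\ref{Boroczky} is stated, none of that machinery is available, and even once it is, the correct monotonicity result (\Monotonicity) only applies on the centered side $\overline{\calc}_n$. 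So the ``routine-but-essential'' extremal-triangle step you flag at the end is not routine: it is false in the generality you need, and ruling out flat sub-triangles in a Delaunay cell is precisely the hard part of this paper, not something one gets from Bor\"oczky's theorem (which is a statement about the density of a disk packing inside its Voronoi cell, not a pointwise lower bound on triangle areas).

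Your second formulation avoids all of this and is the paper's actual argument: lift the embedded radius-$r$ disk about $x$ to a packing of $\mathbb{H}^2$, observe each disk $\widetilde D$ sits inside its Voronoi cell $V_{\tilde x}$ (Lemma~\ref{Voronoi poly}), and apply Bor\"oczky's density bound $\mathrm{area}(\widetilde D)/\mathrm{area}(V_{\tilde x}) \le 3\theta(r)(\cosh r-1)/(\pi-3\theta(r))$ with $\theta(r)=2\sin^{-1}\!\left(\tfrac{1}{2\cosh r}\right)$. Pushing to $F$ replaces $\mathrm{area}(V_{\tilde x})$ by $4\pi$ and $\mathrm{area}(\widetilde D)$ by $2\pi(\cosh r-1)$; cancelling $\cosh r-1$ gives $\theta(r)\ge\pi/9$, i.e.\ $\cosh r\le 1/(2\sin(\pi/18))=\cosh r_\alpha$. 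If you rewrite the proof this way --- keeping the Euler characteristic count as motivation if you like, but not as the load-bearing step --- it is correct and coincides with the paper's.
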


\begin{proof}  Fix a locally isometric universal covering map $p\co \mathbb{H}^2\to F$.  If $F$ has injectivity radius $R$ at $x$, then by definition it contains an isometrically embedded open hyperbolic disk $D$, with radius $R$, centered at $x$.  Each point of $p^{-1}(x)$ is contained in a lift of $D$ to $\mathbb{H}^2$, and since $D$ is embedded in $F$ two such lifts do not overlap unless they are identical.  Thus $p^{-1}(D)$ is a packing of $\mathbb{H}^2$.

Let $\widetilde{V}$ be the Voronoi decomposition of $\mathbb{H}^2$ determined by $p^{-1}(x)$.  Since $p^{-1}(x)$ has injectivity radius $\alpha$, for any $\tilde{x}\in p^{-1}(x)$, $V_{\tilde{x}}$ contains the lift $\widetilde{D}$ of $D$ centered at $\tilde{x}$ (see Lemma \ref{Voronoi poly}).  The main theorem of \cite{Bor} implies:
$$ \frac{\mathit{area}(\widetilde{D})}{\mathit{area}(V_{\tilde{x}})} \leq d(R) = \frac{3\alpha(R)\cdot (\cosh R -1)}{\pi - 3\alpha(R)} $$
Here $\alpha(R)$ is the vertex angle of an equilateral triangle in $\mathbb{H}^2$ with sides of length $2R$.  Since $V_{\tilde{x}}$ projects onto $F$, isometrically on its interior, and $\widetilde{D}$ projects isometrically to $D$ we have:
$$ \frac{\mathit{area}(D)}{\mathit{area}(F)} \leq \frac{3\alpha(R)\cdot (\cosh R -1)}{\pi - 3\alpha(R)}  $$
Since $F$ has area $4\pi$ and $D$ has area $2\pi(\cosh R -1)$, the above inequality simplifies to $\alpha(R) \geq \pi/9$.  The hyperbolic law of cosines implies:
$$ \cos \alpha(R) = \frac{\cosh^2 (2R) -\cosh (2R)}{\sinh^2 (2R)} = \frac{\cosh (2R)}{\cosh (2R)+1} = 1 - \frac{1}{\cosh (2R) +1}  $$
Solving for $\cosh (2R)$ and applying the ``half-angle'' identities for the sine and hyperbolic cosine functions gives $\cosh R = 1/2\sin(\alpha(R)/2)\leq 1/2\sin(\pi/18)$.  The conclusion follows.
\end{proof}

\begin{example}\label{sharp DeBlois}  Let $d_{\beta}>0$ be determined by the following criterion:\begin{align*}
  4\pi & = 4\cdot D_0(d_{\beta},d_{\beta},d_{\beta}) + D_0(d_{\beta},d_{\beta},d_{\beta},d_{\beta}) = 4\cdot D_{0,3}(d_{\beta}) + D_{0,4}(d_{\beta})\\
    & = 4\cdot\left[\pi - 6\sin^{-1}\left(\frac{1}{2\cosh(d_{\beta}/2)}\right)\right] + \left[2\pi - 8\sin^{-1}\left(\frac{\sqrt{2}}{2\cosh(d_{\beta}/2)}\right)\right]\end{align*}
The latter equalities follow from \CenteredDefectBound.  Applying the identity $2\sin^{-1} x = \cos^{-1}(1-2x^2)$ and the half-angle identity for hyperbolic cosine, and re-arranging yields:
$$ \frac{\pi}{2} = 3\cos^{-1}\left(\frac{\cosh d_{\beta}}{\cosh d_{\beta} +1}\right) + \cos^{-1}\left(\frac{\cosh d_{\beta}-1}{\cosh d_{\beta} +1}\right) $$
After taking cosines of both sides and simplifying with trigonometric identities we find that $y = \cosh d_{\beta} +1$ satisfies $y^3 - 17y^2 + 16y -4$, and hence that $x = y-1$ is as described in Theorem \ref{main}.  With $r_{\beta} = d_{\beta}/2$ we have:\begin{align*}
  & \cosh d_{\beta} \cong 15.0166 &
  & \cosh r_{\beta}  \cong 2.8299  \end{align*}

Let $Q$ be a centered quadrilateral and $T_1,T_2,T_3,T_4$ centered triangles, each with all side lengths $d_{\beta}$.  These exist by \UniqueParameters.  Arrange them in $\mathbb{H}^2$ so that they are pairwise non-overlapping and $T_i$ shares an edge with $Q$ for each $i$.  Then $O_{\beta}\doteq Q \cup \left(\bigcup_i T_i\right)$ is a hyperbolic octahedron with area $4\pi$, and hence total angle defect $2\pi$.  An edge-pairing scheme as in Example \ref{sharp Boroczky} yields a genus-$2$ surface $F_{\beta}$, and arguing as in Example \ref{sharp Boroczky} we find that $F_{\beta}$ has injectivity radius $r_{\beta}$ at the point $x_{\beta}\in F_{\beta}$ descended from the vertices of $O_{\beta}$.
\end{example}

We now prove a few preliminary results that will allow us to pin down the Voronoi and Delaunay tessellations in Examples \ref{sharp Boroczky} and \ref{sharp DeBlois}.  

\begin{lemma}\label{shortest edge centered}  Let $V$ be the Voronoi tessellation determined by $\cals\subset\mathbb{H}^2$ with injectivity radius $R>0$, and let $B_0 = \cosh^{-1}(2\cosh(2R) -1)$.  For $x,y\in \cals$, if $d(x,y) \leq B_0$ then the midpoint $m$ of the geodesic arc $\gamma_{xy}$ joining $x$ to $y$ is in $V_x\cap V_y$.  If $d(x,y) < B_0$ then $\gamma_{xy}$ is the geometric dual to an edge $e = V_x\cap V_y$ of $V$ with $m = \gamma_{xy}\cap e\in\mathit{int}(e)$, and $V_x \cup V_y$ contains an open neighborhood of $\gamma_{xy}$.  \end{lemma}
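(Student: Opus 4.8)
The whole argument reduces to one elementary identity. Write $\ell=d(x,y)$, so the midpoint $m$ of $\gamma_{xy}$ satisfies $d(m,x)=d(m,y)=\ell/2$, and observe that for \emph{every} $z\in\mathbb{H}^2$,
\[\cosh d(x,z)+\cosh d(y,z)=2\cosh(\ell/2)\,\cosh d(m,z).\]
Indeed, adding the hyperbolic law of cosines for the (possibly degenerate) triangles $xmz$ and $ymz$, the terms in $\cos\angle xmz$ and $\cos\angle ymz$ cancel because $\angle xmz+\angle ymz=\pi$. For $z\in\cals-\{x,y\}$ the injectivity radius hypothesis gives $d(x,z)\geq 2R$ and $d(y,z)\geq 2R$, so the identity forces $\cosh d(m,z)\geq\cosh(2R)/\cosh(\ell/2)$ for all such $z$; call this bound $(\ast)$.

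The first assertion is then immediate: since $\cosh^2(\ell/2)=\tfrac12(\cosh\ell+1)$, the inequality $\cosh(2R)/\cosh(\ell/2)\geq\cosh(\ell/2)$ is equivalent to $\cosh\ell\leq 2\cosh(2R)-1=\cosh B_0$, i.e.\ to $\ell\leq B_0$. Hence when $d(x,y)\leq B_0$, $(\ast)$ gives $d(m,z)\geq\ell/2=d(m,x)=d(m,y)$ for every $z\in\cals-\{x,y\}$, and together with $d(m,x)=d(m,y)$ this says exactly that $m$ satisfies the inequalities of (\ref{Voronoi cell}) defining both $V_x$ and $V_y$; so $m\in V_x\cap V_y$.

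Suppose now $d(x,y)<B_0$. Comparing $\cosh B_0=2\cosh(2R)-1$ with $\cosh(4R)=2\cosh^2(2R)-1$ gives $B_0<4R$, so $\cosh(\ell/2)<\cosh(2R)$ and $\rho\doteq\cosh^{-1}\!\big(\cosh(2R)/\cosh(\ell/2)\big)$ is well defined and positive; the strict form of the previous computation gives $\rho>\ell/2$, and $(\ast)$ now reads $d(m,z)\geq\rho$ for all $z\in\cals-\{x,y\}$. From this I read off three things. First, for $q$ on the half-open subsegment $[x,m)$ of $\gamma_{xy}$ one has $d(q,x)=\ell/2-d(q,m)$, whereas $d(q,y)=\ell/2+d(q,m)>d(q,x)$ and $d(q,z)\geq d(m,z)-d(q,m)\geq\rho-d(q,m)>\ell/2-d(q,m)=d(q,x)$ for $z\in\cals-\{x,y\}$; hence $q\in\mathit{int}\,V_x$, and symmetrically $(m,y]\subset\mathit{int}\,V_y$. (Note $x\in B_R(x)\subset\mathit{int}\,V_x$ and $y\in\mathit{int}\,V_y$ by Lemma \ref{Voronoi poly}.) Second, fixing $\delta$ with $0<\delta<\tfrac12(\rho-\ell/2)$, for $p\in B_\delta(m)$ and $z\in\cals-\{x,y\}$ we get $d(p,z)\geq\rho-\delta>\ell/2+\delta\geq\max\{d(p,x),d(p,y)\}$, so $p$ lies in $V_x$ if $d(p,x)\leq d(p,y)$ and in $V_y$ otherwise; thus $B_\delta(m)\subset V_x\cup V_y$. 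Third, for $p$ in the nondegenerate arc $B_\delta(m)\cap\gamma_{xy}^{\perp}$ the same estimate gives $d(p,x)=d(p,y)<d(p,z)$ for every $z\in\cals-\{x,y\}$, so $p\in V_x\cap V_y$ and $p$ lies in no other Voronoi cell, i.e.\ $p$ is not a vertex of $V$.

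Packaging this: $V_x\cap V_y$ lies in the equidistant geodesic $\gamma_{xy}^{\perp}$ (noted before Lemma \ref{vertex radius}) and, by the third point, contains a nondegenerate subarc of it no point of which is a vertex of $V$; hence $e\doteq V_x\cap V_y$ is a genuine edge of $V$, with geometric dual $\gamma_{xy}$ by definition. Since $\gamma_{xy}$ meets $\gamma_{xy}^{\perp}$ only in $m$ and $m$ is not a vertex of $e$, we get $\gamma_{xy}\cap e=\{m\}$ with $m\in\mathit{int}\,e$. Finally $\mathit{int}\,V_x\cup B_\delta(m)\cup\mathit{int}\,V_y$ is an open set containing $[x,m)\cup\{m\}\cup(m,y]=\gamma_{xy}$ and contained in $V_x\cup V_y$, which is the last claim. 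The only slightly delicate points are exactly those in the third paragraph---checking that the one-dimensional set $V_x\cap V_y$ is a bona fide edge of the tessellation rather than a lower-dimensional face, and that the neighborhood assembled from the three pieces actually reaches the endpoints $x$ and $y$ (this is where Lemma \ref{Voronoi poly} enters); everything else is the single identity together with the monotonicity of $\cosh$.
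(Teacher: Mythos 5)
Your proof is correct and follows the same overall strategy as the paper: bound $\cosh d(m,z)\geq\cosh(2R)/\cosh(\ell/2)$ for $z\in\cals-\{x,y\}$ via the hyperbolic law of cosines, then convert this into the Voronoi-cell statements. The one pleasant variation is that you obtain the bound by summing the law-of-cosines expressions for triangles $xmz$ and $ymz$ so the $\cos$-terms cancel, whereas the paper picks whichever of $\angle xmz,\angle ymz$ is at most $\pi/2$ and drops the nonpositive term; you also make explicit (via Lemma~\ref{Voronoi poly} and the decomposition $[x,m)\cup B_\delta(m)\cup(m,y]$) why the assembled open set actually covers all of $\gamma_{xy}$, a detail the paper leaves implicit.
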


\begin{proof}  For $z\in \cals - \{x,y\}$, let $\alpha_x\in[0,\pi]$ be the angle between $z$ and $x$ as measured from $m$, and let $\alpha_y$ be the angle from $z$ to $y$.  Since $m$ is in the interior of the geodesic arc $\gamma_{xy}$ we have $\alpha_x+\alpha_y = \pi$, so one of $\alpha_x$ and $\alpha_y$ is at most $\pi/2$.  Assuming (without loss of generality) that $\alpha_x \leq \pi/2$, the hyperbolic law of cosines gives:
\begin{align*}
  \cosh d(x,z) & = \cosh d(m,z)\cosh d(x,m) - \sinh d(m,z)\sinh d(x,m)\cos \alpha_x \\
    & \leq \cosh d(m,z)\cosh d(x,m)  \end{align*}
Let $R_0 = B_0/2$.  The ``half-angle identity'' for hyperbolic cosine implies that $R_0$ satisfies 
$$ \cosh R_0 = \sqrt{\frac{1}{2}(\cosh B_0 + 1)} = \sqrt{\cosh (2R)} $$
If $d(x,y) \leq B_0$ then $d(x,m) = \frac{1}{2}d(x,y) < R_0$.  Since $d(x,z) \geq 2R$, combining expressions above yields:
$$ \cosh d(m,z) \geq \frac{\cosh d(x,z)}{\cosh d(x,m)} \geq \frac{\cosh (2R)}{\sqrt{\cosh (2R)}} = \cosh R_0 $$
Thus $m$ is at least as close to $x$ and $y$ as to $z$ and, since $z\in\cals$ is arbitrary, $m \in V_x\cap V_y$.

If $d(x,y) < B_0$, let $\eta=\frac{\cosh R_0}{\cosh d(x,m)}>1$.  The inequality above gives $\cosh d(m,z) \geq \eta\cdot \cosh R_0$ in this case.  Thus if $R_1 = \cosh^{-1}(\eta\cosh R_0)$ and $\delta = R_1 - R_0$, for $p\in B_{\delta/2}(m)$ the triangle inequality gives:
$$d(p,x) < R_0 + \delta/2 = R_1 -\delta/2 < d(p,z)$$  
Thus in this case $B_{\delta/2}(m)\subset V_x \cup V_y$, and if $\gamma_{xy}^{\perp}$ is the perpendicular bisector to $\gamma_{xy}$ then $\gamma_{xy}^{\perp} \cap B_{\delta/2}(m) \subset e = V_x\cap V_y$.  In particular, $m = \gamma_{xy}^{\perp}\cap \gamma_{xy}\in \mathit{int}(e)$, and $\mathit{int}(V_x) \cup B_{\delta/2}(m) \cup \mathit{int}(V_y)$ is an open neighborhood of $\gamma_{xy}$ in $V_x\cup V_y$.  \end{proof}

\begin{lemma}\label{centered poly Voronoi}  Let $P$ be a centered polygon in $\mathbb{H}^2$ with center  $v\in\mathit{int}\, P$.  For a vertex $x$ of $P$, let $Q_x \subset P$ be the quadrilateral with vertices $v$, $x$, and the midpoints of the edges of $P$ containing $x$.  Then $P = \bigcup Q_x$, taken over all vetices of $P$.  For $y\in P$, $y\in Q_x$ if and only if $d(y,x) \leq d(y,x')$ for each vertex $x'$ of $P$.  \end{lemma}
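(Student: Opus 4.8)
The plan is to exploit the basic fact that a centered polygon $P$ is the union of the isoceles triangles $T_i$ with apex $v$ and base an edge $\gamma_i$ of $P$, together with the observation that each such triangle splits along the perpendicular from $v$ to $\gamma_i$ into two congruent right triangles, one ``belonging'' to each endpoint of $\gamma_i$. So $Q_x$ is precisely the union of the two right triangles (one from each of the two edges at $x$) that contain $x$; hence the $Q_x$ cover $P$ since the $T_i$ do. First I would set up notation: cyclically order the vertices $x_0,\hdots,x_{n-1}$ and let $m_i$ be the midpoint of the edge $[x_{i-1},x_i]$, so that $Q_{x_i}$ is the quadrilateral $v\,m_i\,x_i\,m_{i+1}$, and observe that $Q_{x_i}$ is the union of the right triangles $v\,m_i\,x_i$ and $v\,x_i\,m_{i+1}$. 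Since $P=\bigcup_i T_i$ where $T_i$ is the triangle $v\,x_{i-1}\,x_i$, and $T_i$ is the union of the right triangles $v\,m_i\,x_{i-1}$ and $v\,m_i\,x_i$, regrouping these $2n$ right triangles by their non-$v$, non-midpoint vertex gives $P=\bigcup_i Q_{x_i}$.

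For the characterization of $Q_x$ in terms of nearest vertices, the key point is a local Voronoi-type computation: within $P$, the locus of points at least as close to $x_i$ as to $x_{i-1}$ is the half-space $\calh$ bounded by the perpendicular bisector $\gamma_{x_{i-1}x_i}^\perp$, and since $v$ is equidistant from all vertices, $v\in\gamma_{x_{i-1}x_i}^\perp$; thus this bisector passes through $v$ and through $m_i$ (the midpoint of $[x_{i-1},x_i]$), so it contains the edge $v\,m_i$ of $Q_{x_i}$. Likewise the perpendicular bisector of $[x_i,x_{i+1}]$ contains the edge $v\,m_{i+1}$. Intersecting the two corresponding half-spaces (those containing $x_i$) with $P$ cuts out exactly $Q_{x_i}$ on one side, provided one checks that bisectors of non-adjacent pairs $[x_j,x_k]$ impose no further constraint on $Q_{x_i}$ — which follows because $Q_{x_i}\subset T_i\cup T_{i+1}$, and on $T_i$ the nearest vertices among all of $P$'s vertices are $x_{i-1}$ and $x_i$ (as $T_i\subset\overline{B_{J}(v)}$ with $J$ the radius, and chordal geometry on the circle $\partial B_J(v)$ shows a point of $T_i$ cannot be strictly closer to some $x_j$, $j\neq i-1,i$, than to both $x_{i-1}$ and $x_i$). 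This last sub-claim is essentially the same circle/chord argument used in Lemma~\ref{embedded P^1} and Lemma~\ref{cyclically ordered vertices}, so I would cite or mimic it rather than redo it.

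Concretely, the forward direction ($y\in Q_{x_i}\Rightarrow d(y,x_i)\le d(y,x')$ for all vertices $x'$) follows because $Q_{x_i}$ lies in the intersection of the relevant half-spaces, and for non-adjacent vertices the containment $Q_{x_i}\subset T_i\cup T_{i+1}$ plus the chord estimate handles it. The reverse direction follows from the covering statement $P=\bigcup Q_{x_j}$: if $y\in P$ is at least as close to $x_i$ as to every vertex, pick any $j$ with $y\in Q_{x_j}$; then by the forward direction $y$ is at least as close to $x_j$ as to every vertex, in particular $d(y,x_j)=d(y,x_i)$, and one checks the only way this degeneracy is compatible with $y\in Q_{x_j}$ is $y$ lying on the shared edge $v\,m$ of $Q_{x_i}$ and $Q_{x_j}$ when $j=i\pm1$, so $y\in Q_{x_i}$ after all; the case $j$ non-adjacent to $i$ is ruled out by the strict chord inequality.

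**Main obstacle.** The bookkeeping in the covering statement is routine, so the real work is the sub-claim that on $T_i$ (equivalently on $Q_{x_i}$) no vertex $x_j$ with $j\neq i-1,i$ is closer than both $x_{i-1}$ and $x_i$. I expect this chordal-geometry estimate — comparing $d(y,x_j)$ to $\max\{d(y,x_{i-1}),d(y,x_i)\}$ for $y$ in the triangle $v\,x_{i-1}\,x_i$ inscribed in the circle of radius $J$ about $v$, using that $x_j$ lies on the far arc — to be the only genuinely non-formal step, and I would handle it by reducing to the statement that for a chord $[a,b]$ of a circle and a point $c$ on the minor arc cut off by $[a,b]$, every point of the circular segment on the $c$-side is closer to $c$ than to at least one of $a,b$; combined with the cyclic order of the $x_j$ on $\partial B_J(v)$ from Lemma~\ref{cyclically ordered vertices}, this finishes the argument.
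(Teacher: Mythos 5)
Your covering argument --- splitting each isosceles triangle $T_i$ along its median into two right triangles and regrouping by vertex --- is sound; the paper obtains the same covering by projecting outward along rays from $v$, a minor variant. Likewise your observation that the two sides of $Q_{x_i}$ through $v$ lie on perpendicular bisectors is exactly the paper's opening move (it phrases this via the reflective involution of the isosceles triangle).

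The gap is in the key sub-claim. You reduce the distance comparison to: ``for a chord $[a,b]$ of a circle and a point $c$ on the minor arc cut off by $[a,b]$, every point of the circular segment on the $c$-side is closer to $c$ than to at least one of $a,b$.'' This does not fit the geometry at hand. With $a=x_{i-1}$, $b=x_i$, the minor arc of the chord $[x_{i-1},x_i]$ contains no other vertex of $P$: since $P$ is centered, $v$ and every $x_j$ with $j\ne i-1,i$ lie in the \emph{major} segment. So there is no vertex $c$ to place on the minor arc, and $T_i$ --- which contains the center --- is not in the minor segment either. Even setting that aside, the conclusion of your claim has the form $d(y,c) < \max\{d(y,a),d(y,b)\}$, whereas you need $d(y,x_j)\ge\min\{d(y,x_{i-1}),d(y,x_i)\}$ for $y\in T_i$; these are different inequalities and neither yields the other.

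The fix is to compare angles at $v$ directly, which is in fact what the paper's proof does. Since $d(v,x_{i-1})=d(v,x_i)=d(v,x_j)=J$ and $d(v,y)$ is a common side, the hyperbolic law of cosines converts the distance comparison to an angle comparison at $v$: $d(y,x_j)<d(y,x_{i})$ exactly when the angle at $v$ in the triangle $v\,y\,x_j$ is smaller than the angle at $v$ in $v\,y\,x_i$. Because $T_i$ lies in the geodesic cone at $v$ spanned by the directions to $x_{i-1}$ and $x_i$, and the vertices are cyclically ordered around $v$ (Lemma~\ref{cyclically ordered vertices}), a short angular bookkeeping shows that for $y\in T_i$ and $j\ne i-1,i$ the angle to $x_j$ is at least the smaller of the angles to $x_{i-1}$ and $x_i$, strictly so off the boundary. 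The paper packages this as a dichotomy on whether the geodesic arc from $y$ to $x_j$ crosses $[v,x_i]$ and applies the law of cosines in each case; your angular-cone version is a legitimate alternative packaging of the same comparison, but the chord-and-segment lemma you wrote down should be replaced by it.
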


\begin{proof}  Let $J$ be the radius of $P$ --- ie, the distance from $v$ to the vertices of $P$ --- and let $x_{\pm}$ be the vertices adjacent to $x$ on $\partial P$.  The geodesic arc $e_+$ that joins $v$ to the midpoint $m_+$ of the edge $\gamma_+$ of $P$ containing $x$ and $x_+$ meets $\gamma_+$ at a right angle, since it is the fixed axis of a reflective involution of the isosceles triangle $\Delta_+$ with vertices $v$, $x$, and $x_+$.  Thus $e_+$ is contained in the perpendicular bisector $\gamma_+^{\perp}$.  The same holds true for the other edge $e_-$ of $Q_x$ containing $v$. and it follows that points of $Q_x$ are at least as close to $x$ as to either of $x_{\pm}$.

The center $v$ is in $Q_x$ and satisfies $d(v,x) = J = d(v,x')$ for all other vertices $x'$ of $P$, so the conclusion holds for $v$.  Fix $y\in Q_x - \{v\}$.  If $x'$ is a vertex of $P$ other than $x$, $x_+$ or $x_-$, then the geodesic arc from $y$ to $x'$ crosses one of $e_{\pm}$, say $e_+$.  Let $T$ be the triangle with vertices $v$, $y$, and $x$, and let $T'$ have vertices $v$, $y$, and $x'$.  Each of $T$ and $T'$ has an edge with length $d(y,v)$ and an edge with length $J = d(x,v) = d(x',v)$.  We consider two cases.

\begin{figure}
\input{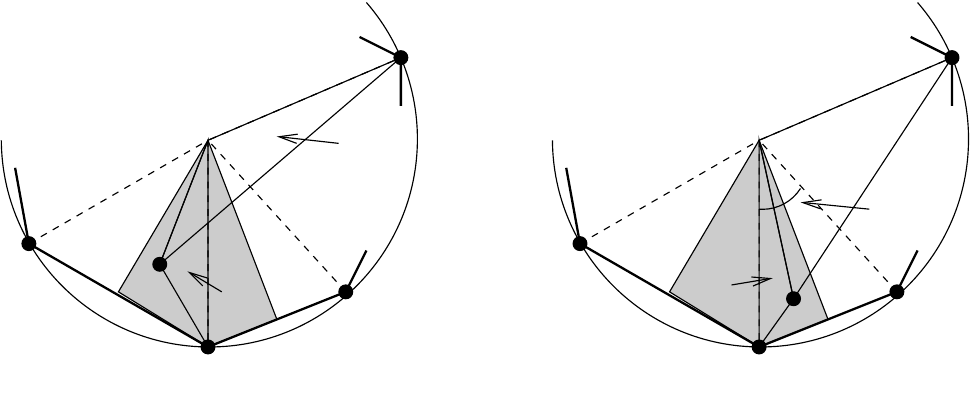_t}
\caption{Two possibilities from Lemma \ref{centered poly Voronoi}.  $Q$ is shaded.}
\label{ctrd poly Vor}
\end{figure}

If the geodesic arc from $y$ to $x'$ crosses the arc from $v$ to $x$, as on the left-hand side of Figure \ref{ctrd poly Vor}, then clearly the angle $\theta'$ of $T'$ at $v$ is larger than the angle $\theta$ of $T$ at $v$.  Hence the hyperbolic law of cosines implies in this case that $d(x,y) < d(x',y)$.  

If not, then $\theta \leq \theta_+/2$, where $\theta_+$ is the angle at $v$ of the isosceles triangle $\Delta_+$ determined by $v$, $x$, and $x_+$.  This is because $y$ lies in $\Delta_+$ between its bisector $e_+$ and the edge joining $v$ to $x$ (see the right-hand side of Figure \ref{ctrd poly Vor}).  On the other hand, the arc from $y$ to $x'$ exits $\Delta_+$ at a point in the edge joining $v$ to $x_+$, since it crosses $e_+$.  Therefore $\theta' > \theta/2 \geq\theta$, and again by the hyperbolic law of cosines we have $d(x,y) < d(x',y)$.  

For $y\neq v$, the geodesic ray from $v$ in the direction of $y$ exits $P$ at a point in some edge $\gamma$.  By construction $\gamma\subset Q_x\cup Q_{x'}$, where $x$ and $x'$ are the endpoints of $\gamma$.  Thus $y$ is in $Q_x$ or $Q_{x'}$, say $Q_x$, since $v\in Q_x\cap Q_{x'}$ and these are convex quadrilaterals.  For another vertex $x''$, if $d(y,x'') \leq d(y,x)$ then $d(y,x'') = d(y,x)$, by the property of $Q_{x}$ that we showed above.  It follows that $x$ is adjacent to $x''$ on $\partial P$, since we showed that $d(x'',y) > d(x,y)$ otherwise.  In this case $Q_x\cap Q_{x''}$ is the intersection of $Q_{x}$ with the equidistant locus of $x$ and $x''$ by construction, so $y$ is in $Q_x\cap Q_{x''}$.  The lemma follows.
\end{proof}

\begin{lemma}\label{centered poly radius}  Let $P$ be a centered polygon in $\mathbb{H}^2$ with radius $J$ and center $v$.  For each $y\in P - \{v\}$ there is a vertex $x$ such that $d(x,y) < J$.\end{lemma}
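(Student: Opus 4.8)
The plan is, for each $y \in P - \{v\}$, to follow the geodesic ray from the center $v$ through $y$ until it leaves $P$, and to read off a nearby vertex from the edge where it exits. Since $P$ is a compact convex polygon and $v \in \mathit{int}\, P$, this ray meets $\partial P$ in a unique first point $p$, which lies in some edge $\gamma$ of $P$; let $a$ and $b$ be the vertices of $P$ that are the endpoints of $\gamma$. Because $P$ is centered with center $v$ and radius $J$, both $a$ and $b$ lie on the circle $C$ of radius $J$ about $v$, so $\gamma$ is a chord of $C$. The crucial observation is that $v \notin \gamma$, since $v$ lies in $\mathit{int}\, P$ and hence off $\partial P$; thus $a$ and $b$ are not antipodal on $C$, and therefore $d(a,b) < 2J$ --- a chord of a circle is shorter than the diameter unless it passes through the center. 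As $p$ lies on the geodesic segment $\gamma = [a,b]$ we get $d(p,a) + d(p,b) = d(a,b)$, whence $\min\{d(p,a), d(p,b)\} \leq \tfrac{1}{2} d(a,b) < J$; relabel so that $d(p,a) < J$.

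Next I would invoke the convexity of distance functions in $\mathbb{H}^2$: as $\mathbb{H}^2$ is $\mathrm{CAT}(0)$, the function $z \mapsto d(z, a)$ is convex along every geodesic. The point $y$ lies on the segment $[v,p]$, and $y \neq v$, so writing $L = d(v,p)$ and parametrizing $[v,p]$ by arclength, $y$ corresponds to a parameter $t_0$ with $0 < t_0 \leq L$. Convexity then gives
$$ d(y, a) \;\leq\; \left(1 - \frac{t_0}{L}\right) d(v,a) + \frac{t_0}{L}\, d(p,a) \;=\; \left(1 - \frac{t_0}{L}\right) J + \frac{t_0}{L}\, d(p,a) \;<\; J, $$
the final strict inequality using $t_0 > 0$ together with $d(v,a) = J$ and $d(p,a) < J$. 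Since $a$ is a vertex of $P$, this is exactly the desired conclusion. (One could instead place $y$ in one of the quadrilaterals $Q_x$ of Lemma \ref{centered poly Voronoi} and estimate $d(y,x)$ there, but the chord bound makes that detour unnecessary.)

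I do not anticipate a real obstacle here. The only points needing a moment's care are the genuinely elementary ones: that the exit point $p$ may be a vertex of $P$ itself, in which case the estimate $\min\{d(p,a),d(p,b)\} = 0 < J$ still applies (or, more directly, $d(y,p) = J - d(v,y) < J$ for that vertex $p$); and the standard chord-versus-diameter inequality $d(a,b) < 2J$, which follows from the hyperbolic Pythagorean identity applied to the perpendicular from $v$ to the geodesic through $a$ and $b$.
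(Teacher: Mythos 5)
Your proof is correct, and it takes a genuinely different route from the paper's. The paper reuses the quadrilateral decomposition $P = \bigcup_x Q_x$ from Lemma \ref{centered poly Voronoi}: it places $y$ in a specific $Q_{x_i}$, casts the geodesic ray \emph{from the vertex} $x_i$ through $y$, lands on a side of $Q_{x_i}$ containing $v$, and bounds $d(x_i,y)$ via two applications of the hyperbolic law of cosines (the right-triangle / Pythagorean form). You instead cast the ray \emph{from the center} $v$ through $y$, exit $P$ at a point $p$ on an edge $[a,b]$, observe that $[a,b]$ is a chord of the circumscribed circle that misses the center (so $d(a,b) < 2J$, hence $\min\{d(p,a),d(p,b)\} < J$), and then pull this bound back to $y$ using convexity of the distance function $d(\cdot,a)$ along $[v,p]$ in the CAT(0) space $\mathbb{H}^2$. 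Your argument is self-contained (it does not need Lemma \ref{centered poly Voronoi} or even \IsoscelesDecomp) and is shorter, at the cost of invoking the CAT(0) convexity fact, which the paper otherwise avoids in favor of direct trigonometric identities. The edge cases you flag ($p$ a vertex, and the strictness of the chord inequality via the perpendicular foot) are handled correctly; both the chord bound and the convexity step deliver strict inequalities for $y \neq v$, so the conclusion $d(y,a) < J$ is strict as required.
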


\begin{proof}  Let $\{x_i\}_{i=0}^n$ be the set of vertices of $P$, enumerated so that for each $i$ there is an edge of $P$ containing $x_i$ and $x_{i+1}$ (with $i+1$ taken modulo $n$), and for each $i$ let $Q_i$ be the quadrilateral associated to $x_i$ by the construction of Lemma \ref{centered poly Voronoi}.  It is clear by construction that $P = \bigcup_{i=0}^{n-1} Q_i$, so for any $y\in P-\{v\}$ there exists $i\in\{0,\hdots,n-1\}$ so that $y\in Q_i$.  The conclusion of Lemma \ref{centered poly Voronoi} implies that $x_i$ is a closest vertex to $y$.

\begin{claim}  For any $i\in\{0,1,\hdots,n-1\}$ and $y\in Q_i- \{v\}$, $d(y,x_i) < J$. \end{claim} 

\begin{proof}  The geodesic ray from $x_i$ through $y$ intersects one of the edges of $Q_i$ containing $v$ at a point $y_0$.  If $y_0 = v$, then since $y\neq v$ is on the geodesic arc joining $x_i$ to $v$, the claim follows immediately.  Otherwise let us consider the right triangle determined by $x_i$, $y_0$, and the other endpoint of the edge containing $v$ and $y_0$, call it $m$.  The hyperbolic law of cosines gives: 
$$  \cosh d(x_i,y_0) = \cosh d(x_i,m)\cosh d(m,y_0) = \cosh R \cosh d(m,y_0) $$
Since $y_0$ is contained in the geodesic arc joining $m$ to $v$, and is not $v$, we have $d(m,y_0) < d(m,v)$.  When applied to the triangle determined by $x_i$, $v$, and $m$, the hyperbolic law of cosines gives $\cosh R \cosh d(m,v) = \cosh J$.  Thus by the above $d(x_i,y_0) < J$, and since $d(x_i,y) \leq d(x_i,y_0)$, the claim follows. \end{proof}

The lemma follows immediately.  \end{proof}

\begin{proposition}\label{shortish poly}  Let $\cals\subset \mathbb{H}^2$ have injectivity radius $R>0$.  If a centered $n$-gon $P$ in $\mathbb{H}^2$ has vertices in $\cals$, sides of length less than $B_0$ (from Lemma \ref{shortest edge centered}), and $\mathit{int}(P)\cap \cals = \emptyset$, then:\begin{enumerate}
\item $P = P_v$ as in Lemma \ref{vertex polygon}, where $v$ is the center of $P$; and
\item the Voronoi tessellation $V$ determined by $\cals$ satisfies $V^{(0)}\cap P = \{v\}$, and $V^{(1)}\cap P$ is the union of geodesic arcs joining $v$ to the midpoint of each side of $P$; and 
\end{enumerate}\end{proposition}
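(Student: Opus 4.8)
The plan is to combine the geometry of centered polygons from Lemmas~\ref{shortest edge centered}, \ref{centered poly Voronoi}, and~\ref{centered poly radius} with the description of vertex polygons in Lemma~\ref{vertex polygon}. Write $P$ as a centered $n$-gon with center $v\in\inter P$, radius $J$, cyclically ordered vertices $x_0,\dots,x_{n-1}$, sides $\gamma_i=\gamma_{x_ix_{i+1}}$ of length $<B_0$, and side midpoints $m_i$. Because each side is shorter than $B_0$, Lemma~\ref{shortest edge centered} applies to all of them: $\gamma_i$ is the geometric dual of a Voronoi edge $e_i=V_{x_i}\cap V_{x_{i+1}}$ with $m_i\in\inter(e_i)$, and $V_{x_i}\cup V_{x_{i+1}}$ contains an open neighborhood of $\gamma_i$; in particular $\partial P\subset\bigcup_i(V_{x_i}\cup V_{x_{i+1}})$.

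First I would pin down $\cals$ near $P$. A point of $\cals$ in the interior of a side $\gamma_i$ would lie in $V_{x_i}$ or $V_{x_{i+1}}$, hence would equal $x_i$ or $x_{i+1}$ (the only point of $\cals$ in a Voronoi cell $V_x$ is $x$ itself, directly from the definition), which is impossible; together with $\inter P\cap\cals=\emptyset$ this gives $\cals\cap P=\{x_0,\dots,x_{n-1}\}$. For $y\in\cals\setminus P$, the geodesic segment from $v$ to $y$ exits $P$ at a point $q$ of some $\gamma_k$, and $q\in V_{x_k}$ after relabeling, so $d(v,y)=d(v,q)+d(q,y)\ge d(v,q)+d(q,x_k)\ge d(v,x_k)=J$; equality would force $q\in[v,x_k]\cap\partial P=\{x_k\}$, whence $d(v,y)=J+d(x_k,y)>J$ anyway. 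Thus $\cals\cap B_J(v)=\emptyset$ and $\{y\in\cals:d(v,y)=J\}=\{x_i\}$, so $v\in V_{x_i}$ for every $i$. Since $n\ge3$ this makes $v$ a vertex of $V$, and Lemma~\ref{vertex radius} gives $J_v=J$ and $\{x\in\cals:v\in V_x\}=\{x_i\}$. Now Lemma~\ref{vertex polygon} builds $P_v$ as the cyclic $n$-gon with center $v$, radius $J$, and vertex set $\{x_i\}$, which is precisely the data determining $P$, so $P=P_v$ by uniqueness of the cyclic polygon on a cyclically ordered vertex set (\PtsToPoly). That is statement~(1).

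For~(2), decompose $P=\bigcup_iQ_{x_i}$ as in Lemma~\ref{centered poly Voronoi}, where $Q_{x_i}$ has vertices $v,m_{i-1},x_i,m_i$. The same ``push the segment out through a side'' argument, combined with the nearest-vertex characterization of $Q_{x_i}$ in Lemma~\ref{centered poly Voronoi}, shows $Q_{x_i}\subset V_{x_i}$. Since $[v,m_i]$ lies on the perpendicular bisector $\gamma_i^\perp$ (the geodesic through $v$ and $m_i$) and is the common edge $Q_{x_i}\cap Q_{x_{i+1}}$, it follows that $[v,m_i]\subset V_{x_i}\cap V_{x_{i+1}}\subset V^{(1)}\cap P$, so $\bigcup_i[v,m_i]\subset V^{(1)}\cap P$. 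For the reverse inclusion one checks, using the open neighborhood of each $\gamma_i$ inside $V_{x_i}\cup V_{x_{i+1}}$ and the fact that $V_x\cap\cals=\{x\}$, that every point of $\partial P\setminus\{m_0,\dots,m_{n-1}\}$ lies in the interior of a single Voronoi cell, and that every point of $\inter P$ off $\bigcup_i[v,m_i]$ lies either in the interior of a single $V_{x_i}$ or in $V^{(0)}$. Finally, Lemma~\ref{centered poly radius} (every non-center point of $P$ is within $J$ of some vertex) forces $V^{(0)}\cap P=\{v\}$: a Voronoi vertex $w\ne v$ in $P$ would have $J_w<J$, hence (again by the exit-point argument) all its nearest $\cals$-points among the $x_i$, hence would lie in three or more of the $Q_{x_i}$, which meet only at $v$ --- a contradiction. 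Combining these gives $V^{(0)}\cap P=\{v\}$ and $V^{(1)}\cap P=\bigcup_i[v,m_i]$.

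I expect the last paragraph's work to be the main obstacle: ruling out a ``spurious'' Voronoi edge meeting $P$ away from the arcs $[v,m_i]$, and showing $v$ is the only Voronoi vertex in $P$. All the needed inputs are present --- the open-neighborhood clause of Lemma~\ref{shortest edge centered} to control $\partial P$, the nearest-vertex characterization of $Q_{x_i}$ in Lemma~\ref{centered poly Voronoi} to control $\inter P$, and Lemma~\ref{centered poly radius} to bound $J_w$ --- but assembling them requires care about exactly which Voronoi cell a given point of $P$ belongs to, especially along the sides of $P$ and on the shared edges $[v,m_i]$ of the $Q_{x_i}$.
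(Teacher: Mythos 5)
Your argument is correct, but it reaches the conclusion by a different mechanism than the paper. The paper's pivotal step is topological: since $\mathit{int}\,P\cap\cals=\emptyset$ and Voronoi cells are connected, the open neighborhood of $\partial P$ furnished by Lemma~\ref{shortest edge centered} forces $P\subset\bigcup_i V_{x_i}$; then Lemma~\ref{centered poly Voronoi} gives $Q_{x_i}=P\cap V_{x_i}$, the description of $V\cap P$ drops out at once, and $P=P_v$ is obtained by matching each side $\gamma_i$ with the geometric dual of the Voronoi edge $e_i=V_{x_{i-1}}\cap V_{x_i}$ through $v$ and invoking Lemma~\ref{vertex polygon}. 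You instead argue metrically: the exit-point/triangle-inequality estimate shows directly that every $y\in\cals$ off $P$ satisfies $d(v,y)>J$, so $v\in V^{(0)}$ with $J_v=J$ and $\{x\,:\,v\in V_x\}=\{x_i\}$, and you then identify $P=P_v$ by uniqueness of the cyclic polygon on a given vertex set (\PtsToPoly) rather than by edge duality; for part (2) you re-run the exit-point estimate at an arbitrary $p\in Q_{x_i}$ to get $Q_{x_i}\subset V_{x_i}$, and you finish with Lemma~\ref{centered poly radius}, which the paper does not need here. What the paper's route buys is that once no cell other than the $V_{x_i}$ meets $P$, both conclusions are immediate; what your route buys is that it never needs the connectedness-of-cells separation argument, everything reducing to explicit distance comparisons.

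One soft spot: your stated dichotomy for points of $\mathit{int}\,P$ off the spokes (``interior of a single $V_{x_i}$ or in $V^{(0)}$'') is not self-evidently exhaustive, since a priori such a point could lie on a Voronoi edge $V_{x_i}\cap V_z$ with $z\notin\{x_j\}$ without being a vertex; and the tools you name for this step ($V_x\cap\cals=\{x\}$ and the boundary neighborhoods) are not quite the ones that close it. What does close it is the strict form of your own exit-point estimate: for $p\in\mathit{int}\,P$ and $z\in\cals$ not a vertex of $P$, equality in the chain $d(p,z)=d(p,q)+d(q,z)\geq d(p,q)+d(q,x_k)\geq d(p,x_k)$ forces $q$ to lie on the segment from $p$ to $x_k$ inside the convex polygon, hence $q=x_k$ and $z=x_k$, a contradiction; so $d(p,z)>\min_i d(p,x_i)$, and equidistance inside $P$ can only occur among vertices, where Lemma~\ref{centered poly Voronoi} confines it to the spokes. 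With that strictness made explicit, your reverse inclusion and the final $V^{(0)}\cap P=\{v\}$ argument go through as you outline.
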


\begin{proof} Let the vertices of $P$ be cyclically ordered $x_0,\hdots,x_{n-1}$ in the sense of \CyclicallyOrderedV, and for each $i$ let $\gamma_i$ be the edge of $P$ joining $x_{i-1}$ to $x_i$ (taking $i-1$ modulo $n$).  By Lemma \ref{shortest edge centered}, $\gamma_i$ has an open neighborhood contained in $V_{x_{i-1}}\cup V_{x_i}$ for each $i$.  Since $\mathit{int}\,P\cap \cals = \emptyset$ and Voronoi cells are connected, it follows that $P\subset\bigcup_{i=0}^{n-1} V_{x_i}$.

For each $i \in \{0,\hdots,n-1\}$, Lemma \ref{centered poly Voronoi} implies that the quadrilateral $Q_{x_i}$ described there is contained in $V_{x_i}$, since its points are as close to $x_i$ as any $x_j$ for $j\neq i$.  Since $P = \bigcup_{i=0}^{n-1} Q_i$ it follows that $Q_i = P\cap V_{x_i}$ for each $i$.  The description of the Voronoi tessellation follows immediately; in particular, $Q_{x_{i-1}}\cap Q_{x_i}$ is contained in an edge $e_i = V_{x_{i-1}}\cap V_{x_i}$ containing $v$ for each $i$.  For each $i$, $\gamma_i$ is the geometric dual to $e_i$, so Lemma \ref{vertex polygon} implies that $P = P_v$.\end{proof}

\begin{corollary}\label{short symmetric poly}  Let $\cals\subset \mathbb{H}^2$ have injectivity radius $R>0$.  For $3\leq n \leq 6$, a cyclic $n$-gon $P$ with vertices in $\cals$ and all sides of length $2R$ satisfies the conclusions of Proposition \ref{shortish poly}.\end{corollary}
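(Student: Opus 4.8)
The plan is to verify the three hypotheses of Proposition \ref{shortish poly} for $P$ and then invoke it. The side-length bound is immediate: since $R>0$ we have $\cosh B_0 = 2\cosh(2R)-1 > \cosh(2R)$, so $B_0 > 2R$, and $P$ has all sides of length $2R < B_0$.

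Next I would check that $P$ is centered by showing $(2R,\dots,2R)\in\widetilde{\calc}_n$ and applying the fact, recalled just before Example \ref{sharp Boroczky}, that a point of $\widetilde{\calc}_n$ represents a centered $n$-gon. Because all entries are equal, the defining inequality of $\widetilde{\calc}_n$ reads $\sum_{j=0}^{n-1} A_{2R}(R) = n\,A_{2R}(R) > 2\pi$; and $A_{2R}(R)=\pi$, since a chord of length $2R$ in a circle of radius $R$ is a diameter (so in the notation of \ParameterFunction, $A_{2R}(R) = 2\arcsin 1 = \pi$). Hence the inequality holds exactly because $n\geq 3$. Equivalently and more geometrically: arranging $n$ mutually non-overlapping congruent isosceles triangles with apex angle $2\pi/n$ and legs chosen so that the opposite sides have length $2R$ around a common apex $v$ produces a cyclic $n$-gon with all sides $2R$ and $v$ in its interior, and \UniqueParameters\ identifies $P$ with it up to isometry.

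The crux is $\mathit{int}\, P\cap\cals=\emptyset$, and this is where $n\leq 6$ is used. Let $v$ and $J$ be the center and radius of $P$. Since $P$ is centered, joining $v$ to the vertices decomposes it into $n$ congruent isosceles triangles with apex angle $2\pi/n$ at $v$; dropping a perpendicular from $v$ to a side splits one of these into two right triangles, each with hypotenuse of length $J$ from $v$ to a vertex, a leg of length $R$ from that vertex to the midpoint of the side, and angle $\pi/n$ at $v$ opposite the leg of length $R$, so $\sinh R = \sinh J\,\sin(\pi/n)$. For $3\leq n\leq 6$ we have $\sin(\pi/n)\geq\sin(\pi/6)=1/2$, hence $\sinh J\leq 2\sinh R < 2\sinh R\cosh R = \sinh(2R)$ (using $\cosh R>1$), so $J<2R$. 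Now suppose some $z\in\cals$ lay in $\mathit{int}\, P$: if $z=v$ then $d(z,x)=J$ for every vertex $x$ of $P$, while if $z\neq v$ then Lemma \ref{centered poly radius} supplies a vertex $x$ with $d(z,x)<J$; either way $d(z,x)\leq J<2R$ for some vertex $x$ of $P$. Since $x\in\cals$ and $z\neq x$ (as $z\in\mathit{int}\, P$ but $x\in\partial P$), the injectivity-radius hypothesis forces $d(z,x)\geq 2R$, a contradiction. Thus $\mathit{int}\, P\cap\cals=\emptyset$, and Proposition \ref{shortish poly} applies, giving all of its conclusions for $P$.

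The one genuine obstacle is this last estimate: recognizing that the inequality $\sin(\pi/n)\geq 1/2$, i.e.\ $n\leq 6$, is precisely what pushes the circumradius $J$ below $2R$, so that the uniform bound of Lemma \ref{centered poly radius} can be played off against the definition of injectivity radius. Everything else is a routine unwinding of definitions and the hyperbolic law of cosines.
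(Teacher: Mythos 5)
Your proof is correct and follows essentially the same route as the paper's: check $B_0>2R$, show $P$ is centered with $\sinh J=\sinh R/\sin(\pi/n)$, deduce $J<2R$ from $n\leq 6$, and conclude via Lemma \ref{centered poly radius} and the injectivity radius hypothesis. The only cosmetic difference is that you derive centeredness and the radius formula directly from the definition of $\widetilde{\calc}_n$ and a right-triangle computation, whereas the paper just cites \CenteredDefectBound\ to package those same facts.
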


\begin{proof}  Since $B_0 > 2R$ by its definition in Lemma \ref{shortest edge centered}, the result will follow from Proposition \ref{shortish poly} once we show that $P$ is centered and $\mathit{int}\,P\cap\cals=\emptyset$. $P$ is represented by $(d,\hdots,d)\in\widetilde{\calAC}_n$.  \CenteredDefectBound\ implies that $(d,\hdots,d)\in\widetilde{\calc}_n$, and hence that $P$ is centered, and furthermore that its  radius $J = J_n(d)$ satisfies $\sinh J = \sinh(d/2)/\sin(\pi/n)$.  If $n\leq 6$ then $\sin(\pi/n) \geq 1/2$, so $\sinh J \leq 2\sinh R < \sinh (2R)$.  Since the hyperbolic sine is increasing on $\mathbb{R}^+$ it follows that $J < 2R$.   Thus since the points of $P$ have distance at most $J$ from $x$ by Lemma \ref{centered poly radius}, and $J < 2R$ in this case, $P$ has no points of $\cals$ but the $x_i$.  \end{proof}

\begin{corollary}  Let  $F_{\alpha}$ and $x_{\alpha}\in F_{\alpha}$ be as in Example \ref{sharp Boroczky}.  The Delaunay tessellation of $F_{\alpha}$ determined by $\cals = \{x_{\alpha}\}$ is the triangulation by the projections of $T_1,\hdots,T_6$ described there, and each edge intersects the interior of its geometric dual.  \end{corollary}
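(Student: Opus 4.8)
The plan is to identify, inside $\mathbb{H}^2$, the Delaunay tessellation $\widetilde{P}$ determined by $\widetilde{\cals} = \pi^{-1}(x_\alpha)$ --- for a locally isometric universal covering $\pi\co\mathbb{H}^2\to F_\alpha$ --- with the unfolding of the triangulation of $F_\alpha$ by the images of $T_1,\dots,T_6$. First I would assemble the standard data. By Lemma \ref{Voronoi Delaunay cover}, $\widetilde{\cals}$ has positive injectivity radius; since $F_\alpha$ has injectivity radius $r_\alpha$ at $x_\alpha$ this radius is $R = r_\alpha$, and $\widetilde{P} = \bigcup_{v\in\widetilde{V}^{(0)}} P_v$ covers $\mathbb{H}^2$, the $P_v$ having pairwise disjoint interiors by Lemma \ref{vertex polygon}. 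Next, unfolding the construction of $F_\alpha$ from the octagon $O_\alpha = T_1\cup\dots\cup T_6$ --- equivalently, pulling back along $\pi$ the subdivision of $F_\alpha$ into the images of the $T_i$ --- yields a locally finite, $\pi_1 F_\alpha$-invariant geodesic triangulation $\widetilde{\calt}$ of $\mathbb{H}^2$ that covers $\mathbb{H}^2$, whose $2$-cells are all isometric to the centered equilateral triangle of side length $d_\alpha = 2R$, and whose vertex set is exactly $\widetilde{\cals}$ (all eight vertices of $O_\alpha$ being identified to $x_\alpha$).

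Now I would compare the two tilings. Each $\Delta\in\widetilde{\calt}$ is a cyclic $3$-gon with vertices in $\widetilde{\cals}$ and all sides of length $2R$, so Corollary \ref{short symmetric poly}, together with Proposition \ref{shortish poly}, applies: $\Delta = P_{v(\Delta)}$, where $v(\Delta)\in\widetilde{V}^{(0)}$ is the circumcenter of $\Delta$, and $\widetilde{V}^{(1)}\cap\Delta$ is the union of the three geodesic arcs joining $v(\Delta)$ to the midpoints of the sides of $\Delta$. Hence every $2$-cell of $\widetilde{\calt}$ is a vertex polygon. Conversely, for $w\in\widetilde{V}^{(0)}$ the polygon $P_w$ has interior points (being a cyclic polygon with at least three vertices), and any such point lies in some $\Delta = P_{v(\Delta)}\in\widetilde{\calt}$ since $\widetilde{\calt}$ covers $\mathbb{H}^2$; the non-overlapping clause of Lemma \ref{vertex polygon} then forces $w = v(\Delta)$, so $P_w\in\widetilde{\calt}$. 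Therefore the $2$-cells of $\widetilde{\calt}$ are exactly the $P_w$, $w\in\widetilde{V}^{(0)}$, so $\widetilde{P} = \widetilde{\calt}$; applying $\pi$ identifies the Delaunay tessellation of $F_\alpha$ determined by $\{x_\alpha\}$ with the triangulation by the projections of $T_1,\dots,T_6$. For centeredness, each edge of $\widetilde{P}$ is a side $\gamma_{xy}$ of some $\Delta\in\widetilde{\calt}$, so $d(x,y) = 2R$, and since $\cosh B_0 = 2\cosh(2R) - 1 > \cosh(2R)$ we have $2R < B_0$; Lemma \ref{shortest edge centered} then shows $\gamma_{xy}$ is the geometric dual to an edge $e = V_x\cap V_y$ of $\widetilde{V}$ meeting it at its midpoint $m\in\mathit{int}(e)$. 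As $m\in\mathit{int}(\gamma_{xy})$ as well, this meeting point is interior to both objects, and everything descends to $F_\alpha$.

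The only step that needs genuine care is the construction of $\widetilde{\calt}$ in the first paragraph: one must verify that the abstract edge-pairing description of $F_\alpha$ really produces a geodesic triangulation of $F_\alpha$ with single vertex $x_\alpha$ whose lift to $\mathbb{H}^2$ covers $\mathbb{H}^2$ and has vertex set $\widetilde{\cals}$. This is exactly where one uses that $F_\alpha$ is an honest complete hyperbolic surface --- concretely, that the total angle at $x_\alpha$ equals $2\pi$, which is forced by the area computation ($\mathit{area}(O_\alpha) = 4\pi$) in Example \ref{sharp Boroczky} via Gauss--Bonnet. Once $\widetilde{\calt}$ is available, the remainder is a soft comparison of two tilings of $\mathbb{H}^2$ plus the already-established Corollary \ref{short symmetric poly}, Lemma \ref{vertex polygon}, and Lemma \ref{shortest edge centered}.
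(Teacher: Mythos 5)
Your proof is correct and follows essentially the same path as the paper's: lift to $\mathbb{H}^2$ via Poincar\'e polyhedron (or, in your phrasing, the pullback of the triangulation), note that $\widetilde{\cals}$ has injectivity radius $r_\alpha$, apply Corollary \ref{short symmetric poly} to identify each equilateral triangle with a vertex polygon $P_v$, and then Lemma \ref{shortest edge centered} for centeredness of edges. The extra ``converse'' paragraph you include (every $P_w$ is one of the $\Delta$'s) is a detail the paper leaves implicit, since the $T_i$-translates already tile $\mathbb{H}^2$ and Lemma \ref{vertex polygon} forbids overlapping vertex polygons, but it does no harm and is a reasonable thing to spell out.
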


\begin{proof}  Let $O_{\alpha} = T_1\cup\hdots\cup T_6\subset\mathbb{H}^2$, as described in Example \ref{sharp Boroczky}.  Given a scheme for isometrically pairing edges of $O_{\alpha}$ to produce $F_{\alpha}$ as in Example \ref{sharp Boroczky}, for each pair of edges $e$ and $e'$, there is an orientation--preserving isometry $f$ of $\mathbb{H}^2$ with $f(e) = e'$ and $f(O_{\alpha})\cap O_{\alpha} = e'$.  The Poincar\`e polyhedron theorem asserts that the set of these edge pairings generates a discrete group $\Pi$ of isometries with fundamental domain $O_{\alpha}$, and that the quotient map $\mathbb{H}^2 \to \mathbb{H}^2/\Pi = F_{\alpha}$ is a locally isometric universal covering.  In particular, $\Pi$-translates of $O_{\alpha}$ tessellate $\mathbb{H}^2$.  

Since $O_{\alpha}$ is itself tessellated by the $T_i$, $\mathbb{H}^2$ is tessellated by $\Pi$-translates of these six triangles.  The preimage $\widetilde{\cals}$ of $\cals$ in $\mathbb{H}^2$ is the set of vertices of $\Pi$-translates of $O_{\alpha}$, so the vertices of any $\Pi$-translate of any $T_{i}$ are in $\widetilde{\cals}$.  $F_{\alpha}$ has injectivity radius $r_{\alpha}$ at $x_{\alpha}$, so $\widetilde{\cals}$ also has injectivity radius $r_{\alpha}$.  Thus since $T_i$ has edge length $d_{\alpha}=2r_{\alpha}$ for each $i$, Corollary \ref{short symmetric poly} implies that each translate of each $T_i$ is a two-cell of the Delaunay tessellation of $\mathbb{H}^2$ determined by $\widetilde{\cals}$.  Lemma \ref{shortest edge centered} further implies that each edge of each $T_i$ intersects the interior of its geometric dual, and the conclusion for $F_{\alpha}$ follows from Definition \ref{Delaunay down}. \end{proof}

The Voronoi tessellation from Example \ref{sharp Boroczky} is easily described.  A similar proof establishes:

\begin{corollary}\label{sharp DeBlois Delaunay}  Let $T_{\beta}$, $Q$, $F_{\beta}$, and $x_{\beta}\in F_{\beta}$ be as in Example \ref{sharp DeBlois}.  The Delaunay tessellation of $F_{\beta}$ determined by $\{x_{\beta}\}$ is the decomposition described there, into $T_1,\hdots,T_4$ and $Q$, all with side lengths $d_{\beta}$. Each edge intersects the interior of its geometric dual.  \end{corollary}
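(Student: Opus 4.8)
The plan is to transcribe the proof of the preceding corollary, the one new feature being the quadrilateral $Q$. Write $O_\beta = Q \cup T_1 \cup \cdots \cup T_4 \subset \mathbb{H}^2$ as in Example \ref{sharp DeBlois}. For the edge-pairing scheme producing $F_\beta$ there, the Poincar\'e polyhedron theorem supplies a discrete group $\Pi$ of isometries of $\mathbb{H}^2$ with fundamental domain $O_\beta$ such that $\mathbb{H}^2 \to \mathbb{H}^2/\Pi = F_\beta$ is a locally isometric universal covering; in particular the $\Pi$-translates of $O_\beta$ tessellate $\mathbb{H}^2$. Since $O_\beta$ is subdivided into the five polygons $Q, T_1,\ldots,T_4$, the plane $\mathbb{H}^2$ is tessellated by the $\Pi$-translates of these polygons, and the preimage $\widetilde{\cals}$ of $\cals = \{x_\beta\}$ is exactly the set of vertices of $\Pi$-translates of $O_\beta$. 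Thus every vertex of every translate of $Q$ or of a $T_i$ lies in $\widetilde{\cals}$, and $\widetilde{\cals}$ has injectivity radius $r_\beta$ by the computation in Example \ref{sharp DeBlois}.

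I would then apply Corollary \ref{short symmetric poly} with $R = r_\beta$. Each $T_i$ is a cyclic triangle and $Q$ a cyclic quadrilateral, each with all side lengths $d_\beta = 2 r_\beta$; since the number of sides is between $3$ and $6$ in both cases, the corollary identifies each $\Pi$-translate of each $T_i$, and each $\Pi$-translate of $Q$, with the vertex polygon $P_v$ centered at its own circumcenter $v$, hence as a $2$-cell of the Delaunay tessellation $\widetilde P$ determined by $\widetilde{\cals}$ (and it pins down the Voronoi $1$-skeleton inside each such polygon as the union of arcs from $v$ to the midpoints of its sides). As these translated polygons already cover $\mathbb{H}^2$ with pairwise non-overlapping interiors, while the $2$-cells of $\widetilde P$ also cover $\mathbb{H}^2$ with pairwise disjoint interiors by Lemmas \ref{Voronoi Delaunay cover} and \ref{vertex polygon}, there can be no other $2$-cells: $\widetilde P$ is precisely the tessellation by $\Pi$-translates of $Q$ and of the $T_i$. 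This structure is $\Pi$-invariant, so by Definition \ref{Delaunay down} it descends to the claimed decomposition of $F_\beta$ into $T_1,\ldots,T_4$ and $Q$, all with side lengths $d_\beta$.

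For the last assertion, each edge of each of these polygons has length $d_\beta$, and $d_\beta < B_0$ because $\cosh B_0 = 2\cosh(2 r_\beta) - 1 = 2\cosh d_\beta - 1 > \cosh d_\beta$; so Lemma \ref{shortest edge centered} shows that every edge of $\widetilde P$ meets the interior of its geometric dual, a property that passes to $F_\beta$ via Definition \ref{Delaunay down}. I expect no genuine obstacle here, since the argument is a near-verbatim copy of the previous corollary's proof; the one point deserving a moment's attention is merely that Corollary \ref{short symmetric poly} applies uniformly to the triangles and to the quadrilateral, all of them having at most six sides, so the presence of $Q$ adds nothing new.
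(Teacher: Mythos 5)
Your proposal is correct and follows exactly the route the paper intends: it is a near-verbatim adaptation of the proof of the preceding corollary for $F_\alpha$, with the single new ingredient being that Corollary \ref{short symmetric poly} applies to the quadrilateral $Q$ just as readily as to the triangles, since $n=4$ lies in the allowed range $3\leq n\leq 6$. The paper offers no explicit proof, only the remark that ``a similar proof establishes'' the corollary, and your write-up fills in precisely the details the reader is expected to supply, including the correct check $d_\beta < B_0 = \cosh^{-1}(2\cosh d_\beta - 1)$ for the final assertion.
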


In particular, the Delaunay tessellation of $F_{\beta}$ is not a triangulation.  The example below shows that the conclusion of Theorem \ref{main} fails upon slightly relaxing the hypothesis $r \geq r_{\beta}$.

\begin{example}\label{dull DeBlois}  We will produce a family of surfaces $F_t$ by perturbing the surface $F_{\beta}$ from Example \ref{sharp DeBlois}.  By \HalfSymmQuadDefect, the equation determining $d_{\beta}$ can be rewritten as
$$ 4\pi = 4\cdot D_{0,3}(d_{\beta}) + 2\cdot D_0(b_{\beta},d_{\beta},d_{\beta}), $$
where $b_{\beta} = \cosh^{-1}(2\cosh d_{\beta} -1) = b_0(d_{\beta},d_{\beta})$.  This reflects the geometric observation, also in \HalfSymmQuadDefect, that the quadrilateral $Q$ from Example \ref{sharp DeBlois} has a diagonal that contains its center and divides it into triangles $T_0$ and $T_0'$, each with side length collection $(b_{\beta},d_{\beta},d_{\beta})$.  Re-naming if necessary, we will  assume that $T_0'$ shares an edge with $T_1$ from Example \ref{sharp DeBlois}.  

For $t$ near $0$ let $d_t = d_{\beta}+t$, $b_t = \cosh^{-1}(2\cosh d_t -1)$, and let $d_1(t)$ satisfy $d_1(0) = d_{\beta}$ and $f(t,d_1(t))\equiv 4\pi$, where:\begin{align}\label{dull constant area}
  f(t,d) =  3\cdot D_{0,3}(d_t) + D_0(d,d_t,d_t) + D_0(b_t,d_t,d) + D_0(b_t,d_t,d_t) \end{align}
We note that $d_0 = d_{\beta}$ and $b_0 = b_{\beta}$, and by comparing with the equation above one finds that $f(0,d_{\beta}) = 4\pi$.  To produce $d_1(t)$ we note that \DefectDerivative\ implies:\begin{align}\label{dull partial}
  \frac{\partial f}{\partial d} (0,d_{\beta})  = \sqrt{\frac{1}{\cosh^2(d_{\beta}/2)} - \frac{1}{\cosh^2 J_3(d_{\beta})}} > 0,\end{align}
since $(d_{\beta},d_{\beta},d_{\beta})\in \widetilde{\calc}_3$ by \CenteredDefectBound, and $J_3(d_{\beta}) < d_{\beta}/2$.  Therefore the implicit function theorem yields  $\epsilon > 0$ and a function $d_1$ on $(-\epsilon,\epsilon)$ with $d_1(0)=d_{\beta}$ and $f(t,d_1(t)) = 4\pi$ for each $t\in(\epsilon,\epsilon)$.  With a computation analogous to the above it is possible to show that $\partial f/\partial t$ is also positive at $(0,d_{\beta})$, and this further implies that $d_1$ decreases in $t$.

By \CenteredDefectBound, $(d_t,d_t,d_t)\in\widetilde{\calc}_3$ for each $t$ such that $d_t >0$; we may as well assume this is all of $(-\epsilon,\epsilon)$.  Moreover, $(b_t,d_t,d_t)\in\widetilde{\calBC}_3$ for each such $t$ by the definition of $b_t$ and \TriCenter.  We may also assume that for each $t\in(-\epsilon,\epsilon)$, each of $(b_t,d_t,d_1(t))$, and $(d_1(t),d_t,d_t)$ is in $\widetilde{\calAC}_3$, since this set is open in $\mathbb{R}^3$. 

For each $t$, let $T_2(t)$, $T_3(t)$, and $T_4(t)$ be centered triangles with all side lengths $d_t$.  Let $T_0(t)$ be a cyclic triangle with cyclically ordered side length collection $(b_t,d_t,d_t)$, let $T_0'(t)$ have side length collection $(b_t,d_t,d_1(t))$, and let $T_1(t)$ have side length collection $(d_1(t),d_t,d_t)$.  That these exist follows from \CenteredSpace\ and \UniqueParameters.

Note that $T_i(0) = T_i$ for $0\leq i \leq 4$, and $T_0'(0) = T_0'$.  Arranging the triangles in $\mathbb{H}^2$ so that at time $0$ their union is $O_{\beta}$, and they have the same combinatorial pattern of intersection for all time, their union at each time $t$ is an octagon $O_t$ with all side lengths $d_t$ and area $4\pi$ by construction.  The total angle defect of $O_t$ is thus $2\pi$, so an isometric edge-pairing scheme that is combinatorially identical to that for $O_{\beta}$ produces a surface $F_t$.  Let $x_t\in F_t$ be the quotient of the vertices of $O_t$.  One can show as in the previous examples that for each $t<0$, $F_t$ has injectivity radius $r_t = d_t/2$ at $x_t$.  Since $d_1$ decreases in $t$, $d_1(t) < d_t$ for $t > 0$, so $F_t$ has injectivity radius at most $d_1(t)/2$ for such $t$. \end{example}

We prove in the lemma below that for $t < 0$, the conclusion of Theorem \ref{main} does not apply to the surfaces $F_t$ from Example \ref{dull DeBlois}, though the Delaunay tessellation is a triangulation.  Recall from the example that for such $t$, $F_t$ has injectivity radius $r(t)< r_{\beta}$ at $x_t$.

\begin{lemma}\label{dull DeBlois Delaunay}  For $O_t$, $F_t$, and $x_t\in F_t$ as in Example \ref{dull DeBlois}, and $t<0$ but near to it, the triangulation that $F_t$ inherits from $O_t$ is its Delaunay tessellation determined by $\{x_t\}$.  Each edge of this triangulation intersects the interior of its geometric dual edge except for $T_0(t) \cap T_0'(t)$, which intersects an endpoint of its geometric dual.  \end{lemma}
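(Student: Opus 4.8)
The plan is to follow the template of the proof of Corollary \ref{sharp DeBlois Delaunay} (which settles $t=0$), modifying it to accommodate the two non-centered pieces $T_0(t)$ and $T_0'(t)$. First I would invoke the Poincar\'e polyhedron theorem exactly as there and in the corollary before it: the edge-pairing of $O_t$ generates a discrete $\Pi_t\leq\mathrm{Isom}(\mathbb{H}^2)$ with fundamental domain $O_t$ and quotient $F_t$, so $\mathbb{H}^2$ is tessellated by $\Pi_t$-translates of $O_t$, hence by $\Pi_t$-translates of the six triangles $T_0(t),T_0'(t),T_1(t),\ldots,T_4(t)$; call this tessellation $\mathcal{T}_t$, so that $\widetilde{\cals}_t\doteq p_t^{-1}(x_t)$ is its vertex set, of injectivity radius $r(t)=d_t/2$. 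Two elementary facts are the linchpins: $b_t=\cosh^{-1}(2\cosh d_t-1)$ is exactly the constant $B_0$ of Lemma \ref{shortest edge centered} for injectivity radius $r(t)$, and $\cosh(b_t/2)=\sqrt{\cosh d_t}$, so $\sinh(b_t/2)=\sqrt2\,\sinh(d_t/2)$ (and $b_t/2<d_t<b_t$). Write $P,R$ for the endpoints of $\gamma\doteq T_0(t)\cap T_0'(t)$, so $d(P,R)=b_t$, and let $S$, $S'$ be the remaining vertices of $T_0(t)$, $T_0'(t)$.

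Next I would dispose of the centered pieces and the short edges. For $t$ near $0$ the tuples $(d_t,d_t,d_t)$ and $(d_1(t),d_t,d_t)$ lie in $\widetilde{\calc}_3$ (the first by \CenteredDefectBound, the second by openness of $\widetilde{\calc}_3$ and its value at $t=0$), with circumradii strictly less than $d_t=2r(t)$; Lemma \ref{centered poly radius} then forces $\mathrm{int}(T_i(t))\cap\widetilde{\cals}_t=\emptyset$ for $1\le i\le 4$, so Proposition \ref{shortish poly} shows every $\Pi_t$-translate of $T_1(t),\ldots,T_4(t)$ is a Delaunay $2$-cell with center in its interior and Voronoi edges running from the center to its edge midpoints. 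Every edge of $\mathcal{T}_t$ not in the $\Pi_t$-orbit of $\gamma$ has length $d_t$ or $d_1(t)$, hence $<b_t=B_0$, so by Lemma \ref{shortest edge centered} it is the geometric dual of a Voronoi edge and meets that edge's interior in its midpoint. This already gives the ``centered'' half of the conclusion.

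Then I would treat $T_0(t)$ and $T_0'(t)$. Since $(b_t,d_t,d_t)\in\widetilde{\calBC}_3$ (recorded in Example \ref{dull DeBlois}) and $T_0(t)$ is isosceles with apex $S$ over $\gamma$, its circumcenter is the midpoint $m_0$ of $\gamma$ (equivalently, its height over $\gamma$ equals $\cosh^{-1}(\cosh d_t/\cosh(b_t/2))=b_t/2$), so $P,R,S$ each lie at distance $b_t/2$ from $m_0$. As $d(P,R)=b_t=B_0$, Lemma \ref{shortest edge centered} gives $m_0\in V_P\cap V_R$; hence $\widetilde{\cals}_t\cap B_{b_t/2}(m_0)=\emptyset$, so $m_0\in V_P\cap V_R\cap V_S\subseteq\widetilde{V}^{(0)}$, and moreover $\mathrm{int}(\gamma)$ and $\mathrm{int}(T_0(t))$, lying inside $B_{b_t/2}(m_0)$, miss $\widetilde{\cals}_t$. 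For $t<0$ one has $d_1(t)>d_t$, so Stewart's theorem in $\mathbb{H}^2$ applied to the median $S'm_0$ of $\triangle PRS'$ gives $\cosh d(m_0,S')=(\cosh d_t+\cosh d_1(t))/(2\sqrt{\cosh d_t})>\sqrt{\cosh d_t}=\cosh(b_t/2)$: the vertex $S'$ lies strictly outside the circumcircle of $T_0(t)$, so $P,S,R,S'$ are not concyclic. Now $\mathrm{int}(T_0(t)\cup T_0'(t))$ misses $\widetilde{\cals}_t$ --- for $\mathrm{int}(T_0(t))$ and $\mathrm{int}(\gamma)$ as just shown, and for $\mathrm{int}(T_0'(t))$ because $T_0(t)\cup T_0'(t)$ is a small perturbation of the Delaunay cell $Q$ of $F_\beta$ (Corollary \ref{sharp DeBlois Delaunay}), whose interior misses $\widetilde{\cals}_0$. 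Hence every Delaunay $2$-cell meeting the convex quadrilateral $Q_t\doteq T_0(t)\cup T_0'(t)$ has all its vertices in $\{P,S,R,S'\}$; since the four sides of $Q_t$ are Delaunay edges bounding $T_i(t)$-translates on the far side, the Delaunay structure inside $Q_t$ is either $Q_t$ itself --- impossible, as $P,S,R,S'$ are not concyclic --- or $Q_t$ cut by one diagonal, and the empty-circumdisk condition, together with $S'$ lying outside the circumdisk of $\triangle PRS$ (so that $S$ likewise lies outside that of $\triangle PRS'$), forces that diagonal to be $\gamma$. Thus the Delaunay cells meeting $Q_t$ are $\triangle PRS=T_0(t)$ and $\triangle PRS'=T_0'(t)$, so $\mathcal{T}_t$ is the Delaunay tessellation determined by $\widetilde{\cals}_t$ and (Definition \ref{Delaunay down}) $\mathcal{T}_t/\Pi_t$ is that of $F_t$ determined by $\{x_t\}$. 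Finally, the geometric dual of the Delaunay edge $\gamma$ is the Voronoi edge $V_P\cap V_R$, whose endpoints are the circumcenters of the two Delaunay cells along $\gamma$: $m_0$, which lies on $\gamma$, and the circumcenter $v'$ of $T_0'(t)$, which lies on the perpendicular bisector of $\gamma$ but is $\ne m_0$ (as $P,S,R,S'$ are not concyclic) and hence off $\gamma$. Therefore $\gamma$ meets its geometric dual precisely at the endpoint $m_0$, as claimed, while all other edges are centered by the previous paragraph.

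I expect the genuine difficulty to be the penultimate step: $t=0$ is the degenerate configuration where $P,S,R,S'$ become concyclic and neighbouring vertices sit at distance exactly $2r_\beta$, so the Delaunay combinatorics change as $t$ crosses $0$; one must both identify how $Q$ ``splits'' --- this is exactly the r\^ole of the inequality $d(m_0,S')>b_t/2$ for $t<0$ --- and rule out any extraneous point of $\widetilde{\cals}_t$ drifting onto a relevant circumcircle. That last point is the one place where the argument leans on continuity of the developing data and local finiteness of $\widetilde{\cals}_t$ near $Q$ (equivalently, on stability of the explicit $F_\beta$ picture of Corollary \ref{sharp DeBlois Delaunay}) rather than on a closed-form estimate; everything else reduces to Lemma \ref{shortest edge centered}, Proposition \ref{shortish poly}, and the two trigonometric identities above.
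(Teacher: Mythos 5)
Your proposal reaches the correct conclusion and handles the centered triangles and the short edges exactly as the paper does (Poincar\'e polyhedron theorem, then Lemma \ref{shortest edge centered} and Proposition \ref{shortish poly}). Where it genuinely diverges is the crux, the pair $T_0(t)\cup T_0'(t)$. The paper works entirely on the Voronoi side: it locates $v_0=m_0$ and the circumcenter $v_1$ of $T_0'(t)$ as Voronoi vertices by explicit distance comparisons (Lemma \ref{centered poly Voronoi} for $d(v_0,x_3)>J_{v_0}$, and a law-of-cosines/angle-addition computation for $d(x_1,v_1)>d(x_2,v_1)$), and concludes that $V_{x_0}\cap V_{x_2}$ is an edge from $v_0$ to $v_1$. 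You instead show the circumdisk $B_{b_t/2}(m_0)$ is empty, use the hyperbolic median identity $\cosh d(m_0,S')=(\cosh d_t+\cosh d_1(t))/(2\sqrt{\cosh d_t})>\cosh(b_t/2)$ to put $S'$ strictly outside it, and then argue combinatorially about how the quadrilateral can be subdivided. Your median computation is a clean substitute for the paper's $v_1$-computation, and the observation $b_t=B_0$ is exactly the right reason $\gamma$ is borderline.

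Two steps as written do not close, though both are repairable with tools you already have. First, the emptiness of $\mathit{int}\,T_0'(t)\cap\widetilde{\cals}_t$ (and, what you also need but do not mention, the absence of points of $\widetilde{\cals}_t$ on the \emph{boundary} circle $\partial B_{b_t/2}(m_0)$ other than $P,R,S$) is justified only by ``small perturbation of $F_\beta$''; since $\widetilde{\cals}_t$ moves with $t$ this needs an argument. The clean fix is the one you used for $T_1(t)$: $T_0'(t)$ is centered with circumradius tending to $b_\beta/2$, and $\cosh(b_\beta/2)=\sqrt{\cosh d_\beta}<\cosh d_\beta$, so its circumradius is less than $2r(t)$ and Lemma \ref{centered poly radius} applies. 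Second, the flip step is where the actual gap sits: to exclude the diagonal $SS'$ you must show $R$ lies \emph{inside} the circumdisk of $PSS'$ (violating Lemma \ref{vertex radius}), whereas your parenthetical (``$S$ lies outside the circumdisk of $\triangle PRS'$'') only certifies that the $PR$-split is legal; the hyperbolic version of ``exactly one diagonal of a non-concyclic convex quadrilateral is locally Delaunay'' is true but is not proved in the paper and not proved by you. In fact the flip discussion is unnecessary: once you know $m_0\in\widetilde{V}^{(0)}$ and that $\{x\in\widetilde{\cals}_t\mid d(m_0,x)=b_t/2\}=\{P,R,S\}$, Lemma \ref{vertex polygon} identifies $T_0(t)$ as the Delaunay cell $P_{m_0}$ outright, and since Delaunay cells tile with disjoint interiors, $T_0'(t)$ is then forced to be the cell on the other side of $\gamma$. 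Finishing that way, your argument is a correct and somewhat more computational alternative to the paper's.
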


\begin{proof}  For each $t$, the Poincar\`e polyhedron theorem implies that $\Pi_t$-translates of $O_t$ tessellate $\mathbb{H}^2$, where $\Pi_t$ is the group generated by the edge-pairing isometries of $O_t$ yielding $F_t$.  It follows that $\mathbb{H}^2$ is triangulated by $\Pi_t$-translates of $T_0'(t)$ and the $T_i(t)$ for $0\leq i\leq 4$.  The preimage $\widetilde{\cals}_t$ of $x_t$ in $\mathbb{H}^2$ is the set of vertices of $\Pi_t$-translates of $O_t$.  

For $i= 2$, $3$, or $4$, since $T_i(t)$ has all side lengths equal to $d_t = 2r_t$, Corollary \ref{short symmetric poly} implies that each of its $\Pi_t$-translates is a $2$-cell of the Delaunay tessellation $\widetilde{P}$ of $\mathbb{H}^2$ determined by $\widetilde{\cals}$.  Lemma \ref{shortest edge centered} applies to the edges of these polygons, and asserting in particular that each intersects the interior of its geometric dual.

For reference we have depicted $T_0(t) \cup T_0'(t) \cup T_1(t)$ in Figure \ref{non-centered Voronoi}, and labeled its vertices.  By construction, each $x_i$ is in $\widetilde{\cals}$, and each frontier edge has length $d_t$.  Lemma \ref{shortest edge centered} thus implies that each frontier edge is in $\widetilde{P}^{(1)}$, intersects the interior of its geometric dual, and has an open neighborhood contained in the union of the Voronoi cells determined by its endpoints.

\begin{figure}
\input{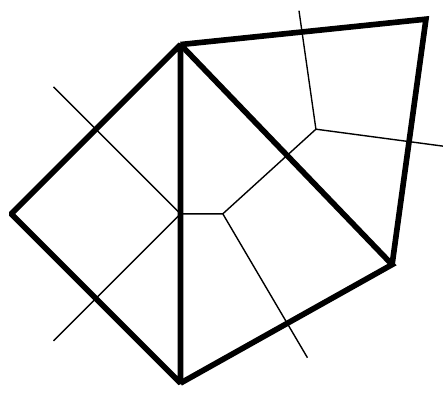_t}
\caption{$P = T_0\cup T_1(\delta)\cup T_2(\delta)$ and its intersection with $\widetilde{V}^{(1)}$.}
\label{non-centered Voronoi}
\end{figure}

The edge $T_0'(t)\cap T_1(t)$, joining $x_0$ to $x_3$ in the figure, has length $d_1(t)$.  Since $d_1(0) = d_{\beta} < b_{\beta} = b_0$, the inequality $d_1(t) < b_t$ holds for $t$ near to $0$.  For such $t$, since $d_1(t) < b_t = \cosh^{-1}(2\cosh d_t -1)$, Lemma \ref{shortest edge centered} asserts that $T_0'(t)\cap T_1(t)$ is in $\widetilde{P}^{(1)}$ and intersects the interior of its geometric dual, and \TriCenter\ implies that $(d_1(t),d_t,d_t)\in\widetilde{\calc}_3$.  Therefore $T_1(t)$ is centered, and since it intersects $\widetilde{\cals}$ in its vertex set, Proposition \ref{shortish poly} implies it is a Delaunay $2$-cell intersecting $\widetilde{V}^{(1)}$ as illustrated in Figure \ref{non-centered Voronoi}.

It remains to consider $T_0(t)\cup T_0'(t)$.  We have already showed that its frontier is in $P^{(1)}$, and moreover has an open neighborhood contained in $V_{x_0}\cup V_{x_1}\cup V_{x_2}\cup V_{x_3}$.  Since $T_0(t)\cup T_0'(t)\cap \widetilde{\cals} = \{x_0,x_1,x_2,x_3\}$ and Voronoi cells are connected, $T_0(t)\cup T_0'(t)$ is entirely contained in $V_{x_0}\cup V_{x_1}\cup V_{x_2}\cup V_{x_3}$.  We claim that $V_{x_0}$ intersects $V_{x_2}$ in an edge $e$, whose geometric dual $T_0(t)\cap T_0'(t)$ is thus in $\widetilde{P}^{(1)}$ and furthermore intersects $e$ in an endpoint.  It will follow immediately that $T_0(t)$ and $T_0'(t)$ are each Delaunay $2$-cells.

Since $\cosh b_t = 2\cosh d_t -1$ by construction, \TriCenter\ implies that $(b_t,d_t,d_t)\in\widetilde{\calBC}_3$.  Therefore $T_0(t)$ has its center $v_0$ at the midpoint of its longest edge $T_0(t)\cap T_0'(t)$ by \InCenteredClosure.  On the other hand, we pointed out below (\ref{dull partial}) that $d_1$ is decreasing in $t$, so $d_1(t) > d_1(0) = d_{\beta} > d_t$ for $t < 0$.  Therefore $\cosh b_t < \cosh d_t + \cosh d_1(t) -1$, so $(b_t,d_t,d_1(t))\in\widetilde{\calc}_3$ by \TriCenter\ again. Thus $T_0'(t)$ has its center $v_1$ in its interior.

By definition $v_0$ is equidistant from $x_0$, $x_1$ and $x_2$, and $v_1$ is equidistant from $x_0$, $x_2$, and $x_3$.  The quadrilaterals $Q_{x_0}$ and $Q_{x_2}$ in $T_0'(t)$ supplied by Lemma \ref{centered poly Voronoi} contain $v_0$ in their intersection, so since $v_0\neq v_1$ it is not contained in $Q_{x_3}$.  Lemma \ref{centered poly Voronoi} thus implies that $d(v_0,x_3) > d(v_0,x_0)$, and hence that $v_0 = V_{x_0}\cap V_{x_1}\cap V_{x_2}$.  

Since $T_0(t)$ is isosceles, the geodesic arc $\gamma$ from $x_1$ to $v_0$ intersects $T_0(t)\cap T_0'(t)$ at a right angle.  Since the geodesic arc $\gamma'$ from $v_1$ to $v_0$ also meets $T_0(t)\cap T_0'(t)$, their union is geodesic.  It follows that $d(x_1,v_1) = d(x_1,v_0) + d(v_0,v_1)$.  On the other hand, $d(x_2,v_1)$ satisfies 
$$\cosh d(x_2,v_1) = \cosh d(x_2,v_0)\cosh d(v_0,v_1) = \cosh d(x_1,v_0)\cosh d(v_0,v_1)$$
by the hyperbolic law of cosines.  The angle addition formula for hyperbolic cosine therefore implies that $d(x_1,v_1) > d(x_2,v_1)$, so $v_1 = V_{x_0}\cap V_{x_2}\cap V_{x_3}$, and it follows that $V_{x_0}\cap V_{x_2}$ contains an edge joining $v_0$ to $v_1$.  The lemma follows.
\end{proof}

\begin{remark}  The construction of Example \ref{dull DeBlois} may be modified, by increasing $d_0$ and reducing $\delta$, to produce deformations of $F_{\beta}$ in which an edge of the Delaunay tessellation does not intersect its geometric dual at all. \end{remark}

%%%%%%%%%%%%%%%%%%%%%
%%%%%%%%%%%%%%%%%%%%%
\section{The centered dual to the Voronoi tessellation}\label{dirichlet dual}
%%%%%%%%%%%%%%%%%%%%%

Our task in this section is to understand the ``pathology'' described in Lemma \ref{dull DeBlois Delaunay}, in which an edge of $V$ does not intersect the interior of its geometric dual.  We will say that such an edge of $V$ is ``non-centered,'' and relate (non-)centeredness of edges to (non-)centeredness of vertex polygons in Lemma \ref{vertex polygon centered}.  The set of non-centered edges has restricted combinatorics: its components are sub-trees of $P^{(1)}$, each with a canonical root vertex (Lemma \ref{tree components}).  We organize the Delaunay polygons corresponding to vertices of such a component into a $2$-cell of the ``centered dual decomposition'' $P_c$, in Definition \ref{centered dual}.

\begin{definition}\label{centered edge}  Let $V$ be the Voronoi tessellation determined by $\cals\subset\mathbb{H}^2$ closed and discrete.  We will say an edge $e$ of $V$ is \textit{centered} if $e$ intersects its geometric dual $\gamma_{xy}$ at a point in $\mathit{int}\,e$.  If $e$ is not centered, orient it pointing away from $\gamma_{xy}$. 

If $V$ is the Voronoi tessellation of a closed surface $F$ determined by a finite set $\cals$, we say an edge $e$ of $V$ is \textit{centered} if and only if one (and hence all) of its lifts to $\widetilde{V}$ is centered, where $\widetilde{V}=\pi^{-1}(V)\subset\mathbb{H}^2$ is the Voronoi tessellation of $\mathbb{H}^2$ determined by $\widetilde{\cals} = \pi^{-1}(\widetilde{\cals})$.  If $e$ is not centered, let it inherit an orientation from a lift $\tilde{e}$.\end{definition}

As indicated above, the action of $\pi_1 F$ on $\widetilde{V}^{(1)}$ preserves (non-)centeredness of edges, and also the orientation of non-centered edges.

\begin{fact}  Let $V$ be the Voronoi tessellation determined by $\cals \subset\mathbb{H}^2$ closed and discrete.  For $x\in\cals$, an edge $e$ of $V_x$ is non-centered with initial vertex $v$  if and only if the angle $\alpha$ at $v$, measured in $V_x$ between $e$ and the geodesic segment joining $v$ to $x$, is at least $\pi/2$.\end{fact}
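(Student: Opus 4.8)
The plan is to reduce the statement to a single right triangle. Write $\ell$ for the geodesic containing $e$, so that $e=V_x\cap V_y$ lies on $\ell=\gamma_{xy}^{\perp}$, and recall that $\gamma_{xy}$ meets $\gamma_{xy}^{\perp}$ perpendicularly in the single point $m$, the midpoint of $\gamma_{xy}$; thus $m$ is the foot of the perpendicular from $x$ to $\ell$. A point of $\gamma_{xy}^{\perp}$ is equidistant from $x$ and $y$, so $x\notin\ell$, and for an endpoint $v$ of $e$ with $v\neq m$ the triangle with vertices $x$, $m$, $v$ is a genuine hyperbolic right triangle with its right angle at $m$; hence its angle $\theta_v\doteq\angle xvm$ is acute. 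It is convenient to set $\theta_v=\pi/2$ when $v=m$, which is consistent since then $[v,x]=[m,x]$ meets $\ell$ at a right angle.

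First I would express $\alpha$ in terms of $\theta_v$. The two rays of $\ell$ issuing from an endpoint $v$ of $e$ are opposite: one points toward $m$, the other away from it (when $v=m$ there is no ``toward'' ray, and $[v,x]\perp\ell$). The ray along $e$ from $v$ is one of these. If it points toward $m$ then $\alpha=\theta_v<\pi/2$; if it points away from $m$, or if $v=m$, then $\alpha=\pi-\theta_v\geq\pi/2$, with equality exactly when $v=m$. Consequently $\alpha\geq\pi/2$ at $v$ if and only if the ray from $v$ into $e$ does not point strictly toward $m$.

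Next I would translate centeredness and the orientation convention into this same language. Since $\gamma_{xy}\cap\ell=\{m\}$, the intersection $e\cap\gamma_{xy}$ is contained in $\{m\}$, so $e$ is centered exactly when $m\in\mathit{int}\,e$; in that case the ray into $e$ from each endpoint points strictly toward $m$, so by the previous step $\alpha<\pi/2$ at both endpoints, and $e$ carries no orientation and no initial vertex. If instead $e$ is non-centered, then $m\notin\mathit{int}\,e$, and among the two endpoints of $e$ exactly one, namely the one nearest $m$ along $\ell$, has the property that the ray from it into $e$ points away from $m$ (or that it equals $m$), while the ray into $e$ from the other endpoint points strictly toward $m$. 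By Definition \ref{centered edge} the orientation of $e$ points away from $\gamma_{xy}$, hence away from $m$, so this distinguished endpoint is precisely the initial vertex $v$; and by the previous step $\alpha\geq\pi/2$ there while $\alpha<\pi/2$ at the terminal vertex. Reading this equivalence in both directions gives the claim: $e$ is non-centered with initial vertex $v$ if and only if $\alpha\geq\pi/2$ at $v$.

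The argument is essentially bookkeeping, and the only place where care is needed — and where a sign error could slip in — is in matching the orientation convention of Definition \ref{centered edge} to the geometry: one must check that ``oriented away from $\gamma_{xy}$'' makes the \emph{nearer} endpoint to $m$ the initial vertex. The degenerate case $v=m$ (where $e$ is non-centered with initial vertex $m$ and $\alpha=\pi/2$) should also be verified separately; all the remaining content is the elementary fact that the non-right angles of a hyperbolic right triangle are acute, which incidentally shows $\alpha=\pi/2$ can occur only in that degenerate case.
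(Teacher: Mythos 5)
Your proof is correct and takes essentially the same approach as the paper, which justifies the Fact by pointing to the right triangle with vertices $x$, $m = \gamma_{xy}\cap\gamma_{xy}^{\perp}$, and $v$, whose angle at $v$ is $\alpha$ or $\pi-\alpha$ depending on which way $e$ leaves $v$. You simply flesh out the bookkeeping the paper leaves implicit, including the correct handling of the degenerate case $v=m$ and the check that the orientation convention of Definition \ref{centered edge} singles out the endpoint nearest $m$ as the initial vertex.
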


This is because there is a right triangle with vertices at $x$ and $v$ and edges contained in $\gamma_{xy}$ and $\gamma_{xy}^{\perp}$, where $\gamma_{xy}$ is the geometric dual to $e$; ie, $e = V_x\cap V_y$.  This triangle has angle equal to either $\alpha$ or $\pi-\alpha$ at $v$, depending on the case above; see Figure \ref{non-centered edge}.

\begin{figure}
\input{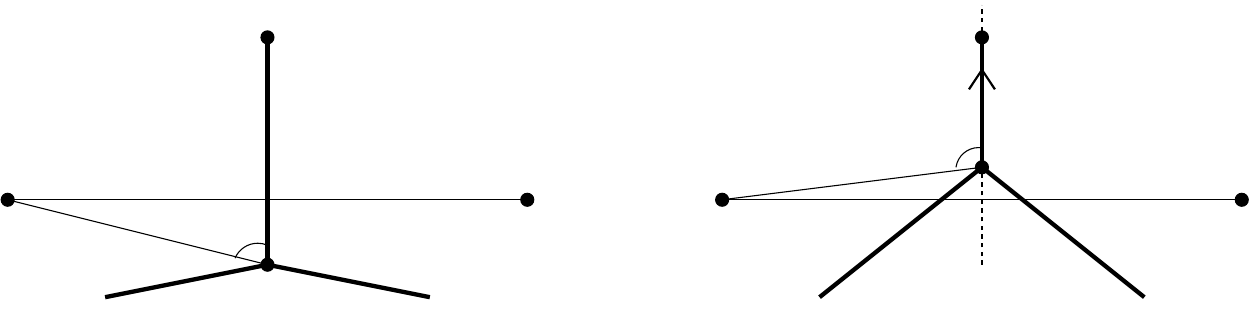_t}
\caption{Centered and non-centered edges.}
\label{non-centered edge}
\end{figure}

If $w$ is the other endpoint of $e$ then since $x\in P_v\cap P_w$, the fact above and the hyperbolic law of cosines imply:\begin{align}\label{radius relation}  \cosh J_w = \cosh\ell(e)\cosh J_v - \sinh\ell(e)\sinh J_v\cos\alpha \end{align}
Because $\cos\alpha \leq 0$ if $\alpha \geq \pi/2$, we have:

\begin{lemma}\label{increasing radius} Let $V$ be the Voronoi tessellation determined by $\cals\subset\mathbb{H}^2$ closed and discrete.  For $x\in\cals$, if $e$ is a non-centered edge of $V_x$ oriented as prescribed in Definition \ref{centered edge}, with initial vertex $v$ and terminal vertex $w$, then $J_{v} < J_{w}$.\end{lemma}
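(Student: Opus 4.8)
The plan is to read off the desired inequality directly from the radius relation (\ref{radius relation}). Applied to the vertex $x$ of $V_x$, which lies in both $P_v$ and $P_w$, that relation reads
\[
\cosh J_w = \cosh\ell(e)\cosh J_v - \sinh\ell(e)\sinh J_v\cos\alpha,
\]
where $\alpha$ is the angle at $v$, measured inside $V_x$, between $e$ and the geodesic segment joining $v$ to $x$. The first step is to invoke the fact recorded just above (\ref{radius relation}): since $e$ is a non-centered edge of $V_x$ with initial vertex $v$, that fact gives $\alpha \geq \pi/2$, and hence $\cos\alpha \leq 0$.

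Next I would check that the remaining quantities have the signs one expects. A non-centered edge carries an orientation only because $\mathit{int}\,e$ misses $\gamma_{xy}$, which forces $e$ to be a nondegenerate segment; thus its endpoints $v$ and $w$ are distinct, $\ell(e) > 0$, and so $\cosh\ell(e) > 1$ and $\sinh\ell(e) > 0$. Also $J_v > 0$ by Lemma \ref{vertex radius}. Substituting $\cos\alpha \leq 0$ into the displayed relation then yields
\[
\cosh J_w \geq \cosh\ell(e)\cosh J_v > \cosh J_v,
\]
and since $\cosh$ is strictly increasing on $[0,\infty)$ while $J_v$ and $J_w$ are both positive, this gives $J_w > J_v$.

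There is no real obstacle here: the geometric content has already been isolated in (\ref{radius relation}) and in the characterization of non-centered edges by the angle condition $\alpha \geq \pi/2$. The only point deserving a word of care is the strictness of the conclusion. The sign $\cos\alpha \leq 0$ by itself gives only $\cosh J_w \geq \cosh\ell(e)\cosh J_v$; it is the strict bound $\cosh\ell(e) > 1$, coming from $\ell(e) > 0$, that upgrades this to $\cosh J_w > \cosh J_v$ and hence to $J_w > J_v$. So the one thing I would be sure to spell out is why $\ell(e)$ cannot vanish, namely that an edge admitting the orientation of Definition \ref{centered edge} must have two distinct endpoints.
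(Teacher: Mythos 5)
Your proof is correct and follows essentially the same route as the paper: the paper deduces the lemma directly from the relation (\ref{radius relation}) together with the observation $\cos\alpha\leq 0$ when $\alpha\geq\pi/2$, exactly as you do. The one small thing worth tightening is your justification that $\ell(e)>0$ — a singleton edge would also have $\mathit{int}\,e=\emptyset$ disjoint from $\gamma_{xy}$, so the cleaner reason is that the lemma hypothesis already posits distinct initial and terminal vertices $v\neq w$, forcing $e$ to be a nondegenerate segment.
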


Below we relate centeredness of edges of $V$ to that of $2$-cells of the Delaunay tessellation.

\begin{lemma}\label{vertex polygon centered} Let $V$ be the Voronoi tessellation determined by $\cals\subset\mathbb{H}^2$ closed and discrete.  For $v\in V^{(0)}$, $P_v$ is non-centered if and only if $v$ is the initial vertex of a non-centered edge $e$ of $V$.  If this is so, the geometric dual $\gamma$ to $e$ is the unique longest edge of $P_v$, and $P\cup T(e,v)$ is a convex polygon, where $T(e,v)$ is the triangle determined by $v$ and $\partial\gamma$.\end{lemma}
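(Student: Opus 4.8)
The plan is to reduce everything to the angles at $v$. By Lemma \ref{vertex polygon}, cyclically order the edges of $V$ at $v$ as $e_0,\dots,e_{n-1}$, so that $P_v$ has vertices $x_0,\dots,x_{n-1}$ cyclically ordered on the circle $C$ of radius $J_v$ about $v$, and edges $\gamma_i=[x_{i-1},x_i]$ with $\gamma_i$ geometrically dual to $e_i$; note $n\geq3$ since $v\in V^{(0)}$. For each $i$ let $\phi_i$ be the angle at $v$ of the sector bounded by the rays $[v,x_{i-1})$ and $[v,x_i)$ that contains $e_i$. Since $e_i$ is the common edge of the consecutive Voronoi cells $V_{x_{i-1}}$ and $V_{x_i}$, which tile a neighborhood of $v$, this sector is exactly the elementary one at $v$ between consecutive rays $[v,x_j)$; hence $\sum_i\phi_i=2\pi$, so at most one $\phi_i$ is $\geq\pi$.

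I would then record two observations. (a) The edge $e_i$ runs along the perpendicular bisector of $\gamma_i$, a line through $v$ that is the symmetry axis of the isosceles triangle $vx_{i-1}x_i$; so $e_i$ bisects the $\phi_i$-sector, and the angle at $v$ in $V_{x_i}$ between $e_i$ and $[v,x_i]$ is $\phi_i/2$. Plugging this into the Fact preceding (\ref{radius relation}) shows that $e_i$ is non-centered with initial vertex $v$ if and only if $\phi_i/2\geq\pi/2$, i.e.\ $\phi_i\geq\pi$. Since any non-centered edge of $V$ with initial vertex $v$ is incident to $v$, this already gives: $v$ is the initial vertex of a non-centered edge of $V$ iff $\phi_i\geq\pi$ for some $i$. (b) The vertices $x_j$ with $j\neq i-1,i$ all lie on the arc of $C$ cut off by $x_{i-1},x_i$ on the side away from $e_i$, of central angle $2\pi-\phi_i$, so $P_v$ lies in the closed half-plane bounded by the line through $\gamma_i$ on that side. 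The center $v$ lies on the side of this line containing the longer of the two arcs, hence in the \emph{open} half-plane containing $P_v$ exactly when $2\pi-\phi_i>\pi$, i.e.\ $\phi_i<\pi$. Therefore $v\in\inter P_v$ iff $\phi_i<\pi$ for every $i$, i.e.\ $P_v$ is non-centered iff $\phi_i\geq\pi$ for some $i$. Combining (a) and (b) gives the asserted equivalence, and the uniqueness of an index $i_0$ with $\phi_{i_0}\geq\pi$ (noted above) shows the non-centered edge $e=e_{i_0}$ with initial vertex $v$ is unique.

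Now assume $P_v$ is non-centered and set $\gamma=\gamma_{i_0}$. A chord of $C$ of central angle $2\theta$, $\theta\in(0,\pi/2]$, has half-length $\operatorname{arcsinh}(\sinh J_v\sin\theta)$; here this applies with $\theta=\phi_j/2<\pi/2$ for $j\neq i_0$ and with $\theta=\pi-\phi_{i_0}/2\in(0,\pi/2]$ for $i_0$ (the non-reflex central angle of $\gamma_{i_0}$ is $2\pi-\phi_{i_0}$). Since $\phi_{i_0}+\phi_j<\sum_k\phi_k=2\pi$ (using $n\geq3$), we get $\pi-\phi_{i_0}/2>\phi_j/2$, so by monotonicity of $\sin$ on $(0,\pi/2]$ and of $\sinh$ and $\operatorname{arcsinh}$, $\ell(\gamma)>\ell(\gamma_j)$; thus $\gamma$ is the unique longest side of $P_v$ (this is also \LongestSide). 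For the convexity of $P_v\cup T(e,v)$: if $\phi_{i_0}=\pi$ then $\gamma$ is a diameter, $v$ is its midpoint, $T(e,v)$ is a segment contained in $P_v$, and $P_v\cup T(e,v)=P_v$. Otherwise, by (b) the center $v$ lies in the open half-plane containing $P_v$ for the line through every edge $\gamma_j$, $j\neq i_0$, and only for the line through $\gamma$ does it lie strictly on the opposite side; so $\gamma$ is the only edge of $P_v$ visible from $v$, and coning it to $v$ builds the convex hull: $\operatorname{conv}(P_v\cup\{v\})=P_v\cup T(e,v)$. Being the convex hull of the finite set $\{x_0,\dots,x_{n-1},v\}$, this is a convex polygon.

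The step I expect to demand the most care is observation (b): checking that $\phi_i$, defined via $e_i$, is really the elementary sector between consecutive rays $[v,x_j)$, and then pinning down on which side of each edge-line the center $v$ lies. Once those are in place the Fact does the essential work, and the rest — the central-angle/chord-length formula, the monotonicity comparison, and recognizing $P_v\cup T(e,v)$ as a convex hull — is routine.
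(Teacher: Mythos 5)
Your proof is correct, but it takes a genuinely different route from the paper's. The paper argues each direction separately with the hyperbolic law of cosines: for the forward direction it shows the arc of the circumscribing circle $C$ on the $e$-side of the line through $\gamma_{xy}$ lies in $B_{J_w}(w)$ and so contains no points of $\cals$ (via Lemma \ref{vertex radius}), forcing $P_v$ into the opposite half-plane; for the converse it invokes \OneSide\ and \LongestSide\ wholesale to produce the edge $\gamma$, the convexity of $P_v\cup T(e,v)$, and the uniqueness of the longest edge, and then rules out by contradiction that the dual edge points toward $\gamma$. You instead reduce everything to the central angles $\phi_i$ at $v$: the interleaving of the rays $[v,x_j)$ with the Voronoi edges (implicit in the proof of Lemma \ref{cyclically ordered vertices}) gives $\sum\phi_i=2\pi$, the bisection property converts the Fact into ``$e_i$ non-centered with initial vertex $v$ iff $\phi_i\geq\pi$,'' and the standard chord/half-plane facts convert ``$v\in\mathit{int}\,P_v$'' into ``$\phi_i<\pi$ for all $i$.'' This buys you a self-contained argument that never touches Lemma \ref{vertex radius} or the cited results \OneSide\ and \LongestSide\ (you reprove the special cases needed via the chord-length formula $\sinh(\ell/2)=\sinh J_v\sin\theta$ and the convex-hull identification $\mathrm{conv}(P_v\cup\{v\})=P_v\cup T(e,v)$), and it yields Corollary \ref{one direction} for free from $\sum\phi_i=2\pi$, whereas the paper derives that corollary from uniqueness of the longest edge. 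The only steps you should spell out if this were written up in full are the ones you flagged: the interleaving claim behind $\sum\phi_i=2\pi$, and, in the convex-hull step, that a segment from $v$ to a point of $P_v$ meets the line $L$ through $\gamma$ only inside $\gamma$ itself (which follows since the segment lies in $\overline{B_{J_v}(v)}$ and $L\cap\overline{B_{J_v}(v)}=\gamma$).
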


\begin{proof}  Suppose first that $v$ is the initial vertex of a non-centered edge $e = V_x\cap V_y$, and let $\calh'$ be the half-space containing $e$ and bounded by the geodesic containing $x$ and $y$.  The circle $C$ with radius $J_v$ and center $v$ intersects $\partial\calh'$ in $\{x,y\}$.  If $\alpha$ is the angle at $v$ between $e$ and the geodesic arc to $x$, then by the Fact above, $\alpha \geq \pi/2$.  The hyperbolic law of cosines implies that $z\in C$ is in the interior of $\calh'$ if and only if the angle $\alpha'$ at $v$ between $e$ and the geodesic arc to $z$ is less than $\alpha$.  Thus if $w$ is the other endpoint of $e$ for such $z$:
$$\cosh d(z,w) = \cosh\ell(e)\cosh J_v - \sinh\ell(e)\sinh J_v\cos\alpha' $$
Since $\alpha' < \alpha$, comparing with (\ref{radius relation}) we find that $d(z,w) < J_w$, so the intersection of $C$ with the interior of $\calh$ is entirely contained in $B_{J_w}(w)$.  Therefore by Lemma \ref{vertex radius} it contains no points of $\cals$.  Since all vertices of $P_v$ are on $C$, it follows that $P_v$ is contained in the half-plane $\calh$ opposite $\calh'$, and hence that $v\notin\mathit{int}\,P_v$.  Thus $P_v$ is non-centered by \CenteredPoly.

Assume now that $P_v$ is not centered and apply \OneSide.  This produces an edge $\gamma$ of $P_v$ and a half-space $\calh$ containing $P_v$ and bounded by the geodesic containing $\gamma$, such that $v$ is in the half-space $\calh'$ opposite $\calh$.  \OneSide\ further asserts that $P\cup T(e,v)$ is a convex polygon; also, $\gamma$ is the unique longest edge of $P_v$, by \LongestSide.  We claim that the other endpoint $w$ of the geometric dual $e$ to $\gamma$ is further from $\calh$ than $v$, and hence that $e$ is non-centered with initial vertex $v$.

If $w$ is closer to $\calh$ than $v$ (this includes the possibility $w\in\calh$), then $e$ intersects the geodesic joining $v$ to $x$ in an angle of $\alpha\leq\pi/2$.  $J_w$ again satisfies (\ref{radius relation}), and if $C$ is the circle of radius $J_v$ centered at $v$, the hyperbolic law of cosines again implies that $z\in C$ is in $\mathit{int}\,\calh$ if and only if the angle at $v$ between $e$ and the geodesic joining $v$ to $z$ is less than $\alpha$.  As in the previous case, this implies that the distance from the other vertices of $P_v$ to $w$ is less than $J_w$, contradicting Lemma \ref{vertex radius}.  Therefore $w$ is further from $\calh$ than $v$.\end{proof}

If $v\in V^{(0)}$ is the initial vertex of a non-centered edge $e$, the fact that the geometric dual to $e$ is the \textit{unique} longest edge of $P_v$ immediately implies the following.

\begin{corollary}\label{one direction}  Let $V$ be the Voronoi tessellation determined by $\cals \subset\mathbb{H}^2$ closed and discrete.  No $v\in V^{(0)}$ is the initial vertex of more than one non-centered edge.  \end{corollary}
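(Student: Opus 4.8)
The plan is to argue by contradiction, extracting everything from the uniqueness clause in Lemma \ref{vertex polygon centered}. Suppose some $v \in V^{(0)}$ were the initial vertex of two distinct non-centered edges $e$ and $e'$ of $V$, and write $\gamma$ and $\gamma'$ for their respective geometric duals. Applying Lemma \ref{vertex polygon centered} to the pair $(e,v)$ shows that $P_v$ is non-centered and that $\gamma$ is the \emph{unique} longest edge of $P_v$. Applying the same lemma to $(e',v)$ shows, likewise, that $\gamma'$ is the unique longest edge of $P_v$. Since $P_v$ has exactly one longest edge, this forces $\gamma = \gamma'$.

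It then remains only to note that geometric duality is injective on the edge set of $V$: if $e = V_x \cap V_y$, then by definition its geometric dual $\gamma_{xy}$ is the geodesic arc with endpoint set $\{x,y\}$, so any edge of $V$ with the same geometric dual must be of the form $V_{x'}\cap V_{y'}$ with $\{x',y'\}=\{x,y\}$, hence must coincide with $e$ (this is also immediate from Lemma \ref{embedded P^1}). Therefore $e = e'$, contradicting the assumption that $e$ and $e'$ are distinct. There is no real obstacle here: all of the substantive work is contained in the ``unique longest edge'' assertion of Lemma \ref{vertex polygon centered}, so the corollary follows at once.
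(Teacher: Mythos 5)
Your proof is correct and takes essentially the same route as the paper, which derives the corollary directly from the uniqueness of the longest edge of $P_v$ established in Lemma \ref{vertex polygon centered}. Your additional remark that geometric duality is injective on edges of $V$ is a small but legitimate detail that the paper leaves implicit.
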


\begin{definition}  If $V$ is the Voronoi tessellation determined by $\cals\subset\mathbb{H}^2$ closed and discrete, let $V^{(1)}_n\subset V^{(1)}$ be the union of the non-centered edges.  If $F$ is a closed surface, we define $V^{(1)}_n$ in the same way for the Voronoi tessellation $V$ determined by $\cals\subset F$ finite.\end{definition}

Below, given a graph $G$ we will say that $\gamma = e_0\cup e_1\cup \hdots\cup e_{n-1}$ is an \textit{edge path} if $e_i$ is an edge of $G$ for each $i$ and $e_i\cap e_{i-1}\neq \emptyset$ for $i>0$.  An edge path $\gamma$ as above is \textit{reduced} if $e_i\neq e_{i-1}$ for each $i>0$, and $\gamma$ is \textit{closed} if $e_0\cap e_{n-1} \neq\emptyset$.

\begin{lemma}\label{tree components}  Let $\widetilde{V}$ be the Voronoi tessellation determined by $\widetilde{\cals}\subset\mathbb{H}^2$ closed and discrete.  Each component $T$ of $\widetilde{V}^{(1)}_n$ is a tree.  If $\widetilde{\cals} = \pi^{-1}(\cals)$, where $\pi\co\mathbb{H}^2\to F$ is the universal cover to a closed surface $F$, and $\cals\subset F$ is finite, then $T$ is finite, with a unique vertex $v_T$ such that $J_{v_T} > J_v$ for all $v\in T^{(0)}-\{v_T\}$, and $T$ projects homeomorphically to $F$.  \end{lemma}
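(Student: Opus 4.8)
The plan is to prove first, for an arbitrary closed discrete $\widetilde{\cals}\subset\mathbb{H}^2$, that $\widetilde{V}^{(1)}_n$ contains no simple cycle; since a connected graph without a simple cycle is a tree, this shows every component is a tree. Suppose $C=e_0\cup\cdots\cup e_{m-1}$ were a simple cycle, with $v_i=e_{i-1}\cap e_i$ (indices modulo $m$), so that $e_i$ has endpoints $v_i$ and $v_{i+1}$. Each $e_i$ is non-centered, hence oriented by Definition \ref{centered edge}, so has an initial vertex in $\{v_i,v_{i+1}\}$. The assignment $e_i\mapsto(\text{initial vertex of }e_i)$ into $\{v_0,\dots,v_{m-1}\}$ is injective, because by Corollary \ref{one direction} no vertex is the initial vertex of two distinct non-centered edges; being an injection between $m$-element sets, it is a bijection. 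Injectivity together with surjectivity forces, at each $v_{i+1}$, that exactly one of $e_i,e_{i+1}$ has initial vertex $v_{i+1}$; hence, after relabeling, $e_i$ runs from $v_i$ to $v_{i+1}$ for all $i$, a coherent orientation of $C$. But then Lemma \ref{increasing radius} gives $J_{v_0}<J_{v_1}<\cdots<J_{v_m}=J_{v_0}$, a contradiction.

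For the surface case, write $\widetilde{\cals}=\pi^{-1}(\cals)$ with $\pi\co\mathbb{H}^2\to F$ and $\cals$ finite. Since $F$ is compact and $\cals$ finite, $V=\pi(\widetilde{V})$ is a finite cell complex on $F$ by Lemma \ref{Voronoi Delaunay cover}, so $V^{(1)}_n$ is a finite graph, and $\pi^{-1}(V^{(1)}_n)=\widetilde{V}^{(1)}_n$ since centeredness of an edge is detected by any of its lifts (Definition \ref{centered edge}). I claim $V^{(1)}_n$ also contains no simple cycle. One reruns the previous paragraph on $F$: each edge of a hypothetical simple cycle is oriented; Corollary \ref{one direction} still forbids a vertex $v$ of $V$ from being the initial vertex of two non-centered edges (lift $v$ to $\mathbb{H}^2$, where $\pi$ restricts to an isometry of a neighborhood preserving non-centeredness and orientation, and apply the corollary there), so the cycle is coherently oriented exactly as before. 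Lift this cycle to a coherently oriented edge path in $\widetilde{V}^{(1)}_n$; by Lemma \ref{increasing radius} the radius $J$ strictly increases along it, yet its two endpoints are distinct lifts of a single vertex of $V$ and so have equal $J$, covering transformations being isometries preserving $\widetilde{\cals}$. This contradiction shows each component $T'$ of $V^{(1)}_n$ is a finite tree.

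Now fix a component $T$ of $\widetilde{V}^{(1)}_n$. Its image $\pi(T)$ is connected, so lies in a single component $T'$ of $V^{(1)}_n$, and the covering map $\pi\co\mathbb{H}^2\to F$ restricts to a covering $\pi^{-1}(V^{(1)}_n)=\widetilde{V}^{(1)}_n\to V^{(1)}_n$, hence to a covering $\pi|_T\co T\to T'$ (as $T$ is a component of $\pi^{-1}(T')$). Since $T$ is connected and $T'$ is simply connected (a tree), $\pi|_T$ has a single sheet and is therefore a homeomorphism; in particular $T\cong T'$ is finite and projects homeomorphically onto $T'\subset F$. For the distinguished vertex: $T$ is a finite tree, hence has $|T^{(0)}|-1$ edges, and summing out-degrees over $T^{(0)}$ counts each edge once (at its initial vertex, which lies in $T^{(0)}$); since each out-degree is at most $1$ by Corollary \ref{one direction}, exactly one vertex $v_T$ of $T$ has out-degree $0$ and every other has out-degree $1$. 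Following out-edges from an arbitrary $v\in T^{(0)}$ produces a reduced edge path in $T$ — reduced because one always departs along an out-edge, never the edge just traversed — which must terminate ($T$ being a finite tree), necessarily at $v_T$; along it $J$ strictly increases by Lemma \ref{increasing radius}, so $J_v<J_{v_T}$ for every $v\in T^{(0)}-\{v_T\}$. This yields both the existence and the uniqueness of $v_T$ as asserted.

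The step I expect to demand the most care is showing that the downstairs graph $V^{(1)}_n$ again contains no simple cycle, i.e.\ that the image $T'$ of $T$ in $F$ is a tree: this is precisely what pins $\pi|_T$ down to a homeomorphism rather than to a possibly infinite-to-one covering, and without it both the finiteness of $T$ and the homeomorphism claim would fail. The remaining arguments are bookkeeping with two structural facts: that each non-centered edge carries an ``increasing-$J$'' orientation (Lemma \ref{increasing radius}), and that each vertex is initial for at most one non-centered edge (Corollary \ref{one direction}).
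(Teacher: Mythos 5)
Your proof is correct, and the first half (no closed reduced edge paths in $\widetilde{V}^{(1)}_n$, via Corollary \ref{one direction} forcing a coherent orientation and Lemma \ref{increasing radius} then forcing $J$ to increase around the loop) is essentially the paper's argument, just packaged as a counting/bijection statement rather than a minimal-index argument. The second half, however, takes a genuinely different route. The paper never discusses the quotient graph $V^{(1)}_n$ directly: it observes that $\{J_v \mid v\in T^{(0)}\}$ takes only finitely many values (by equivariance of $J$ under deck transformations), picks $v_T$ maximizing $J$, proves uniqueness of the maximizer by the same edge-path argument, and only then deduces that $T$ misses its $\pi_1F$-translates — because a translate $\gamma.T=T$ would fix $v_T$, contradicting freeness of the action — whence $T$ embeds in $F$ and is finite. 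You instead rerun the no-cycle argument downstairs to show each component of $V^{(1)}_n$ is a finite tree, invoke the standard fact that a covering of a connected, simply connected, locally path-connected space by a connected space is a homeomorphism to get embeddedness and finiteness of $T$, and then locate $v_T$ by an out-degree count on the finite tree followed by the increasing-$J$ walk. Your route buys a cleaner conceptual explanation of why $T$ embeds (it covers a tree), at the cost of having to verify the downstairs cycle argument carefully — including the degenerate cases of loops and bigons in $V^{(1)}_n$, which your lifting argument does handle — whereas the paper's route gets embeddedness from uniqueness of $v_T$ plus freeness and so never needs the quotient graph to be a tree as an intermediate step. Both arguments are complete; yours is a legitimate alternative.
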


\begin{proof}  Suppose that a component $T$ of $\widetilde{V}^{(1)}_n$ admits closed, reduced edge paths, and let $\gamma = e_0\cup e_1\cup \hdots \cup e_{n-1}$ be a shortest such.  Orienting the $e_i$ as in Definition \ref{centered edge}, we may assume (after re-numbering if necessary) that $e_0$ points toward $e_0 \cap e_{n-1}$.  We claim that then $e_i$ points to $e_i\cap e_{i-1}$ for each $i>0$ as well.
Otherwise, for the minimal $i>0$ such that $e_i$ points toward $e_{i+1}$ it would follow that the vertex $e_i \cap e_{i-1}$ was the initial vertex of both $e_i$ and $e_{i-1}$, contradicting Corollary \ref{one direction}.

Let $v_0 = e_0\cap e_{n-1}\in V^{(0)}$, and for $i>1$ take $v_i = e_i\cap e_{i-1}$.  Applying Lemma \ref{increasing radius} to $e_i$ for each $i$, we find that $J_{v_i}> J_{v_{i+1}}$.  By induction this gives $J_{v_0} > J_{v_{n-1}}$; but since $e_{n-1}$ points to $v_{n-1}$ Lemma \ref{increasing radius} implies that $J_{v_{n-1}}$ must exceed $J_{v_0}$, a contradiction.  Thus no component of $\widetilde{V}^{(1)}_n$ admits closed, reduced edge paths, so each is a tree.

If $\widetilde{\cals} = \pi^{-1}(\cals)$ as in the hypotheses then the set $\{J_v\,|\,v\in T^{(0)}\}$ has only finitely many distinct elements, since $J_v = J_{v'}$ if $v$ and $v'$ project to the same point of $\cals$.  Thus take $v_T\in T^{(0)}$ with $J_{v_T}$ maximal.  We claim that $J_v < J_{v_T}$ for each $v\in T^{(0)}-\{v_T\}$.

If there exists $v\in T^{(0)}-\{v_T\}$ with $J_{v} = J_{v_T}$, let $\gamma = e_0\cup \hdots \cup e_{n-1}$ be a reduced edge path joining $v_T$ to $v$.  We may assume that $v_T$ is the endpoint of $e_0$ not in $e_1$, and $v\in e_{n-1} - e_{n-2}$.  Lemma \ref{increasing radius} implies that $e_0$ points toward $v_T$ and $e_{n-1}$ towards $v$.  Thus if $i>0$ is minimal such that $e_i$ does not point toward $e_{i-1}$, $v_i = e_i\cap e_{i-1}$ is the initial endpoint of $e_i$ and $e_{i-1}$, contradicting Corollary \ref{one direction}.  This proves the claim.

Since covering transformations exchange components of $\widetilde{V}^{(1)}_n$, if $\gamma.T\cap T \neq \emptyset$ for some $\gamma\in\pi_1 F - \{1\}$ then $\gamma.T = T$.  Since $J_{\gamma.v} = J_v$ for each $v\in T^{(0)}$, the claim above would imply that $\gamma.v_T = v_T$ for such $\gamma$, contradicting freeness of the $\pi_1 F$-action.  Therefore $T$ does not intersect its $\pi_1 F$-translates and thus projects homeomorphically to $F$.  It follows that $T^{(0)}$, and hence also $T$, is finite.\end{proof}

It is a basic fact that any two distinct points in a tree are joined by a unique reduced edge path, and that each such path is homeomorphic to an embedded interval.

\begin{lemma}\label{to the root}  Let $V$ be the Voronoi tessellation of a closed hyperbolic surface $F$ determined by $\cals\subset F$ finite, and let $T$ be a component of $V^{(1)}_n$.  For $v_T\in T^{(0)}$ as in Lemma \ref{tree components}, $P_{v_T}$ is centered.  For $v\in T^{(0)} - \{v_T\}$, $P_v$ is not centered, and the reduced edge path joining $v$ to $v_T$ inherits an orientation from each of its constituent edges, pointing from $v$ to $v_T$.  \end{lemma}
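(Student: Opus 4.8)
The plan is to deduce everything from Lemma \ref{vertex polygon centered}, Lemma \ref{increasing radius}, Corollary \ref{one direction}, and the structure of $T$ furnished by Lemma \ref{tree components}.

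First I would treat the root $v_T$. By Lemma \ref{vertex polygon centered}, $P_{v_T}$ is non-centered if and only if $v_T$ is the initial vertex of some non-centered edge $e$. Any such $e$ is contained in $V^{(1)}_n$ and meets $T$ at $v_T$, hence (being connected) lies in the component $T$, so its terminal vertex $w$ belongs to $T^{(0)}$. But Lemma \ref{increasing radius} then gives $J_{v_T}<J_w$, contradicting the property of $v_T$ from Lemma \ref{tree components} that $J_{v_T}\geq J_v$ for all $v\in T^{(0)}$. Thus $v_T$ is the initial vertex of no non-centered edge, and $P_{v_T}$ is centered.

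Now fix $v\in T^{(0)}-\{v_T\}$. Since $T$ is a tree there is a unique reduced edge path $\gamma=e_0\cup\cdots\cup e_{n-1}$ from $v$ to $v_T$; set $v_0=v$, $v_i=e_{i-1}\cap e_i$ for $1\leq i\leq n-1$, and $v_n=v_T$, so $e_i$ has endpoints $v_i,v_{i+1}$ and the $v_i$ are pairwise distinct (the path is embedded). Each $e_i$ is non-centered and so carries the orientation of Definition \ref{centered edge}; the crux is to show every $e_i$ points from $v_i$ to $v_{i+1}$. For $e_{n-1}$ this is immediate: $v_{n-1}\neq v_T$ gives $J_{v_{n-1}}<J_{v_T}$ by Lemma \ref{tree components}, so by Lemma \ref{increasing radius} the terminal vertex of $e_{n-1}$ is $v_T=v_n$. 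If some $e_i$ pointed toward $v_i$ instead, choose the largest such $i$; then $i\leq n-2$, and by maximality $e_{i+1}$ points toward $v_{i+2}$, so $v_{i+1}$ is its initial vertex, while $e_i$ pointing toward $v_i$ makes $v_{i+1}$ the initial vertex of $e_i$ as well. As $e_i\neq e_{i+1}$, this contradicts Corollary \ref{one direction}. Hence all $e_i$ point from $v_i$ to $v_{i+1}$, which is precisely the asserted orientation of $\gamma$ from $v$ to $v_T$; in particular $v=v_0$ is the initial vertex of the non-centered edge $e_0$, so $P_v$ is non-centered by Lemma \ref{vertex polygon centered}.

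The whole argument is bookkeeping on top of results already established. I expect the only delicate point to be the propagation of the orientation along $\gamma$ via Corollary \ref{one direction} — but this is exactly the device already used inside the proof of Lemma \ref{tree components}, so no new idea is required; one need only record that the $v_i$ are distinct so that the two possible orientations of $e_i$ are the only alternatives and Lemma \ref{increasing radius} applies cleanly at the end $e_{n-1}$.
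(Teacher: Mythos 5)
Your proof is correct and takes essentially the same approach as the paper: centeredness of $P_{v_T}$ via Lemma \ref{increasing radius} and maximality of $J_{v_T}$, then propagation of orientation along the path using Lemma \ref{increasing radius} at the far end and Corollary \ref{one direction} for the inductive step. (Your bookkeeping at the contradiction step — identifying $v_{i+1}$ as the common initial vertex — is in fact a touch more careful than the paper's phrasing.)
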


\begin{proof}  Since $v_T$ has maximal radius among $v\in T^{(0)}$, Lemma \ref{increasing radius} implies that it is the terminal endpoint of each edge of $T$ containing it.  Since every other edge of $V$ containing $v_T$ is centered, Lemma \ref{vertex polygon centered} implies that $P_{v_T}$ is centered.

For $v\in T^{(0)}-\{v_T\}$, let $e_0\cup\hdots\cup e_{n-1}$ be the reduced edge path joining $v$ to $v_T$, take $v_i = e_i\cap e_{i-1}$ for $1\leq i \leq n-1$, and let $v_0$ and $v_n$ be the endpoints of $e_0$ and $e_{n-1}$ not equal to $v_1$ and $v_{n-1}$, respectively.  Re-numbering if necessary, we may assume that $v_0 = v$ and $v_n = v_T$.  Then $e_{n-1}$ has terminal endpoint $v_n$.  If $e_i$ does not point toward $v_{i+1}$ for some $i < n-1$ then for the maximal such $i$, $v_i$ is the initial vertex of $e_i$ and $e_{i+1}$, contradicting Corollary \ref{one direction}.  The $e_i$ thus agree with the orientation on the edge path that points toward $v_T$.  In particular, $v$ is the initial vertex of $e_0$, so $P_v$ is not centered by Lemma \ref{vertex polygon centered}.  \end{proof}

\begin{definition}\label{centered dual graph}  Let $V$ be the Voronoi tessellation of a closed hyperbolic surface $F$ determined by $\cals\subset F$ finite.  Define the \textit{centered dual graph} $P^{(1)}_c$ to $V^{(1)}$ as: 
$$P^{(1)}_c = \bigcup\ \{\gamma_{xy}\,|\,\gamma_{xy}\ \mbox{is the geometric dual to a centered edge}\ e \subset V^{(1)} \}\subset P^{(1)}$$
Let $\widetilde{P}^{(1)}_c\subset\widetilde{P}^{(1)}$ be the preimage of $P^{(1)}_c$ in the universal cover.\end{definition}

It is easy to see that $\widetilde{P}^{(1)}_c$ is the union of geometric duals to centered edges of $\widetilde{V}$ (cf.~Definition \ref{centered edge}).  The centered dual graph has the structure of a subgraph of the one-skeleton $P^{(1)}$ of the Delaunay tessellation.  It exhibits the behavior that one expects from a dual:

\begin{lemma}\label{duality}  Let $V$ be the Voronoi tessellation of a closed hyperbolic surface $F$ determined by a finite set $\cals\subset F$.  If $e$ is a centered edge of $V$ and $\gamma \subset P^{(1)}_c$ its geometric dual, then $\gamma\cap V^{(1)} = \gamma\cap e$ is a single point.  Furthermore, $e\cap P^{(1)}_c = e\cap\gamma$. \end{lemma}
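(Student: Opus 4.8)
The plan is to work in the universal cover, where Lemma \ref{embedded P^1} and the circle/chord geometry of Lemma \ref{vertex radius} are directly available; the statement on $F$ then follows immediately since all the relevant objects are defined by lifting. So fix a centered edge $\tilde e = V_x\cap V_y$ of $\widetilde V$ with geometric dual $\tilde\gamma = \gamma_{xy}$, the geodesic arc from $x$ to $y$. Because $\widetilde P^{(1)}_c$ is the union of geometric duals to centered edges and each such dual meets $\widetilde V^{(1)}$ in a single point (which is what the first assertion claims), proving the first assertion for $\tilde e$ will give $\tilde e\cap\widetilde P^{(1)}_c = \tilde e\cap\tilde\gamma$ as well: no other dual arc in $\widetilde P^{(1)}_c$ can touch $\tilde e$. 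Thus the two assertions are not independent — the second is a formal consequence of the first once we also observe that $\tilde\gamma$ itself lies in $\widetilde P^{(1)}_c$ by definition.

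For the first assertion, the key point is that $\tilde\gamma$ meets $\widetilde V^{(1)}$ only along $\tilde e$. First, $\tilde\gamma$ does meet $\tilde e$: since $\tilde e$ is centered, by Definition \ref{centered edge} $\tilde\gamma$ intersects $\tilde e$ at a point of $\mathrm{int}\,\tilde e$, and since both $\tilde\gamma$ and the geodesic $\gamma_{xy}^\perp$ containing $\tilde e$ are geodesics meeting transversely (at the midpoint $m$ of $\gamma_{xy}$, at right angles, as recorded just before Lemma \ref{Voronoi poly}), $\tilde\gamma\cap\tilde e$ is exactly the single point $m$. So $\tilde\gamma\cap\tilde e = \{m\}$ is a single point. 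It remains to rule out $\tilde\gamma$ meeting any other edge $\tilde e' = V_{x'}\cap V_{y'}$ of $\widetilde V$. If it did, then $\tilde\gamma = \gamma_{xy}$ would be a geodesic arc with an interior intersection with the geodesic $\gamma_{x'y'}^\perp$; but $\gamma_{x'y'}^\perp$ is the perpendicular bisector of the \emph{dual edge} $\gamma_{x'y'}$ of $\widetilde P^{(1)}$, and here I would invoke Lemma \ref{embedded P^1}: the geometric duals $\gamma_{xy}$ and $\gamma_{x'y'}$ satisfy $\gamma_{xy}\cap\gamma_{x'y'} = \{x,y\}\cap\{x',y'\}$, so they are disjoint or share an endpoint. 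That controls how $\gamma_{xy}$ sits relative to $\gamma_{x'y'}$, but I still need to translate this into a statement about $\gamma_{x'y'}^\perp$.

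The cleanest route, and the step I expect to be the main obstacle, is the following local argument at a purported intersection point $p\in\mathrm{int}\,\tilde\gamma\cap\tilde e'$. Such a $p$ lies on $\gamma_{x'y'}^\perp$, so $d(p,x') = d(p,y')$, and $p\in\tilde e' = V_{x'}\cap V_{y'}$ means this common value is $\le d(p,z)$ for all $z\in\widetilde\cals$ — in particular $d(p,x')\le d(p,x)$ and $d(p,x')\le d(p,y)$. But $p\in\mathrm{int}\,\tilde\gamma$, i.e.\ $p$ is an interior point of the geodesic segment from $x$ to $y$, so $d(p,x) + d(p,y) = d(x,y) = \ell(\tilde\gamma)$ with both summands positive. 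Meanwhile $x\notin B_{J_{v'}}(v')$ where $v'$ is an endpoint of $\tilde e'$ and $d(v',x') = J_{v'}$ (Lemma \ref{vertex radius}); combined with $p$ lying on the perpendicular bisector through a point of $\tilde e'$, one gets $d(p,x)\ge$ (something bounding $d(p,x')$ from above is impossible). More precisely I would argue: being on $\tilde e'$ forces $p$ to be closer to $x'$ and $y'$ than to $x$ and $y$, so both $x$ and $y$ lie outside the open disk of radius $d(p,x')$ about $p$; but $x$ and $y$ are the endpoints of the chord $\tilde\gamma$ through $p$, and for a geodesic through the center-region this is impossible unless $p$ is an endpoint of $\tilde\gamma$ — contradicting $p\in\mathrm{int}\,\tilde\gamma$. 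This last comparison is essentially the ``chord" lemma already proved inside Lemma \ref{embedded P^1}, applied to the disk $B_{d(p,x')}(p)$ and the chord $\tilde\gamma$: one of $x$, $y$ would have to lie strictly inside it. Finally, transfer to $F$: an edge $e$ of $V$ and its dual $\gamma$ lift to $\tilde e$, $\tilde\gamma$ with $\tilde e$ centered iff $e$ is, so $\gamma\cap V^{(1)} = \gamma\cap e$ is a single point and $e\cap P^{(1)}_c = e\cap\gamma$ by projecting the corresponding statements downstairs, using that $\pi$ is a local isometry and $\widetilde V^{(1)}$, $\widetilde P^{(1)}_c$ are the full preimages.
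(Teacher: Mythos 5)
Your setup (pass to the universal cover, note that $\tilde\gamma\cap\tilde e$ is the single midpoint $m\in\mathit{int}\,\tilde e$, and deduce the second assertion formally from the first) is fine and matches the paper. The gap is in the main step, ruling out an intersection point $p\in\mathit{int}\,\gamma_{xy}\cap\tilde e'$ for another edge $\tilde e'=V_{x'}\cap V_{y'}$. From $p\in V_{x'}\cap V_{y'}$ you correctly get that $x$ and $y$ lie outside the open disk $B_r(p)$ with $r=d(p,x')>0$, but this is not a contradiction: an interior point of a geodesic segment can perfectly well have both endpoints at distance $\geq r$ from it (take $p$ the midpoint of a long segment). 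Nor can you invoke the claim inside Lemma \ref{embedded P^1}: that claim needs two circles with \emph{intersecting} chords, and the only natural chord of $\partial B_r(p)$ here is $\gamma_{x'y'}$ (its endpoints $x',y'$ lie on that circle), which by Lemma \ref{embedded P^1} does \emph{not} intersect $\gamma_{xy}$ --- so its hypothesis fails exactly where you need it. As written, the argument does not close, and you flagged this step yourself as the obstacle.

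The missing idea is convexity of the Voronoi cells, which is how the paper proceeds: writing $p=\tilde e\cap\gamma_{xy}$ (the midpoint), the subarc $[x,p]$ joins the interior point $x$ of the convex set $V_x$ to the boundary point $p$, hence lies in $V_x$ and meets $\partial V_x$ only at $p$; since $V_x\cap\widetilde V^{(1)}\subset\partial V_x$, the half-open arc $[x,p)$ misses $\widetilde V^{(1)}$ entirely, and likewise for $[y,p)$ in $V_y$. This gives $\gamma_{xy}\cap\widetilde V^{(1)}=\{p\}$ in two lines. If you want to stay with your metric bookkeeping, you can recover the same conclusion by showing directly that any $q\in[x,m)$ satisfies $d(q,x)<d(q,z)$ for every $z\in\widetilde\cals-\{x\}$ (use $d(q,z)\geq d(m,z)-d(m,q)\geq d(m,x)-d(m,q)=d(q,x)$ and analyze the equality case), so $q\in\mathit{int}\,V_x$; but some such argument must replace the disk-at-$p$ contradiction.
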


\begin{proof}  Let $\tilde{e}$ be a lift of $e$ to $\mathbb{H}^2$, and let $x$ and $y\in\widetilde{\cals}$ be such that $\tilde{e} = V_x\cap V_y$.  Then the geodesic arc $\gamma_{xy}$ joining $x$ to $y$ projects to $\gamma$.  Let $p = e\cap\gamma_{xy}$, and let $[x,p]$ and $[y,p]$ be the sub-arcs of $\gamma_{xy}$ joining $x$ and $y$, respectively, to $p$.  Since $V_x$ is convex with $x$ in its interior, it contains $[x,p]$, and $[x,p] \cap\partial V_x = \{p\}$.  The analogous assertion holds for $V_y$ and $[y,p]$, and so $\gamma_{xy} \cap\widetilde{V}^{(1)} = \{p\}$.  Since this holds for any lift of $e$, the first claim follows. 

The second claim follows from the first, since the set of centered edges of $V^{(1)}$ is in bijective correspondence with the edge set of $P^{(1)}_c$ by associating a centered edge to its dual.
\end{proof}

Figure \ref{non-centered edge} shows that the conclusion of Lemma \ref{duality} does not hold for non-centered edges.

Lemma \ref{tree components} implies that each cell $V_x$ of $V$ has at least one centered edge $e$, for otherwise some component of $V^{(1)}_n$ would contain the closed loop $\partial V_x$.  Since the geometric dual of such an edge $e$ is of the form $\gamma_{xy}$, it follows that the vertex set of $P^{(1)}_c$ is all of $\cals$.

The interior of each Voronoi cell $V_x$ is isometric to the interior of a compact, convex polygon in $\mathbb{H}^2$.  Therefore there is a ``geometric'' deformation retract $V_x -\{x\} \to \partial V_x \subset V^{(1)}$ along geodesic arcs connecting $x$ to points on $\partial V_x$.  Since $P^{(1)}_c$ intersects each $V_x$ in a collection of such arcs, we have:

\begin{lemma}  Let $V$ be the Voronoi tessellation of a closed hyperbolic surface $F$ determined by $\cals \subset F$ finite, and let $P^{(1)}_c$ be the centered dual graph to $V^{(1)}$.  There is a deformation retract $\rho_{\cals}\co F - P^{(1)}_c \to V^{(1)} - (V^{(1)}\cap P^{(1)}_c)$ such that for each $x\in\cals$, $\rho_{\cals}$ restricts on $V_x - P^{(1)}_c$ to the restriction of the corresponding geometric deformation retract.  \end{lemma}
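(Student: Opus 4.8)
\emph{Proof proposal.} The plan is to build $\rho_{\cals}$ cell by cell and then invoke the pasting lemma. For each $x\in\cals$, write $r_x\co V_x-\{x\}\to\partial V_x$ for the geometric deformation retract recalled above the lemma; it sends $p$ to the point where the geodesic ray from $x$ through $p$ first meets $\partial V_x$, and it comes with a homotopy $H_x\co (V_x-\{x\})\times[0,1]\to V_x-\{x\}$, rel $\partial V_x$, that slides each $p$ along that ray toward $\partial V_x$, so that $H_x(\cdot,0)=\mathrm{id}$, $H_x(\cdot,1)=r_x$, and $H_x(p,t)$ always lies on the geodesic segment from $p$ to $r_x(p)$. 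Since the vertex set of $P^{(1)}_c$ is all of $\cals$, we have $x\in P^{(1)}_c$, so $V_x-P^{(1)}_c\subset V_x-\{x\}$ and $r_x$, $H_x$ restrict to $V_x-P^{(1)}_c$.

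The first point to verify is that $H_x$ keeps $V_x-P^{(1)}_c$ inside $F-P^{(1)}_c$. By the remark preceding the lemma, $P^{(1)}_c\cap V_x$ is a union of geodesic segments each of the form $[x,q]$ with $q\in\partial V_x$, hence each contained in a single geodesic ray from $x$; by convexity of $V_x$ such a ray meets $V_x$ exactly in that segment. Two distinct geodesic rays from $x$ meet only at $x$, so if $p\in V_x-P^{(1)}_c$ then the ray from $x$ through $p$ meets $P^{(1)}_c\cap V_x$ at most at $x$ --- were it one of the defining rays, $p$ would lie on the corresponding segment and so in $P^{(1)}_c$. Thus the segment from $p$ to $r_x(p)$, which does not contain $x$ since $p\ne x$ and which lies in $V_x$ by convexity, is disjoint from $P^{(1)}_c$. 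Hence $H_x$ carries $(V_x-P^{(1)}_c)\times[0,1]$ into $V_x-P^{(1)}_c\subset F-P^{(1)}_c$, and in particular $r_x(V_x-P^{(1)}_c)\subset V^{(1)}-(V^{(1)}\cap P^{(1)}_c)$.

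It remains to glue. For distinct $x,y$ the intersection $V_x\cap V_y$ lies on $\gamma_{xy}^{\perp}$, which bounds the half-space containing $V_x$, so $V_x\cap V_y\subset\partial V_x\cap\partial V_y$; consequently $H_x(\cdot,t)$ and $H_y(\cdot,t)$ both restrict to the identity there for every $t$, and in particular the $r_x$ agree on overlaps. Since $\cals$ is finite, $V$ has finitely many $2$-cells and the $V_x$ cover $F$, so $\{V_x-P^{(1)}_c\}_{x\in\cals}$ is a finite closed cover of $F-P^{(1)}_c$; the pasting lemma then assembles the $r_x$ into a continuous $\rho_{\cals}\co F-P^{(1)}_c\to V^{(1)}-(V^{(1)}\cap P^{(1)}_c)$ and the $H_x$ into a continuous homotopy $H\co (F-P^{(1)}_c)\times[0,1]\to F-P^{(1)}_c$. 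By construction $H(\cdot,0)=\mathrm{id}$, $H(\cdot,1)$ is $\rho_{\cals}$ followed by the inclusion, and $H$ fixes $V^{(1)}-(V^{(1)}\cap P^{(1)}_c)$ pointwise for all $t$ (each such point lies in some $\partial V_x$ with $x$ in its interior, hence is fixed by $H_x$), so $\rho_{\cals}$ is the asserted deformation retract and restricts to $r_x$ on $V_x-P^{(1)}_c$ by construction. The one genuinely delicate step, which I would write out in full, is the invariance claim of the second paragraph that the radial tracks of points off $P^{(1)}_c\cap V_x$ stay off it; the overlap compatibility and the pasting are formal once one notes $V_x\cap V_y\subset\partial V_x$ and that there are finitely many cells.
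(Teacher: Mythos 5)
Your proposal is correct and follows essentially the same route as the paper: the paper's entire argument is the remark that the radial (geometric) retraction of each $V_x-\{x\}$ onto $\partial V_x$ restricts to the complement of $P^{(1)}_c$ because $P^{(1)}_c\cap V_x$ is a union of radial arcs from $x$, and your write-up just makes the ray/track invariance, the overlap compatibility on $\partial V_x$, and the finite pasting explicit.
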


Since $\rho_{\cals}$ is a deformation retract, it determines a one-to-one correspondence between the set of components of $F-P^{(1)}_c$ and the set of components of $V^{(1)} - (V^{(1)}\cap\Gamma)$.  We use this to give an initial description of the components of $F - \Gamma$.  It will be convenient to first introduce another definition.

\begin{definition}\label{frontier}  If $V$ is a graph and $T$ a subgraph, we define the \textit{frontier} $\calf$ of $T$ in $V$ to be the set of pairs $(e,v)$ where $e$ is an edge of $V$ that is not in $T$ and $v\in e \cap T$.\end{definition}

We may refer just to ``an edge'' of the frontier of $T$, without reference to its vertices, but note that $\calf$ has two elements for each $e$ not in $T$ with both endpoints in $T$.

\begin{lemma}\label{centered or not}  Let $V$ be the Voronoi tessellation of a closed hyperbolic surface $F$ determined by $\cals\subset F$ finite, and let $P^{(1)}_c$ be the centered dual graph to $V^{(1)}$.  Each component $U$ of $F- P^{(1)}_c$ is homeomorphic to an open disk, and either:\begin{enumerate}
\item  there is a unique $v\in V^{(0)}\cap U$, each edge of $V$ containing $v$ is centered, and the universal cover maps $P_{\tilde{v}}$ to $\overline{U}$ for (any) $\tilde{v}\in\pi^{-1}(V)$; or
\item  $U$ contains a unique component $T$ of $V^{(1)}_n$, and $V^{(0)} \cap U = T^{(0)}$.  
\end{enumerate}\end{lemma}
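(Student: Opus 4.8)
The plan is to transfer the problem to the finite graph $V^{(1)}$ using the deformation retract $\rho_{\cals}$, classify the components of $V^{(1)}$ after removing its intersection with $P^{(1)}_c$, and then pull the description back up to $F$. First I would identify $V^{(1)}\cap P^{(1)}_c$: by Lemma \ref{duality}, if $e$ is a centered edge of $V$ with geometric dual $\gamma$ then $e\cap P^{(1)}_c=e\cap\gamma$ is a single point $p_e\in\mathit{int}\,e$, and $\gamma\cap V^{(1)}=\{p_e\}$; hence $V^{(1)}\cap P^{(1)}_c$ is contained in $\bigcup\{\mathit{int}\,e : e\text{ centered}\}$, so it is a finite set consisting of one interior point of each centered edge, disjoint from $V^{(0)}$ and from every non-centered edge. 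Write $A\doteq V^{(1)}-(V^{(1)}\cap P^{(1)}_c)$; it is obtained from the finite graph $V^{(1)}$ by deleting one interior point of each centered edge, so every component of $A$ contains a vertex of $V^{(0)}$ (a centered edge splits into two half-open arcs, each retaining one endpoint, while a non-centered edge stays intact).

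Next I would classify the components of $A$. Given a component $U$ of $F-P^{(1)}_c$, the restriction of $\rho_{\cals}$ deformation-retracts $U$ onto the corresponding component $W\doteq A\cap U$ of $A$, and since $V^{(0)}\subset A$ this gives $V^{(0)}\cap U=W^{(0)}$. Fix $v\in W^{(0)}$. If $v$ lies on no non-centered edge, then every edge of $V$ at $v$ is centered hence cut, so $W$ is the ``star'' of $v$ --- the union of $v$ with the half-open arcs running from $v$ to the points $p_e$ --- whence $W^{(0)}=\{v\}$ and $W$ retracts to $v$. Otherwise $v$ lies on a non-centered edge, which sits in a unique component $T$ of $V^{(1)}_n$; since $T$ is connected and avoids the cut points, $T\subseteq W$, and conversely any $w\in W^{(0)}$ is joined to $v$ by a reduced edge path in $W$, which cannot enter a half-open arc (it would dead-end at the removed point $p_e$, contradicting reducedness) and so runs in $V^{(1)}_n$, forcing $w\in T^{(0)}$; thus $W^{(0)}=T^{(0)}$ and $W=T\cup\{\text{half-open arcs at the vertices of }T\}$ retracts to the tree $T$. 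In this second case $T$ is the only component of $V^{(1)}_n$ meeting $W$, since any such component shares a vertex with $W^{(0)}=T^{(0)}$.

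Finally I would read off the conclusions. In either case $W$, hence $U$, is contractible, so $U$ is a simply connected surface (open in $F$, hence without boundary); it is non-compact since $P^{(1)}_c\neq\emptyset$ (every $V_x$ has a centered edge, by Lemma \ref{tree components}), so $U$ is homeomorphic to an open disk by the classification of surfaces. The second case yields alternative (2) directly, with $V^{(0)}\cap U=T^{(0)}$. In the first case I would further identify $\overline U$ to get alternative (1): since all edges of $V$ at $v$ are centered, $P_v$ is centered by Lemma \ref{vertex polygon centered}, so $v\in\mathit{int}\,P_v$; by Lemma \ref{vertex polygon}, $\partial P_v$ is the union of the geometric duals to the (centered) edges at $v$, so $\partial P_v\subseteq P^{(1)}_c$ while $\mathit{int}\,P_v\cap P^{(1)}=\emptyset$. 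Thus $\mathit{int}\,P_v$ is a connected subset of $F-P^{(1)}_c$ containing $v\in U$, so $\mathit{int}\,P_v\subseteq U$; and $U$ being connected with $U\cap\partial P_v=\emptyset$ forbids $U$ from meeting $F-P_v$, whence $U=\mathit{int}\,P_v$ and $\overline U=P_v$. For any lift $\tilde v$ of $v$, the covering $\pi$ maps $\mathit{int}\,P_{\tilde v}$ homeomorphically onto $\mathit{int}\,P_v=U$ by Definition \ref{Delaunay down}, so $\pi(P_{\tilde v})=\overline{\pi(\mathit{int}\,P_{\tilde v})}=\overline U$.

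The step I expect to be the main obstacle is the component analysis of the second paragraph: making the reduced-path argument precise, so that reachability within $A$ coincides with reachability through $V^{(1)}_n$, and correctly accounting for the dangling half-open arcs attached to the vertices of a non-centered tree $T$ (or to a lone vertex $v$). Once that dichotomy is established, the remaining assertions are bookkeeping with the deformation retract together with one appeal to the classification of surfaces.
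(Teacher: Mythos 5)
Your proof is correct, and its skeleton is the paper's: both arguments push the problem through the deformation retract $\rho_{\cals}$ to $V^{(1)}-(V^{(1)}\cap P^{(1)}_c)$, and your component $W$ in case (2) is exactly the set $S_T$ the paper builds (the tree $T$ together with the half-open arcs $[v,e\cap\gamma)$ along its frontier edges), identified via Lemma \ref{duality} and the observation after it that $P^{(1)}_c\neq\emptyset$ (Lemma \ref{tree components}). You diverge in two sub-steps, both legitimately. In case (1) you get uniqueness of $v$ combinatorially, because the component of $v$ in the cut one-skeleton is just the star of $v$; the paper instead first shows $U=\pi(\mathit{int}\,P_{\tilde v})$ and derives uniqueness geometrically from the quadrilateral decomposition of the centered polygon (Lemma \ref{centered poly Voronoi}), so your route avoids that lemma but must then supply (as you do) the separation argument $F=\pi(\mathit{int}\,P_{\tilde v})\sqcup\pi(\partial P_{\tilde v})\sqcup(F-\pi(P_{\tilde v}))$ to conclude $\overline U=\pi(P_{\tilde v})$. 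For the disk statement, the paper lifts $U$ homeomorphically to $\mathbb{H}^2$ and quotes the Riemann mapping theorem, whereas you apply the classification of simply connected noncompact surfaces to $U$ directly; both are fine. The only step needing polish is the one you flag: a path in $W$ between vertices is not literally an edge path, since $W$ is not a subgraph; the precise statement is that any arc in $W$ that leaves a vertex into a centered edge $e$ must return to that same vertex, because the deleted point $e\cap\gamma\in\mathit{int}\,e$ separates the two endpoints of $e$ within $e$, so vertex-to-vertex connectivity in $W$ is realized by walks along non-centered edges. That is a routine repair, not a gap.
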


\begin{proof}  Suppose for $v\in V^{(0)}\cap U$ that every edge of $V$ containing $v$ is centered.  The same holds for $\tilde{v}\in\pi^{-1}(v)$, so Lemma \ref{vertex polygon centered} implies that the vertex polygon $P_{\tilde{v}}$ is centered, and hence that $\tilde{v}\in\mathit{int}\,P_{v}$.  By Lemma \ref{vertex polygon}, each edge of $P_{\tilde{v}}$ is the geometric dual to an edge containing $\tilde{v}$.  Thus $\partial P_{\tilde{v}}\subset \widetilde{P}^{(1)}_c$, so $\mathit{int}\,P_{\tilde{v}}$ is a component of $\mathbb{H}^2-\widetilde{P}^{(1)}_c$ containing $\tilde{v}$.  Since $\pi$ maps $\partial P_{\tilde{v}}$ into $P^{(1)}_c$ and $\mathit{int}\,P_{\tilde{v}}$ homeomorphically, $U = \pi(\mathit{int}\,P_{\tilde{v}})$.

In particular, since $\mathit{int}\,P_v$ is homeomorphic to an open disk, the same holds for $U$.  Let the edges of $\widetilde{V}$ containing $\tilde{v}$ by cyclically ordered $e_0,\hdots,e_{n-1}$ as in Lemma \ref{vertex polygon}, with geometric dual $\gamma_i$ for each $i$, and let $m_i = e_i\cap\gamma_i\in \mathit{int}\,e_i$.  For each the quadrilateral $Q_{x_i}$ constructed in Lemma \ref{centered poly Voronoi} is contained in $V_{x_i}$ since its vertices are (cf.~Definition \ref{cyclically ordered V}), so the conclusion there implies that $P_{\tilde{v}}\subset \bigcup_{i=0}^{n-1} Q_{x_i} \subset \bigcup_{i=0}^{n-1} V_{x_i}$.  Since $Q_{x_i}\cap\partial V_{x_i} = (e_i\cap P_{\tilde{v}})\cup (e_{i+1}\cap P_{\tilde{v}})$ it follows that $v = V^{(0)}\cap U$.

If $v\in V^{(0)}\cap U$ is a vertex of a non-centered edge, then $U$ contains the entire component $T$ of $V^{(1)}_n$ containing $v$, since $T$ does not intersect $P^{(1)}_c$.  Let $\widetilde{T}$ be a lift of $T$ to the universal cover.  For each $(e,v)$ in the frontier $\calf$ of $\widetilde{T}$, $e$ is centered, so its geometric dual $\gamma$ intersects it and lies in $\widetilde{P}^{(1)}_c$.  For $v\in e\cap \widetilde{T}$, let $[v,e\cap\gamma)$ refer to the component of $e-\gamma$ containing $v$. Then
$$ S_{\widetilde{T}} = T \cup \left(\ \bigcup\, \left\{[v,e\cap\gamma)\,|\,(e,v)\in\calf\right\}\, \right)  $$
is a connected open subset of $\widetilde{V}^{(1)} - \widetilde{P}^{(1)}_c$ with frontier in $\widetilde{P^{(1)}_c}$, and hence is a component of $\widetilde{V}^{(1)} - \widetilde{P}^{(1)}_c$.  Again we find that by construction, $S_{\widetilde{T}}$ deformation retracts to $\widetilde{T}$ (thus in particular is simply connected) and that $S_{\widetilde{T}} \cap \widetilde{V}^{(0)} = \widetilde{T}^{(0)}$.  Therefore $S_{\widetilde{T}}$ projects homeomorphically to the component $S_T$ of $V^{(1)} - P^{(1)}_c$ containing $T$.

Since $S_T$ is a component of $V^{(1)}-P^{(1)}_c$ contained in $U$, and $U$ is a component of $F-P^{(1)}_c$, $U = \rho_{\cals}^{-1}(S_T)$; in particular, $T^{(0)} = S_T\cap V^{(0)} = U\cap V^{(0)}$.  Since $\rho_{\cals}$ is a deformation retract, $U$ is simply connected and hence lifts homeomorphically to the component $\widetilde{U}$ of $\mathbb{H}^2-\widetilde{P}^{(1)}_c$ containing $S_{\widetilde{T}}$.  This is homeomorphic to a disk by, say, the Riemann mapping theorem, and therefore so is $U$.\end{proof}

To better understand the structure of complementary components to $P^{(1)}_c$ that contain points of $V^{(1)}_n$, we introduce a new tool.

\begin{definition}\label{edge-vertex tri dfn}  Let $V$ be the Voronoi tessellation of $\mathbb{H}^2$ determined by $\cals\subset F$ closed and discrete.  For $v\in V^{(0)}$, an edge $e$ of $V$ containing $v$, and $x,y\in \cals$ such that $e = V_x \cap V_y$, let $T(e,v)$ be the isosceles triangle with vertices $v$, $x$ and $y$.  

If $V$ is the Voronoi tessellation of a closed hyperbolic surface $F$ determined by $\cals\subset F$ finite, let $T(e,v) = \pi(T(e,\tilde{v}))$, where $\tilde{v}\in\pi^{-1}(v)$.  \end{definition}

The edges of $T(e,v)$ that join $v$ to $x$ and $y$, respectively, each have length $J_v$, and the third edge of $(e,v)$ is the geometric dual $\gamma_{xy}$ to $e$.  If $v$ and $w$ are opposite endpoints of $e$, then $T(e,v)$ and $T(e,w)$ share the edge $\gamma_{xy}$.  Whether their intersection is larger than this depends on whether $e$ is centered --- see Figure \ref{isosceles dichotomy}.  In particular:

\begin{figure}
\input{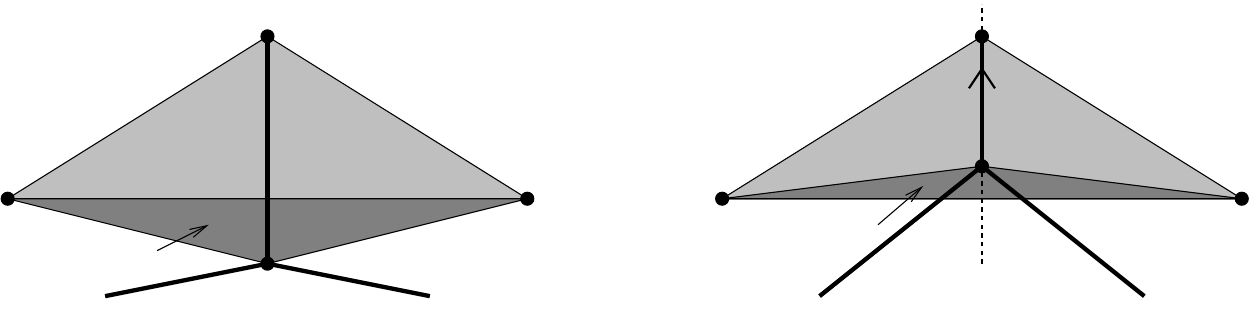_t}
\caption{Triangles $T(e,\tilde{v})$ and $T(e,\tilde{w})$ when $e$ is centered (on the left) and not centered.}
\label{isosceles dichotomy}
\end{figure}

\begin{lemma}\label{edge-vertex tri}  Let $V$ be the Voronoi tessellation determined by $\cals \subset\mathbb{H}^2$ closed and discrete.  If $e$ is a non-centered edge of $V$ with initial vertex $v$ and terminal vertex $w$, then $T(e,v) \subset T(e,w)$ and $T(e,v)\cap \partial T(e,w)$ is the geometric dual of $e$.  \end{lemma}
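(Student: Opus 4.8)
The plan is to work entirely inside the geodesic $\gamma_{xy}^{\perp}$ that carries $e$, using that both $v$ and $w$ lie on it (being equidistant from $x$ and $y$). Write $m$ for the midpoint of $\gamma_{xy}$; it is the unique point of $\gamma_{xy}^{\perp}\cap\gamma_{xy}$, and the function $p\mapsto d(p,x)$ restricted to $\gamma_{xy}^{\perp}$ is strictly decreasing then increasing with minimum at $m$. Non-centeredness of $e$ says $\inter e$ misses $\gamma_{xy}$, hence misses $m$, while Lemma \ref{increasing radius} gives $J_v<J_w$, i.e.\ $d(v,x)<d(w,x)$. Together these force the order $m,v,w$ along $\gamma_{xy}^{\perp}$, so $v$ lies on the segment $[m,w]$. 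Next I would note $[m,w]=\gamma_{xy}^{\perp}\cap T(e,w)$: this segment is the join of the apex $w$ of the isosceles triangle $T(e,w)$ to the midpoint $m$ of its base $\gamma_{xy}$ along the perpendicular bisector of that base; the intersection of $\gamma_{xy}^{\perp}$ with the convex set $T(e,w)$ is a segment containing $[m,w]$, and it cannot extend past $m$ (the far side of $\gamma_{xy}$ is disjoint from $T(e,w)$) or past $w$ (the interior angle of $T(e,w)$ at $w$ is less than $\pi$ and is bisected by the ray of $\gamma_{xy}^{\perp}$ from $w$ through $m$). Since $v\neq w$ (different radii) and, outside the degenerate case $v=m$, also $v\neq m$, we conclude $v$ lies in the relative interior of $[m,w]$, hence $v\in\inter T(e,w)$.

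For the containment $T(e,v)\subseteq T(e,w)$ I would simply observe that $T(e,v)$ is the convex hull of $\{v,x,y\}$, all three of which lie in the convex set $T(e,w)$. For the second assertion, $\gamma_{xy}\subseteq T(e,v)\cap\partial T(e,w)$ because $\gamma_{xy}$ is an edge of both triangles. For the reverse inclusion I would use that $v\in\inter T(e,w)$: for a convex body the half-open segment from any point of the closure to an interior point lies, apart from possibly its first endpoint, in the interior; applied to $[x,v]$ and $[y,v]$ this shows the two ``new'' edges of $T(e,v)$ meet $\partial T(e,w)$ only at $x$ and at $y$ respectively, each of which lies in $\gamma_{xy}$. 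Since also $\inter T(e,v)\subseteq\inter T(e,w)$ is disjoint from $\partial T(e,w)$, every point of $T(e,v)\cap\partial T(e,w)$ lies on $\partial T(e,v)=\gamma_{xy}\cup[x,v]\cup[y,v]$, and therefore in $\gamma_{xy}$, giving equality.

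The only genuinely delicate point is the identification $\gamma_{xy}^{\perp}\cap T(e,w)=[m,w]$ — in particular the claim that $\gamma_{xy}^{\perp}$ does not re-enter $T(e,w)$ beyond the apex $w$ — together with the bookkeeping needed to pin down which endpoint of $e$ is $v$ and to rule out the degenerate configurations; everything else is soft convex geometry. I would dispatch the apex claim by the interior-angle observation above, and handle the degenerate case $v=m$ (where $T(e,v)$ collapses onto the segment $\gamma_{xy}$) as a one-line aside, since there both assertions are immediate from $\gamma_{xy}\subseteq\partial T(e,w)$.
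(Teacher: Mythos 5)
Your proof is correct and follows essentially the same route as the paper's: non-centeredness of $e$ places the initial vertex $v$ inside (or, degenerately, on the base of) $T(e,w)$, and both assertions then follow from convexity of the triangle. Your version merely spells out the details the paper leaves implicit, and it has the small virtue of explicitly treating the boundary case $v=m$ (an edge meeting its geometric dual at an endpoint, as in Lemma \ref{dull DeBlois Delaunay}), which the paper's phrase ``the nearer vertex $v$ \ldots is in the interior of $T(e,w)$'' technically skips.
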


\begin{proof}  Since $e$ is non-centered it is contained on one side of the geodesic in $\mathbb{H}^2$ containing its geometric dual $\gamma$, so the nearer vertex $v$ on $e$ to $\gamma$ is in the interior of $T(e,w)$.  The result now follows from convexity.\end{proof}

\begin{lemma}\label{vertex polygon decomp}  Let $V$ be the Voronoi tessellation determined by $\cals\subset\mathbb{H}^2$ closed and discrete.  For $v\in V^{(0)}$:\begin{enumerate} 
\item  If $P_v$ is centered then $P_{v} = \bigcup\, \{ T(e,v)\,|\,v\in e\}$.
\item  Otherwise, $P_v\cap T(e_v,v) = \gamma_v$ and $P_{v}\cup T(e_v,v) = \bigcup\,\{ T(e',v)\,|\, e\neq e', v\in e'\}$, where $e_v$ is the non-centered edge of $V$ with initial vertex $v$ and $\gamma_v$ is its geometric dual.\end{enumerate}
Let $\gamma$ and $\gamma'$ be the respective geometric duals to $e$ and $e'\ne e$ containing $v$.  In case (1), $T(e,v)\cap T(e',v) = \{v\}$ if $\gamma\cap\gamma'=\emptyset$, and otherwise is an edge joining $v$ to $\gamma\cap\gamma'$.  This holds in case (2) for $e,e'\neq e_v$.\end{lemma}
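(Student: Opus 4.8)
The plan is to isolate one elementary \emph{fan decomposition} fact for convex polygons, and then feed in the structural description of $P_v$ from Lemma \ref{vertex polygon} and Lemma \ref{vertex polygon centered}.

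\emph{Fan fact.} Let $P\subset\mathbb{H}^2$ be a compact convex polygon and $v\in P$ (possibly on $\partial P$). Let $\gamma_1,\dots,\gamma_m$ be, in their cyclic order around $\partial P$, the edges of $P$ that do not contain $v$; since a line through an edge of a convex polygon meets the polygon in exactly that edge, $v$ lies on none of these lines, so the geodesic triangle $\Delta_i=\mathrm{conv}(\{v\}\cup\gamma_i)$ is nondegenerate. Then $P=\bigcup_{i=1}^m\Delta_i$, and for $i\ne j$, $\Delta_i\cap\Delta_j=\{v\}$ if $\gamma_i\cap\gamma_j=\emptyset$, while $\Delta_i\cap\Delta_j$ is the geodesic segment $[v,c]$ if $\gamma_i$ and $\gamma_j$ share the endpoint $c$. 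The proof is routine: $\Delta_i\subseteq P$ by convexity; conversely, for $q\in P\setminus\{v\}$ the geodesic ray from $v$ through $q$ leaves $P$ at a point $p\in\partial P$ with $q\in[v,p]\subseteq P$, and $p$ lies either on some $\gamma_i$ (so $q\in\Delta_i$) or on an edge through $v$, in which case $q$ lies on that edge, a side of $\Delta_1$ or $\Delta_m$. For the intersection statement, $\Delta_i$ lies in the convex angular sector $\Sigma_i$ at $v$ subtended by $\gamma_i$, the $\Sigma_i$ have pairwise disjoint interiors because each ray from $v$ into $P$ meets $\partial P$ once, and $\Sigma_i\cap\Sigma_j$ reduces to a single common bounding ray (through the shared vertex) exactly when $\gamma_i$ and $\gamma_j$ are consecutive, equivalently share an endpoint. (Cf.\ \IsoscelesDecomp\ for the cyclic case.)

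By Lemma \ref{vertex polygon}, the edges of $P_v$ are the geometric duals $\gamma_0,\dots,\gamma_{n-1}$ of the cyclically ordered edges $e_0,\dots,e_{n-1}$ of $V$ at $v$, with $\gamma_i$ joining $x_{i-1}$ to $x_i$, so $\mathrm{conv}(\{v\}\cup\gamma_i)=T(e_i,v)$. If $P_v$ is centered then $v\in\mathit{int}\,P_v$ by \CenteredPoly, no edge of $P_v$ contains $v$, and the fan fact applied to $P_v$ gives (1) and the intersection statement (with $\gamma_i\cap\gamma_j=\emptyset$ exactly for non-consecutive edges). If $P_v$ is not centered, write $e_v=e_k$, so $\gamma_v=\gamma_k$ joins $x_{k-1}$ to $x_k$; by Lemma \ref{vertex polygon centered} (via \OneSide, \LongestSide), $\gamma_v$ is the unique longest edge of $P_v$, it lies on a supporting line $\ell$ with $P_v$ on one side and $v$ on the opposite closed side, and $P_v':=P_v\cup T(e_v,v)$ is convex. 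As $T(e_v,v)$ has the vertex $v$ on the $v$-side of $\ell$ and its other two vertices on $\ell$, it lies in that closed side and meets $\ell$ in $\gamma_v$, while $P_v\cap\ell=\gamma_v$; hence $P_v\cap T(e_v,v)=\gamma_v$, the first assertion of (2). Moreover, since $d(v,x_{k-1})=d(v,x_k)=J_v$ and $\gamma_v$ is the strictly longest edge, no straight angle occurs in $P_v'$ at $x_{k-1}$ or $x_k$, so $v\in\partial P_v'$ and the edges of $P_v'$ not containing $v$ are exactly the $\gamma_i$ with $i\ne k$; their fan triangles are the $T(e_i,v)$ with $e_i\ne e_v$, and the fan fact applied to $P_v'$ delivers $P_v\cup T(e_v,v)=\bigcup_{e_i\ne e_v}T(e_i,v)$ together with the intersection formula for $e,e'\ne e_v$.

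The only real difficulty is bookkeeping: making the fan-covering argument robust when $v$ sits on $\partial P$ (a vertex, or the midpoint of an edge in the limiting ``diameter'' case) rather than in the interior, and checking that attaching $T(e_v,v)$ to $P_v$ does not merge or destroy an edge $\gamma_i$ flanking $\gamma_v$ --- which is precisely where the equidistance $d(v,x_{k-1})=d(v,x_k)=J_v$ and the uniqueness of the longest edge are used. Past those points everything is convexity together with the law of cosines already exploited in Lemma \ref{vertex polygon centered}.
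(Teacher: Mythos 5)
Your proof is correct, and it reaches the same decomposition as the paper but by a different supporting route. The paper's own proof is essentially a citation: it identifies each triangle $T_i$ of \IsoscelesDecomp\ with $T(e_i,v)$, uses Lemma \ref{vertex polygon centered} together with \LongestSide\ and \OneSide\ to get $T(e_v,v)\cap P_v=\gamma_v$ in the non-centered case, and then imports both decomposition statements (centered and non-centered) directly from \IsoscelesDecomp, which is tailored to cyclic polygons. You instead prove a general fan-decomposition fact for an arbitrary compact convex polygon with apex $v\in P$ (possibly on $\partial P$) and apply it once to $P_v$ and once to the convex polygon $P_v\cup T(e_v,v)$, drawing the same inputs from Lemma \ref{vertex polygon centered} (supporting line through $\gamma_v$, uniqueness of the longest edge, convexity of the union) for the non-centered case; your derivation of $P_v\cap T(e_v,v)=\gamma_v$ from the supporting line is the same argument the paper gets from \OneSide\ and \LongestSide. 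What your route buys is independence from the companion paper's cyclic-polygon lemma and a statement valid for any convex polygon; what it costs is exactly the bookkeeping you flag and then correctly dispatch: that $v$ may lie on $\partial(P_v\cup T(e_v,v))$ (including the degenerate case $v\in\gamma_v$, where $T(e_v,v)$ collapses to $\gamma_v$ and the claims hold trivially), and that no edge $\gamma_{k\pm1}$ merges with a side of $T(e_v,v)$ --- your observation that a straight angle at $x_{k\pm1}$ would force $d(v,x_{k\mp1}\,\mbox{or}\,x_{k+1})>J_v$, contradicting $d(v,x_{k-1})=d(v,x_k)=J_v$, settles this cleanly. So the argument stands as a self-contained substitute for the appeal to \IsoscelesDecomp.
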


\begin{proof}  If the edges of $P_v$ are enumerated $\gamma_0,\hdots,\gamma_i$, for each $i$ the triangle $T_i$ described in the hypothesis of \IsoscelesDecomp\ is identical to $T(e_i,v)$, where $e_i$ is the geometric dual to $\gamma_i$.  If $P_v$ is non-centered then Lemma \ref{vertex polygon centered} implies that $\gamma_v$ as defined above is its unique longest edge, so \LongestSide\ and \OneSide\ imply that $T(e_v,v)\cap P_v = \gamma_v$.  The decompositions of $P_v$ and $P_v\cup T(e,v)$ described above follow directly from \IsoscelesDecomp.\end{proof}

\begin{proposition}\label{dual decomp}  Let $V$ be the Voronoi decomposition of a closed hyperbolic surface $F$ determined by $\cals \subset F$ finite, and let $P^{(1)}_c$ be the centered dual graph to $V^{(1)}$.  For each component $U$ of $F-P^{(1)}_c$, $\overline{U} = \bigcup_{v\in Q\cap V^{(0)}} P_v$.  \end{proposition}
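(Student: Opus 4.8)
The plan is to pass to the universal cover and reduce the statement to a covering fact about Delaunay $2$-cells. Fix a component $U$ of $F-P^{(1)}_c$, and let $Q=\overline U$ be the corresponding $2$-cell of $P_c$. By Lemma~\ref{centered or not}, $U$ is an open disk, so the restriction of $\pi$ to any component $\widetilde U$ of $\pi^{-1}(U)\subseteq\mathbb{H}^2-\widetilde P^{(1)}_c$ is a homeomorphism onto $U$; the same lemma shows $\widetilde U\cap\widetilde V^{(0)}$ is finite (a single point in case~(1), and $\widetilde T^{(0)}$ for a lifted component $\widetilde T$ of $\widetilde V^{(1)}_n$ — finite by Lemma~\ref{tree components} — in case~(2)). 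Since $\partial\widetilde U\subseteq\widetilde P^{(1)}_c$, which is disjoint from $\widetilde V^{(0)}$ by Lemma~\ref{duality} (a geometric dual of a centered edge meets $\widetilde V^{(1)}$ only in the interior of that edge), we have $\overline{\widetilde U}\cap\widetilde V^{(0)}=\widetilde U\cap\widetilde V^{(0)}$. It thus suffices to prove $\overline{\widetilde U}=\bigcup\{P_{\tilde v}:\tilde v\in\widetilde U\cap\widetilde V^{(0)}\}$; granting this, the right side is a finite union of compact polygons, so $\overline{\widetilde U}$ is compact, $\pi(\overline{\widetilde U})=\overline U$, and since $\pi$ carries each $P_{\tilde v}$ to the Delaunay $2$-cell $P_{\pi(\tilde v)}$ and $\overline{\widetilde U}\cap\widetilde V^{(0)}$ onto $\overline U\cap V^{(0)}$ (as $\widetilde V^{(0)}=\pi^{-1}(V^{(0)})$), the assertion follows.

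By Lemmas~\ref{Voronoi Delaunay cover} and~\ref{vertex polygon} the $2$-cells $P_{\tilde v}$ tile $\mathbb{H}^2$ with pairwise disjoint interiors, locally finitely, and $P_{\tilde v}\cap\widetilde P^{(1)}=\partial P_{\tilde v}$; hence $\mathbb{H}^2-\widetilde P^{(1)}=\bigsqcup_{\tilde v}\mathit{int}(P_{\tilde v})$. Each $\mathit{int}(P_{\tilde v})$ is connected and disjoint from $\widetilde P^{(1)}\supseteq\widetilde P^{(1)}_c\supseteq\partial\widetilde U$, so if it meets $\widetilde U$ it is contained in $\widetilde U$. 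Therefore $\widetilde U-\widetilde P^{(1)}=\bigsqcup\{\mathit{int}(P_{\tilde v}):\mathit{int}(P_{\tilde v})\subseteq\widetilde U\}$, and since $\widetilde U\cap\widetilde P^{(1)}$ lies in the $1$-complex $\widetilde P^{(1)}$ it is nowhere dense, so this union is dense in $\widetilde U$. Taking closures, and using local finiteness of $\{P_{\tilde v}\}$ together with $P_{\tilde v}=\overline{\mathit{int}(P_{\tilde v})}$, we get $\overline{\widetilde U}=\bigcup\{P_{\tilde v}:\mathit{int}(P_{\tilde v})\subseteq\widetilde U\}$, and moreover $P_{\tilde v}\subseteq\overline{\widetilde U}$ whenever $\mathit{int}(P_{\tilde v})\subseteq\widetilde U$.

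So it remains to identify, for $\tilde v\in\widetilde V^{(0)}$, the condition $\mathit{int}(P_{\tilde v})\subseteq\widetilde U$ with $\tilde v\in\widetilde U$. Both follow once we show that $\tilde v$ and $\mathit{int}(P_{\tilde v})$ always lie in the same component of $\mathbb{H}^2-\widetilde P^{(1)}_c$ (note $\tilde v\notin\widetilde P^{(1)}_c$ by Lemma~\ref{duality}, so $\tilde v$ does lie in a component). Let $F_{\tilde v}=\bigcup\{T(e,\tilde v):\tilde v\in e\}$ be the fan of isosceles triangles at $\tilde v$. By Lemma~\ref{vertex polygon decomp}, $F_{\tilde v}=P_{\tilde v}$ if $P_{\tilde v}$ is centered, and $F_{\tilde v}=P_{\tilde v}\cup T(e_{\tilde v},\tilde v)$ — a convex polygon by Lemma~\ref{vertex polygon centered} — otherwise; here $e_{\tilde v}$ is the non-centered edge with initial vertex $\tilde v$, and $P_{\tilde v}\cap T(e_{\tilde v},\tilde v)$ is its geometric dual $\gamma_{\tilde v}$, the unique longest edge of $P_{\tilde v}$. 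In both cases $\tilde v\in F_{\tilde v}$, $\mathit{int}(P_{\tilde v})\subseteq\mathit{int}(F_{\tilde v})$, and $\mathit{int}(F_{\tilde v})$ meets every neighborhood of $\tilde v$; so it is enough to show $\mathit{int}(F_{\tilde v})\cap\widetilde P^{(1)}_c=\emptyset$, for then $\mathit{int}(F_{\tilde v})$ is a connected subset of a single component, namely that of $\tilde v$. We prove this by downward induction on $J_{\tilde v}$, using that $\{J_{\tilde v}:\tilde v\in\widetilde V^{(0)}\}$ is finite. If $J_{\tilde v}$ is maximal then $P_{\tilde v}$ is centered (otherwise the terminal vertex of $e_{\tilde v}$ has larger radius, by Lemma~\ref{increasing radius}), so $F_{\tilde v}=P_{\tilde v}$ and $\mathit{int}(F_{\tilde v})\cap\widetilde P^{(1)}=\emptyset$ by Lemma~\ref{vertex polygon}. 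In general, if $P_{\tilde v}$ is centered we conclude as before; otherwise let $\tilde w$ be the terminal vertex of $e_{\tilde v}$, so $J_{\tilde w}>J_{\tilde v}$. Then $\mathit{int}(F_{\tilde v})=\mathit{int}(P_{\tilde v})\cup\mathit{int}(T(e_{\tilde v},\tilde v))\cup\mathit{int}(\gamma_{\tilde v})$ (two convex polygons glued along a common edge). The first piece misses $\widetilde P^{(1)}$; the third misses $\widetilde P^{(1)}_c$ because $\gamma_{\tilde v}\notin\widetilde P^{(1)}_c$ (its dual $e_{\tilde v}$ is non-centered) and distinct Delaunay edges meet only at endpoints (Lemma~\ref{embedded P^1}); and $T(e_{\tilde v},\tilde v)\subseteq T(e_{\tilde v},\tilde w)\subseteq F_{\tilde w}$ by Lemma~\ref{edge-vertex tri}, so $\mathit{int}(T(e_{\tilde v},\tilde v))\subseteq\mathit{int}(F_{\tilde w})$, which misses $\widetilde P^{(1)}_c$ by the inductive hypothesis for $\tilde w$. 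This completes the induction.

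Combining, $\mathit{int}(P_{\tilde v})\subseteq\widetilde U$ exactly when $\tilde v\in\widetilde U$, so the identity of the first paragraph follows from the second, and projecting as in the first paragraph completes the proof. The main obstacle is the induction in the third paragraph: when $P_{\tilde v}$ is not centered its center $\tilde v$ lies outside $P_{\tilde v}$, so joining $\tilde v$ to $\mathit{int}(P_{\tilde v})$ within $\mathbb{H}^2-\widetilde P^{(1)}_c$ requires the auxiliary triangle $T(e_{\tilde v},\tilde v)$, the convexity of $P_{\tilde v}\cup T(e_{\tilde v},\tilde v)$, and the radius monotonicity of Lemma~\ref{increasing radius} to drive the recursion up to a centered vertex polygon.
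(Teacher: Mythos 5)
Your proof is correct, but it takes a genuinely different route from the paper's. The paper argues cell by cell: case (1) of Lemma \ref{centered or not} is quoted directly, and in case (2) it works inside a fixed $2$-cell, first showing that for each frontier pair $(e,v)$ of the tree $T$ the triangle $T(e,v)$ lies in $\widetilde{Q}$ (by tracking how $\widetilde{P}^{(1)}_c$ meets the two Voronoi cells containing it), deducing that $\mathit{int}\,P_v$ meets $\widetilde{U}$ and hence $P_v\subset\widetilde{Q}$, then propagating across adjacent vertices of $T$ via the shared geometric dual of a non-centered edge (which meets $\widetilde{P}^{(1)}_c$ only in its endpoints); the reverse inclusion is a separate short maximality argument quoting Lemma \ref{centered or not} once more. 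You instead prove a single global statement --- every $\tilde v\in\widetilde{V}^{(0)}$ lies in the same component of $\mathbb{H}^2-\widetilde{P}^{(1)}_c$ as $\mathit{int}\,P_{\tilde v}$ --- by downward induction on the finitely many values of $J_{\tilde v}$, using the fan $F_{\tilde v}=P_{\tilde v}\cup T(e_{\tilde v},\tilde v)$, the nesting $T(e_{\tilde v},\tilde v)\subset T(e_{\tilde v},\tilde w)$ of Lemma \ref{edge-vertex tri}, and Lemma \ref{increasing radius} to push up to a centered vertex polygon; combined with the tiling fact that $\mathbb{H}^2-\widetilde{P}^{(1)}$ is the disjoint union of the $\mathit{int}\,P_{\tilde v}$, this gives both inclusions of the proposition at once, so no separate maximality step is needed. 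The ingredient lemmas (\ref{vertex polygon}, \ref{vertex polygon centered}, \ref{vertex polygon decomp}, \ref{edge-vertex tri}, \ref{increasing radius}, \ref{duality}, \ref{embedded P^1}) are the same, but your induction runs over all of $\widetilde{V}^{(0)}$ stratified by circumradius rather than along a tree inside one cell; this buys a cleaner and more uniform proof of the proposition (and makes $\overline{U}\cap V^{(0)}=U\cap V^{(0)}$ transparent), whereas the paper's tree-based bookkeeping is exactly what gets reused later, e.g.\ for the re-tiling by the $P_v'$ in Section \ref{centered packing}. In a final write-up I would add one sentence each for two small points you elide: that a component of $\pi^{-1}(U)$ is in fact a full component of $\mathbb{H}^2-\widetilde{P}^{(1)}_c$ (connected sets map into single components, so $\pi^{-1}(U)$ is a union of components), and that the endpoints of $\gamma_{\tilde v}$ lie on $\partial F_{\tilde v}$, which is what makes your three-piece description of $\mathit{int}\,F_{\tilde v}$ cover all of the interior.
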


\begin{proof}  In case (1) of Lemma \ref{centered or not}, the desired conclusion is proved there, so we will assume that $U$ contains a component $T$ of $V^{(1)}_n$.  Let $\widetilde{T}$ be a lift of $T$ to $\mathbb{H}^2$, let $\widetilde{U}$ be the component of $\mathbb{H}^2 - \widetilde{\Gamma}$ containing $\widetilde{T}$, and let $\widetilde{Q}$ be the closure of $\widetilde{U}$ in $\mathbb{H}^2$.  Let $\calf$ be the frontier of $\widetilde{T}$ in $\widetilde{V}^{(1)}$.  For $(e,v)\in\calf$, we claim that $T(e,v)$ is contained in $\widetilde{Q}$.

Let $x$ and $y\in \widetilde{\cals}$ be such that $e = V_x \cap V_y$.  Then the side of $T(e,v)$ opposite $v$ is $\gamma_{xy} \subset \widetilde{P}^{(1)}_c$, since $e$ is centered, and $T(e,v)$ is equal to the union of its intersections with $V_x$ and $V_y$.  Let $[v,\gamma_{xy}]$ be the sub-arc of $e$ running from $v$ to $e\cap\gamma_{xy}$.  Lemma \ref{duality} implies that $[v,\gamma_{xy}] \cap \widetilde{P}^{(1)}_c = \{e\cap\gamma_{xy}\}$, so since $v\in\widetilde{T}\subset\widetilde{U}$ it follows that $[v,\gamma_{xy}]\subset \widetilde{Q}$.

$T(e,v)\cap V_x$ is the union of geodesic arcs in $V_x$ joining $x$ to points on $[v,\gamma_{xy}]$.  Since $\widetilde{P}^{(1)}_c\cap V_x$ is a union of geodesic arcs joining $x$ to points of $\widetilde{P}^{(1)}_c\cap\partial V_x$, and the only such point in $[v,\gamma_{xy}]$ is $e\cap\gamma_{xy}$, it follows that $T(e,v) \cap V_x$ intersects $\widetilde{P}^{(1)}_c$ only in $\gamma_{xy} \cap V_x$.  Since $\widetilde{U}$ contains a neighborhood of $v$ and has its frontier in $\widetilde{P}^{(1)}_c$, this implies that $T(e,v) \cap V_x\subset\widetilde{Q}$.  The analogous argument gives the analogous result for $T(e,v)\cap V_y$, and the claim is proved.

Lemma \ref{vertex polygon decomp} implies that the interior of $P_{v}$ intersects that of $T(e,v)$, so the claim above implies that the interior of $P_{v}$ intersects $\widetilde{U}$.  By Lemma \ref{vertex polygon}, the interior of $P_{v}$ is a component of $\mathbb{H}^2-\widetilde{P}^{(1)}$.  Therefore since $\widetilde{P}^{(1)}_c \subset \widetilde{P}^{(1)}$, $\mathrm{int}\,P_{v} \subset \widetilde{U}$, and hence $P_{v} \subset \widetilde{Q}$.

If $v$ and $w$ are adjacent in $\widetilde{T}$, then the corresponding vertex polygons $P_{v}$ and $P_{w}$ share an edge $\gamma$ of $\widetilde{P}$, the geometric dual to the edge of $T$ joining $v$ to $w$.  Since this edge is non-centered, $\gamma$ intersects $\widetilde{P}^{(1)}_c$ only at its endpoints.  Since $\mathit{int}\,P_{v}$ and $\mathit{int}\,P_{w}$ are each components of $\mathbb{H}^2 - \widetilde{P}^{(1)}_c$, it follows that $P_{v}\subset \widetilde{Q}$ if and only if $P_{w}\subset\widetilde{Q}$. 

We have already proved that $P_v\subset\widetilde{Q}$ for any $v\in\widetilde{T}^{(0)}$ such that $(e,v)\in\calf$ for some edge $e$, so since $\widetilde{T}$ is connected, the previous paragraph and an inductive argument show that $\bigcup_{v\in \widetilde{T}^{(0)}} P_{v}\subset \widetilde{Q}$.  Projecting to $F$ it follows that $\bigcup_{v\in T^{(0)}} P_v\subset \overline{U}$.

It remains to show that $\bigcup_{v\in T^{(0)}} P_{\tilde{v}}$ is not properly contained in $\overline{U}$.  If it were, then there would exist $v' \notin T^{(0)}$ such that $P_{v'} \subset \overline{U}$.  But then Lemma \ref{centered or not} implies that $v'$ is contained in a different component $U'$ of $F-P^{(1)}_c$, so by the above $P_{v'}\subset\overline{U}'$.  But since $P_{v'}$ has non-empty interior, this is a contradiction.\end{proof}

\begin{corollary}\label{centered dual polys}  Let $V$ be the Voronoi decomposition of a closed hyperbolic surface $F$ determined by $\cals \subset F$ finite, and let $P^{(1)}_c$ be the centered dual graph to $V^{(1)}$.  For each component $U$ of $F-P^{(1)}_c$, the completion $\bar{Q}$ of the induced path metric on $U$ is homeomorphic to a closed disk.  If $U$ contains a component $T$ of $V^{(1)}_n$ with frontier $\calf$ in $V^{(1)}$, then:
$$ \bar{Q} - U = \bigcup\ \{\gamma_{(e,v)}\,|\,(e,v)\in\calf\}, $$
where $\gamma_{(e,v)}$ is isometric to the geometric dual to $e$ for each $(e,v)\in\calf$.\end{corollary}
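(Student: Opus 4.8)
The plan is to lift everything to $\mathbb{H}^2$, use Proposition \ref{dual decomp} to identify the lift of $\overline U$ with a union of vertex polygons, and then realize $\bar Q$ as the abstract polygonal complex obtained by gluing those polygons to one another \emph{only} along the geometric duals of non-centered edges. First I would apply Lemma \ref{centered or not} to lift $U$ homeomorphically to a component $\widetilde U$ of $\mathbb{H}^2-\widetilde{P}^{(1)}_c$; since $\pi|_{\widetilde U}$ is simultaneously a homeomorphism and a local isometry it is an isometry of induced path metrics, so $\bar Q$ is isometric to the completion of $(\widetilde U,d_{\widetilde U})$. If $U$ is as in case (1) of Lemma \ref{centered or not}, then $\widetilde U=\mathrm{int}\,P_{\tilde v}$ for a centered (hence compact, convex) vertex polygon, so $d_{\widetilde U}$ restricts from $d_{\mathbb{H}^2}$ and the completion is $P_{\tilde v}$, a closed disk; the ``if'' clause is then vacuous. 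So I assume we are in case (2), with $\widetilde T\subset\widetilde U$ a lift of $T$. By Lemma \ref{tree components} $\widetilde T$ is a finite tree with at least one edge, and by Proposition \ref{dual decomp} the closure $\widetilde Q=\overline{\widetilde U}$ equals $\bigcup_{v\in\widetilde T^{(0)}}P_{\tilde v}$, a compact polygonal region.

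Next I would form $\hat Q$ from $\bigsqcup_{v\in\widetilde T^{(0)}}P_{\tilde v}$ by gluing, for each edge $e'=vw$ of $\widetilde T$, the polygons $P_{\tilde v}$ and $P_{\tilde w}$ along their common edge dual to $e'$ via the identity on that geodesic segment. Since $\widetilde T$ is a tree and each gluing identifies one properly embedded boundary arc of a polygon with $\geq 3$ sides, an induction on $\#\widetilde T^{(0)}$ shows $\hat Q$ is a closed disk. I would give $\hat Q$ the path metric from the hyperbolic metrics on the $P_{\tilde v}$ and consider the tautological map $\Phi\co\hat Q\to\mathbb{H}^2$ glued from the inclusions $P_{\tilde v}\hookrightarrow\mathbb{H}^2$; it is well defined because the identifications are along genuine intersections in $\mathbb{H}^2$. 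The crux is that $\Phi$ restricts to an isometry $\mathrm{int}\,\hat Q\to\widetilde U$: injectivity uses that the $\mathrm{int}\,P_{\tilde v}$ are pairwise disjoint and that $\widetilde{P}^{(1)}$ is embedded (Lemmas \ref{vertex polygon} and \ref{embedded P^1}), so the interiors of the glued dual edges are disjoint from everything else; the image is open (invariance of domain) and lies in $\widetilde U$, and it is all of $\widetilde U$ by Proposition \ref{dual decomp} together with the observation that a non-centered edge incident to a point of $\widetilde T^{(0)}$ must itself lie in $\widetilde T$; and $\Phi$ is a local isometry near a glued edge because the two convex polygons sharing it have disjoint interiors, hence lie on opposite sides of the geodesic through it. Composing with $\pi|_{\widetilde U}$ gives an isometry $U\cong\mathrm{int}\,\hat Q$, so $\bar Q$ is isometric to the completion of $\mathrm{int}\,\hat Q$, which (as $\hat Q$ is compact, hence complete, and $\mathrm{int}\,\hat Q$ is dense) is $\hat Q$: a closed disk.

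Finally, under this identification $\bar Q-U$ corresponds to $\partial\hat Q$, which consists exactly of those edges of the $P_{\tilde v}$ that were not glued, i.e.\ the edges dual to edges $e$ incident to some $\tilde v\in\widetilde T^{(0)}$ with $e\notin\widetilde T$; each such $e$ is centered, so these are precisely the geometric duals $\gamma_{(e,v)}$ over the pairs $(e,v)$ in the frontier $\calf$ of $T$ (one boundary arc per element of $\calf$, a dual to a centered edge with both endpoints in $\widetilde T^{(0)}$ contributing two). Each $\gamma_{(e,v)}$ is a single edge $\gamma_{xy}$ of a vertex polygon with $e=V_x\cap V_y$, hence isometric — via $\pi$, a local isometry carrying edge interiors isometrically to $F$ — to the geometric dual of $e$, giving the asserted description of $\bar Q-U$. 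I expect the main obstacle to be precisely this middle step: verifying that the completion is the ``unfolded'' complex $\hat Q$ rather than the raw closure $\widetilde Q$, so that each dual of a centered edge with both endpoints in $\widetilde T^{(0)}$ really is split into two boundary arcs — which is exactly what gluing only along non-centered duals is designed to encode.
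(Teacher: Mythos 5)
Your argument is correct, but it takes a more constructive route than the paper, whose own justification is only a sketch: there, $\bar{Q}$ is identified (topologically) with the complement in $U$ of a small neighborhood of its frontier, and the boundary description is obtained by observing that the natural map $\bar{Q}\to\widetilde{Q}$ onto the closure of a lift $\widetilde{U}$ is two-to-one exactly over duals of frontier edges with both endpoints in $T$ and injective elsewhere. You instead build an explicit model: starting from Proposition \ref{dual decomp} you glue the vertex polygons $P_{\tilde v}$, $\tilde v\in\widetilde{T}^{(0)}$, only along the geometric duals of the edges of $\widetilde{T}$, prove by induction over the tree that the resulting complex $\hat{Q}$ is a closed disk, and then check that the tautological map to $\mathbb{H}^2$ is an injective local isometry of $\mathit{int}\,\hat{Q}$ onto $\widetilde{U}$ (your supporting observations are the right ones: disjointness of polygon interiors and of edge interiors from Lemmas \ref{vertex polygon} and \ref{embedded P^1}, and the fact that a non-centered edge meeting $\widetilde{T}^{(0)}$ lies in $\widetilde{T}$, which gives surjectivity). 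This buys a transparent account of exactly the phenomenon the paper only gestures at -- the splitting of a dual of a centered edge with both endpoints in $T^{(0)}$ into two boundary arcs is built into the gluing pattern rather than inferred from a two-to-one map -- at the cost of a longer argument. One small point you should make explicit in the final step: identifying the completion of $U\cong\mathit{int}\,\hat{Q}$ with $\hat{Q}$ uses not just that $\hat{Q}$ is compact with dense interior, but that the intrinsic path metric of $\mathit{int}\,\hat{Q}$ agrees with the restriction of the path metric of $\hat{Q}$; this holds because a path through $\partial\hat{Q}$ can be pushed into the interior with arbitrarily small increase in length (off an edge into the adjacent half-disks, and around a boundary vertex along a short circular arc), each polygon being convex and the vertex links being arcs rather than circles. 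With that sentence added, your proof is complete and, if anything, more detailed than the one in the paper.
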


A brief proof sketch: $\bar{Q}$ is homeomorphic to the complement in $U$ of a small neighborhood of its frontier, itself a closed disk.  If $e$ is an edge of $V$ that is not in $T$ but has both endpoints in it, then its geometric dual $\gamma$ contributes two edges to $\bar{Q}$ --- one for each side --- but only one to the closure $Q$ of $U$.  This is why we use the induced path metric.  It holds even in $\mathbb{H}^2$: a lift of $U$ to $\widetilde{U}\subset\mathbb{H}^2-\widetilde{P}^{(1)}_c$ determines a map from $\bar{Q}$ to the closure $\widetilde{Q}$ of $\widetilde{U}$ that is two-to-one over each edge of $\widetilde{P}^{(1)}_c$ geometrically dual to a lift of $e$ as above, and injective elsewhere.

\begin{definition}\label{centered dual}  Let $V$ be the Voronoi tessellation of a closed hyperbolic surface $F$ determined by $\cals\subset F$ finite.  Define the \textit{centered dual decomposition} $P_c$ of $F$ to be the cell complex with $P^{(0)}_c = \cals$, $P^{(1)}_c$ as described in Definition \ref{centered dual graph}, and $2$-cells as in Corollary \ref{centered dual polys}.

If $U$ is a component of $F-P^{(1)}_c$ containing a component $T$ of $V^{(1)}_n$, we will refer to its closure $Q$ as a $2$-cell of $P_c$ with \textit{vertex set} $Q\cap\cals = \bigcup_{v\in T^{(0}} P_v\cap T^{(0)}$, and \textit{edge set} $Q - U = \bigcup_{e\in\calf} \{\gamma\ \mbox{geometrically dual to}\ e\}$, where $\calf$ is the frontier of $T$ in $V^{(1)}$ and ``$e\in\calf$'' means that $e$ has an endpoint $v$ such that $(e,v)\in\calf$.\end{definition}

%%%%%%%%%%%%%%%
\section{The centered dual versus a disk packing}\label{centered packing}
%%%%%%%%%%%%%%%

We show in this section that the centered dual decomposition determined by $\cals$ interacts well with a set of disjoint open hyperbolic disks of equal radius isometrically embedded about the points of $\cals$.  Recall from the beginning of \cite[\S 5]{DeB_cyclic_geom} that a polygon $P$ determines a \textit{sector} of a disk $U$ centered at one of its vertices $x$, with angle measure equal to $\angle_x P$, and that this sector contains $P\cap U$.  Figure \ref{more bad packing} illustrates an instance in which containment is proper, with the ``bad'' region shaded.

\begin{figure}
\input{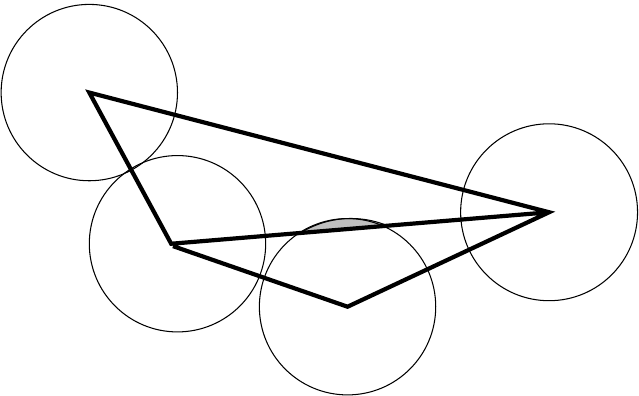_t}
\caption{When a triangle determines a sector that it does not entirely contain.}
\label{more bad packing}
\end{figure}

The \textit{radius-$R$ defect} $D_R(d_0,\hdots,d_{n-1})$, as defined in \Defect, describes the area of the region of a cyclic $n$-gon $P$ represented by $(d_0,\hdots,d_{n-1})\in\widetilde{\calAC}_n$ outside the union of a collection of disjoint radius-$R$ disks centered at its vertices, if each disk intersects $P$ in a full sector. If $P$ is centered then by \PackingVsDecomp, the full sectors hypothesis holds, and by \DefectFunction\ the area in question is $D_R(P)$.  For non-centered cyclic polygons the pathology of Figure \ref{more bad packing} may occur, but we show here that it does not for centered dual $2$-cells.

\begin{definition}\label{inj rad}  For $\cals\subset F$ finite, where $F$ is a closed hyperbolic surface, define the \textit{injectivity radius $i(\cals)$ of $F$ at $\cals$} to be the injectivity radius of the preimage $\widetilde{\cals}\subset\mathbb{H}^2$, as defined above Lemma \ref{Voronoi poly}, of $\cals$ under the universal cover $\mathbb{H}^2\to F$.\end{definition}

It is easy to see that $i(\cals)$ is the maximal $R$ such that a collection of open, radius-$R$ hyperbolic disks may be isometrically embedded in $F$ without overlapping, centered at the points of $\cals$.  In particular, if $\cals = \{x\}$ is a singleton, then $i(\cals)$ is the usual injectivity radius of $F$ at $x$.

\begin{proposition}\label{2-cell defect}  Let $V$ be the Voronoi tessellation and $P_c$ the centered dual decomposition determined by $\cals\subset F$ finite, where $F$ is a closed hyperbolic surface.  If $\{U_x\}$ is a set of open hyperbolic disks of radius $R\leq i(\cals)$ centered at the points of $\cals$, then for a $2$-cell $Q$ of $P_c$:
$$ \mathrm{area}\left(Q- \bigcup_{x\in\cals} (U_x\cap Q)\right) = D_R(Q) \doteq \sum_{v\in Q\cap V^{(0)}} D_R(\bd_v), $$
where $\bd_v$ represents $P_v$ in $\widetilde{\calAC}_{n_v}$ for each $v$ (with $n_v$ the valence of  $v$ in $V^{(1)}$).\end{proposition}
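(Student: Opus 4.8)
The plan is to pass to $\mathbb{H}^2$ and reduce everything to one sector-area computation per site. Fix a lift $\widetilde{Q}$ of $Q$; by Proposition \ref{dual decomp} (pulled back to the universal cover), $\widetilde{Q}=\bigcup_{v\in\widetilde{T}^{(0)}}P_v$ is a finite (Lemma \ref{tree components}) union of vertex polygons with pairwise disjoint interiors, and it is the closure of a component $\widetilde{U}$ of $\mathbb{H}^2-\widetilde{P}^{(1)}_c$. Since $\pi$ carries the interior of each cell isometrically onto its image and is injective on $\widetilde{Q}$ up to null sets (the only overlaps, along doubled edges as in Corollary \ref{centered dual polys}, have measure zero), and since $R\le i(\cals)$ makes the open disks $B_R(\tilde{x})$, $\tilde{x}\in\widetilde{\cals}$, pairwise disjoint with $B_R(\tilde{x})\subset\inter V_{\tilde{x}}$ (Lemma \ref{Voronoi poly}, using that a point of $\partial V_{\tilde{x}}$ within $R$ of $\tilde{x}$ would force two sites at distance $<2R$), it suffices to show $\mathrm{area}(B_R(\tilde{x})\cap\widetilde{Q})=\theta_{\tilde{x}}(\cosh R-1)$ for every $\tilde{x}\in\widetilde{\cals}$, where $\theta_{\tilde{x}}$ is the sum of $\angle_{\tilde{x}}P_v$ over those $v\in\widetilde{T}^{(0)}$ with $\tilde{x}$ a vertex of $P_v$ (so $\theta_{\tilde{x}}=0$ if $\tilde{x}\notin\widetilde{Q}$). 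Granting this and summing over $\tilde{x}$, the left side of the proposition becomes $\mathrm{area}(\widetilde{Q})-\sum_{\tilde{x}}\theta_{\tilde{x}}(\cosh R-1)=\sum_{v}\big[\mathrm{area}(P_v)-\sum_{\tilde{x}\ \mathrm{vtx}\ P_v}\angle_{\tilde{x}}P_v(\cosh R-1)\big]$; since the vertices of $P_v$ are distinct points of $\widetilde{\cals}$ the vertex disks are disjoint, no sector overlaps occur, and the bracket is exactly $D_R(\bd_v)$ by the definition of the defect (\Defect; this is \DefectFunction, via \PackingVsDecomp, when $P_v$ is centered).

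The heart of the matter is the estimate: \emph{for $\tilde{x}\in\widetilde{\cals}$ and $\gamma$ an edge of $\widetilde{P}^{(1)}_c$ not incident to $\tilde{x}$, one has $d(\tilde{x},\gamma)\ge R$}. I would prove this by writing $\gamma=\gamma_{yz}$ for the geometric dual of a \emph{centered} edge $e=V_y\cap V_z$ of $\widetilde{V}$ (by Definition \ref{centered dual graph} every edge of $\widetilde{P}^{(1)}_c$ has this form). The midpoint $m$ of $\gamma_{yz}$ is the unique point of $\gamma_{yz}$ lying on the perpendicular bisector $\gamma_{yz}^{\perp}\supset e$, so centeredness of $e$ forces $m\in\inter(e)\subset V_y\cap V_z$; as each half-space $\{p:d(p,y)\le d(p,w)\}$ is convex and contains $y$ and $m$, the subarc $[y,m]$ lies in $V_y$, and symmetrically $[m,z]\subset V_z$. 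For $q\in[y,m]$ this gives $d(\tilde{x},q)\ge d(y,q)$ (since $\tilde{x}\ne y$ is a site), which is $\ge R$ unless $d(y,q)<R$, in which case $d(\tilde{x},q)\ge d(\tilde{x},y)-d(y,q)>2R-R=R$ because $\tilde{x},y$ are distinct points of $\widetilde{\cals}$; the case $q\in[m,z]$ is the same.

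Given the estimate, the sector computation is bookkeeping: an edge of $\widetilde{P}^{(1)}_c$ incident to $\tilde{x}$ is a geodesic segment issuing from $\tilde{x}$ whose far endpoint is another site, hence at distance $>R$, so inside $B_R(\tilde{x})$ the graph $\widetilde{P}^{(1)}_c$ is a finite set of radii at $\tilde{x}$. These cut $B_R(\tilde{x})$ into open sectors, each lying in a single component of $\mathbb{H}^2-\widetilde{P}^{(1)}_c$; a sector lying in $\widetilde{U}$ lies in $\widetilde{Q}$, while a sector in any other component $W$ meets $\widetilde{Q}=\overline{\widetilde{U}}$ only in $W\cap\partial\widetilde{U}\subset\widetilde{P}^{(1)}_c$, a null set. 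Finally the sectors lying in $\widetilde{U}$ subtend total angle exactly $\theta_{\tilde{x}}$: a cell $P_v$ with $v\in\widetilde{T}^{(0)}$ and $\tilde{x}$ a vertex has $\inter P_v\subset\widetilde{U}$ (it is a component of $\mathbb{H}^2-\widetilde{P}^{(1)}$ contained in $\widetilde{Q}$), and conversely any cell whose cone at $\tilde{x}$ lies in a $\widetilde{U}$-sector has its interior in $\widetilde{U}\subset\widetilde{Q}$, hence belongs to $\widetilde{T}^{(0)}$. Thus $\mathrm{area}(B_R(\tilde{x})\cap\widetilde{Q})=\theta_{\tilde{x}}(\cosh R-1)$, which completes the reduction.

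The genuinely delicate point — and the reason the proposition is not a triviality — is that one \emph{cannot} argue cell by cell: a single non-centered vertex polygon $P_v\subset Q$ can exhibit the pathology of Figure \ref{more bad packing}, with $B_R(\tilde{x})$ poking out of $P_v$ across $P_v$'s unique longest edge, which by Lemma \ref{vertex polygon centered} is the dual of a non-centered Voronoi edge. What rescues the global statement is precisely that this longest edge is \emph{not} in $\widetilde{P}^{(1)}_c$: the non-centered Voronoi edge dual to it joins two vertices of the same tree $\widetilde{T}$, so the edge separates two cells of $\widetilde{Q}$, crossing it keeps one inside $\widetilde{Q}$, and it contributes nothing to the count of radii of $B_R(\tilde{x})$; whereas every edge that genuinely bounds $\widetilde{Q}$ is the dual of a centered edge, controlled by the estimate. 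I expect the only real work to be that estimate; making precise the structural claims used ($\widetilde{Q}=\overline{\widetilde{U}}$ with $\widetilde{U}$ a component of $\mathbb{H}^2-\widetilde{P}^{(1)}_c$, and the identification of $\widetilde{U}$'s angle at $\tilde{x}$ with $\theta_{\tilde{x}}$) is routine from Lemmas \ref{vertex polygon}, \ref{tree components}, \ref{vertex polygon centered} and Proposition \ref{dual decomp}.
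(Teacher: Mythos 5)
Your proof is correct, but it takes a genuinely different route from the paper's. The paper devotes all of Section \ref{centered packing} to a tile-by-tile argument: it re-tiles $Q$ by the modified polygons $P_v'$ of Definition \ref{vertex poly prime} (built by shuffling the isosceles triangles $T(e,v)$ along the tree), proves in Lemma \ref{star sector} that each disk meets each $P_v'$ in a full sector, and then runs an induction outward from $v_T$ with the telescoping sum of Lemma \ref{restriction defect}, using the ``restricted'' disks $U_x|_{Q_k'}$ to excise incidental intersections. You instead prove one global metric estimate --- the open $R$-disk about a site is disjoint from every edge of $\widetilde{P}^{(1)}_c$ not incident to that site, which you get cleanly from centeredness of the dual Voronoi edge (forcing the midpoint $m$ into $V_y\cap V_z$), convexity of Voronoi cells, and the injectivity-radius bound --- and conclude that $B_R(\tilde{x})\cap\widetilde{P}^{(1)}_c$ is a union of radii, so that $B_R(\tilde{x})\cap\widetilde{Q}$ is a union of full sectors of total angle $\sum_{v}\angle_{\tilde{x}}P_v$. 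The proposition then reduces to the identity $D_R(\bd_v)=\mathrm{area}(P_v)-(\cosh R-1)\sum_{x}\angle_{x}P_v$, valid for non-centered cyclic polygons as well; this is the one external fact you lean on, and it does follow from \Defect\ together with \IsoscelesDecomp\ and \FullSector\ (it is exactly what fails to coincide with ``area outside the disks'' in Figure \ref{more bad packing}, but it is the formula the defect function computes). Your diagnosis of why the cell-by-cell argument fails and the global one succeeds --- the unique longest edge of a non-centered $P_v$ is dual to a non-centered Voronoi edge and hence is interior to $\widetilde{Q}$, so it contributes no radius to $B_R(\tilde{x})\cap\widetilde{P}^{(1)}_c$ --- is exactly the right point. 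What the paper's approach buys is the finer information of Lemma \ref{restriction defect} (defect sums over partial subtrees $Q_k'$), which your argument does not produce but which the paper does not use elsewhere; what yours buys is brevity and a transparent statement of the only metric fact actually needed, essentially a global version of \PackingVsDecomp\ for the centered dual graph.
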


As we remarked above, this does not necessarily hold for Delaunay $2$-cells that are non-centered; however, for those that are it follows directly from \DefectFunction.  Lemma \ref{centered or not} implies that each $2$-cell of $P_c$ is either a centered Delaunay polygon or contains a component $T$ of $V^{(1)}_n$, so it is this latter case that we will address in the remainder of the section.

A centered dual $2$-cell is by definition equal to the union of vertex polygons $P_v$ for $v\in Q\cap V^{(0)}$.  It will be convenient for our purposes to re-tile $Q$ by a new set of ``polygons.''

\begin{definition}\label{vertex poly prime} Let $\widetilde{V}$ be the Voronoi tessellation determined by $\widetilde{\cals}\subset \mathbb{H}^2$ closed and discrete, and let $T$ be a component of $\widetilde{V}^{(1)}_n$.  For $v\in T^{(0)} - \{v_T\}$, let $e_v$ be the edge of $T$ with initial vertex $v$.  For $v\in T^{(0)}$, define $v+1$ to be the set of $w\in T^{(0)}$ such that $v$ is the terminal vertex of $e_w$ (oriented as in Definition \ref{centered edge}). 

Define $P_{v_T}' = \overline{P_v - \left(\bigcup_{w\in v+1} T(e_w,w)\right)}$, where $T(e_w,w)$ is as in Definition \ref{edge-vertex tri dfn}, and if $v \ne v_T$ let $P_v' = \overline{(P_v\cup T(e_v,v)) - \left(\bigcup_{w\in v+1} T(e_w,w)\right)}$ (here the overline denotes the closure in $\mathbb{H}^2$).\end{definition}

Although $P_v'$ is not necessarily convex, its angle at a vertex $x$ of $P_v$ is clearly at most that of $P_v\cup T(e_v,v)$, so since $P_v\cup T(e_v,v)$ is convex (cf.~Lemma \ref{vertex polygon centered}) it makes sense to talk about ``the sector determined by $P_v'$'' of a disk $U$ centered at $x$.  The key advantage of the $P_v'$ is that they behave well with respect to such disks.

\begin{lemma}\label{star sector} Let $V$ be the Voronoi tessellation and $P_c$ the centered dual decomposition determined by $\widetilde{\cals} = \pi^{-1}(\cals)$, where $\pi\co\mathbb{H}^2\to F$ is the universal cover of a closed hyperbolic surface and $\cals\subset F$ is finite.  Fix a component $T$ of $V^{(1)}_n$, $v\in T^{(0)}$, and $R\leq i(\cals)$.  A disk $U_x$ of radius $R$ centered at a vertex $x$ of $P$ intersects $P_v'$ in the sector determined by $P_v'$.  For $w\in v+1$, $U_x\cap T(e_w,w)\neq\emptyset$ if and only if $x$ is in the geometric dual $\gamma_w$ to $e_w$.\end{lemma}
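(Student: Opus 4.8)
The plan is to deduce everything from the single fact that $R\le i(\cals)$ forces $U_x\subseteq\mathit{int}\,V_x$ for each $x\in\widetilde{\cals}$: if $d(p,x)<R$ then $d(p,x)<\tfrac12 d(x,z)\le d(p,z)$ for every $z\in\widetilde{\cals}-\{x\}$. In particular the disks $\{U_z\}_{z\in\widetilde{\cals}}$ are pairwise disjoint.

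I would prove the last sentence of the statement first. The ``if'' direction is immediate: if $x\in\partial\gamma_w$ then $x$ is a vertex of $T(e_w,w)$, which the open disk $U_x$ meets. For the ``only if'', suppose $x$ is a vertex of $P_v$ with $x\notin\partial\gamma_w$; I claim $U_x\cap T(e_w,w)=\emptyset$. Since $w$ is the initial and $v$ the terminal vertex of the non-centered edge $e_w$, Lemma \ref{edge-vertex tri} gives $T(e_w,w)\subseteq T(e_w,v)$, and by Definition \ref{edge-vertex tri dfn} together with Lemma \ref{vertex polygon} the latter is the isosceles triangle with apex $v$ and base the edge $\gamma_w=\gamma_{ab}$ of $P_v$, where $\{a,b\}=\partial\gamma_w$. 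Let $m$ be the midpoint of $\gamma_{ab}$; then $[v,m]\subseteq\gamma_{ab}^{\perp}$ and splits $T(e_w,v)$ into $\triangle(v,a,m)\cup\triangle(v,b,m)$. Pick $p\in T(e_w,w)$, lying say in $\triangle(v,a,m)$. Then $d(p,a)\le d(p,x)$: the bisector $\gamma_{ax}^{\perp}$ passes through $v$ (both $a$ and $x$ lie at distance $J_v$ from $v$), its $a$-side contains $a$ trivially and contains $m$ because $a$ is a nearest vertex of $P_v$ to the edge-midpoint $m$ (on the circumscribing circle $d(m,\cdot)$ is minimized at the midpoint of the minor arc $ab$, which is the arc subtended by $\gamma_{ab}$ since $\gamma_{ab}\neq\gamma_v$ when $v\neq v_T$ and $P_{v_T}$ is centered otherwise, and every other vertex is at larger angular distance from that midpoint), and $v$ itself lies on $\gamma_{ax}^{\perp}$, so convexity of $\triangle(v,a,m)$ does the rest. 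Hence either $d(p,a)\ge R$, so $d(p,x)\ge R$, or $d(p,a)<R$, so $p\in U_a$, disjoint from $U_x$; either way $p\notin U_x$.

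For the first assertion, write $\Pi=P_{v_T}$ if $v=v_T$ and $\Pi=P_v\cup T(e_v,v)$ otherwise; $\Pi$ is convex (Lemma \ref{vertex polygon centered}; trivially if $v=v_T$) and $P_v'=\overline{\Pi-\bigcup_{w\in v+1}T(e_w,w)}$. Exactly as above, $\Pi$ is tiled by the ``closest-vertex regions'' $Q_z$ — each a union of two right triangles $\triangle(v,z,m')$ lying inside the angular wedge of $\Pi$ at $z$ — as $z$ ranges over the vertices of $P_v$; using disjointness of the $U_z$, a point $p\in U_x\cap P_v'\subseteq\mathit{int}\,V_x\cap\Pi$ must lie in $Q_x$ (so in particular $x$ is a vertex of $P_v$; otherwise $U_x\cap P_v'=\emptyset$ and the claimed sector is empty). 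The same disjointness argument shows $U_x$ cannot cross an edge of $\Pi$ non-adjacent to $x$, so $U_x\cap\Pi$ is the sector of $\Pi$ at $x$; removing the triangles $T(e_w,w)$ affects $\Pi$ near $x$ only when $x\in\partial\gamma_w$, in which case it trims the wedge of $\Pi$ at $x$ by the wedge of $T(e_w,w)$ at $x$, and since $p$ avoids the interiors of these triangles it lies in the trimmed sector, which is exactly the sector determined by $P_v'$. For the reverse inclusion, that sector lies in the wedge of $\Pi$ at $x$, avoids each $T(e_w,w)$ (which sits inside its own wedge at $x$), and does not escape $\Pi$, hence lies in $\Pi-\bigcup_w T(e_w,w)\subseteq P_v'$.

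The step I expect to be the real obstacle is the part hidden under ``exactly as above'': when $v\neq v_T$ the polygon $P_v\cup T(e_v,v)$ is convex but not cyclic, so Lemma \ref{centered poly Voronoi} does not literally apply and one must construct the closest-vertex decomposition of $\Pi$, verify $Q_z\subseteq V_z$, and treat the added edges $[v,p],[v,q]$ by hand; and since $P_v'$ itself is not convex, establishing that $U_x\cap P_v'$ is a sector — in particular ruling out $U_x$ slipping across a non-adjacent edge of $P_v'$ — genuinely uses embeddedness and disjointness of the disk packing rather than any lower bound on edge lengths.
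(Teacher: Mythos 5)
Your route is genuinely different from the paper's: where the paper decomposes $P_v$ via Lemma \ref{vertex polygon decomp} and then quotes \FullSector\ and \PackingVsDecomp\ for the full-sector statements, you rebuild everything from two elementary facts, that $R\le i(\cals)$ makes the disks $U_z$ pairwise disjoint with $U_z\subseteq\mathit{int}\,V_z$, and that perpendicular bisectors of segments joining vertices of $P_v$ pass through $v$. For $x$ a \emph{vertex of} $P_v$ this works: your bisector-plus-arc argument does show that each right triangle $\triangle(v,z,m')$ over an edge $\neq\gamma_v$ consists of points at least as close to $z$ as to any other vertex of $P_v$ (here you correctly use $\gamma_w\neq\gamma_v$, resp.\ centeredness of $P_{v_T}$, to place $v$ on the polygon side of the chord), and from this plus disjointness one gets both the last sentence and $U_x\cap\Pi=$ the sector of $\Pi$ at $x$. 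But there are two genuine gaps. First, in the main argument you allow $x$ to be an arbitrary vertex of $P$ and dismiss the case ``$x$ not a vertex of $P_v$'' with the unproved parenthetical $U_x\cap P_v'=\emptyset$; the tool you earmark for this, ``verify $Q_z\subseteq V_z$,'' is false in general. If $\gamma_w$ is the long dual edge of a strongly non-centered cell $P_w$ with far vertex $y$, the midpoint of $\gamma_w$ lies in $Q_z$ for $z\in\partial\gamma_w$ but can be far closer to $y$ than to $z$, so $Q_z\not\subseteq V_z$: this is exactly the pathology of Figure \ref{more bad packing}. For the same configuration $U_y$ pokes across $\gamma_w$ into $T(e_w,w)$ even though $y\notin\gamma_w$, so the lemma's second sentence cannot hold for arbitrary points of $\widetilde{\cals}$; the statement is proved in the paper, and used in Lemma \ref{restriction defect} and Proposition \ref{2-cell defect}, only for $x$ a vertex of $P_v$, and the fact that $U_x$ does not invade pieces not containing $x$ is recovered only globally at the end of the section. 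So the repair is to restrict the scope and delete the $Q_z\subseteq V_z$ step, not to try to fill it.

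Second, in passing from $\Pi$ to $P_v'$ you conflate ``$p$ avoids the interiors of the removed triangles'' with ``$p$ avoids their wedges at $x$.'' To get that $U_x\cap P_v'$ is the trimmed sector you need, for each $w\in v+1$ with $x\in\partial\gamma_w$, that $U_x$ meets $T(e_w,w)$ in the \emph{full} sector that triangle determines at $x$, i.e.\ that $U_x$ cannot cross the leg of $T(e_w,w)$ opposite $x$: otherwise a piece of $U_x$ inside that wedge but beyond the leg would lie in $P_v'$ and break both inclusions. The paper imports this from \FullSector; your own method supplies it in one line (every point of the opposite leg $[w,b]$ satisfies $d(\cdot,b)\le d(\cdot,a)$ because $d(w,a)=d(w,b)$, so a point of it within $R$ of $x=a$ would force $d(a,b)<2R$), but as written the step is missing. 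With the scope restricted to vertices of $P_v$ and these two points supplied, your argument becomes a correct, more self-contained alternative to the paper's proof; as submitted it has real gaps, one of them resting on a false auxiliary claim.
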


Let us recall that for $v$ as above and $w\in v+1$, $T(e_w,w)\subset T(e_w,v)$ and $T(e_w,v)\cap \partial T(e_w,v)$ is the edge $\gamma_w$ geometrically dual to $e_w$, by Lemma \ref{edge-vertex tri}.  Thus Lemma \ref{vertex polygon decomp} implies that $T(e_w,w)$ is entirely contained in $P_v\cup T(e_v,v)$, for $v\neq v_T$, or in $P_v$ if $v = v_T$; and furthermore that $T(e_w,w)\cap \partial (P_v\cup T(e_v,v)) = \gamma_w$ (or that $T(e_w,w)\cap \partial P_v = \gamma_w$ if $v=v_T$).

\begin{proof}  For now take $v = v_T$.  Lemma \ref{vertex polygon decomp} implies that a vertex $x$ of $P$ is contained in $T(e,v)$, for some edge $e$ containing $v$, if and only if $x$ is an endpoint of the edge $\gamma$ of $P_v$ that is geometrically dual to $e$.  Thus a small-enough disk $U$ around $x$ has the property that $U\cap P_v = (U\cap T(e,v))\cup (U\cap T(e',v))$, where $e$ and $e'$ are the edges containing $v$ with geometric duals $\gamma$ and $\gamma'$ meeting at $x$ (this also uses Lemma \ref{vertex polygon decomp}).  

For $U_x$ as described above, \FullSector\ implies that $U_x\cap T(e,v)$ is the sector determined by $T(e,v)$, and likewise for $T(e',v)$.  Since $U_x\cap P_v$ is contained in the sector determined by $P_v$, and this is the union of those determined by $T(e,v)$ and $T(e',v)$ by the above, it follows that $U_x\cap P_v = (U_x\cap T(e,v))\cup (U_x\cap T(e',v))$.  For $w\in v+1$ such that $e_w \neq e,e'$, since $T(e_w,w)\subset T(e_w,v)$ it follows that $T(e_w,w)\cap U_x = \emptyset$.  On the other hand, if $e_w = e $ (say), then $\gamma_w = \gamma$ and $U_x$ clearly intersects $T(e_w,w)$, in the sector that it determines (by \FullSector\ again).  The final assertion of the lemma follows.

Since $v = v_T$, the definition of $P_v'$ implies that $U_x\cap P_v' = \overline{(U_x\cap P_v) - (\bigcup_{w\in v+1} (U_x\cap T(e_w,w)))}$.  By the above, $U_x\cap P_v$ is a sector, and for each $w\in v+1$, $U_x\cap T(e_w,w)$ is empty unless $x\in \gamma_w$, in which case it is a sector containing the boundary edge $U_x\cap \gamma_w$ of $U_x\cap P_v$.  It thus easily follows from the description above that $U_x\cap P_v'$ is also a sector.

We have proved the lemma for $v = v_T$.  The proof for $v\in T^{(0)}-\{v_T\}$ is similar, but with two important differences.  First, $P_v$ should be replaced above by $P_v\cup T(e_v,v)$, and second, the case of $x\in \partial \gamma_v$ must be treated separately.  For such $x$ it turns out that $U_x\cap (P_v\cup T(e_v,v)) = U_x\cap T(e',v)$, where $e'$ is the geometric dual to the edge of $P_v$ meeting $\gamma_v$ at $x$.  This follows from \PackingVsDecomp, which implies that $\angle_x (P_v\cup T(e_v,v)) = \angle_x T(e',v)$ after chasing through some definitions.\end{proof}

\begin{fact}  For each $v\in T^{(0)}$ and $w\in v+1$, $P_v'\cap T(e_w,w)$ is the union of edges of $T(e_w,w)$ containing $w$.  For $v\in T^{(0)} - \{v_T\}$, if $\gamma_v$ is the geometric dual to $e_v$ then: 
$$\mathit{int}\, P_v' = \left(\mathit{int}\, P_v \cup \mathit{int}\,T(e_v,v) \cup \mathit{int}\,\gamma_v\right) -  \left(\bigcup_{w\in v_T+1} T(e_w,w)\right)$$
Similarly, $\mathit{int}\, P_{v_T}' = \mathit{int}\, P_{v_T} - \left(\bigcup_{w\in v_T+1} T(e_w,w)\right)$.\end{fact}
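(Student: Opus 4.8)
The plan is to work throughout with the convex polygon $\calp = P_v\cup T(e_v,v)$ --- convex by Lemma \ref{vertex polygon centered} --- when $v\neq v_T$, with $\calp = P_{v_T}$ when $v=v_T$, and with the finite union of closed triangles $\Delta = \bigcup_{w\in v+1} T(e_w,w)$, so that $P_v' = \overline{\calp - \Delta}$ by Definition \ref{vertex poly prime}. First I would record the geometry of the pieces of $\Delta$. For $w\in v+1$, Lemma \ref{vertex polygon} identifies the geometric dual $\gamma_w$ of $e_w$ with the common edge of $P_v$ and $P_w$, so $\gamma_w$ is an edge of $P_v$ distinct from $\gamma_v$; hence $\gamma_w\subset\partial\calp$, and along $\mathit{int}\,\gamma_w$ the polygon $\calp$ coincides locally with the half-plane on its interior side of the geodesic through $\gamma_w$. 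By Lemma \ref{edge-vertex tri}, $T(e_w,w)\subset T(e_w,v)$ with $T(e_w,w)\cap\partial T(e_w,v) = \gamma_w$; by Lemma \ref{vertex polygon decomp} and the remark after Lemma \ref{star sector}, $T(e_w,v)\subset\calp$, $T(e_w,w)\cap\partial\calp = \gamma_w$, and, for distinct $w,u\in v+1$, $T(e_w,w)\cap T(e_u,u)\subset T(e_w,v)\cap T(e_u,v)$ is a point or a single edge, hence has empty interior. Finally, for a leg $\ell$ of $T(e_w,w)$ --- an edge joining the apex $w$ to an endpoint of $\gamma_w$ --- convexity of $T(e_w,v)$ gives $\mathit{int}\,\ell\cup\{w\}\subset\mathit{int}\,T(e_w,v)$, so $\calp$ contains a full metric disk around each point of $\mathit{int}\,\ell$.

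For the first claim I would fix $w\in v+1$, write $\ell_1,\ell_2$ for the legs of $T(e_w,w)$, and prove two inclusions. For $P_v'\cap T(e_w,w)\subset\ell_1\cup\ell_2$: a point of $\mathit{int}\,T(e_w,w)$ has a neighborhood inside $\Delta$, hence disjoint from $\calp-\Delta$, so it is not in $P_v' = \overline{\calp-\Delta}$; and along $\mathit{int}\,\gamma_w$ the polygon $\calp$ coincides locally with $T(e_w,w)\subset\Delta$, so $\calp-\Delta$ is empty near $\mathit{int}\,\gamma_w$ and $P_v'$ misses it; since $\partial T(e_w,w)=\gamma_w\cup\ell_1\cup\ell_2$ and the endpoints of $\gamma_w$ already lie on $\ell_1\cup\ell_2$, this gives the inclusion. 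For the reverse inclusion I would fix $p\in\mathit{int}\,\ell_j$: points close to $p$ on the side of $\ell_j$ opposite $\mathit{int}\,T(e_w,w)$ lie in $\mathit{int}\,T(e_w,v)\subset\calp$, and avoid $\Delta$ as soon as they avoid the finitely many empty-interior sets $T(e_w,v)\cap T(e_u,v)$; so they lie in $\calp-\Delta$ and accumulate on $p$, whence $\mathit{int}\,\ell_j\subset P_v'$ and then $\ell_j\subset P_v'$ by closedness.

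For the interior formulas I would first reduce them to the single identity $\mathit{int}\,P_v' = \mathit{int}\,\calp - \Delta$: since $\calp$ is convex and $P_v\cap T(e_v,v) = \gamma_v$ (Lemma \ref{vertex polygon decomp}), $\mathit{int}\,\calp = \mathit{int}\,P_v\cup\mathit{int}\,T(e_v,v)\cup\mathit{int}\,\gamma_v$ for $v\neq v_T$, while $\mathit{int}\,\calp = \mathit{int}\,P_{v_T}$ otherwise. The inclusion $\mathit{int}\,\calp - \Delta\subset\mathit{int}\,P_v'$ is immediate: since $\Delta$ is closed, a point of $\mathit{int}\,\calp-\Delta$ has a neighborhood inside $\calp-\Delta\subset P_v'$. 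For $\mathit{int}\,P_v'\subset\mathit{int}\,\calp-\Delta$ I would take $p\in\mathit{int}\,P_v'$, note $p\in\mathit{int}\,\calp$ because $P_v'\subset\calp$, and rule out $p\in\Delta$: if $p\in T(e_w,w)$ for some $w$, the first claim puts $p$ on a leg $\ell$; for $p\in\mathit{int}\,\ell$, near $p$ the set $P_v'$ is exactly the closed half-disk on the side of $\ell$ away from $\mathit{int}\,T(e_w,w)$ (the far side lying in $P_v'$ as above, the near side disjoint from it), which is not a neighborhood of $p$; and for $p$ an endpoint of $\ell$, $p$ is either the apex $w$, near which the nonempty open set $\mathit{int}\,T(e_w,w)$ --- disjoint from $P_v'$ --- accumulates on $p$, or an endpoint of $\gamma_w$, which lies in $\partial\calp$; in every case this contradicts $p\in\mathit{int}\,P_v'$ or $p\in\mathit{int}\,\calp$. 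Hence $p\notin\Delta$.

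The crux is the local picture near a leg $\ell$ of a removed triangle: that on one side of $\mathit{int}\,\ell$ the points of $\calp-\Delta$ are dense (so $P_v'$ fills a half-disk there) while on the other side $P_v'$ is empty. This single geometric input --- which rests on the pairwise empty-interior intersection property of the pie slices $T(e_w,v)$ from Lemma \ref{vertex polygon decomp} --- drives both $\ell\subset P_v'$ in the first claim and the exclusion of $\Delta$ from $\mathit{int}\,P_v'$ in the interior formulas; everything else is elementary manipulation of closures and interiors.
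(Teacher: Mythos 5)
Your proof is correct. The paper asserts this Fact with no accompanying justification, so there is no argument of the author's to compare against; your write-up supplies exactly the missing point-set verification, organized around the right local picture (a leg $\ell$ of $T(e_w,w)$ has $\calp-\Delta$ dense on its far side and $P_v'$ empty on its near side) and resting on the correct inputs: Lemma \ref{edge-vertex tri}, the intersection statements of Lemma \ref{vertex polygon decomp}, the identification $\gamma_w = P_v\cap P_w$ from Lemma \ref{vertex polygon}, and the containment $T(e_w,w)\cap\partial(P_v\cup T(e_v,v))=\gamma_w$ recorded after Lemma \ref{star sector}. Note that you have also implicitly corrected what is evidently a typo in the statement --- the union in the displayed formula for general $v$ should be over $w\in v+1$, not $w\in v_T+1$, consistent with Definition \ref{vertex poly prime} --- which is the right reading.
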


\begin{lemma}\label{re-tile}  Let $V$ be the Voronoi tessellation and $P_c$ the centered dual decomposition determined by $\widetilde{\cals} = \pi^{-1}(\cals)$, where $\pi\co\mathbb{H}^2\to F$ is the universal cover of a closed hyperbolic surface and $\cals\subset F$ is finite.  If $Q$ is a $2$-cell of $P_c$ containing a component $T$ of $V^{(1)}_n$ then $Q = \bigcup_{v\in T^{(0)}} P_v'$, for $P_v'$ as in Definition \ref{vertex poly prime}.  For distinct $v$ and $w$ in $T^{(0)}$, $\mathit{int}\,P_v' \cap P_w'=\emptyset$.\end{lemma}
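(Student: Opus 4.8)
The plan is to establish the two assertions — that $Q = \bigcup_{v\in T^{(0)}} P_v'$ and that the interiors $\mathrm{int}\,P_v'$ are pairwise disjoint — by bookkeeping with the triangles $T(e,v)$, using the decompositions from Lemma \ref{vertex polygon decomp} together with the containment statement of Lemma \ref{edge-vertex tri}. The key point to pin down first is that each triangle $T(e_w,w)$ removed in forming one $P_v'$ is exactly a triangle "put back in" when forming the $P_v'$ at the terminal vertex $v$ of $e_w$: by Definition \ref{vertex poly prime}, if $w\in T^{(0)}-\{v_T\}$ then $T(e_w,w)$ is subtracted from $P_{v}'$ where $v$ is the terminal vertex of $e_w$ (so $w\in v+1$), and it is added to $P_w'$ itself (since $P_w' = \overline{(P_w\cup T(e_w,w)) - \bigcup_{u\in w+1} T(e_u,u)}$). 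Thus the multiset of triangles $\{T(e,v)\}$, as $v$ ranges over $T^{(0)}$ and $e$ over edges of $V$ containing $v$, is partitioned correctly: each appears once positively and (if its defining edge lies in $T$) once negatively, and the negative copy sits inside the positive one by Lemma \ref{edge-vertex tri}.

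Next I would prove $Q = \bigcup_{v\in T^{(0)}} P_v'$. By Proposition \ref{dual decomp}, $Q = \bigcup_{v\in T^{(0)}} P_v$ (working in $\mathbb{H}^2$ via a lift $\widetilde{T}$, then projecting). For each $v$, Lemma \ref{vertex polygon decomp} gives $P_{v_T} = \bigcup\{T(e,v_T)\,|\,v_T\in e\}$ and, for $v\neq v_T$, $P_v\cup T(e_v,v) = \bigcup\{T(e',v)\,|\,e'\neq e_v,\ v\in e'\}$. So $\bigcup_v (P_v\cup T(e_v,v))$ — where the union over the dummy term $T(e_v,v)$ is taken only for $v\neq v_T$ — equals the union of all triangles $T(e,v)$ with $v\in T^{(0)}$, $v\in e$. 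Now each $P_v'$ is obtained from $P_v\cup T(e_v,v)$ (or $P_{v_T}$) by deleting the $T(e_w,w)$ for $w\in v+1$; since each such deleted triangle reappears as (part of) $P_w'$, taking the union over all $v$ restores everything, giving $\bigcup_v P_v' = \bigcup_v (P_v\cup T(e_v,v)) = \bigcup_v P_v = Q$. One must check the deleted triangles do not poke outside $Q$: each $T(e_w,w)\subset T(e_w,u)\subset P_u\cup T(e_u,u)\subset Q$ where $u$ is the terminal vertex of $e_w$, using Lemma \ref{edge-vertex tri} and, if $u\neq v_T$, that $P_u\cup T(e_u,u)\subset Q$, which follows since $P_u\subset Q$ and $T(e_u,u)$ is itself of this form or is absorbed — here I would invoke that $T(e_u,u)$ lies in $Q$ by the same containment chain applied to the next edge up the tree toward $v_T$, terminating because $T$ is finite.

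For disjointness of interiors, fix distinct $v,w\in T^{(0)}$. Using the displayed formula for $\mathrm{int}\,P_v'$ in the Fact preceding the lemma, $\mathrm{int}\,P_v'$ is contained in $\mathrm{int}(P_v\cup T(e_v,v))$ (or $\mathrm{int}\,P_{v_T}$) with certain closed triangles removed. By Lemma \ref{vertex polygon}, $\mathrm{int}\,P_v\cap P_w=\emptyset$ for $v\neq w$, so the only overlap between $P_v\cup T(e_v,v)$ and $P_w\cup T(e_w,w)$ can come from the appended triangles $T(e_v,v)$ and $T(e_w,w)$ meeting the other polygon or each other. But by Lemma \ref{edge-vertex tri}, $T(e_v,v)\subset T(e_v,v')$ where $v'$ is the terminal vertex of $e_v$, meeting $\partial T(e_v,v')$ only along $\gamma_v$; and Lemma \ref{vertex polygon decomp} places $T(e_v,v)$ inside $P_{v'}\cup T(e_{v'},v')$ meeting its boundary in $\gamma_v$. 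So $\mathrm{int}\,T(e_v,v)$ lies in $\mathrm{int}\,P_{v'}$ (excised region aside), hence is disjoint from $P_w$ unless $w=v'$ — and in the case $w=v'$, the triangle $T(e_v,v)$ is precisely one of the triangles deleted in forming $\mathrm{int}\,P_w'=\mathrm{int}\,P_{v'}'$, so it does not meet $\mathrm{int}\,P_w'$. The symmetric argument handles $T(e_w,w)$ against $P_v'$. The remaining possibility — $\mathrm{int}\,T(e_v,v)$ meeting $\mathrm{int}\,T(e_w,w)$ — would force both $v$ and $w$ to have the same terminal vertex $u$ with the two triangles overlapping inside $P_u\cup T(e_u,u)$, but by Lemma \ref{vertex polygon decomp} distinct triangles $T(e,u),T(e',u)$ meet only in $\{u\}$ or along an edge, so their interiors are disjoint; and $T(e_v,v)\subset T(e_v,u)$, $T(e_w,w)\subset T(e_w,u)$ with $e_v\neq e_w$ (else $v=w$), finishing it.

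The main obstacle is the disjointness half, specifically the careful verification that the "added" triangle $T(e_v,v)$ appearing in $P_v'$ and the "subtracted" triangle $T(e_v,v)$ removed from $P_{v'}'$ (with $v'$ the terminal vertex of $e_v$) are genuinely the same triangle and therefore cancel rather than overlap — this is exactly where one must track orientations of edges of $T$ and use Corollary \ref{one direction} (each vertex is the initial vertex of at most one non-centered edge) so that $v+1$ and the assignment $v\mapsto e_v$ are unambiguous and the cancellation is one-to-one. Once that combinatorial matching is set up cleanly, everything else is a routine application of Lemmas \ref{vertex polygon}, \ref{vertex polygon decomp}, and \ref{edge-vertex tri}.
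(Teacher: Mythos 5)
There is a genuine gap in the disjointness half, and it sits exactly where you wave it through. You assert that since $T(e_v,v)\subset T(e_v,v')$ and Lemma \ref{vertex polygon decomp} places this inside $P_{v'}\cup T(e_{v'},v')$, ``$\mathit{int}\,T(e_v,v)$ lies in $\mathit{int}\,P_{v'}$ (excised region aside), hence is disjoint from $P_w$ unless $w=v'$.'' That conclusion does not follow: containment in $P_{v'}\cup T(e_{v'},v')$ is not containment in $P_{v'}$, because when $v'\neq v_T$ the triangle $T(e_{v'},v')$ is not inside $P_{v'}$ but inside $P_{v''}\cup T(e_{v''},v'')$ for the next vertex $v''$ up the tree, and so on. Thus the added triangle $T(e_v,v)$ can protrude beyond $P_{v'}$ and meet the interiors of $P_w$ for every $w$ along the path from $v$ to the root $v_T$ — this propagation is precisely the content of the paper's inductive Claims (\ref{T vs P}) and (\ref{T vs P cup T}), which show $T(e_{i-1},v_i)\subset\bigcup_{j=0}^{i-1}P_{v_j}$ and control where it meets each $\partial P_{v_j}$ (only in $\gamma_{j-1}\cup\gamma_j$). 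Because your premise fails, your case analysis omits the configurations that actually need work: $w$ two or more steps above $v$ on its path to $v_T$, and $v,w$ on different branches whose added triangles both spread into the vertex polygons along their shared initial segment; the paper's proof splits into exactly these cases (paths meeting only at $v_T$, one path an initial segment of the other, proper shared initial segment) and resolves them using the boundary control from the claims together with Lemma \ref{vertex polygon}. You also identify the wrong ``main obstacle'': the matching of added and subtracted copies of $T(e_v,v)$ is immediate from Definition \ref{vertex poly prime} and Corollary \ref{one direction}; the hard part is the uncontrolled spreading just described.

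A secondary, repairable looseness: in the coverage argument, a triangle $T(e_w,w)$ deleted from $P_v'$ does not simply ``reappear as (part of) $P_w'$,'' since $P_w'$ itself has the triangles $T(e_u,u)$, $u\in w+1$, removed, and these can overlap $T(e_w,w)$; the missing pieces only reappear further down the tree, so your union-restoration argument needs an induction on the depth of the tree to close. The paper avoids this by a direct pointwise argument: for $x\in Q$ choose the vertex $v$ at maximal edge-path distance (through a fixed $v_1\in v_0+1$) with $x\in T(e_v,v)$, and check $x\in P_v'$.
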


\begin{proof}  For $x\in Q$ let $v_0$ be such that $x\in P_{v_0}$.  If $x\notin T(e_v,v)$ for any $v\in v_0+1$, then $x\in P_{v_0}'$.  If there exists $v_1 \in v_0+1$ such that $x\in T(e_{v_1},v_1)$, we choose $v\in T^{(0)}$ to satisfy three criteria: $x\in T(e_v,v)$, the reduced edge path from $v_0$ to $v$ contains $v_1$, and this edge path is longest among all of those joining $v_0$ to vertices satisfying the first two criteria.  Since $v_1$ satisfies the first two criteria there is some such $v$, and by construction $x\in T(e_v,v)$ but not in $T(e_w,w)$ for any $w\in v+1$ (the reduced edge path from $v_0$ to such a $w$ is the union of the reduced edge path to $v$ with $e_w$).  Therefore $x\in P_v'$.  

Our goal in the remainder is to prove that $\mathit{int}\,P_v' \cap P_w'=\emptyset$ for distinct $v$ and $w$ in $T^{(0)}$.  For $v\in T^{(0)}-\{v_T\}$, let $e_0\cup \hdots\cup e_{n-1}$ be the reduced edge path in $T$ joining $v_T$ to $v$, numbered so that $v_T$ is a vertex of $e_0$ and $v$ is a vertex of $e_{n-1}$.   Upon orienting this path and its edges as described in Lemma \ref{to the root}, $v_T$ is the terminal vertex of $e_0$ and $v$ the initial vertex of $e_{n-1}$.  For $0\leq i\leq n-1$ let $v_i$ be the terminal vertex of $e_i$, so in particular $v_T = v_0$, and let $v_n = v$.  Then $v_i$ is the initial vertex of $e_{i-1}$ for $i >0$, so  for each $i<n$, $e_i = e_{v_{i+1}}$ as defined in \ref{vertex poly prime}.  

\begin{claim}\label{T vs P} For $0<i\leq n$, $T(e_{i-1},v_i)\subset \bigcup_{j=0}^{i-1} P_{v_j}$; and if $\gamma_j$ is the geometric dual to $e_j$ for each $j$, we have $T(e_{i-1},v_i)\cap\partial P_{v_0}\subset \gamma_0$, and $T(e_{i-1},v_i)\cap \partial P_{v_j} \subset \gamma_{j-1}\cup \gamma_{j}$ for $0< j\leq i-1$.\end{claim}

First take $i=1$.  Lemma \ref{edge-vertex tri} implies that $T(e_0,v_1)\subset T(e_0,v_0)$, and Lemma \ref{vertex polygon decomp} implies that $T(e_0,v_0)\subset P_{v_0}$.  (Recall from Lemma \ref{to the root} that $P_{v_T} = P_{v_0}$ is centered.)  By Definition \ref{edge-vertex tri dfn}, $\gamma_0$ is the edge of $T(e_0,v_0)$ opposite $v_0$.  Any other edge $\gamma'$ of $P_{v_0}$ is contained in $T(e',v_0)$, where $e'$ is the geometric dual to $\gamma'$, and Lemma \ref{vertex polygon decomp} implies that $T(e_0,v_0)\cap \gamma' \subset T(e_0,v_0)\cap T(e',v_0)$ is at most the endpoint $\gamma_0\cap\gamma'$.  Thus $T(e_0,v_0)\cap\partial P_{v_0}\subset \gamma_0$.

For $1 < i\leq n$, the combination of Lemmas \ref{edge-vertex tri} and \ref{vertex polygon decomp} implies:\begin{align}\label{combo}
  T(e_{i-1},v_i)\subset T(e_{i-1},v_{i-1})\subset P_{v_{i-1}}\cup T(e_{i-2},v_{i-1})\end{align}
We assume by induction that $T(e_{i-2},v_{i-1})\subset \bigcup_{j=0}^{i-2} P_{v_j}$, so by the above $T(e_{i-1},v_i)\subset\bigcup_{j=0}^{i-1} P_{v_j}$.  

To prove the claim it remains to show that $T(e_{i-1},v_i)$ has reasonable intersections with the $P_{v_j}$.  We assume by induction that $T(e_{i-2},v_{i-1})\cap \partial P_{v_0}\subset\gamma_0$ and $T(e_{i-2},v_{i-1})\cap\partial P_{v_j}\subset \gamma_{j-1}\cup\gamma_j$ for $0<j\leq i-2$.  Thus by (\ref{combo}) it suffices to show that $T(e_{i-1},v_{i-1})\cap\partial P_{v_{i-1}}\subset \gamma_{i-2}\cup\gamma_{i-1}$.  This follows as in the base case, but using the non-centered case of Lemma \ref{vertex polygon decomp}.

Below we will obtain different information from essentially the same sequence of observations.

\begin{claim}\label{T vs P cup T}  For $n >1$ and $0< j <i \leq n$, $T(e_{i-1},v_i) \subset T(e_{j-1},v_j)\cup \left(\bigcup_{k=j}^{i-1} P_{v_k}\right)$.\end{claim}

We again prove the claim by induction, this time on $n$.  For $n=2$ the only case above is with $i = n = 2$ and $j =1$.  Applying (\ref{combo}) immediately implies the conclusion in this case.  Let us now take $n > 2$ and suppose that the claim holds for $n-1$.  The only new cases to consider have $i = n$, since for $i\leq n-1$ the conclusion follows from the induction hypothesis.

Fixing $i = n$, the conclusion for $j = n-1$ is a direct application of (\ref{combo}).  For $j < n-1$, (\ref{combo}) gives $T(e_{n-1},v_n)\subset T(e_{n-2},v_{n-1}) \cup P_{v_{n-1}}$, so induction produces:\begin{align*}
   T(e_{n-1},v_n) \subset T(e_{j-1},v_j)\cup \left(\bigcup_{k=j}^{n-2} P_{v_k}\right)\cup P_{v_{n-1}} = T(e_{j-1},v_j)\cup \left(\bigcup_{k=j}^{n-1} P_{v_k}\right)\end{align*}
This proves the claim.

We first fix $w = v_T$ and $v\in T^{(0)} - \{v_T\}$, and prove $\mathit{int}\, P_v'\cap P_w = P_v' \cap \mathit{int}\,P_w = \emptyset$.  Let $e_0\cup\hdots e_{n-1}$ be the reduced edge path joining $v_T$ to $v$ in $T$, numbered and oriented as above, so that $w = v_T$ is the terminal vertex of $e_0$ and $v$ is the initial vertex of $e_{n-1}$.  With the $v_j$ also numbered as above for $0\leq j \leq n$, we apply Claim \ref{T vs P cup T} with $i = n$ and $j = 1$.  Taking a union with $P_v$ on each side of the result yields:\begin{align}\label{i equals n}
  P_v\cup T(e_{n-1},v) \subset T(e_0,v_1)\cup \left(\bigcup_{k=1}^{n} P_{v_k}\right)\end{align}
This is because $v = v_n$.  Since $v_1\in v_T + 1$ and $e_0 = e_{v_1}$, the fact above the lemma implies that $\mathit{int}\, P_{v_T}' \cap T(e_0,v_1) = \emptyset$.  By Lemma \ref{vertex polygon}, $\mathit{int}\,P_{v_T}\cap P_{v_k} = \emptyset$ for $1\leq k \leq n$.  Since $P_v' \subset P_v\cup T(e_{n-1},v)$, it follows from (\ref{i equals n}) that $P_v'\cap\mathit{int}\, P_{v_T}' = \emptyset$.

Since $P_v'\subset P_v\cup T(e_{n-1},v)$, $\mathit{int}\,P_v'\subset \mathit{int}\left(P_v\cup T(e_{n-1},v)\right) = \mathit{int}\, P_v\cup\mathit{int}\, T(e_{n-1},v)\cup\mathit{int}\, \gamma_{n-1}$.  The latter equality here follows from Lemma \ref{vertex polygon decomp}, which asserts that $T(e_{n -1},v)$ intersects $P_v$ in precisely the edge $\gamma_{n-1}$ geometrically dual to $e_{n-1}$.  Similarly:

\begin{claim}\label{union int} For $0< i\leq n$, 
$$ \mathit{int}\left( P_v\cup T(e_{n-1},v_n)\right) \subset \mathit{int}\,T(e_{i-1},v_i)\cup\left(\bigcup_{k=i}^n \mathit{int}\,P_{v_k}\right)\cup\left(\bigcup_{k=i-1}^{n-1}\mathit{int}\,\gamma_k\right),$$
where $\gamma_k$ is the geometric dual to $e_k$ for each $k$.\end{claim}

This uses Claim \ref{T vs P cup T}, also noting that $T(e_{i-1},v)\cap \partial P_{v_k}\subset \gamma_{k-1}\cup \gamma_k$ for $i\leq k< n$, by Claim \ref{T vs P}.  Thus by the fact above the lemma, $\mathit{int}\, P_v'\cap P_{v_T}' = \emptyset$.

For distinct $v$ and $w$ in $T^{(0)} - \{v_T\}$, let $e_0\cup\hdots\cup e_{n-1}$ and $f_0\cup\hdots f_{m-1}$ be reduced edge paths joining $v_T$ to $v$ and $w$, respectively.  Let the $e_i$ be numbered and oriented as in the case $w= v_T$ above, and number the vertices $v_i$ accordingly.  Similarly, number the $f_i$ so that $v_T$ is an endpoint of $f_0$ and $w$ an endpoint of $f_{m-1}$, and orient them as in Lemma \ref{to the root}.  Let $w_i$ be the terminal vertex of $f_i$ for each $i$, so $v_T = w_0$ in particular, and take $w = w_m$. 

Because $T$ is a tree, there are three cases to consider: either these paths have no edges in common,  meeting only at $v_T$; or one is an initial segment of the other; or they share an initial segment that is proper in each.  We will address the third case in detail.  The others are similar, and we will indicate afterwards how to approach them.

If $e_0\cup\hdots\cup e_{n-1}$ and $f_0\cup\hdots\cup f_{m-1}$ share an initial segment that is proper in each, then $m$ and $n$ are each at least $2$.  Let $i_0 >0$ but less than $\min\{m,n\}$ be such that $e_i = f_i$ for $i< i_0$ but $e_{i_0}\neq f_{i_0}$.  It follows that $v_i = w_i$ for $i\leq i_0$, but that $\{v_{i_0+1},\hdots,v_n\}$ is disjoint from $\{w_{i_0+1},\hdots,w_m\}$, since $T$ is a tree.

Applying Claim \ref{T vs P cup T} to $e_0\cup\hdots\cup e_{n-1}$ with $i = n$ and $j = i_0+1$, then taking the union with $P_v = P_{v_n}$ on both sides, we have:\begin{align}\label{i equals n part deux}
  P_v\cup T(e_{n-1},v) \subset T(e_{i_0},v_{i_0+1})\cup \left(\bigcup_{k=i_0+1}^{n} P_{v_k}\right)\end{align}
Using Claim \ref{T vs P cup T} on $f_0\cup\hdots\cup f_{m-1}$ with $i = m$ and $j = i_0+1$, arguing as above yields:\begin{align}\label{j equals m part un}
  P_w \cup T(f_{m-1},w)\subset T(f_{i_0},w_{i_0+1}) \cup \left(\ \bigcup_{k=i_0+1}^{m} P_{w_k}\right)\end{align}
We will use Claim \ref{union int} with $i=i_0+1$ to show that $\mathit{int}(P_v\cup T(e_{n-1},v_n))$ is disjoint from $P_w\cup T(f_{m-1},w)$, from which it immediately follows that $\mathit{int}\,P_v' \cap P_w'=\emptyset$.  

Lemma \ref{vertex polygon} implies that $\mathit{int}\,P_{v_k}\cap P_{w_l} = \emptyset$ for each $k\in \{i_0+1,\hdots,n\}$ and $l\in\{i_0+1,\hdots,m\}$, and also that $\mathit{int}\,\gamma_k\cap P_{w_l}=\emptyset$ for $k\in\{i_0,\hdots,n-1\}$ and the same $l$.  This is because $\gamma_k = P_{v_k}\cap P_{v_{k+1}}$ for each such $k$, and so its interior is disjoint from all $P_v$ but these two.  In the particular case $k = i_0$, $\gamma_{i_0}$ is a different edge of $P_{v_{i_0}}$ than its edge of intersection with $P_{w_{i_0+1}}$, since $w_{i_0+1} \neq v_{i_0+1}$, so it is still true that $\mathit{int}\,\gamma_{i_0}\cap P_{w_{i_0+1}} = \emptyset$.

Lemma \ref{edge-vertex tri} implies that $T(e_{i_0},v_{i_0+1})\subset T(e_{i_0},v_{i_0})$ and $T(f_{i_0},w_{i_0+1}) = \subset T(f_{i_0},v_{i_0})$ (recall that $w_{i_0}=v_{i_0}$), and by Lemma \ref{vertex polygon decomp} these are each contained in $P_{v_{i_0}}\cup T(e_{i_0-1},v_{i_0})$.  It further implies that $T(e_{i_0},v_{i_0})\cap T(f_{i_0},v_{i_0})$ is at most an edge of each containing $v_{i_0}$; hence in particular $\mathit{int}\, T(e_{i_0},v_{i_0+1})\cap T(f_{i_0},w_{i_0+1})=\emptyset$.

We will finally show that $\mathit{int}\,T(e_{i_0},v_{i_0+1})\cap P_{w_l}=\emptyset$ for each $l\in\{i_0+1,\hdots,m\}$.  Claim \ref{T vs P} implies that $T(e_{i_0},v_{i_0+1})\subset \bigcup_{j=0}^{i_0} P_{v_j}$, and the second part of that claim implies that its interior is contained in $\left(\,\bigcup_{j=0}^{i_0} \mathit{int}\,P_{v_j}\right)\cup\left(\,\bigcup_{j=0}^{i_0-1}\mathit{int}\,\gamma_j\right)$.  It thus follows as above that $\mathit{int}\,T(e_{i_0},v_{i_0+1})\cap P_{w_l}=\emptyset$ for each $l$ under consideration.  This completes the proof that $\mathit{int}\, P_v'\cap P_w' = \emptyset$ when the edge paths joining each to $v_T$ share a proper initial segment.

The case when the edge paths meet at $v_T$ and nowhere else is similar to the above but with $i_0 =0$. Then $T(e_0,v_1)$ and $T(f_0,w_1)$ are each contained in $P_{v_T}$, and it is immediate that $\mathit{int}\,T(e_0,v_1)$ is disjoint from $P_{w_l}$ for each $l>0$.  The case when $e_0\cup\hdots\cup e_{n-1}$ (say) is a proper initial segment of $f_0\cup\hdots\cup f_{m-1}$ is similar to the case $v = v_T$ that we first addressed.  An extra argument is required in this case to show that $T(e_v,v) = T(e_{n-1},v_n)$ has interior disjoint from the $P_{w_l}$; this proceeds as in the paragraph directly above.
\end{proof}

\begin{definition} Let $V$ be the Voronoi tessellation and $P_c$ the centered dual decomposition determined by $\widetilde{\cals} = \pi^{-1}(\cals)$, where $\pi\co\mathbb{H}^2\to F$ is the universal cover of a closed hyperbolic surface and $\cals\subset F$ is finite, and fix a $2$-cell $Q$ of $P_c$ containing a component $T$ of $V^{(1)}_n$.  Define $T^{(0)}_0 = \{v_T\}$ and $Q_0 = P_{v_T}$, and for $k > 0$ let $T^{(0)}_k$ consist of vertices of $T$ joined to $v_T$ by a path of at most $k$ edges, and $Q_k' = \bigcup_{v\in T^{(0)}_k} P_v'$.

If $x$ is a vertex of $Q$, for $k\geq 0$ define the \textit{restriction} of $U_x$ to $Q_k'$ as the union of sectors:
$$U_x|_{Q_k'} \doteq\bigcup\ \{U_x\cap P_v'\,|\,v\in T^{(0)}_k, x\in P_v\}$$
Let the restriction $U_x|_Q$ of $U_x$ to $Q$ be the union of its restrictions to $Q_k'$ over all $k \geq 0$.\end{definition}

The point of defining the restriction is to exclude an incidental component of intersection with $Q_k'$ as in Figure \ref{more bad packing}, where a disk $U_x$ protrudes from a polygon that does not entirely contain the sector that it determines, intersecting one that does not have $x$ as a vertex.

\begin{lemma}\label{restriction defect} Let $V$ be the Voronoi tessellation and $P_c$ the centered dual decomposition determined by $\widetilde{\cals} = \pi^{-1}(\cals)$, where $\pi\co\mathbb{H}^2\to F$ is the universal cover of a closed hyperbolic surface and $\cals\subset F$ is finite.  If $\{U_x\}$ is a set of open hyperbolic disks of radius $R\leq i(\cals)$ centered at the vertices of a $2$-cell $Q$ of $P_c$ containing a component $T$ of $V^{(1)}_n$, for each $k\geq 0$:
$$ \mathrm{area}\left(Q_k'- \bigcup_{x\in\cals} U_x|_{Q_k'}\right) =\sum_{v\in T^{(0)}_k} D_R(\bd_v) -\sum_{w\in T^{(0)}_{k+1} - T^{(0)}_k} D_R(d_{e_w},J_w), $$
where $\bd_v$ represents $P_v$ in $\widetilde{\calAC}_{n_v}$ for each $v$ (with $n_v$ the valence of  $v$ in $V^{(1)}$), and $D_R(d_{e_w},J_w)$ is as in \FullSector\ (with $d_{e_w}$ the length of the geometric dual to $e_w$).\end{lemma}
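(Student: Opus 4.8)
The plan is to reduce the whole statement to a single identity attached to each vertex $v$ of $T$, and then sum and telescope. For $v\in T^{(0)}$ write $\mathcal D(P_v')\doteq\mathrm{area}\bigl(P_v'-\bigcup_{x\in P_v}(U_x\cap P_v')\bigr)$. I would prove
\[
\mathcal D(P_{v_T}')=D_R(\bd_{v_T})-\sum_{u\in v_T+1}D_R(d_{e_u},J_u),\qquad
\mathcal D(P_v')=D_R(\bd_v)+D_R(d_{e_v},J_v)-\sum_{u\in v+1}D_R(d_{e_u},J_u)\ \ (v\ne v_T).
\]
Granting these, the lemma follows: by Lemma \ref{re-tile} the regions $\{P_v'\mid v\in T^{(0)}_k\}$ tile $Q_k'$ with pairwise disjoint interiors, while Lemma \ref{star sector} identifies $U_x\cap P_v'$ with the sector of $U_x$ cut out by $P_v'$ and shows $U_x|_{Q_k'}$ is the union of these over $v\in T^{(0)}_k$ with $x\in P_v$; since $R\le i(\cals)$ the disks $U_x$ are pairwise disjoint, so area is additive and $\mathrm{area}(Q_k'-\bigcup_x U_x|_{Q_k'})=\sum_{v\in T^{(0)}_k}\mathcal D(P_v')$. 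Substituting the two displayed formulas, the terms $D_R(d_{e_v},J_v)$ (present exactly for $v\ne v_T$) telescope against the double sum: every $u\in T^{(0)}-\{v_T\}$ has a unique parent $v(u)$ with $u\in v(u)+1$, and $v(u)\in T^{(0)}_k$ if and only if $u\in T^{(0)}_{k+1}$, so $\sum_{v\in T^{(0)}_k}\sum_{u\in v+1}D_R(d_{e_u},J_u)=\sum_{u\in T^{(0)}_{k+1}-\{v_T\}}D_R(d_{e_u},J_u)$, leaving exactly $\sum_{v\in T^{(0)}_k}D_R(\bd_v)-\sum_{w\in T^{(0)}_{k+1}-T^{(0)}_k}D_R(d_{e_w},J_w)$. (This can equivalently be organized as induction on $k$, with base case $k=0$ the first displayed identity.)

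To establish the per-vertex identity I would first decompose $P_v'$. If $v=v_T$ then $P_{v_T}$ is centered (Lemma \ref{to the root}), so by Lemma \ref{vertex polygon decomp}(1) $P_{v_T}=\bigcup_{e\ni v_T}T(e,v_T)$ with disjoint interiors; if $v\ne v_T$ then by Lemma \ref{vertex polygon decomp}(2) $P_v\cup T(e_v,v)=\bigcup_{e\ni v,\,e\ne e_v}T(e,v)$, again with disjoint interiors. For $u\in v+1$ the edge $e_u$ has terminal vertex $v$, hence its geometric dual $\gamma_{e_u}$ is an edge of $P_v$ (Lemma \ref{vertex polygon}) and $T(e_u,v)$ is one of these pie slices, with $e_u\ne e_v$; moreover Lemma \ref{edge-vertex tri} gives $T(e_u,u)\subset T(e_u,v)$ with $T(e_u,u)\cap\partial T(e_u,v)=\gamma_{e_u}$, so $\mathrm{int}\,T(e_u,u)\subset\mathrm{int}\,T(e_u,v)$ and distinct $u$ cut into distinct pie slices. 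Thus from Definition \ref{vertex poly prime}, up to a set of measure zero,
\[
P_v'=\Bigl(\bigsqcup_{\substack{e\ni v,\ e\ne e_v\\ e\notin\{e_u\,:\,u\in v+1\}}}T(e,v)\Bigr)\ \sqcup\ \Bigl(\bigsqcup_{u\in v+1}\bigl(T(e_u,v)-\mathrm{int}\,T(e_u,u)\bigr)\Bigr),
\]
a decomposition into regions with pairwise disjoint interiors (when $v=v_T$, drop the restriction ``$e\ne e_v$''). Every vertex of a piece $Z$ above that lies in $\cals$ is an endpoint of an edge of $P_v$, hence lies in $P_v$; combined with Lemma \ref{star sector} and $R\le i(\cals)$, this shows $\bigcup_{x\in P_v}(U_x\cap P_v')$ meets each piece $Z$ exactly in $\bigcup_{x\in Z\cap\cals}(U_x\cap Z)$, a union of full sectors of $Z$ by \FullSector, with $U_x\cap Z=\emptyset$ whenever $x$ is not a vertex of $Z$. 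Hence $\mathcal D(P_v')=\sum_Z\mathrm{area}\bigl(Z-\bigcup_{x\in Z\cap\cals}U_x\bigr)$; a pie slice $T(e,v)$ contributes $D_R(d_e,J_v)$ by \FullSector, and a trimmed slice $T(e_u,v)-\mathrm{int}\,T(e_u,u)$ contributes $D_R(d_{e_u},J_v)-D_R(d_{e_u},J_u)$ (subtract areas of the two nested isosceles triangles—with common base $d_{e_u}$ and legs $J_v$, $J_u$ respectively—and apply \FullSector twice). Summing, $\mathcal D(P_v')=\sum_{e\ni v,\,e\ne e_v}D_R(d_e,J_v)-\sum_{u\in v+1}D_R(d_{e_u},J_u)$.

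It remains to rewrite the first sum in terms of $D_R(\bd_v)$. By the definition of the defect (\Defect), $D_R(\bd_v)$ equals $\mathrm{area}(P_v)$ minus the combined area of the full sectors subtended by the interior angles of $P_v$. When $v=v_T$, the pie-slice decomposition of the centered polygon $P_{v_T}$ gives $\sum_{e\ni v_T}D_R(d_e,J_{v_T})=D_R(\bd_{v_T})$ (cf.\ \DefectFunction, \PackingVsDecomp), yielding the first displayed identity. When $v\ne v_T$, attaching the flap $T(e_v,v)$ to $P_v$ along its unique longest edge $\gamma_{e_v}$ (Lemma \ref{vertex polygon centered}) increases the polygon's angle only at the two endpoints of $\gamma_{e_v}$, and there by the flap's angle; so the full sectors of the pie slices of $P_v\cup T(e_v,v)$ total the full sectors of $P_v$ together with the two base sectors of $T(e_v,v)$, whose combined area is $\mathrm{area}(T(e_v,v))-D_R(d_{e_v},J_v)$ by \FullSector. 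Since $\mathrm{area}(P_v\cup T(e_v,v))=\mathrm{area}(P_v)+\mathrm{area}(T(e_v,v))$, this gives $\sum_{e\ni v,\,e\ne e_v}D_R(d_e,J_v)=D_R(\bd_v)+D_R(d_{e_v},J_v)$, hence the second displayed identity, completing the reduction.

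The main obstacle is the geometric core of the middle step: the assertion that $\bigcup_{x\in P_v}(U_x\cap P_v')$ really does split as the disjoint union, over the pieces, of their full sectors, with no stray component of the type in Figure \ref{more bad packing}. This rests on $R\le i(\cals)$ and, crucially, on the already-delicate analysis in the proof of Lemma \ref{star sector}—in particular its separate treatment of vertices $x$ on $\partial\gamma_{e_v}$, where the flap $T(e_v,v)$ overlaps an adjacent pie slice, so that a disk $U_x$ there reaches $T(e_v,v)$ only inside that pie slice. Everything else (the combinatorics of which edges at $v$ equal $e_v$ or some $e_u$, and the telescoping) is routine bookkeeping once that containment is in hand.
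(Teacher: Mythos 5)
Your proposal is correct and takes essentially the same route as the paper: the identical per-vertex identity (the summand $D_R(\bd_v)+D_R(d_{e_v},J_v)-\sum_{w\in v+1}D_R(d_{e_w},J_w)$, with the flap-free version at $v_T$) followed by the same telescoping over $T^{(0)}_k$, which the paper merely packages as an induction on $k$. Your explicit pie-slice decomposition and angle-addition bookkeeping at non-root vertices just spell out what the paper's inductive step calls ``an argument analogous to the base case,'' using the same inputs (Lemmas \ref{re-tile} and \ref{star sector}, \FullSector, \DefectFunction).
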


\begin{proof}  We prove this by induction on $k$.  By definition, $P_{v_T} = P_{v_T}'\cup \left(\bigcup_{w\in v_T+1} T(e_w,w)\right)$.  For each vertex $x$ of $P_{v_T}$, $U_x\cap P_{v_T}$ is a sector (by \PackingVsDecomp) that is a non-overlapping union of sectors $(U_x\cap P_{v_T}')\cup (U_x\cap T(e_w,w))$ (by Lemma \ref{star sector} and \FullSector, respectively).  By Lemma \ref{star sector}, $T(e_w,w)$ does not intersect $U_x$ unless $x$ is one of its vertices.  Thus:
$$ \mathrm{area}\left(P_{v_T}'- \bigcup_{x\in\cals} U_x|_{Q_0'}\right) = D_R(\bd_{v_T}) -\sum_{w\in v_T+1} D_R(d_{e_w},J_w), $$
This is because \DefectFunction\ implies that $D_R(\bd_{v_T})$ is the area of the region in $P_{v_T}$ outside the $U_x$, and \FullSector\ implies that $D_R(d_{e_w},J_w)$ is the area of the region of $T(e_w,w)$ outside the disks centered at its endpoint.  Since $T^{(0)} = \{v_T\}$ and $v_T+1=T^{(0)}_1 - T^{(0)}_0$, the $k=0$ case follows.

For $k>0$, we note that for any $v\in T^{(0)}_k$, $\mathit{int} P_v'$ intersects $U_x|_{Q_k'}$ if and only if $x$ is a vertex of $P_v$.  This by definition of the $U_x|_{Q_k'}$, since by Lemma \ref{star sector} for a vertex $x'$ of $P_w'$ for some $w\neq v$, $P_w'$ contains the sector of $U_{x'}$ that it determines, and $\mathit{int}\, P_v' \cap P_w' = \emptyset$ by Lemma \ref{re-tile}. 

For $k>0$, assume the conclusion holds for $k-1$.  Writing $Q_k' = Q_{k-1}'\cup\left(\bigcup_{v\in T^{(0)}_k-T^{(0)}_{k-1}} P_v'\right)$ yields:\begin{align*}
  \mathrm{area}\left(Q_k'- \bigcup_{x\in\cals} U_x|_{Q_k'}\right) & =\sum_{v\in T^{(0)}_{k-1}} D_R(\bd_v) -\sum_{w\in T^{(0)}_k-T^{(0)}_{k-1}} D_R(d_{e_w},J_w), \\
  &\qquad + \sum_{v\in T^{(0)}_k - T^{(0)}_{k-1}} \left(D_R(\bd_v) + D_R(d_{e_v},J_v) - \sum_{w\in v+1} D_R(d_{e_w},J_w)\right) \end{align*}
The first line above follows from the inductive hypothesis, and the second by an argument analogous to the base case.  The sum above telescopes, and since $T^{(0)}_{k+1} - T^{(0)}_k = \bigcup\,\left\{v+1\,|\,v\in T^{(0)}_k - T^{(0)}_{k-1}\right\}$, the lemma follows by induction.\end{proof}

The main result of the section follows quickly.

\begin{proof}[Proof of Proposition \ref{2-cell defect}]  The result follows from Lemma \ref{restriction defect} and two observations.  

First, for a $2$-cell $Q$ of $P_c$ containing a component $T$ of $V^{(1)}_n$, let $\widetilde{Q}$ be a lift to $\mathbb{H}^2$ and $\widetilde{T}$ the lift of $T$ that it contains.  Because $\widetilde{T}$ is finite, there exists $k >0$ such that $\widetilde{T} = \widetilde{T}^{(0)}_k$, and hence $\widetilde{Q} = \widetilde{Q}_k'$ (by Lemma \ref{re-tile}) and $\widetilde{T}^{(0)}_{k+1}-\widetilde{T}^{(0)}_k = \emptyset$.  Thus Lemma \ref{restriction defect} implies:
$$ \mathrm{area}\left(\widetilde{Q}- \bigcup_{x\in\widetilde{\cals}} U_x|_{\widetilde{Q}}\right) =\sum_{v\in \widetilde{T}^{(0)}} D_R(\bd_v) $$

The first observation above, combined with Lemma \ref{star sector}, implies for each $2$-cell $\widetilde{Q}$ of $\widetilde{P}_c$ that contains a component $\widetilde{T}$ of $\widetilde{V}^1_n$, and each $x\in\widetilde{\cals}$ that is the vertex of $P_v$ for some $v\in\widetilde{T}^{(0)}$, that $\widetilde{Q}$ contains the union of sectors of $U_x$ determined by the $P_v$ over all $v\in T^{(0)}$.  The same holds for $2$-cells $\widetilde{Q}$ that are centered polygons, by \PackingVsDecomp.  It follows that  for each $x\in\widetilde{\cals}$, $U_x \subset \bigcup_{i=1}^{n} Q_i$, where $\{Q_i\}_{i=1}^{n}$ is the collection of $2$-cells of $\widetilde{P}_c$ containing $x$.  This in turn implies the second observation: that $U_x\cap\widetilde{Q} = U_x|_{\widetilde{Q}}$ for each $x\in\widetilde{\cals}$.\end{proof}

%%%%%%%%%%%%%%%%%%%%%%%%
\section{Admissible spaces}\label{moduli}
%%%%%%%%%%%%%%%%%%%%%%%%

This section is devoted to abstracting the data provided by a $2$-cell $Q$ of the centered dual and lower bounds for its edge lengths, turning this into a parameter space and a function on it whose minimum bounds the defect below.  We will show that this defect function attains a minimum on the closure of the parameter space, and in the second half of the section restrict the location of such a minimum for low-complexity cells.

\begin{definition}\label{partial order}  Let $T\subset V$ be finite graphs such that $T$ is a rooted tree with root vertex $v_T$.    Partially order $T^{(0)}$ by setting $v < v_T$ for each $v\in T^{(0)} - \{v_T\}$ and $w < v$ if the edge arc in $T$ joining $w \in T^{(0)}- \{v_T,v\}$ to $v_T$ runs through $v$.  A vertex $v$ is \textit{minimal} if there is no $w \in T^{(0)}$ such that $w < v$; ie, so that $v+1$ (as in Defintion \ref{vertex poly prime}) is empty.  For $v\in T^{(0)}-\{v_T\}$, say ``$e\to v$'' for each edge $e\neq e_v$ of $V$ containing $v$, where $e_v$ is as in Definition \ref{vertex poly prime}.\end{definition}

\begin{definition}\label{admissible criteria}  Let $T\subset V$ be finite graphs such that $T$ is a rooted tree with root vertex $v_T$ and each vertex of $V$ has valence at least three.    Let $\mathcal{E}$ be the set of edges of $T$ and $\mathcal{F}$ the frontier of $T$ in $V$, fix an ordering on $\mathcal{E}\cup\calf$ and for some choice of $d_e > 0$ for each $e\in\calf$, let $\bd_{\calf} = (d_e)\in\mathbb{R}^{\calf}$.  For any $\bd_{\cale} = (d_e)\in \mathbb{R}^{\mathcal{E}}$, let $\bd = (\bd_{\cale},\bd_{\calf})$ and $P_{v}(\bd) = (d_{e_0},\hdots,d_{e_{n-1}})$ for $v\in T^{(0)}$, where the edges of $V$ containing $v$ are cyclically ordered as $e_0,\hdots,e_{n-1}$.  We will say $\bd_{\cale}$ is in the \textit{admissible set} $\mathit{Ad}(\bd_{\calf})$ determined by $\bd_{\calf}$ if the following criteria hold: \begin{enumerate}
\item\label{not centered} For $v\in T^{(0)}-\{v_T\}$ with valence $n_v$ in $V$, $P_v(\bd) \in \mathcal{AC}_{n_v} - \calc_{n_v}$ has largest entry $d_{e_v}$. 
\item\label{centered} $P_{v_T}(\bd) \in \calc_{n_T}$, where $v_T$ has valence $n_T$ in $V$.
\item\label{radius order} $J(P_{v}(\bd)) > J(P_w(\bd))$ for each $w\in v- 1$, where $J(P_v(\bd))$ and $J(P_w(\bd))$ are the respective radii of $P_v(\bd)$ and  $P_w(\bd)$.  \end{enumerate}  
(Note that the final condition above is vacuous for minimal $v$.)  \end{definition}

\begin{definition}\label{tree defect} Let $T \subset V$ be finite graphs such that $T$ is a rooted tree with root vertex $v_T$ and each vertex of $V$ has valence at least three.  Let $\mathcal{E}$ be the set of edges of $T$ and $\calf$ the frontier of $T$ in $V$, and fix $\bd_{\calf} = (d_e\,|\,e\in\calf)\in \mathbb{R}^{\calf}$ such that $\mathit{Ad}(\bd_{\calf})\neq \emptyset$.  For each $\bd_{\cale}\in \mathit{Ad}(\bd_{\calf})$, let $\bd = (\bd_{\cale},\bd_{\calf})$ and for $R\leq\min\{d_e/2\,|\,e\in\mathcal{F}\}$, define:
$$ D_R(T,\bd) = \sum_{v\in T^{(0)}} D_R(P_v(\bd)), $$
where $P_v(\bd)$ is as in Definition \ref{admissible criteria} and $D_R(P)$ is as defined in \DefectFunction.  \end{definition}

\begin{lemma}\label{admissible Voronoi}  Let $V$ be the Voronoi decomposition of a closed hyperbolic surface $F$ determined by a finite set $\cals\subset F$, and let $Q$ be a centered dual $2$-cell containing a component $T$ of $V^{(1)}_n$.  Let $\mathcal{E}$ be the edge set of $T$ and $\mathcal{F}$ its frontier in $V$, and for each $e\in\mathcal{E}$ or such that $(e,v)\in\mathcal{F}$ for some $v$, let $d_e$ be the length of the geometric dual to $e$.  Then $\bd_{\cale} \in \mathit{Ad}(\bd_{\calf})$ and $D_R(Q) = D_R(T,\bd)$ for $\bd = (\bd_{\cale},\bd_{\calf})$ and $R\leq \min\{d_e/2\,|\,e\in\mathcal{F}\}$.  \end{lemma}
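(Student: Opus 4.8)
The plan is to recognize that, for every $v\in T^{(0)}$, the tuple $P_v(\bd)$ of Definition \ref{admissible criteria} is exactly the cyclically ordered side-length collection of the vertex polygon $P_v$ of Lemma \ref{vertex polygon}, after which both assertions follow by unwinding definitions against the structure theory of Section \ref{dirichlet dual}.  So I would first fix $v\in T^{(0)}$ and cyclically order the edges $e_0,\dots,e_{n_v-1}$ of $V$ containing $v$ as in Lemma \ref{vertex polygon}.  Each $e_i$ either lies in $T$, hence in $\cale$, or else $(e_i,v)$ lies in the frontier $\calf$ of $T$ in $V$; in either case $d_{e_i}$ is among the data fixed in the hypothesis and equals the length of the geometric dual $\gamma_i$ to $e_i$.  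By the second bullet of Lemma \ref{vertex polygon}, the cyclically ordered edge set of $P_v$ is precisely $\gamma_0,\dots,\gamma_{n_v-1}$, so $P_v(\bd)=(d_{e_0},\dots,d_{e_{n_v-1}})$ is the cyclically ordered side-length collection of $P_v$.  In particular $P_v(\bd)\in\widetilde{\calAC}_{n_v}$ and represents $P_v$ in the sense introduced after Definition \ref{centered space} (using \UniqueParameters{}), and $J(P_v(\bd))$ equals the radius $J_v$ of $P_v$.

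Granting this, the three conditions of Definition \ref{admissible criteria} are read off from Section \ref{dirichlet dual}.  For $v\in T^{(0)}-\{v_T\}$, Lemma \ref{to the root} gives that $P_v$ is not centered and that the edge $e_v$ of $T$ is a non-centered edge of $V$ with initial vertex $v$; then Lemma \ref{vertex polygon centered} identifies the geometric dual of $e_v$ as the unique longest edge of $P_v$.  Hence $P_v(\bd)$ has largest entry $d_{e_v}$, and it lies outside $\widetilde{\calc}_{n_v}$ by the characterization after Definition \ref{centered space} of which elements of $\widetilde{\calAC}_{n_v}$ represent centered polygons; this is condition (\ref{not centered}).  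Condition (\ref{centered}) is the same characterization applied to $P_{v_T}$, which is centered by Lemma \ref{to the root}.  Condition (\ref{radius order}) is immediate from Lemma \ref{increasing radius}: if $w\in v+1$, i.e.\ $v$ is the terminal vertex of the non-centered edge $e_w$, then $J_w<J_v$, which says $J(P_w(\bd))<J(P_v(\bd))$.

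For the defect identity, Proposition \ref{dual decomp} together with Lemma \ref{centered or not} gives $Q\cap V^{(0)}=T^{(0)}$, so by the definition of $D_R(Q)$ in Proposition \ref{2-cell defect} we have $D_R(Q)=\sum_{v\in T^{(0)}}D_R(\bd_v)$.  For each such $v$, both $\bd_v$ and $P_v(\bd)$ represent $P_v$ and so have the same class in $\calAC_{n_v}$; hence $D_R(\bd_v)=D_R(P_v(\bd))$, since the quantity $D_R$ records depends only on the cyclic polygon determined by its argument and so is invariant under cyclic permutation of the entries.  Summing over $T^{(0)}$ and comparing with Definition \ref{tree defect} yields $D_R(Q)=D_R(T,\bd)$; along the way one checks that both sides are defined throughout the stated range $R\le\min\{d_e/2\mid e\in\calf\}$.

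I do not anticipate a serious obstacle here: the lemma is essentially a dictionary entry matching the geometric objects of Section \ref{dirichlet dual} to the combinatorial data of Definitions \ref{admissible criteria} and \ref{tree defect}.  The one point that genuinely needs care is the opening step --- confirming that the cyclic order on the edges of $V$ at $v$ used to build $P_v(\bd)$ agrees with the cyclic order on the sides of $P_v$ underlying $\widetilde{\calAC}_{n_v}$ --- and this is exactly what the second bullet of Lemma \ref{vertex polygon} together with the definition of $\calAC_n$ supply.
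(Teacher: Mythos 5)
Your proposal is correct and follows essentially the same route as the paper's proof: identify $P_v(\bd)$ with the cyclically ordered side-length collection of $P_v$ via Lemma \ref{vertex polygon}, verify conditions (1)--(3) of Definition \ref{admissible criteria} using Lemmas \ref{vertex polygon centered}, \ref{to the root} (equivalently \ref{tree components}), and \ref{increasing radius}, and deduce the defect identity from Proposition \ref{2-cell defect} and Definition \ref{tree defect}. The only divergence is your use of $v+1$ where Definition \ref{admissible criteria} writes $v-1$, but that reflects a notational wobble in the paper itself and your reading matches the intended meaning.
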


\begin{proof}  For each $v\in T^{(0)}$, Lemma \ref{vertex polygon} implies that the vertex polygon $P_v$ is a cyclic polygon with cyclically ordered side length collection $(d_{e_0},\hdots,d_{e_{n-1}})$, where $e_0,\hdots,e_{n-1}$ is the cyclically ordered collection of edges of $V$ containing $v$. 
Recall from Definition \ref{centered edge} that each edge of $T$ is oriented.  Lemma \ref{tree components} asserts that the root vertex $v_T$ of $T$ is the terminal endpoint of every edge of $T$ that contains it.  Since every other edge of $V$ containing $v_T$ is centered, Lemma \ref{vertex polygon centered} implies that $P_{v_T} \in \calc_n$.  This establishes (\ref{centered}) from Definition \ref{admissible criteria}.  

Lemma \ref{vertex polygon centered} also implies that for $v\in T^{(0)} - \{v_T\}$, $P_v$ is non-centered with longest side length $d_{e_v}$, yielding (\ref{not centered}) from Definition \ref{admissible criteria}.  For $v\in T^{(0)}$ and $w \in v -1$, Lemma \ref{to the root} implies that $w$ is the initial vertex of $e_w$, and the definition (in \ref{partial order} above) implies that $v$ is its terminal vertex.  Therefore Lemma \ref{increasing radius} yields $J_v > J_w$ and hence, by Lemma \ref{vertex polygon}, that $J(P_v) > J(P_w)$.  This establishes (\ref{radius order}) from Definition \ref{admissible criteria}.  

That $D_R(Q) = D_R(T,\bd)$ is a direct consequence of Definition \ref{tree defect} and Proposition \ref{2-cell defect}.  \end{proof}

The lemma below implies in particular that the admissible set of $\bd_{\cale}$ is a bounded subset of $\mathbb{R}^{\calf}$, so that it has compact closure.

\begin{lemma}\label{precompact}  Let $T\subset V$ be finite graphs such that $T$ is a rooted tree with root vertex $v_T$ and each vertex of $V$ has valence at least three.    Let $\mathcal{E}$ be the set of edges of $T$ and $\mathcal{F}$ the frontier of $T$ in $V$.  There exist collections $\{b_e\co (\mathbb{R}^+)^{\calf}\to\mathbb{R}^+\,|\,e\in\cale\}$ and $\{h_e\co (\mathbb{R}^+)^{\calf}\to\mathbb{R}^+\,|\,e\in\cale\}$ characterized by the following properties:\begin{itemize}
  \item  $P_v(\bd)\in\widetilde{\calBC}_{n_v}$, with largest entry $d_{e_v}$, for each $v\in T^{(0)}-\{v_T\}$, where $v$ has valence $n_v$ in $V$ and $\bd = (\bd_{\cale},\bd_{\calf})$, if and only if $d_e = b_e(\bd_{\calf})$ for each $e\in\cale$.
  \item  $P_v(\bd)\in\widetilde{\calHC}_{n_v}$, with largest entry $d_{e_v}$, for each $v\in T^{(0)}-\{v_T\}$ if and only if $d_e = h_e(\bd_{\calf})$ for each $e\in\cale$.\end{itemize}
%\begin{align*}& \sum_{\{e\in\mathcal{E} | e\to v\}} A_{b_e}(b_{e_v}/2) + \sum_{\{e\in\mathcal{F}|e\to v\}} A_{d_e}(d_{e_v}/2) = \pi\\& \sum_{\{e\in\mathcal{E}|e\to v\}} \sinh(h_e/2) + \sum_{\{e\in\mathcal{F}|e\to v\}} \sinh(d_e/2) =\sinh(h_{e_v}/2) \end{align*}  
For fixed $\bd_{\calf}$, if $\bd_{\cale} = (d_e\,|\, e\in\cale) \in \mathit{Ad}(\bd_{\calf})$ then $b_e(\bd_{\calf}) \leq d_e < h_e(\bd_{\calf})$ for each $e\in\mathcal{E}$; and for $\bd_{\calf}' = (d_e'\,|\,e\in\calf)$ such that $d_e' \geq d_e$ for each $e\in\calf$, $b_e(\bd_{\calf}')\geq b_e(\bd_{\calf})$ for each $e\in\cale$.\end{lemma}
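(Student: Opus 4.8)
The plan is to construct the functions $b_e$ and $h_e$ one at a time by an induction running up the tree $T$, from the minimal vertices of the partial order of Definition \ref{partial order} toward the root, and then to deduce the remaining assertions from inductions of the same shape. The first step is a bookkeeping observation: for $v\in T^{(0)}-\{v_T\}$ with valence $n_v$ in $V$, every edge of $T$ incident to $v$ is oriented toward $v_T$ (Lemma \ref{to the root}), so it is either the ``up-edge'' $e_v$ or one of the ``down-edges'' $e_w$ with $w\in v+1$, while the remaining $n_v-1-|v+1|$ edges of $V$ at $v$ belong to $\calf$. Hence, if I process the vertices in an order refining the partial order, minimal ones first, then when $v$ is reached each $d_{e_w}$ with $w\in v+1$ has already been assigned, and the only coordinate of $P_v(\bd)$ not determined by $\bd_{\calf}$ and the previously assigned values is $d_{e_v}$. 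I would then invoke \BCn (together with the monotonicity of $A_d(J)$ in $J$, \MonotonicParameterFunction): among the tuples with the given $n_v-1$ remaining side lengths there is a unique one lying in $\widetilde{\calBC}_{n_v}$ with largest entry in the $e_v$-slot, i.e.\ a unique $d_{e_v}=:b_{e_v}(\bd_{\calf})$ solving $\pi=\sum_{e\in\calf,\,v\in e}A_{d_e}(d/2)+\sum_{w\in v+1}A_{d_{e_w}}(d/2)$, and this value exceeds all the other side lengths since $n_v\geq 3$. As each $e\in\cale$ equals $e_v$ for exactly one non-root $v$ (the endpoint of $e$ farther from $v_T$), this defines $b_e$ on all of $(\mathbb{R}^+)^{\calf}$; processing the children of $v_T$ exhausts $\cale$, so nothing needs to be assigned at $v_T$. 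I would define $h_e$ identically, replacing \BCn by \HCn and the defining equation by $\sinh(d/2)=\sum_{e\in\calf,\,v\in e}\sinh(d_e/2)+\sum_{w\in v+1}\sinh(d_{e_w}/2)$.

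For the characterization, ``$\Leftarrow$'' is immediate from the construction. For ``$\Rightarrow$'' I would rerun the induction: at a minimal $v$ the coordinates of $P_v(\bd)$ other than $d_{e_v}$ are exactly the frontier lengths at $v$, so the uniqueness clause of \BCn forces $d_{e_v}=b_{e_v}(\bd_{\calf})$; at a general $v$ the inductive hypothesis gives $d_{e_w}=b_{e_w}(\bd_{\calf})$ for $w\in v+1$, and \BCn again forces $d_{e_v}=b_{e_v}(\bd_{\calf})$. This in particular shows the stated characterization pins down the collection $\{b_e\}$ (and, identically, $\{h_e\}$) uniquely.

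Next I would establish the two-sided bound for $\bd_{\cale}\in\mathit{Ad}(\bd_{\calf})$, again by induction from the minimal vertices. Fix $e=e_v$ and let $i$ index $e_v$ at $v$; by criterion (\ref{not centered}) of Definition \ref{admissible criteria}, $d_{e_v}$ is the largest entry of $P_v(\bd)$ and $P_v(\bd)\in\widetilde{\calAC}_{n_v}-\widetilde{\calc}_{n_v}$, so the defining inequalities of Definition \ref{centered space}, together with $A_{d_{e_v}}(d_{e_v}/2)=\pi$ (a diameter subtends angle $\pi$; see \ParameterFunction), give $\sum_{e'\in\calf,\,v\in e'}A_{d_{e'}}(d_{e_v}/2)+\sum_{w\in v+1}A_{d_{e_w}}(d_{e_v}/2)\leq\pi$ and $\sinh(d_{e_v}/2)<\sum_{e'\in\calf,\,v\in e'}\sinh(d_{e'}/2)+\sum_{w\in v+1}\sinh(d_{e_w}/2)$. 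Using the inductive hypothesis $b_{e_w}(\bd_{\calf})\leq d_{e_w}<h_{e_w}(\bd_{\calf})$ and the monotonicity of $A_d(J)$ (increasing in $d$, decreasing in $J$; \MonotonicParameterFunction), I would replace each $d_{e_w}$ by $b_{e_w}(\bd_{\calf})$ in the first inequality: the left side only decreases, yet as a function of $d_{e_v}$ it is decreasing and equals $\pi$ precisely at $d_{e_v}=b_{e_v}(\bd_{\calf})$, forcing $d_{e_v}\geq b_{e_v}(\bd_{\calf})$. Symmetrically, replacing each $d_{e_w}$ by $h_{e_w}(\bd_{\calf})$ in the second inequality enlarges its right side to $\sinh(h_{e_v}(\bd_{\calf})/2)$, giving $d_{e_v}<h_{e_v}(\bd_{\calf})$. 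Finally, the monotonicity $b_e(\bd_{\calf}')\geq b_e(\bd_{\calf})$ for $\bd_{\calf}'\geq\bd_{\calf}$ comes out of the same leaves-to-root induction: the left side of the defining equation for $b_{e_v}$ is increasing in each frontier length and in each $d_{e_w}$ and decreasing in $d$, so, using $b_{e_w}(\bd_{\calf}')\geq b_{e_w}(\bd_{\calf})$ from the inductive hypothesis, evaluating the $\bd_{\calf}'$-version at $d=b_{e_v}(\bd_{\calf})$ yields a value $\geq\pi$, which forces $b_{e_v}(\bd_{\calf}')\geq b_{e_v}(\bd_{\calf})$.

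I expect the only real obstacle to be organizational: making the bookkeeping of the first paragraph airtight, so that at each stage the ``free'' coordinate of $P_v(\bd)$ really is $d_{e_v}$ and the remaining coordinates really are the frontier lengths at $v$ together with the already-fixed $d_{e_w}$, $w\in v+1$. Everything else reduces to the monotonicity of $A_d(J)$ and to the content of \BCn and \HCn, which I am reading as the statements that $\widetilde{\calBC}_n$ and $\widetilde{\calHC}_n$ are exactly the loci where the defining inequality of $\widetilde{\calc}_n$, respectively of $\widetilde{\calAC}_n$, degenerates to an equality in the longest-side slot, and that on each of these loci the longest side is a well-defined, monotone function of the other $n-1$ side lengths.
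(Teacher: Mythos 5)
Your proposal is correct and follows essentially the same route as the paper: a leaves-to-root induction that defines $b_{e_v}$ and $h_{e_v}$ recursively from the frontier lengths at $v$ together with the already-defined values at the vertices of $v+1$, and then deduces the two-sided bound and the monotonicity in $\bd_{\calf}$ by the same induction. The only difference is cosmetic: where the paper simply cites \VerticalRay\ and the monotonicity clauses of \BCn\ and \HCn, you unpack these into the explicit $A_d(J)$- and $\sinh$-inequalities defining $\widetilde{\calc}_n$, $\widetilde{\calBC}_n$, $\widetilde{\calAC}_n$, $\widetilde{\calHC}_n$, which is a legitimate reading of those results.
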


\begin{proof}  The proof is by induction, the key point being that for $v\in T^{(0)}-\{v_T\}$, $b_{e_v}(\bd_{\calf})$ is determined by the set of $b_{e_w}(\bd_{\calf})$ for $w<v$, and similarly for $h_{e_v}(\bd_{\calf})$.  Fix $\bd_{\calf}\in(\mathbb{R}^+)^{\calf}$.

Suppose first that $v\in T^{(0)}$ is minimal, so each $e\to v$ is in $\mathcal{F}$.  Cyclically enumerate the edges of $V$ containing $v$ as $e_0,\hdots,e_{n-1}$ so that $e_0 = e_v$, and for each $i >0$ let $d_i = d_{e_i}$.  Then \BCn\ implies that $b_{e_v}(\bd_{\calf})\doteq b_0(d_1,\hdots,d_{n-1})$ is the unique number such that $(b_{e_v}(\bd_{\calf}),d_1,\hdots,d_{n-1})\in\overline{U}_n\cap\widetilde{\calBC}_n$, where $U_n$ is as in \FundamentalDomain.  That is, $b_{e_v}(\bd_{\calf})$ is unique with the property that the tuple above is in $\widetilde{\calBC}_n$ and has its largest entry first.  \HCn\ implies the analogous conclusion for $h_{e_v}(\bd_{\calf})\doteq h_0(d_1,\hdots,d_{n-1})$ and $\widetilde{\calHC}_n$. 

Let us also note that by \VerticalRay, $(d_0,d_1,\hdots,d_{n-1})\in U_n$ is in $\widetilde{\calAC}_n - \widetilde{\calc}_n$ if and only if $b_{e_v}(\bd_{\calf})\leq d_0 < h_{e_v}(\bd_{\calf})$.  If $\bd_{\cale}\in \mathit{Ad}(\bd_{\calf})$ then for $\bd = (\bd_{\cale},\bd_{\calf})$ Definition \ref{admissible criteria} (\ref{not centered}) implies that $P_v(\bd) = (d_{e_v},d_1,\hdots,d_{n-1})\in U_n\cap (\widetilde{\calAC}_n - \widetilde{\calc}_n)$, so $b_{e_v}(\bd_{\calf})\leq d_{e_v} < h_{e_v}(\bd_{\calf})$.

Now fix $v\in T^{(0)}-\{v_T\}$ non-minimal, and suppose that we $b_{e_w}(\bd_{\calf})$ and $h_{e_w}(\bd_{\calf})$ are defined for each $w<v$, satisfying the following inductive hypotheses:\begin{itemize}
\item $P_w(\bd)\in\widetilde{\calBC}_{n_w}$, with largest entry $d_{e_w}$, for each $w<v$, where $\bd=(\bd_{\cale},\bd_{\calf})$, if and only if $d_{e_w} = b_{e_w}(\bd_{\calf})$ for each $w < v$.
\item $P_w(\bd)\in\widetilde{\calBC}_{n_w}$, with largest entry $d_{e_w}$, for each $w<v$, where $\bd=(\bd_{\cale},\bd_{\calf})$, if and only if $d_{e_w} = b_{e_w}(\bd_{\calf})$ for each $w < v$.
\item For each $\bd_{\cale}\in\mathit{Ad}(\bd_{\calf})$, $b_{e_w}(\bd_{\calf})\leq d_{e_w} < h_{e_w}(\bd_{\calf})$ for each $w < v$.\end{itemize}
Cyclically enumerate the edges containing $v$ as $e_0,\hdots,e_{n-1}$ so that $e_0 = e_v$, and for $i>0$ define:\begin{align*}
  & d_i = \left\{\begin{array}{ll} d_{e_i} & e_i\in\calf \\ b_{e_i}(\bd_{\calf}) & e_i\in\cale \end{array}\right. &
  & d_i' = \left\{\begin{array}{ll} d_{e_i} & e_i\in\calf \\ h_{e_i}(\bd_{\calf}) & e_i\in\cale\end{array}\right.\end{align*}
Then \BCn\ again implies that $b_{e_v}(\bd_{\calf})\doteq b_0(d_1,\hdots,d_{n-1})$ is the unique number such that $(b_{e_v}(\bd_{\calf}),d_1,\hdots,d_{n-1})\in\overline{U}_n\cap\widetilde{\calBC}_n$, and \HCn\ gives the analogous conclusion for $h_{e_v}(\bd_{\calf}) = h_0(d_1',\hdots,d_{n-1}')$.

For $\bd_{\cale}\in\bd_{\calf}$, by hypothesis $d_i \leq d_{e_i}$ for each $i\in\{1,\hdots,n-1\}$, so \BCn implies that $b_{e_v}(\bd_{\calf})\leq b_0(d_{e_1},\hdots,d_{e_{n-1}})$.  We also have $d_{e_i} < d_i$ for each $i$ such that $e_i\in\cale$ by hypothesis (and $d_{e_i}=d_i$ otherwise), so $h_{e_v}(\bd_{\calf}) > h_0(d_{e_1},\hdots,d_{e_{n-1}})$ by \HCn.  By Definition \ref{admissible criteria} (\ref{not centered}) and \VerticalRay,
$$ b_0(d_{e_1},\hdots,d_{e_{n-1}}) \leq d_{e_v} < h_0(d_{e_1},\hdots,d_{e_{n-1}}),$$
and it follows that $b_{e_v}(\bd_{\calf})\leq d_{e_v} < h_{e_v}(\bd_{\calf})$.  We have thus proved the three hypotheses above for $\{v\}\cup\{w<v\}$, so it follows by induction that they hold on all of $T^{(0)}-\{v_T\}$.  (Recall in particular that there is a unique $e_v$ for each $v\in T^{(0)}-\{v_T\}$, and that $\cale$ is the set of all such $e_v$.)

The final claim of the lemma, that $b_e$ is ``increasing'' in $\bd_{\calf}$ for each $e$, follows from an inductive argument and \BCn, which asserts that $b_0(d_1,\hdots,d_{n-1})\leq b_0(d_1',\hdots,d_{n-1}')$ when $d_i\leq d_i'$ for each $i$.\end{proof}

\begin{remark}\label{outside-in}  For any given tree $T$ with frontier $\calf$, the proof of Lemma \ref{precompact} is easily adapted (using formulas from \cite{DeB_cyclic_geom}) to produce a recursive algorithm that takes $\bd_{\calf}\in(\mathbb{R}^+)^{\calf}$ and computes the values $b_e(\bd_{\calf})$ or $h_e(\bd_{\calf})$ from the ``outside in.''\end{remark}

\begin{lemma}\label{admissible closure}  Let $T\subset V$ be finite graphs such that $T$ is a rooted tree with root vertex $v_T$ and each vertex of $V$ has valence at least three.    Let $\mathcal{E}$ be the set of edges of $T$ and $\mathcal{F}$ the frontier of $T$ in $V$, and fix $\bd_{\calf} = (d_e\,|\,e\in\calf)\in\mathbb{R}^{\calf}$.  If $\mathit{Ad}(\bd_{\calf})\neq \emptyset$, then for each $\bd_{\cale}$ in its closure $\overline{\mathit{Ad}}(\bd_{\calf})$, $\bd = (\bd_{\cale},\bd_{\calf})$ satisfies: \begin{enumerate}
\item\label{closure not centered} For $v\in T^{(0)}-\{v_T\}$ with valence $n_v$ in $V$, $P_v(\bd) \in\widetilde{\mathcal{AC}}_{n_v} - \widetilde{\calc}_{n_v}$ has largest entry $d_{e_v}$. 
\item\label{closure centered} $P_{v_T}(\bd) \in \widetilde{\calc}_{n_T}\cup \widetilde{\calBC}_{n_T}$, where $v_T$ has valence $n_T$ in $V$.
\item\label{closure radius order} $J(P_{v}(\bd)) \geq J(P_w(\bd))$ for each $w\in v- 1$, where $J(P_v(\bd))$ and $J(P_w(\bd))$ are the respective radii of $P_v(\bd)$ and  $P_w(\bd)$.  \end{enumerate}  
\end{lemma}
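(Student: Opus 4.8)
The plan follows a standard pattern — show that each defining condition of the admissible set is inherited by its closure — with one genuinely quantitative step. Fix $\bd_{\cale}\in\overline{\mathit{Ad}}(\bd_{\calf})$, choose $\bd_{\cale}^{(k)}\in\mathit{Ad}(\bd_{\calf})$ with $\bd_{\cale}^{(k)}\to\bd_{\cale}$, and write $\bd^{(k)}=(\bd_{\cale}^{(k)},\bd_{\calf})$ and $\bd=(\bd_{\cale},\bd_{\calf})$; I would verify that each condition of Definition \ref{admissible criteria} relaxes in the limit to its counterpart (\ref{closure not centered})--(\ref{closure radius order}). First, Lemma \ref{precompact} gives $b_e(\bd_{\calf})\le d_e^{(k)}<h_e(\bd_{\calf})$ for $e\in\cale$, hence $d_e\in[b_e(\bd_{\calf}),h_e(\bd_{\calf})]$ in the limit; since the horocyclic values $h_e(\bd_{\calf})$ are finite (evident from their recursive description in the proof of Lemma \ref{precompact}) while the $b_e(\bd_{\calf})$ and the fixed lengths $d_e$ ($e\in\calf$) are positive, there are $0<\epsilon\le M<\infty$ so that every entry of every $P_v(\bd^{(k)})$ and every $P_v(\bd)$ — each a length $d_e$ with $e\in\cale\cup\calf$ — lies in $[\epsilon,M]$. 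In particular $\overline{\mathit{Ad}}(\bd_{\calf})$ is compact and no $P_v(\bd)$ degenerates onto $\partial(\mathbb{R}^+)^{n_v}$.

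The conditions that are \emph{closed} then pass to the limit for free. The requirement in Definition \ref{admissible criteria}(\ref{not centered}) that $d_{e_v}$ be a largest entry of $P_v(\bd)$ is a finite conjunction of weak inequalities and survives. The function $\bd\mapsto\sum_j A_{d_j}(d_{\max}/2)$, with $d_{\max}$ the largest entry, is continuous, so $\widetilde{\calc}_{n_v}=\{\,\cdot>2\pi\,\}$ is open: thus $P_v(\bd)\notin\widetilde{\calc}_{n_v}$ for $v\ne v_T$, while $P_{v_T}(\bd)$ lies in the closed set $\{\,\cdot\ge 2\pi\,\}$, whose members of $\widetilde{\calAC}_{n_T}$ are exactly those of $\widetilde{\calc}_{n_T}\cup\widetilde{\calBC}_{n_T}$ (the equality case being precisely the boundary-centered polygons, whose circumradius is half the longest side). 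And once all $P_v(\bd)$ are known to lie in $\widetilde{\calAC}$, the radius function $J$ is continuous there by \RadiusFunction, so the strict inequalities of Definition \ref{admissible criteria}(\ref{radius order}) descend to the weak inequalities (\ref{closure radius order}).

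Everything thus reduces to showing $P_v(\bd)\in\widetilde{\calAC}_{n_v}$ for each $v$, i.e.\ that the limit avoids the horocyclic boundary $\widetilde{\calHC}_{n_v}$ — the only portion of $\partial\widetilde{\calAC}_{n_v}$ not already excluded by the entry bounds. This is the one non-formal step, and I would drive it by feeding Definition \ref{admissible criteria}(\ref{radius order}) a radius estimate. A centered $n$-gon with all sides $\le M$ has circumradius $J$ solving $\sum_j A_{d_j}(J)=2\pi$, and since $A_d(J)=2\arcsin(\sinh(d/2)/\sinh J)\to 0$ as $J\to\infty$ uniformly for $d\le M$, this forces $J\le M'$ for a finite $M'$ depending only on $M$ and $n$ (explicitly $\sinh M'=\sinh(M/2)/\sin(\pi/n)$ works). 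Applying this to the centered polygon $P_{v_T}(\bd^{(k)})$ and chaining the inequalities of Definition \ref{admissible criteria}(\ref{radius order}) up the finite edge path in $T$ from $v$ to $v_T$ yields $J(P_v(\bd^{(k)}))\le M'$ for all $k$ and all $v\in T^{(0)}$. But the radius blows up at $\widetilde{\calHC}_{n_v}$: a sequence in $\widetilde{\calAC}_{n_v}$ converging to a point of $\widetilde{\calHC}_{n_v}$ has circumradius $\to\infty$ (part of the radius-function analysis of \cite{DeB_cyclic_geom}; cf.~\PolyConvergence). Since each $P_v(\bd^{(k)})\in\widetilde{\calAC}_{n_v}$, its limit $P_v(\bd)$ lies in $\overline{\widetilde{\calAC}_{n_v}}$, and as its entries lie in $[\epsilon,M]$ it lies in $\widetilde{\calAC}_{n_v}\cup\widetilde{\calHC}_{n_v}$; the uniform bound $M'$ excludes the second alternative, so $P_v(\bd)\in\widetilde{\calAC}_{n_v}$, and with this the closed conditions above complete the proof. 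Staying inside the open stratum $\widetilde{\calAC}_{n_v}$, rather than drifting onto its horocyclic boundary, is the only real obstacle: the remaining defining conditions of the admissible set are either open — hence already pinned down by the explicit bounds of Lemma \ref{precompact} — or closed, hence automatically inherited by the closure.
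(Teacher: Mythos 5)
Your proof is correct and takes essentially the same route as the paper's: the largest-entry, non-centeredness, root-centeredness and radius-order conditions pass to the closure by closedness and continuity, and the real content is ruling out degeneration onto the horocyclic boundary of $\widetilde{\calAC}_{n_v}$ by playing criterion (3) of Definition \ref{admissible criteria} against the blow-up of the circumradius there (for which the paper cites \RadiusUpNDown\ rather than \PolyConvergence). The only real difference is how the bounded side of that contradiction is produced: you bound $J(P_{v_T}(\bd^{(k)}))$ uniformly and explicitly via the centered root polygon with sides at most $M$ and chain the radius inequalities down the tree, whereas the paper chooses a vertex $v$ maximal among those where $\widetilde{\calAC}$-membership fails, so that the parent polygon $P_w(\bd)$ remains in $\widetilde{\calAC}_{n_w}$, has bounded radius along the approximating sequence, and contradicts the single inequality $J(P_w(\bd_n))>J(P_v(\bd_n))$ — a cosmetic variation, with your version in fact spelling out more carefully the parts of the negation of (\ref{closure not centered}) and the ordering of the continuity claims that the paper leaves implicit.
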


\begin{proof}  Lemma \ref{precompact} implies that $\mathit{Ad}(\bd_{\calf})$ is bounded in $\mathbb{R}^{\cale}$ and therefore has compact closure.  Since $P_{v_T}(\bd)\in\calc_n$ for each $\bd\in\mathit{Ad}(\bd_{\calf})\times\{\bd_{\calf}\}$, for $\bd$ in the closure it must be the case that $P_{v_T}(\bd)\in\overline{\calc}_n$, establishing (\ref{closure centered}).  \SmoothAnglesRadius\  implies that for each $v\in T^{(0)}$, $J(P_v(\bd))$ varies continuously with $\bd$ on $\mathit{Ad}(\bd_{\calf})\times\{\bd_{\calf}\}$.  Thus since $J(P_v(\bd)) > J(P_w(\bd))$ for each such $\bd$ and $w\in v-1$, $J(P_v(\bd))\geq J(P_w(\bd))$ for each $\bd\in\overline{\mathit{Ad}}(\bd_{\calf})\times\{\bd_{\calf}\}$, so (\ref{closure radius order}) holds.

Now suppose that (\ref{closure not centered}) does not hold, so there exist $\bd\in\overline{\mathit{Ad}}(\bd_{\calf})\times\{\bd_{\calf}\}$ and $v\in T^{(0)}-\{v_T\}$ such that $P_v(\bd) \in \overline{\mathcal{AC}}_n - \mathcal{AC}_n$.  Let us take $v$ to be maximal with this property, so that in particular $P_w(\bd) \in \mathcal{AC}_n$ for the endpoint $w$ of $e_v$.  If $\{\bd_n\}$ is a sequence in $\mathit{Ad}(\bd_{\calf})\times\{\bd_{\calf}\}$ approaching $\bd$, then $P_w(\bd_n) \to P_w(\bd)$, and so there is a universal upper bound on $J(P_w(\bd_n))$.  On the other hand, \RadiusUpNDown\  implies that $J(P_v(\bd_n)) \to \infty$, contradicting criterion (\ref{radius order}) of Definition \ref{admissible criteria} for some $\bd_n$.  Therefore (\ref{closure not centered}) holds.  \end{proof}

\begin{lemma}  Let $T \subset V$ be finite graphs such that $T$ is a rooted tree with root vertex $v_T$ and each vertex of $V$ has valence at least three.  Let $\mathcal{E}$ be the set of edges of $T$ and $\calf$ the frontier of $T$ in $V$, and fix $\bd_{\calf} = (d_e\,|\,e\in\calf)\in \mathbb{R}^{\calf}$ such that $\mathit{Ad}(\bd_{\calf})\neq \emptyset$.  Then $D_R(T,\bd)$ is continuous on $\overline{\mathit{Ad}}(\bd_{\calf})\times\{\bd_{\calf}\}$ and attains a minimum there.  \end{lemma}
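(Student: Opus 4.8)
The plan is to read off the statement from the extreme value theorem, so there are really two things to check: that $\overline{\mathit{Ad}}(\bd_{\calf})$ is compact and nonempty, and that $D_R(T,\bd)$ (for $R$ as in Definition \ref{tree defect} and $\bd=(\bd_{\cale},\bd_{\calf})$) extends continuously over it. First I would dispose of compactness: by hypothesis $\mathit{Ad}(\bd_{\calf})\neq\emptyset$, and Lemma \ref{precompact} gives $b_e(\bd_{\calf})\le d_e<h_e(\bd_{\calf})$ for every $e\in\cale$ whenever $\bd_{\cale}\in\mathit{Ad}(\bd_{\calf})$, so $\mathit{Ad}(\bd_{\calf})$ is bounded in $\mathbb{R}^{\cale}$ and its closure $\overline{\mathit{Ad}}(\bd_{\calf})$ is compact.

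For continuity I would use the expression $D_R(T,\bd)=\sum_{v\in T^{(0)}}D_R(P_v(\bd))$ of Definition \ref{tree defect}: as $T^{(0)}$ is finite it suffices to show each summand extends continuously, and since $\bd\mapsto P_v(\bd)$ is the restriction of a linear reshuffling of coordinates, this comes down to continuity of $D_R$ on the set of tuples occurring as $P_v(\bd)$ for $\bd\in\overline{\mathit{Ad}}(\bd_{\calf})\times\{\bd_{\calf}\}$. The key input here is Lemma \ref{admissible closure}: on that closure one has $P_v(\bd)\in\widetilde{\calAC}_{n_v}-\widetilde{\calc}_{n_v}$ for $v\neq v_T$ and $P_{v_T}(\bd)\in\widetilde{\calc}_{n_T}\cup\widetilde{\calBC}_{n_T}$, so in every case $P_v(\bd)$ lies in $\widetilde{\calAC}_{n_v}$ (using $\widetilde{\calc}_n\subset\widetilde{\calAC}_n$ from \Ad, and that a boundary-centered tuple represents an honest cyclic polygon and hence lies in $\widetilde{\calAC}_n$; cf.~\BCn). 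It then suffices that $D_R$ is continuous on $\widetilde{\calAC}_n$, and this I would take from \cite{DeB_cyclic_geom}: the circumradius and vertex-angle functions of a cyclic $n$-gon vary continuously (indeed smoothly) on $\widetilde{\calAC}_n$ by \SmoothAnglesRadius, and $D_R$ is an explicit continuous combination of these (see \Defect, or the symmetric-case formula in \CenteredDefectBound). I would also record the minor point that $D_R(P_v(\bd))$ is the defect it is supposed to be, i.e.\ that every coordinate of $P_v(\bd)$ is at least $2R$: the frontier coordinates are entries of the fixed tuple $\bd_{\calf}$, all $\ge 2R$ by the standing hypothesis $R\le\min\{d_e/2:e\in\calf\}$, while each tree-edge coordinate $d_e$ satisfies $d_e\ge b_e(\bd_{\calf})$, which in the recursion of Lemma \ref{precompact} is produced as the largest entry of a boundary-centered tuple all of whose remaining entries are, by induction from the leaves of $T$ inward, already $\ge 2R$.

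Given all of this, $D_R(T,\bd)$ is a continuous real-valued function on the nonempty compact set $\overline{\mathit{Ad}}(\bd_{\calf})\times\{\bd_{\calf}\}$ and therefore attains a minimum there, as claimed. The step I expect to be the main obstacle is pinning down continuity of $D_R$ at the root boundary locus $\widetilde{\calBC}_{n_T}$, where the center of $P_{v_T}(\bd)$ lies on $\partial P_{v_T}(\bd)$ instead of in its interior; there one cannot argue directly with the ``area outside the radius-$R$ disks'' picture and has to fall back on continuity of the circumradius and angle functions from \SmoothAnglesRadius. The non-root vertices present no such issue, since Lemma \ref{admissible closure}(\ref{closure not centered}) confines each $P_v(\bd)$ to the open set $\widetilde{\calAC}_{n_v}$.
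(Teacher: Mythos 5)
Your proposal is correct and follows essentially the same route as the paper: Lemma \ref{admissible closure} places each $P_v(\bd)$ in $\widetilde{\calAC}_{n_v}$ on the closure, continuity of $D_R$ there is imported from \cite{DeB_cyclic_geom} (the paper cites \DefectDerivative\ where you invoke \SmoothAnglesRadius\ together with the defect formula, which amounts to the same thing), and compactness via Lemma \ref{precompact} plus the extreme value theorem finishes it. Your extra check that the tree-edge coordinates are at least $2R$ is careful but not something the paper's proof dwells on.
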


\begin{proof}  Since $P\mapsto D_R(P)$ is continuous on $\mathcal{AC}_n$ (by \DefectDerivative), and by the above $P_v(\bd) \in \mathcal{AC}_n$ for each $\bd \in \overline{\mathit{Ad}}(\bd_{\calf})\times\{\bd_{\calf}\}$, $D_R(T,\bd)$ is continuous on $\overline{\mathit{Ad}}(\bd_{\calf})\times\{\bd_{\calf}\}$.  This set is closed and, by Lemma \ref{precompact}, bounded, so it is compact and $D_R(T,\bd)$ attains a minimum on it.
\end{proof}

%%%%%%%%%%%%%%%
%\section{Local defect minima in low complexity}\label{minima}
%%%%%%%%%%%%%%%

For an arbitrary finite tree $T$ and $\bd_{\calf}\in\mathbb{R}^{\calf}$ as above, it seems difficult to precisely describe $\mathit{Ad}(\bd_{\calf})$ or determine the point in $\overline{\mathit{Ad}}(\bd_{\calf})$ at which $D_R(T,\bd)$ attains its minimum.  Here we will identify an alternative that such a minimum point must satisfy, at least for very simple $T$: those with one or two edges.  In the second half of the section, we will turn this into an algorithm that produces lower bounds on the minimum of $D_R(T,\bd)$, given lower bounds on the entries of $\bd_{\calf}$.

We first address the case that $T$ has a single edge.  In this case, uniquely, we are able to describe the topology of $\mathit{Ad}(\bd_{\calf})$ and locate the minimum of $D_R$.

\begin{lemma}\label{admissible quad} Let $V$ be a graph and $T$ a subgraph with one edge $e_T$ and root vertex $v_T$, and let $\calf$ be the frontier of $T$ in $V$.  For $\bd_{\calf}\in\mathbb{R}^{\calf}$, if $\mathit{Ad}(\bd_{\calf})\neq \emptyset$ it is an interval: $(d^-,d^+)$ or $[d^-,d^+)$.  For $R\geq 0$, $D_R(T,\bd)$ attains its minimum at $\bd= (d^-,\bd_{\calf})$, which satisfies one of:\begin{enumerate}
 \item $P_v(\bd)\in\widetilde{\calBC}_n$, where $n$ is the valence in $V$ of the initial vertex $v$ of $e_T$; or
 \item $P_{v_T}(\bd)\in\widetilde{\calBC}_{n_T}$, where $n_T$ is the valence of $v_T$ in $V$.
\end{enumerate}  
In case (2) above, $d^-$ is not the largest side length of $P_{v_T}(\bd)$.\end{lemma}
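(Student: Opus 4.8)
The plan is to reduce the statement to a one--parameter analysis. Since $e_T$ is the only edge of $T$, write $t = d_{e_T}$ for the sole coordinate of $\bd_{\cale}$, and abbreviate $P_v(t) = P_v(\bd)$ and $P_{v_T}(t) = P_{v_T}(\bd)$ for $\bd = (t,\bd_{\calf})$; since every edge of $V$ meeting $v$ or $v_T$ other than $e_T$ lies in $\calf$, each of $P_v(t)$ and $P_{v_T}(t)$ is a cyclic polygon whose only variable side length is $t$. First I would translate conditions (\ref{not centered}) and (\ref{centered}) of Definition \ref{admissible criteria} into conditions on $t$. By Lemma \ref{precompact} (which rests on \BCn, \HCn\ and \VerticalRay), condition (\ref{not centered}) holds exactly for $t \in [b,h)$, where $b = b_{e_T}(\bd_{\calf})$ is characterized by $P_v(b) \in \widetilde{\calBC}_n$ with the $t$--side longest, and $h = h_{e_T}(\bd_{\calf})$ by $P_v(h) \in \widetilde{\calHC}_n$. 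For condition (\ref{centered}), let $c$ be the largest of the side lengths of $P_{v_T}$ coming from $\bd_{\calf}$. Using the defining inequality of $\widetilde{\calc}_{n_T}$ together with monotonicity of $A_d(\cdot)$ (\MonotonicParameterFunction), one checks that $\{t : P_{v_T}(t) \in \calc_{n_T}\}$ is an open interval $(\alpha,\beta)$ with $0 \le \alpha < c < \beta$: the polygon is strictly centered at $t = c$ because the summand $A_c(c/2) = \pi$ then occurs at least twice with a further positive summand present, the endpoint $\beta > c$ gives $P_{v_T}(\beta) \in \widetilde{\calBC}_{n_T}$ with the $t$--side longest, and if $\alpha > 0$ then $P_{v_T}(\alpha) \in \widetilde{\calBC}_{n_T}$ with its longest side a fixed side, of length $c$.

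Condition (\ref{radius order}) reads $J(P_{v_T}(t)) > J(P_v(t))$. By \SmoothAnglesRadius\ both radii are continuous in $t$, and by \RadiusUpNDown\ $J(P_v(t)) \to \infty$ as $t \to h$; using this, together with $J(P_v(b)) = b/2$ (the $t$--side is a diameter there) and the descriptions above, one shows that the $t$--set on which (\ref{radius order}) holds is a subinterval of $[b,h)$ containing a right neighborhood of $\max(b,\alpha)$ — so its left endpoint is $\max(b,\alpha)$. Intersecting, $\mathit{Ad}(\bd_{\calf}) = [b,h) \cap (\alpha,\beta) \cap \{t : J(P_{v_T}(t)) > J(P_v(t))\}$ is an interval with left endpoint $d^- = \max(b,\alpha)$, which lies in $\mathit{Ad}(\bd_{\calf})$ if and only if $b > \alpha$; this is the asserted shape $[d^-,d^+)$ or $(d^-,d^+)$. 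If $b \ge \alpha$ then $d^- = b$ and $P_v((d^-,\bd_{\calf})) \in \widetilde{\calBC}_n$, which is alternative (1); if $\alpha > b$ then $d^- = \alpha > 0$ and $P_{v_T}((d^-,\bd_{\calf})) \in \widetilde{\calBC}_{n_T}$, which is alternative (2).

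To locate the minimum of $D_R$, note that a hyperbolic $m$--gon $P$ satisfies $D_R(P) = \cosh R\cdot\mathrm{area}(P) - (\cosh R - 1)(m-2)\pi$, because $D_R(P)$ subtracts from $\mathrm{area}(P)$ the $m$ vertex sectors, whose total area is $(\cosh R - 1)$ times the angle sum $(m-2)\pi - \mathrm{area}(P)$. Hence $D_R(T,\bd) = \cosh R\bigl(\mathrm{area}\,P_v(t) + \mathrm{area}\,P_{v_T}(t)\bigr) + \mathrm{const}$, and it suffices to show $\mathrm{area}\,P_v(t) + \mathrm{area}\,P_{v_T}(t)$ is nondecreasing on $\mathit{Ad}(\bd_{\calf})$. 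By \DefectDerivative\ (with radius $0$), $\tfrac{d}{dt}\mathrm{area}\,P_v(t) = -\tfrac12\sqrt{\cosh^{-2}(t/2) - \cosh^{-2}J(P_v(t))}$ — the minus sign because the length--$t$ side is the longest side of the non--centered polygon $P_v(t)$, so its center lies on the far side of that side — and $\tfrac{d}{dt}\mathrm{area}\,P_{v_T}(t) = +\tfrac12\sqrt{\cosh^{-2}(t/2) - \cosh^{-2}J(P_{v_T}(t))}$. Their sum is $\ge 0$ precisely because $J(P_{v_T}(t)) \ge J(P_v(t))$, which is condition (\ref{radius order}). Thus $D_R(T,\bd)$ attains its minimum on $\overline{\mathit{Ad}}(\bd_{\calf}) \times \{\bd_{\calf}\}$ at $(d^-,\bd_{\calf})$.

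The final assertion is then immediate. In case (2) we have $d^- = \alpha > 0$ and $P_{v_T}((d^-,\bd_{\calf})) \in \widetilde{\calBC}_{n_T}$, a genuine cyclic polygon whose longest side is one of the fixed sides and so has length $c$; since $\alpha < c$ was established above, the length--$t$ side of $P_{v_T}((d^-,\bd_{\calf}))$ — of length $d^- = \alpha$ — is strictly shorter, so $d^-$ is not the largest side length of $P_{v_T}(\bd)$. I expect the main obstacle to be the first two steps, and in particular the verification that condition (\ref{radius order}) never supplies the left endpoint of $\mathit{Ad}(\bd_{\calf})$: this amounts to comparing the circumradii $J(P_v(t))$ and $J(P_{v_T}(t))$ near $\max(b,\alpha)$, which needs the monotonicity information in \SmoothAnglesRadius\ and \RadiusUpNDown\ used with some care. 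Everything after that — the reduction of $D_R$ to area and the final statement — is comparatively routine.
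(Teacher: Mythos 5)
Your proposal tracks the paper's argument closely: the same translation of conditions (\ref{not centered}) and (\ref{centered}) of Definition \ref{admissible criteria} into the intervals $[b,h)$ and $(\alpha,\beta)$ (the paper's $[b_{e_T},h_{e_T})$ and $(D_0^-,D_0^+)$), the same identification of $d^-=\max(b,\alpha)$ with the two alternatives, the same use of \DefectDerivative\ to show the defect increases in $t$ (your detour through $D_R(P)=\cosh R\cdot\mathrm{area}(P)-(\cosh R-1)(m-2)\pi$ is correct but equivalent to applying \DefectDerivative\ directly, up to a stray factor of $\tfrac12$), and the same closing observation in case (2).

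However, there is a genuine gap at exactly the step you flag as ``the main obstacle,'' and the tools you cite there do not close it. You need to know that $\{t : J(P_{v_T}(t))>J(P_v(t))\}$ meets $[b,h)\cap(\alpha,\beta)$ in an \emph{initial} subinterval; continuity of the radii (\SmoothAnglesRadius) plus $J(P_v(t))\to\infty$ as $t\to h$ (\RadiusUpNDown) only show this set is open and stays away from $h$ --- they do not rule out that it is a union of several intervals whose leftmost component misses $\max(b,\alpha)$, in which case $\mathit{Ad}(\bd_{\calf})$ would fail to be an interval and its left endpoint could be supplied by condition (\ref{radius order}) rather than by (\ref{not centered}) or (\ref{centered}), so that neither alternative (1) nor (2) would hold at the minimizer. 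The missing ingredient is \RadiusDeriv: since $t$ is the longest side of the non-centered $P_v(t)$ and a side of the (nearly) centered $P_{v_T}(t)$, one gets $\bigl|\tfrac{\partial}{\partial t}J(P_{v_T}(t))\bigr|<\tfrac12<\bigl|\tfrac{\partial}{\partial t}J(P_v(t))\bigr|$ on $[b,h)\cap(\alpha,\beta)$, so $f(t)=J(P_{v_T}(t))-J(P_v(t))$ is strictly decreasing there; hence $f^{-1}(\mathbb{R}^+)$ cuts off an initial subinterval (nonempty exactly when $\mathit{Ad}(\bd_{\calf})\neq\emptyset$), and everything else in your outline then goes through. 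Note also that your base observation $J(P_{v_T}(b))>J(P_v(b))=b/2$ when $b>\alpha$ only gives membership at the single point $t=b$; it is again the monotonicity of $f$ that promotes this to the required right neighborhood.
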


\begin{proof}  By Definition \ref{admissible criteria} $\mathit{Ad}(\bd_{\calf})$ is contained in a subset of $\mathbb{R}^+$ consisting of possible values for $d_{e_T}$.  By Lemma \ref{precompact}, if criterion (\ref{not centered}) is satisfied then $d_{e_T} \in [b_{e_T},h_{e_T})$, where $b_{e_T} = b_{e_T}(\bd_{\calf})$ and $h_{e_T}=h_{e_T}(\bd_{\calf})$ as in Lemma \ref{precompact}.  In fact, $P_v(\bd)\in\widetilde{\calAC}_n - \widetilde{\calc}_n$ if and only if $d_{e_T}\in[b_{e_T},h_{e_T})$, where $\bd = (d_{e_T},\bd_{\calf})$.  This follows from \VerticalRay, as pointed out in the base case of the proof of Lemma \ref{precompact}, and it follows that (\ref{not centered}) is satisfied if and only if $d_{e_T}\in[b_{e_T},h_{e_T})$.

Now consider criterion (\ref{centered}).  Let the edges of $V$ containing $v_T$ be cyclically enumerated $e_0,e_1,\hdots,e_{n_T}-1$ so that $e_0 = e_T$.  Then $e_i\in\calf$ for $1\leq i < n_T$.  Let $d = d_{e_T}$ and $d_i = d_{e_i}$ for $i>0$, and for $\bd = (d,\bd_{\calf})$ recall that from Definition \ref{admissible criteria} that:
$$ P_{v_T}(\bd) \doteq (d,d_1,\hdots,d_{n_T}-1) \in (\mathbb{R}^+)^n $$

Let $M = \max\{d_i\}_{i=1}^{n_T-1}$.  The inequality of \CenteredSpace, determining whether $P_{v_T}(\bd)\in\widetilde{\calc}_n$, takes different forms depending on the relation of $d$ to $M$.  For $d\geq M$, $P_{v_T}(\bd)\in\calc_{n_T}$ if and only if $A_d(d/2)+\sum_{i=1}^{n_T-1} A_{d_i}(d/2) > 2\pi$, where $A_d(J)$ is from \ParameterFunction.  By \MonotonicParameterFunction, $A_d(d/2) = \pi$ and $\sum_{i=1}^{n_T-1} A_{d_i}(J)$ decreases in $J$ to a horizontal asymptote of $0$, so the criterion of \CenteredSpace\ is satisfied at $d=M$ and there exists $D_0^+>M$ such that $A_d(d/2)+\sum_{i=1}^{n_T-1} A_{d_i}(d/2) > 2\pi$ if and only if $d<D_0^+$.  

For $d \leq M$, \CenteredSpace\ requires $A_{d}(M/2) + \sum_{i=1}^{n_T-1} A_{d_i}(M/2) > 2\pi$ for $P_v(\bd)\in\widetilde{\calc}_n$.  Since $A_{d}(J)$ is continuous and increases in $d$ there is an open interval of positive $d < M$, with left endpoint $D_0^- \geq 0$, on which this inequality holds.  Thus $\{d\,|\,P_{v_T}(\bd)\in\calc_n\} = (D_0^-,D_0^+)$.  If $D_0^- >0$ then $A_{D_0^-}(M/2) + \sum_{i=1}^{n_T-1} A_{d_i}(M/2) = 2\pi$.  In this case, for $\bd=(D_0^-,\bd_{\calf})$ \CenteredClosure\ implies $P_{v_T}(\bd)$ is in the closure of $\widetilde{\calc}_{n_T}$.  It is not in $\widetilde{\calc}_{n_T}$, so $P_{v_T}(\bd)\in\widetilde{\calBC}_{n_T}$ by \BCn.  Futhermore, its longest side length is $M >D_0^-$, since $M=d_i$ for some $i > 0$.

If $\mathit{Ad}(\bd_{\calf})\neq \emptyset$ then $[b_{e_T},h_{e_T}) \cap (D_0^-,D_0^+)$ is non-empty.  We claim that $f(d) = J(P_{v_T}(\bd)) - J(P_v(\bd))$ decreases in $d$ on $[b_{e_T},h_{e_T}) \cap (D_0^-,D_0^+)$, where $J\co\widetilde{\calAC}_n\to\mathbb{R}^+$ is as in \RadiusFunction.  This follows directly from \RadiusDeriv, which implies that on this interval:
$$ \left|\frac{\partial}{\partial d} J(P_{v_T}(\bd)) \right| < \frac{1}{2} < \left|\frac{\partial}{\partial d} J(P_{v}(\bd))\right| $$
By criterion (\ref{radius order}) of Definition \ref{admissible criteria}, $\mathit{Ad}(\bd_{\calf}) = f^{-1}(\mathbb{R}^+)\cap \left([b_{e_T},h_{e_T}) \cap (D_0^-,D_0^+)\right)$, so since $f$ is decreasing $\mathit{Ad}(\bd_{\calf})$ is a subinterval containing the left endpoint of $[b_{e_T},h_{e_T}) \cap (D_0^-,D_0^+)$, if it is non-empty.  If $b_{e_t}\leq D_0^-$ then $\mathit{Ad}(\bd_{\calf}) = (d^-,d^+)$ for $d^- = D_0^-$; otherwise $\mathit{Ad}(\bd_{\calf}) = [d^-,d^+)$ for $d^- = b_{e_T}$.  By \DefectDerivative, for $d$ in this interval the derivative $\frac{\partial}{\partial d} D_R(T,\bd)$ is:
$$ \cosh R \left[ \sqrt{\frac{1}{\cosh^2(d/2)} - \frac{1}{\cosh^2 J(P_{v_T}(\bd))}} - \sqrt{\frac{1}{\cosh^2 (d/2)} - \frac{1}{\cosh^2 J(P_{v}(\bd))}} \right]  $$
Since $J(P_{v_T}(\bd)) > J(P_{v}(\bd))$, this quantity is positive, and it follows that the defect sum increases with $d$.  Therefore its minimum is at $d^-$.

If $d^-= b_e$ then $P_{v}(\bd)\in\widetilde{\calBC}_n$ for $\bd = (d^-,\bd_{\calf})$, by Lemma \ref{precompact}, and condition (2) above holds.  If $d^- = D_0^-$ then since $D_0^- \geq b_{e_T} >0$ in this case, $P_{v_T}(\bd)\in\widetilde{\calBC}_{n_T}$ for $\bd = (d^-,\bd_{\calf})$ as we observed above, and condition (1) holds.  We also noted above that $d^- = D_0^-$ is not the longest edge of $P_{v_T}(\bd)$ in this case.\end{proof}

A two-edged tree is homeomorphic to an interval, but up to symmetry there are two possibilities for a root vertex: the intersection of the two edges, or one of the two boundary vertices.  Although these two possibilities have different admissible spaces, the locations at which the associated defect function may be minimized satisfy the same criteria.

\begin{proposition}\label{two-edge}  Let $V$ be a graph and $T$ a subtree of $V$ with two edges and root vertex $v_T$.  Let $\calf$ be the frontier of $T$ in $V$ and fix $\bd_{\calf}\in\mathbb{R}^{\calf}$ with $\mathit{Ad}(\bd_{\calf})\neq\emptyset$.  For $R\geq 0$, $D_R(T,\bd)$ attains a minimum at $\bd = (\bd_{\cale},\bd_{\calf})$ satisfying one of:\begin{enumerate}
  \item\label{non-roots near centered} $P_{v}(\bd)\in\widetilde{\calBC}_{n_v}$ for each $v\in T^{(0)}-\{v_T\}$, where $v$ has valence $n_v$ in $V$; or
  \item\label{root not centered} $P_{v_T}(\bd) \in\widetilde{\calBC}_{n_T}$, where $v_T$ has valence $n_T$ in $V$; or
  \item\label{equal radii} $J(P_{v}(\bd)) = J(P_{v_T}(\bd))$ for each $v\in T^{(0)}-\{v_T\}$. \end{enumerate}\end{proposition}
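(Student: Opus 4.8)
The plan is to reduce the statement, via a monotonicity observation, to locating the ``lower--left corner'' of $\overline{\mathit{Ad}}(\bd_{\calf})$ in the two coordinates indexed by $\cale$, much as in the proof of Lemma \ref{admissible quad}.  First I would check that $D_R(T,\cdot)$ is strictly increasing in each coordinate $d_e$, $e\in\cale$, on $\mathit{Ad}(\bd_{\calf})$.  Fix $e\in\cale$ and write $v$ for its initial and $w$ for its terminal vertex in $T$, so that $e=e_v$ in the sense of Definition \ref{vertex poly prime}.  Then $d_e$ is an entry of the tuple $P_u(\bd)$ for exactly the two vertices $u=v$ and $u=w$ (the endpoints of $e$), so $\partial D_R(T,\bd)/\partial d_e$ equals $\partial_{d_e}D_R(P_v(\bd))+\partial_{d_e}D_R(P_w(\bd))$.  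Since $P_v(\bd)$ is non-centered with unique longest side $d_{e_v}=d_e$ (criterion (\ref{not centered}) of Definition \ref{admissible criteria}, with \LongestSide), while $d_e$ is not the longest side of a non-centered polygon among $\{P_v,P_w\}$ other than $P_v$ ($P_w$ being centered if $w=v_T$, and otherwise non-centered with longest side $d_{e_w}\ne d_e$), \DefectDerivative\ gives
$$\frac{\partial}{\partial d_e}D_R(T,\bd)=\cosh R\left[\sqrt{\frac{1}{\cosh^2(d_e/2)}-\frac{1}{\cosh^2 J(P_w(\bd))}}-\sqrt{\frac{1}{\cosh^2(d_e/2)}-\frac{1}{\cosh^2 J(P_v(\bd))}}\right],$$
which is positive because criterion (\ref{radius order}) forces $J(P_w(\bd))>J(P_v(\bd))$.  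As $D_R(T,\cdot)$ is continuous on the compact set $\overline{\mathit{Ad}}(\bd_{\calf})$ and attains a minimum there (the preceding lemma), strict monotonicity places this minimum at a point $\bd^{*}$ that is \emph{minimal} for the coordinatewise order on $\mathbb{R}^{\cale}$: no point of $\overline{\mathit{Ad}}(\bd_{\calf})$ lies below $\bd^{*}$ in one coordinate and weakly below it in the other.

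Next I would suppose, toward a contradiction, that $\bd^{*}$ satisfies none of (\ref{non-roots near centered}), (\ref{root not centered}), (\ref{equal radii}), and produce a nonzero vector $-(\alpha,\beta)$ with $\alpha,\beta\ge 0$ along which $\bd^{*}$ can be nudged while staying in $\overline{\mathit{Ad}}(\bd_{\calf})$.  Lemma \ref{admissible closure} describes the constraints cutting out $\overline{\mathit{Ad}}(\bd_{\calf})$; in particular, failure of (\ref{root not centered}) means $P_{v_T}(\bd^{*})$ is strictly centered, failure of (\ref{non-roots near centered}) means some non-root $P_{v^{0}}(\bd^{*})$ is strictly non-centered --- so $d_{e_{v^{0}}}$ may be decreased slightly without leaving $\widetilde{\calAC}_{n_{v^{0}}}-\widetilde{\calc}_{n_{v^{0}}}$, by \VerticalRay\ as in the base case of Lemma \ref{precompact} --- and failure of (\ref{equal radii}) means some radius inequality of criterion (\ref{radius order}) is strict.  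The analysis then breaks up according to the two combinatorial shapes of $(T,v_T)$ (the root central, or an endpoint) and, within each, according to which of the $P_v$ lie on $\widetilde{\calBC}$ and which radius inequalities are equalities.

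Two ingredients drive this case check.  First, by \RadiusDeriv, when $d_e$ is the longest side of a non-centered polygon its circumradius varies with $d_e$ at rate of absolute value strictly greater than $1/2$, whereas the circumradius of a polygon in which $d_e$ is \emph{not} a longest side varies at rate strictly less than $1/2$; hence decreasing $d_e$ \emph{widens} any strict gap $J(P_w(\bd))-J(P_v(\bd))>0$ and never forces a violation of criterion (\ref{radius order}) across a strict inequality.  Second, if $P_v(\bd^{*})\in\widetilde{\calBC}_{n_v}$ for a non-root $v$, then its longest side $d_{e_v}$ is a diameter of its circumscribed circle, i.e.\ $d_{e_v}=2J(P_v(\bd^{*}))$; but $d_{e_v}$ is a chord of the circumscribed circle of $P_w$ ($w$ the terminal vertex of $e_v$), of radius $J(P_w(\bd^{*}))$, and equality $d_{e_v}=2J(P_w(\bd^{*}))$ would make it a diameter of $P_w$ too --- impossible, since the center of a centered polygon lies in no side, and a non-centered $P_w$ has a strictly longer chord $d_{e_w}>d_{e_v}$.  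So $P_v(\bd^{*})\in\widetilde{\calBC}_{n_v}$ forces $J(P_w(\bd^{*}))>J(P_v(\bd^{*}))$.  Combining these, one checks that unless $\bd^{*}$ satisfies (\ref{non-roots near centered}), (\ref{root not centered}), or (\ref{equal radii}) there is always an admissible direction of decrease: decreasing one coordinate works except when its radius inequality is tight, and then the first ingredient lets one decrease both coordinates in a suitable ratio while keeping all the inequalities of Definitions \ref{admissible criteria} and Lemma \ref{admissible closure} satisfied.

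The main obstacle is exactly that last step.  The only configurations that block every descending direction while failing both (\ref{non-roots near centered}) and (\ref{root not centered}) are those in which several radius inequalities are simultaneously equalities, and one must verify that when $T$ has only two edges this forces \emph{all} of them --- that is, $J(P_v(\bd))=J(P_{v_T}(\bd))$ for every $v\in T^{(0)}-\{v_T\}$, which is condition (\ref{equal radii}).  When $v_T$ is an endpoint, the second ingredient rules out such a standoff, so the minimum occurs at a point of type (\ref{non-roots near centered}) or (\ref{root not centered}); when $v_T$ is the central vertex it is the joint equality $J(P_{v_1}(\bd))=J(P_{v_T}(\bd))=J(P_{v_2}(\bd))$ that can genuinely obstruct every decrease.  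Carrying out these derivative estimates with the explicit formulas of \cite{DeB_cyclic_geom} is routine but lengthy, and is where I expect the real work to lie.
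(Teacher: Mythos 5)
Your overall strategy is the paper's: compute $\partial D_R(T,\bd)/\partial d_e$ via \DefectDerivative, observe that its sign is controlled by the gap $J(P_w(\bd))-J(P_v(\bd))$ between the polygons at the two ends of $e$, and then argue that any point of $\overline{\mathit{Ad}}(\bd_{\calf})$ failing all of (\ref{non-roots near centered})--(\ref{equal radii}) admits a small admissible decrease of some coordinate that strictly lowers the defect sum. The derivative formula, the rate comparison from \RadiusDeriv\ (greater than $1/2$ for the longest side of a non-centered polygon, less than $1/2$ otherwise), and the observation that $P_v(\bd)\in\widetilde{\calBC}_{n_v}$ forces $J(P_w(\bd))>d_{e_v}/2=J(P_v(\bd))$ are exactly the ingredients of the paper's Lemmas \ref{two-edge inward} and \ref{two-edge sideways}.

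But the proposal stops where the proof begins: the case analysis you defer as ``routine but lengthy'' \emph{is} the content of the statement. In each configuration one must exhibit a specific coordinate that can be decreased without leaving $\overline{\mathit{Ad}}(\bd_{\calf})$ --- without making a non-root polygon centered, without reversing a radius inequality --- while still obtaining a strict decrease of $D_R$ even when the relevant derivative vanishes at the starting point. The paper always finds a single such coordinate, and the choice is forced by the data: for instance, with central root, if $P_{v_2}(\bd)\in\widetilde{\calBC}_{n_2}$ one must perturb $d_1$ (failure of (\ref{non-roots near centered}) then guarantees $P_{v_1}(\bd)\notin\overline{\calc}_{n_1}$, so this stays admissible), whereas if $P_{v_2}(\bd)\notin\overline{\calc}_{n_2}$ one perturbs $d_2$ even when $J(P_{v_T}(\bd))=J(P_{v_2}(\bd))$, using \RadiusDeriv\ to see that the inequality becomes strict instantly. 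Your proposed device for the tight cases --- decreasing both coordinates ``in a suitable ratio'' --- is never needed in the paper and is not obviously workable: using only the rate bounds of \RadiusDeriv, the two constraints $\Delta J(P_{v_1})\le \Delta J(P_{v_2})\le \Delta J(P_{v_T})$ can be incompatible for every ratio of decreases, which is precisely why alternative (\ref{equal radii}) must be allowed in the conclusion. Relatedly, your claim that the all-radii-equal standoff is ruled out when $v_T$ is an endpoint is unsupported (and unnecessary). Finally, the inference that coordinatewise monotonicity places the minimum at a coordinatewise-minimal point of $\overline{\mathit{Ad}}(\bd_{\calf})$ does not follow without more information about that set; fortunately your argument does not actually rely on it.
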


Proposition \ref{two-edge} follows directly from the two lemmas below, which separately address the  possible locations for the root vertex of $T$.

\begin{lemma}\label{two-edge inward}  Let $V$ be a graph and $T$ a subtree of $V$ with two edges that share its root vertex $v_T$.  With $\calf$, $\bd_{\calf}\in\mathbb{R}^{\calf}$, and $R\geq 0$ as in Proposition \ref{two-edge}, its conclusions hold.  \end{lemma}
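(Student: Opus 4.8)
The plan is to treat Lemma \ref{two-edge inward} as a two-variable minimization. Write the edges of $T$ as $e_1,e_2$, meeting at the root $v_T$, with other endpoints $v_1,v_2$; these $v_i$ are the minimal vertices of $T$, so $e_{v_i}=e_i$ and by Definition \ref{tree defect}, $D_R(T,\bd)=D_R(P_{v_T}(\bd))+D_R(P_{v_1}(\bd))+D_R(P_{v_2}(\bd))$, a function of $\bd=(d_1,d_2,\bd_{\calf})$ with $d_i=d_{e_i}$. By Lemma \ref{precompact}, $\mathit{Ad}(\bd_{\calf})$ lies in the box $[b_{e_1}(\bd_{\calf}),h_{e_1}(\bd_{\calf}))\times[b_{e_2}(\bd_{\calf}),h_{e_2}(\bd_{\calf}))$ and is further cut out by the open condition $P_{v_T}(\bd)\in\calc_{n_T}$ and the strict inequalities $J(P_{v_T}(\bd))>J(P_{v_i}(\bd))$; moreover by Lemma \ref{admissible closure} the upper constraints $d_i<h_{e_i}(\bd_{\calf})$ are never active on $\overline{\mathit{Ad}}(\bd_{\calf})$, which is compact, and on it $D_R(T,\bd)$ is continuous and attains a minimum at some $\bd^\ast=(d_1^\ast,d_2^\ast,\bd_{\calf})$. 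First I would assume $\bd^\ast$ satisfies none of (\ref{non-roots near centered})--(\ref{equal radii}) of Proposition \ref{two-edge} and manufacture a nearby admissible point of strictly smaller defect.

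Two computations, both variants of ones already in the proof of Lemma \ref{admissible quad}, drive the contradiction. From \DefectDerivative, since $d_i$ is a side of the centered polygon $P_{v_T}(\bd)$ and the longest side of the non-centered polygon $P_{v_i}(\bd)$, and every side of a cyclic polygon is shorter than twice its radius,
\[ \frac{\partial}{\partial d_i}D_R(T,\bd)=\cosh R\left[\sqrt{\frac{1}{\cosh^2(d_i/2)}-\frac{1}{\cosh^2 J(P_{v_T}(\bd))}}-\sqrt{\frac{1}{\cosh^2(d_i/2)}-\frac{1}{\cosh^2 J(P_{v_i}(\bd))}}\right], \]
which is positive when $J(P_{v_T}(\bd))>J(P_{v_i}(\bd))$ and vanishes when the two radii are equal. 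From \RadiusDeriv, $|\partial J(P_{v_T}(\bd))/\partial d_i|<\tfrac12<\partial J(P_{v_i}(\bd))/\partial d_i$ (with the sign as in Lemma \ref{admissible quad}), so $J(P_{v_T}(\bd))-J(P_{v_i}(\bd))$ strictly decreases in $d_i$. Using Lemma \ref{admissible closure}: failure of (\ref{root not centered}) together with (\ref{closure centered}) forces $P_{v_T}(\bd^\ast)\in\calc_{n_T}$, an open condition preserved under small moves; failure of (\ref{equal radii}) together with (\ref{closure radius order}) gives a strict inequality $J(P_{v_T}(\bd^\ast))>J(P_{v_j}(\bd^\ast))$ for some $j$; and failure of (\ref{non-roots near centered}) gives, after relabelling $e_1,e_2$, that $d_1^\ast>b_{e_1}(\bd_{\calf})$ --- which, by the $\widetilde{\calBC}$-characterization in Lemma \ref{precompact} (via \VerticalRay), means $P_{v_1}(\bd^\ast)$ is strictly non-centered, with $d_1^\ast$ its unique longest side.

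The argument then splits on the radius relations at $v_1,v_2$, always exploiting $d_1^\ast>b_{e_1}(\bd_{\calf})$. If $J(P_{v_T}(\bd^\ast))>J(P_{v_1}(\bd^\ast))$ and $J(P_{v_T}(\bd^\ast))>J(P_{v_2}(\bd^\ast))$, then $\partial D_R(T,\bd)/\partial d_1$ is positive at $\bd^\ast$ and decreasing $d_1$ slightly keeps every constraint (all the relevant inequalities being strict at $\bd^\ast$) while lowering $D_R$. If $J(P_{v_T}(\bd^\ast))=J(P_{v_1}(\bd^\ast))$, hence automatically $J(P_{v_T}(\bd^\ast))>J(P_{v_2}(\bd^\ast))$, then $\partial D_R/\partial d_1$ vanishes at $\bd^\ast$, but decreasing $d_1$ below $d_1^\ast$ makes $J(P_{v_T})-J(P_{v_1})$ strictly positive by the second computation, so $\partial D_R/\partial d_1>0$ just below $d_1^\ast$ and $D_R(\,\cdot\,,d_2^\ast)$ strictly increases on a left half-neighbourhood of $d_1^\ast$; the resulting point is admissible ($P_{v_T}$ stays centered, $d_1$ stays above $b_{e_1}(\bd_{\calf})$ as the unique longest side of $P_{v_1}$, and $J(P_{v_T})>J(P_{v_2})$ survives), again lowering $D_R$. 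The remaining, and hardest, case is $J(P_{v_T}(\bd^\ast))>J(P_{v_1}(\bd^\ast))$ but $J(P_{v_T}(\bd^\ast))=J(P_{v_2}(\bd^\ast))$: here $\partial D_R/\partial d_2$ vanishes at $\bd^\ast$, and one first notes $d_2^\ast>b_{e_2}(\bd_{\calf})$ (otherwise $P_{v_2}(\bd^\ast)$ would be boundary-centered, with radius $d_2^\ast/2$ by \InCenteredClosure, incompatible with $d_2^\ast$ being a side of the centered polygon $P_{v_T}(\bd^\ast)$); then one moves in direction $(-\delta,-\epsilon)$ with $\delta/\epsilon$ small enough that the $\tfrac12$-bounds of \RadiusDeriv\ force $J(P_{v_T})-J(P_{v_2})$ to increase along the move --- because decreasing $d_2$ drops $J(P_{v_2})$ faster than $d_1$ and $d_2$ together drop $J(P_{v_T})$ --- so admissibility is preserved, while $D_R$ decreases ($\partial D_R/\partial d_1>0$ and $\partial D_R/\partial d_2=0$ at $\bd^\ast$). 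In every case this contradicts minimality of $\bd^\ast$, so $\bd^\ast$ satisfies one of (\ref{non-roots near centered})--(\ref{equal radii}); the main obstacle is exactly this last case, where the coordinate one needs to decrease has vanishing defect-derivative and sits against an active radius constraint, and it is resolved by the uniform $\tfrac12$-bounds of \RadiusDeriv\ together with the midpoint property (\InCenteredClosure) of boundary-centered polygons.
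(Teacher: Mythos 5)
Your proof is correct and follows essentially the same strategy as the paper's: take a minimizer on the compact set $\overline{\mathit{Ad}}(\bd_{\calf})$, and if it violates all three criteria, use \DefectDerivative\ to show a suitable decrease of $d_1$ and/or $d_2$ strictly lowers the defect while \RadiusDeriv\ (with the $\tfrac12$ bounds) and \InCenteredClosure\ keep the perturbed point admissible. The only differences are organizational --- you split cases by which radius inequality is an equality rather than by whether $P_{v_2}(\bd)\in\overline{\calc}_{n_2}$, and in the last case you move both coordinates where the paper moves only $d_2$ --- but the directional derivative there is still strictly negative, so the argument goes through.
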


\begin{proof}  Lemma \ref{admissible closure} describes $\overline{\mathit{Ad}}(\bd_{\calf})$ and asserts that $\sum_{v\in T^{(0)}} D_R(P_v(\bd))$ attains a minimum somewhere on $\overline{\mathit{Ad}}(\bd_{\calf})\times\{\bd_{\calf}\}$.  We will show that if $\bd = (\bd_{\cale},\bd_{\calf})$ satisfies none of the criteria of Proposition \ref{two-edge}, then $\bd_{\cale}$ may be deformed in $\overline{\mathit{Ad}}(\bd_{\calf})$ to reduce $\sum_{v\in T^{(0)}} D_R(P_v(\bd))$.

Let $e_1= e_{v_1}$ and $e_2 = e_{v_2}$, and let $d_1$ and $d_2$ be the respective lengths of their geometric duals.  Then $\bd_{\cale} = (d_1,d_2)$.  We note that as long as $\bd = (\bd_{\cale},\bd_{\calf}) \in\overline{\mathit{Ad}}(\bd_{\calf})$, then reducing either of $d_1$ or $d_2$ does not increase the defect sum, since (say) $\frac{\partial}{\partial d_1} \sum_{v\in T^{(0)}} D_R(P_v(\bd))$ is:
$$ \cosh R \left[ \sqrt{\frac{1}{\cosh^2(d_1/2)} - \frac{1}{\cosh^2 J(P_{v_T}(\bd))}} - \sqrt{\frac{1}{\cosh^2 (d_1/2)} - \frac{1}{\cosh^2 J(P_{v_1}(\bd))}} \right]  \geq 0 $$
This follows from \DefectDerivative\  because $\bd_{\cale} \in \overline{\mathit{Ad}}(\bd_{\calf})$ implies that $P_{v_T}(\bd) \in \overline{\calc}_{n_T}$, $d_1$ is the largest side length of $P_{v_1}(\bd) \in\mathcal{AC}_{n_1} - \calc_{n_1}$, and $J(P_{v_T}(\bd)) \geq J(P_{v_1}(\bd))$.  

Now assume that $\bd_{\cale}=(d_1,d_2)$ does not satisfy any of criteria (1) -- (3) from Proposition \ref{two-edge}.  Thus $P_{v_T}(\bd)\in\calc_n$ by (\ref{root not centered}), and by (\ref{equal radii}) we may assume that (say) $J(P_{v_T}(\bd)) > J(P_{v_1}(\bd))$.  If $P_{v_2}(\bd) \in \overline{\calc}_{n_2}$, then $P_{v_1}(\bd)\notin\overline{\calc}_{n_1}$ by (\ref{non-roots near centered}).  In this case, addressed in the paragraph below, we also have $J(P_{v_T}(\bd)) > d_2/2 = J(P_{v_2}(\bd))$, by \InCenteredClosure.

Since the radius varies continuously with $\bd$ (see \SmoothAnglesRadius), and $P_{v_1}(\bd)$ is in the open set $\mathcal{AC}_{n-1}-\overline{\calc}_{n-1}$, and $P_{v_T}(\bd)$ is in the open set $\calc_n$ there exists $\epsilon > 0$ such that for $d_1 -\epsilon < d_1' < d_1$ and $\bd_{\cale}' = (d_1',d_2)$, $P_{v_1}(\bd') \in \mathcal{AC}_{n_1} - \overline{\calc}_{n_1}$, $P_{v_T}(\bd')\in\calc_n$, and $J(P_{v_T}(\bd')) > J(P_{v_i}(\bd'))$ for $i = 1$ or $2$, where $\bd' = (\bd_{\cale}',\bd_{\calf})$.  Note that $P_{v_2}(\bd') = P_{v_2}(\bd) \in \mathcal{AC}_{n_2} - \calc_{n_2}$.  Therefore each $\bd_{\cale}'\in\overline{\mathit{Ad}}(\bd_{\calf})$, and the defect computation above gives: 
$$ \sum_{v\in T^{(0)}} D_R(P_v(\bd')) < \sum_{v\in T^{(0)}} D_R(P_v(\bd))$$ 
(In particular, since $J(P_{v_T}(\bd)) > J(P_{v_1}(\bd)) > d_1/2$, the inequality is strict.) 

Continuing to assume that (1) -- (3) do not hold, and in particular that $J(P_{v_T}(\bd)) > J(P_{v_1}(\bd))$, let us now suppose that $P_{v_2}(\bd) \notin \overline{\calc}_{n_2}$.  In this case it is possible that $J(P_{v_T}(\bd)) = J(P_{v_2}(\bd))$.  We will reduce $d_2$ instead of $d_1$, yielding $\bd'_{\cale} = (d_1,d'_2)$ for $d_2' < d_2$, and $\bd' = (\bd'_{\cale},\bd_{\calf})$.  Note that \RadiusDeriv\  implies that $\frac{\partial}{\partial d_2}J(P_{v_2}(\bd)) > \frac{1}{2} > \frac{\partial}{\partial d_2} J(P_{v_T}(\bd))$, and indeed this estimate holds at $\bd'$ for as long as $P_{v_T}(\bd')\in\calc_{n_T}$ and $P_{v_2}(\bd') \in \mathcal{AC}_{n_2} - \overline{\calc}_{n_2}$.

Let $\epsilon > 0$ be small enough that if $d_2 -\epsilon < d_2' < d_2$ and $\bd_{\cale}' = (d_1,d_2')$, then $P_{v_T}(\bd') \in \calc_{n_T}$, $P_{v_2}(\bd') \in \mathcal{AC}_{n_2} - \overline{\calc}_{n_2}$ and $J(P_{v_T}(\bd')) > J(P_{v_1}(\bd'))$, where $\bd' = (\bd_{\cale}',\bd_{\calf})$.  By the paragraph above, $J(P_{v_T}(\bd')) > J(P_{v_2}(\bd'))$ for such $\bd'$, and since $P_{v_1}(\bd') = P_{v_1}(\bd) \in \mathcal{AC}_{n_1} - \calc_{n_1}$ it follows that $\bd'\in\overline{\mathit{Ad}}(\bd_{\calf})$.  Furthermore, the change-of-defect computation using \DefectDerivative\  again implies a strict decrease in defect.\end{proof}

\begin{lemma}\label{two-edge sideways}  Let $V$ be a graph and $T$ a rooted subtree with two edges, only one containing the root vertex $v_T$, and other vertices $v_1$ and $v_2$.  With $\calf$, $\bd_{\calf}\in\mathbb{R}^{\calf}$, and $R\leq\min\{d_e/2\,|\,e\in\calf\}$ as in Proposition \ref{two-edge}, the conclusions of the proposition hold. \end{lemma}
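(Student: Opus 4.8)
The plan is to adapt the proof of Lemma \ref{two-edge inward}. Here $T$ is a path from $v_2$ through $v_1$ to $v_T$, so $v_2 < v_1 < v_T$ in the partial order of Definition \ref{partial order}; write $e_{v_1}$ for the edge joining $v_1$ to $v_T$, $e_{v_2}$ for the edge joining $v_1$ to $v_2$, and $d_1,d_2$ for the lengths of their geometric duals, so $\bd_{\cale}=(d_1,d_2)$. By Lemma \ref{vertex polygon} and Definition \ref{admissible criteria}, for $\bd=(\bd_{\cale},\bd_{\calf})$ the entry $d_1$ is the longest side of $P_{v_1}(\bd)$ and also a shorter side of $P_{v_T}(\bd)$, while $d_2$ is the longest side of $P_{v_2}(\bd)$ and a shorter side of $P_{v_1}(\bd)$; neither $d_1$ occurs in $P_{v_2}(\bd)$ nor $d_2$ in $P_{v_T}(\bd)$. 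By Lemma \ref{admissible closure}, on $\overline{\mathit{Ad}}(\bd_{\calf})$ one has $P_{v_T}(\bd)\in\widetilde\calc_{n_T}\cup\widetilde\calBC_{n_T}$, each of $P_{v_1}(\bd),P_{v_2}(\bd)$ in $\widetilde\calAC_{n_i}-\widetilde\calc_{n_i}$ with longest entry as above, and $J(P_{v_T}(\bd))\ge J(P_{v_1}(\bd))\ge J(P_{v_2}(\bd))$. By the preceding lemma $D_R(T,\bd)$ attains its minimum on the compact set $\overline{\mathit{Ad}}(\bd_{\calf})\times\{\bd_{\calf}\}$; I would suppose it does so at a point $\bd$ satisfying none of (\ref{non-roots near centered})--(\ref{equal radii}) of Proposition \ref{two-edge} and produce a nearby point of $\overline{\mathit{Ad}}(\bd_{\calf})$ with strictly smaller defect, a contradiction. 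Failure of (\ref{root not centered}) gives $P_{v_T}(\bd)\in\widetilde\calc_{n_T}$.

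The computations rest on \DefectDerivative and \RadiusDeriv. Because $d_1$ is the longest side of $P_{v_1}(\bd)$ but a shorter side of the centered polygon $P_{v_T}(\bd)$, \DefectDerivative gives $\partial D_R(T,\bd)/\partial d_1=\cosh R\,\bigl(\sqrt{\cosh^{-2}(d_1/2)-\cosh^{-2}J(P_{v_T}(\bd))}-\sqrt{\cosh^{-2}(d_1/2)-\cosh^{-2}J(P_{v_1}(\bd))}\bigr)$, which is $\ge 0$ as $J(P_{v_T}(\bd))\ge J(P_{v_1}(\bd))$ and is $>0$ precisely when that inequality is strict; symmetrically $\partial D_R(T,\bd)/\partial d_2\ge 0$, strict precisely when $J(P_{v_1}(\bd))>J(P_{v_2}(\bd))$. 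Meanwhile \RadiusDeriv gives $\partial J(P_{v_1})/\partial d_1>\tfrac12>\partial J(P_{v_T})/\partial d_1$ and $\partial J(P_{v_2})/\partial d_2>\tfrac12>\partial J(P_{v_1})/\partial d_2$, so decreasing $d_1$ (with $d_2$ fixed) increases $J(P_{v_T})-J(P_{v_1})$ and decreases $J(P_{v_1})-J(P_{v_2})$, whereas decreasing $d_2$ increases both differences. Thus a small decrease of $d_1$ stays in $\overline{\mathit{Ad}}(\bd_{\calf})$ and strictly lowers $D_R$ provided $P_{v_1}(\bd)\notin\overline{\widetilde\calc}_{n_1}$ and $J(P_{v_1}(\bd))>J(P_{v_2}(\bd))$, and a small decrease of $d_2$ does so provided $P_{v_2}(\bd)$ and the perturbed $P_{v_1}$ stay non-centered; in either case the strict decrease persists even when the relevant partial derivative vanishes at $\bd$, because the perturbation makes the governing radius inequality strict.

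Three cases exhaust the situation. (i) If $P_{v_1}(\bd)\notin\overline{\widetilde\calc}_{n_1}$ and $J(P_{v_1}(\bd))>J(P_{v_2}(\bd))$, decrease $d_1$. (ii) If $P_{v_1}(\bd)\notin\overline{\widetilde\calc}_{n_1}$ and $J(P_{v_1}(\bd))=J(P_{v_2}(\bd))$, first note $P_{v_2}(\bd)\notin\widetilde\calBC_{n_2}$: otherwise \InCenteredClosure gives $J(P_{v_2}(\bd))=d_2/2$, forcing $d_1\le 2J(P_{v_1}(\bd))=d_2\le d_1$, so $d_1$ is a diameter of the circumcircle of $P_{v_1}(\bd)$ and $P_{v_1}(\bd)\in\overline{\widetilde\calc}_{n_1}$, contrary to assumption; hence $P_{v_2}(\bd)$ is strictly non-centered and one decreases $d_2$, which keeps $P_{v_1}$ non-centered. (iii) If $P_{v_1}(\bd)\in\widetilde\calBC_{n_1}$, then failure of (\ref{non-roots near centered}) forces $P_{v_2}(\bd)$ strictly non-centered; the point is that decreasing $d_2$ pushes $P_{v_1}$ \emph{out} of $\widetilde\calc_{n_1}$, since $d_2$ is not the longest side of $P_{v_1}$, so the longest side is unchanged and the left-hand side of the centeredness inequality of \CenteredSpace for $P_{v_1}$ — equal to $2\pi$ at $\widetilde\calBC_{n_1}$ — strictly decreases by monotonicity of $A_d$ in $d$ (as used in the proof of Lemma \ref{admissible quad}); so the perturbation stays in $\overline{\mathit{Ad}}(\bd_{\calf})$ and strictly lowers $D_R$ as before. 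In every case the minimality of $\bd$ is contradicted, which with Lemma \ref{two-edge inward} proves Proposition \ref{two-edge}.

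The step I expect to require the most care is verifying, case by case, that the chosen one-parameter deformation genuinely remains inside $\overline{\mathit{Ad}}(\bd_{\calf})$: that the orderings $J(P_{v_T})\ge J(P_{v_1})\ge J(P_{v_2})$ survive (via the $\tfrac12$-estimates of \RadiusDeriv), that the coordinate being decreased stays the longest side of its polygon and stays $\ge 2R$, and — in case (iii) — the direction in which $P_{v_1}$ crosses $\widetilde\calBC_{n_1}$; once these are pinned down, the remaining bookkeeping is parallel to the proof of Lemma \ref{two-edge inward}.
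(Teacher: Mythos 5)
Your strategy is the same as the paper's: assume the minimum of $D_R(T,\cdot)$ over $\overline{\mathit{Ad}}(\bd_{\calf})\times\{\bd_{\calf}\}$ occurs at a point violating all three alternatives, then perturb a single coordinate of $\bd_{\cale}$, using \DefectDerivative\ for the strict decrease and \RadiusDeriv\ to control admissibility. Your case division (by whether the middle polygon $P_{v_1}(\bd)$ lies in $\widetilde{\calBC}_{n_1}$, and otherwise by whether $J(P_{v_1}(\bd))=J(P_{v_2}(\bd))$) is organized differently from the paper's (which splits on which consecutive radius inequality is strict and on whether the outer polygon lies in $\overline{\calc}$), but it is exhaustive, and each of your three moves can be made to work.

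There is, however, one step that is wrong as stated, and it is the step you lean on when you claim the deformations stay admissible. You assert that decreasing $d_2$ increases both $J(P_{v_T})-J(P_{v_1})$ and $J(P_{v_1})-J(P_{v_2})$, on the strength of $\partial J(P_{v_1})/\partial d_2<\tfrac12$. But for the non-centered polygon $P_{v_1}(\bd)$, whose longest entry is $d_1$, the derivative of its radius with respect to the non-longest entry $d_2$ is \emph{negative} --- this is exactly the fact the paper's own proof extracts from \RadiusDeriv\ (in its notation, shrinking the shared non-longest side \emph{increases} the radius of the middle polygon). So decreasing $d_2$ raises $J(P_{v_1})$ and therefore shrinks $J(P_{v_T})-J(P_{v_1})$; the preservation of $J(P_{v_T})\geq J(P_{v_1})$ under your $d_2$-moves is not delivered by the $\tfrac12$-estimates, contrary to what your closing paragraph suggests. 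It has to come instead from strictness of that inequality at the basepoint, which your cases do in fact supply but which you never state: in case (ii), $J(P_{v_1}(\bd))=J(P_{v_2}(\bd))$ together with the failure of alternative (\ref{equal radii}) forces $J(P_{v_T}(\bd))>J(P_{v_1}(\bd))$; in case (iii), the failure of (\ref{root not centered}) puts $P_{v_T}(\bd)\in\widetilde{\calc}_{n_T}$, and a centered polygon has radius exceeding half of every side length, so $J(P_{v_T}(\bd))>d_1/2=J(P_{v_1}(\bd))$, the equality coming from \InCenteredClosure\ since $P_{v_1}(\bd)\in\widetilde{\calBC}_{n_1}$. Once these two observations are inserted (together with continuity of the radii, \SmoothAnglesRadius), the perturbed points satisfy the strict conditions of Definition \ref{admissible criteria}, hence lie in $\mathit{Ad}(\bd_{\calf})$, and the strict decrease of the defect sum goes through as you describe; so the proof closes, but only after replacing the false monotonicity claim by these basepoint strictness arguments.
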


\begin{proof}  Lemma \ref{admissible closure} describes $\overline{\mathit{Ad}}(\bd_{\calf})$ and asserts that $\sum_{v\in T^{(0)}} D_R(P_v(\bd))$ attains a minimum somewhere on $\overline{\mathit{Ad}}(\bd_{\calf})\times\{\bd_{\calf}\}$.  We will show that if $\bd = (\bd_{\cale},\bd_{\calf})$ satisfies none of the criteria above, then $\bd_{\cale}$ may be deformed in $\overline{\mathit{Ad}}(\bd_{\calf})$ to reduce $\sum_{v\in T^{(0)}} D_R(P_v(\bd))$.

Take $v_2$ to be the opposite endpoint of the edge $e_2 = e_{v_2}$ containing $v_T$, let $v_1$ be the far endpoint of the other edge $e_1 = e_{v_1}$, and let $d_1$ and $d_2$ be the lengths of the geometric duals to $e_1$ and $e_2$, respectively, so that $\bd_{\cale} = (d_1,d_2)$.  Assume now that $\bd = (\bd_{\cale},\bd_{\calf}) \in \overline{\mathit{Ad}}(\bd_{\calf}) \times \{\bd_{\calf}\}$ does not satisfy any of (1) -- (3) from Proposition \ref{two-edge}.  

Since $\bd_{\cale} \in \overline{\mathit{Ad}}(\bd_{\calf})$ we have $J(P_{v_1}(\bd)) \leq J(P_{v_2}(\bd)) \leq J(P_{v_T}(\bd))$.  Since $\bd$ does not satisfy (\ref{equal radii}), at least one of these inequalities is strict.  Let us suppose first that $J(P_{v_2}(\bd)) < J(P_{v_T}(\bd))$.  If $P_{v_1}(\bd) \in \overline{\calc}_{n_1}$, then ``not (\ref{non-roots near centered})'' implies that $P_{v_2}(\bd)\notin \overline{\calc}_{n_2}$, and furthermore:
$$ J(P_{v_2}(\bd)) > d_2/2 > d_1/2 = J(P_{v_1}(\bd))  $$
Therefore there exists $\epsilon > 0$ such that for all $d_2'$ with $d_2-\epsilon < d_2' < d_2$, taking $\bd_{\cale}' = (d_1,d_2')$ and $\bd' = (\bd_{\cale}',\bd_{\calf})$ we have $P_{v_2}(\bd') \notin \overline{\calc}_{n_2}$, $P_{v_T}(\bd')\in \calc_n$, and $J(P_{v_T}(\bd')) > J(P_{v_2}(\bd'))$.  We note that $P_{v_1}(\bd') = P_{v_1}(\bd)$ for all such $\bd'$.  \DefectDerivative\  implies that $\frac{\partial}{\partial d_2} \sum_{v\in T^{(0)}} D_R(P_v(\bd))$ is:
$$ \cosh R \left[ \sqrt{\frac{1}{\cosh^2(d_2/2)} - \frac{1}{\cosh^2 J(P_{v_T}(\bd))}} - \sqrt{\frac{1}{\cosh^2 (d_2/2)} - \frac{1}{\cosh^2 J(P_{v_2}(\bd))}} \right]  $$
As long as $J(P_{v_T}(\bd)) > J(P_{v_2}(\bd))$, this quantity is positive, so decreasing $d_2$ decreases the defect sum.  Thus with $\bd'$ as above we have $\sum_{v\in T^{(0)}} D_R(P_v(\bd')) <  \sum_{v\in T^{(0)}} D_R(P_v(\bd))$.

Continuing to suppose that $J(P_{v_2}(\bd)) < J(P_{v_T}(\bd))$, let us now also assume that $P_{v_1}(\bd) \notin \overline{\calc}_{n_1}$.  \RadiusDeriv\  implies that decreasing $d_1$ has the effect of decreasing $J(P_{v_1}(\bd))$ but increasing $J(P_{v_2}(\bd))$, since $P_{v_2}(\bd) \in\mathcal{AC}_{n_2} - \calc_{n_2}$ has longest side $d_2$.  Thus there exists $\epsilon > 0$ such that for $d_1 - \epsilon < d_1' < d_1$, taking $\bd_{\cale}' = (d_1',d_2)$ and $\bd' = (\bd_{\cale}',\bd_{\calf})$ we have $P_{v_1}(\bd') \notin\overline{\calc}_{n_1}$, $P_{v_2}(\bd') \in\mathcal{AC}_{n_2} - \calc_{n_2}$, and $J(P_{v_2}(\bd')) < J(P_{v_T}(\bd'))$.  Furthermore, $P_{v_T}(\bd') = P_{v_T}(\bd) \in\calc_n$ for all such $\bd'$, so $\frac{\partial}{\partial d_1} \sum_{v\in T^{(0)}} D_R(P_v(\bd'))$ is:
$$ \cosh R \left[ \sqrt{\frac{1}{\cosh^2(d_1'/2)} - \frac{1}{\cosh^2 J(P_{v_2}(\bd'))}} - \sqrt{\frac{1}{\cosh^2 (d_1'/2)} - \frac{1}{\cosh^2 J(P_{v_1}(\bd'))}} >0 \right]  $$
Thus we again find that $\sum_{v\in T^{(0)}} D_R(P_v(\bd')) < \sum_{v\in T^{(0)}} D_R(P_v(\bd))$ for $d_1' < d_1$.  (Note that even if $J(P_{v_1}(\bd)) = J(P_{v_2}(\bd))$, strict inequality holds for $\bd'$ by the above, and so the strict inequality of defect sums is also accurate.)

Let us finally suppose that $J(P_{v_2}(\bd)) = J(P_{v_T}(\bd))$.  Then since (\ref{equal radii}) does not hold, $J(P_{v_1}(\bd)) < J(P_{v_2}(\bd))$.  Since (\ref{root not centered}) does not hold we have $P_{v_T}(\bd)\in\calc_n$, so $J(P_{v_T}(\bd)) = J(P_{v_2}(\bd)) > d_2/2$ and so also $P_{v_2}(\bd))\notin\overline{\calc}_{n_2}$.  \RadiusDeriv\  implies that reducing $d_2$ reduces the radius of $P_{v_2}(\bd)$ faster than that of $P_{v_T}(\bd)$, and it follows that $d_2$ may be reduced slightly keeping $\bd_{\cale}\in\overline{\mathit{Ad}}(\bd_{\calf})$.  A derivative computation as above shows that this reduces the defect.\end{proof}

%%%%%%%%%%%%%%%
\section{Defect bounds from side length bounds}\label{bounds}
%%%%%%%%%%%%%%%

\begin{definition}  For $\calf$ finite and $\bb_{\calf},\bd_{\calf}\in\mathbb{R}^{\calf}$, say $\bd_{\calf} \geq \bb_{\calf}$ if $d_f \geq b_f$ for each $f\in\calf$, where $\bb_{\calf} = (b_f\,|\,f\in\calf)$ and $\bd_{\calf} = (d_{\calf}\,|\,f\in\calf)$.\end{definition}

This section describes an algorithm with the following:\begin{description}
\item[Input]  A rooted tree $T$ with frontier $\calf$, $R\geq 0$, and $\bb_{\calf}\in(\mathbb{R}^+)^{\calf}$.
\item[Output]  $D>0$ such that $D_R(T,(\bd_{\cale},\bd_{\calf})) \geq D$ for all $\bd_{\calf} \geq \bb_{\calf}$ and $\bd_{\cale}\in\overline{\mathit{Ad}}(\bd_{\calf})$.\end{description}

We begin with some \textit{a priori} bounds.

\begin{lemma}\label{horocyclic vertex poly}  Let $T$ be a rooted tree with root vertex $v_T$, edge set $\cale$, and frontier $\calf$ such that each $v\in T^{(0)}$ is at least three-valent in $T\cup\bigcup\,\{e\,|\,(e,v)\in \calf\}$.   Fix $\bb_{\calf} \in\mathbb{R}^{\calf}$.  For $v\in T^{(0)} - \{v_T\}$ let $e_0,\hdots, e_{n-1}$ be the set of edges containing $v$, with $e_v = e_0$, and define $P_v^h(\bb_{\calf})= (h_0(b_1,\hdots,b_{n-1}),b_1,\hdots,b_{n-1})\in\widetilde{\calHC}_n$, where $h_0$ is as in \HCn\ and:
$$ b_i = \left\{\begin{array}{ll} b_{e_i}(\bb_{\calf}) & e_i\in\cale \\ b_{e_i} & e_i\in\calf \end{array}\right.$$
Then for $R\geq 0$, $\bd_{\calf}\geq \bb_{\calf}$, $\bd_{\cale}\in\overline{\mathit{Ad}}(\bd_{\calf})$ and $\bd = (\bd_{\cale},\bd_{\calf})$, $D_R(P_v(\bd)) > D_R(P_v^h(\bb_{\calf}))$, where $P_v(\bd)$ is as in Definition \ref{admissible criteria}.\end{lemma}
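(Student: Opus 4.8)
The plan is to compare $D_R(P_v(\bd))$ to $D_R(P_v^h(\bb_{\calf}))$ in two stages: first decrease from $P_v(\bd)$ to a polygon with the same side lengths as $P_v(\bd)$ except that the ``distinguished'' side $d_{e_v}$ is pushed down to the horocyclic value, and then decrease the remaining side lengths down to the $b_i$. For the first stage, I would use the monotonicity of $D_R$ in the longest side of a non-centered cyclic polygon. By Definition \ref{admissible criteria}(\ref{not centered}), $P_v(\bd) \in \calAC_{n_v} - \calc_{n_v}$ with largest entry $d_{e_v}$; combined with \VerticalRay\ (as invoked in the proof of Lemma \ref{precompact}), holding $(d_{e_1},\hdots,d_{e_{n-1}})$ fixed and varying the first coordinate $d$, the polygon stays in $\widetilde{\calAC}_{n_v} - \widetilde{\calc}_{n_v}$ exactly for $d \in [b_0(d_{e_1},\hdots,d_{e_{n-1}}), h_0(d_{e_1},\hdots,d_{e_{n-1}}))$, and the horocyclic boundary is at $d = h_0$. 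The derivative formula \DefectDerivative\ — specialized to the non-centered case, where $J(P_v)$ strictly exceeds half of every side length — shows $\frac{\partial}{\partial d} D_R < 0$ along this interval (the relevant radical involving $J$ is \emph{larger} than the one involving $d/2$, but it enters with a minus sign; here I would double-check the sign convention matches how \DefectDerivative\ is stated, see \HorocyclicDefect\ and \HorocyclicBound). So decreasing $d_{e_v}$ toward $h_0(d_{e_1},\hdots,d_{e_{n-1}})$ strictly decreases $D_R$, but we want to go the \emph{other} direction — we want a \emph{lower} bound, so we should instead observe that $D_R$ is strictly \emph{larger} at $d_{e_v}$ than at the horocyclic endpoint $h_0$ of the admissible interval, i.e. $D_R(P_v(\bd)) > D_R(h_0(d_{e_1},\hdots,d_{e_{n-1}}), d_{e_1},\hdots,d_{e_{n-1}})$. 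Here I am using that $d_{e_v} < h_0(d_{e_1},\hdots,d_{e_{n-1}})$, which is part of Definition \ref{admissible criteria}(\ref{not centered}) together with Lemma \ref{precompact}.

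For the second stage, I would invoke the monotonicity of the horocyclic defect $D_R(P^h_v(\cdot))$ in the remaining side lengths. The point is that $d_{e_i} \geq b_i$ for each $i \in \{1,\hdots,n-1\}$: when $e_i \in \calf$ this is the hypothesis $\bd_{\calf} \geq \bb_{\calf}$, and when $e_i \in \cale$ this is the final assertion of Lemma \ref{precompact}, that $b_e$ is increasing in $\bd_{\calf}$, combined with $d_{e_i} \geq b_{e_i}(\bd_{\calf}) \geq b_{e_i}(\bb_{\calf}) = b_i$ (using $\bd_{\cale} \in \overline{\mathit{Ad}}(\bd_{\calf})$ and Lemma \ref{precompact} again, in its closure form via Lemma \ref{admissible closure}). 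The function $(d_1,\hdots,d_{n-1}) \mapsto D_R(h_0(d_1,\hdots,d_{n-1}), d_1, \hdots, d_{n-1})$ should be non-decreasing in each $d_i$ — this is exactly the content of \HorocyclicBound\ (or a direct consequence of \HorocyclicDefect), which describes how the area outside the disks in a horocyclic polygon grows with side lengths. Chaining the two stages gives $D_R(P_v(\bd)) > D_R(h_0(d_{e_1},\hdots,d_{e_{n-1}}),d_{e_1},\hdots,d_{e_{n-1}}) \geq D_R(h_0(b_1,\hdots,b_{n-1}),b_1,\hdots,b_{n-1}) = D_R(P^h_v(\bb_{\calf}))$, as desired.

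The main obstacle I anticipate is verifying that the horocyclic defect is genuinely monotone in the non-distinguished side lengths once the distinguished side is constrained to track the horocyclic locus $h_0$ — i.e., as $d_i$ increases, $h_0(d_1,\hdots,d_{n-1})$ also moves, and one must confirm the \emph{net} effect on $D_R$ is an increase. I would handle this by citing \HorocyclicBound\ directly if it is stated in this form; otherwise I would differentiate, writing $\frac{d}{d d_i} D_R(h_0(\mathbf d), \mathbf d) = \partial_0 D_R \cdot \frac{\partial h_0}{\partial d_i} + \partial_i D_R$, and use that on the horocyclic locus the ``radius'' term $1/\cosh^2 J$ vanishes in the limit (the center escapes to infinity), so $\partial_0 D_R$ and $\partial_i D_R$ have controlled signs from \DefectDerivative, while $\partial h_0/\partial d_i > 0$ follows from \HCn. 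A secondary, purely bookkeeping concern is confirming the three-valence hypothesis guarantees each $P^h_v(\bb_{\calf})$ is an honest element of $\widetilde{\calHC}_n$ with $n \geq 3$, so that all the cited cyclic-polygon machinery applies; this is immediate from the hypothesis that each $v$ is at least three-valent in $T \cup \bigcup\{e : (e,v) \in \calf\}$.
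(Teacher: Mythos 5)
Your proposal is correct and follows essentially the same route as the paper: the paper's proof likewise first establishes $b_i\leq d_{e_i}$ for $i>0$ via Lemma \ref{precompact} (using monotonicity of $b_e$ in $\bb_{\calf}$), then notes via Lemma \ref{admissible closure}(\ref{closure not centered}) that $P_v(\bd)\in\widetilde{\calAC}_{n_v}-\widetilde{\calc}_{n_v}$ has longest entry $d_{e_v}$, and then concludes in one step by citing \HorocyclicBound. Your two-stage decomposition (longest-side monotonicity of the defect for non-centered polygons, then monotonicity of the horocyclic defect in the remaining sides) is just an unpacking of what that cited corollary packages, so the ``obstacle'' you flag in the second stage is already handled by \HorocyclicBound\ and needs no separate argument here.
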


\begin{proof}  Fix $\bd_{\calf}\geq\bb_{\calf}$ and $\bd_{\cale} = (d_e\,|\,e\in\cale) \in \mathit{Ad}(\bd_{\calf})$.  By Lemma \ref{precompact}, $d_e \geq b_e(\bd_{\calf})\geq b_e(\bb_{\calf})$ for each $e\in\cale$.  Thus for $v\in T^{(0)}-\{v_T\}$ and $e_0,\hdots,e_{n-1}$ as described in the hypotheses above, $b_i \leq d_{e_i}$ for each $i > 0$.  Criterion (\ref{closure not centered}) of Lemma \ref{admissible closure} implies that $P_v(\bd)\in \mathcal{AC}_n - \calc_n$ has longest edge $d_{e_v}$, so \HorocyclicBound\ implies $D_R(P_v(\bd)) > D_R(P_v^h(\bd_{\calf}))$.\end{proof}

\begin{proposition}\label{horocyclic tree}  Let $T$ be a rooted tree with root vertex $v_T$, edge set $\cale$, and frontier $\calf$ such that each $v\in T^{(0)}$ is at least three-valent in $T\cup\bigcup\,\{e\,|\,(e,v)\in \calf\}$.  Fix $\bb_{\calf}\in(\mathbb{R}^+)^{\calf}$.  For a subtree $T_0$ of $T$ with $v_T\in T_0$, let $\cale_0$ and $\calf_0$ be the edge set and frontier (in $T\cup\,\bigcup\,\{e\,|\,(e,v)\in\calf\}$) of $T_0$, respectively, and define $\bb_{\calf_0} = (b_e\,|\,e\in\calf_0)$ by:
$$ b_e = \left\{\begin{array}{ll} b_e & e\in\calf \\ b_e(\bb_{\calf}) & e\in\cale\end{array}\right. $$
Let $D_0 = \inf\{ D_R(T_0,(\bd_{\cale_0},\bd_{\calf_0}))\,|\,\bd_{\cale_0}\in\overline{\mathit{Ad}}(\bd_{\calf_0}),\ \bd_{\calf_0}\geq \bb_{\calf_0}\}$.  Then for any $\bd_{\calf}\geq \bb_{\calf}$ and $\bd_{\cale}\in\overline{\mathit{Ad}}(\bd_{\calf})$,
$$ D_R(T,\bd_{\calf}) > D_0 + \sum_{v\in T^{(0)}-T^{(0)}_0} D_R(P_v^h(\bb_{\calf})), $$
where $P_v^h(\bb_{\calf})$ is as in Lemma \ref{horocyclic vertex poly}.\end{proposition}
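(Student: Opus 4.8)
The plan is to split the defect sum $D_R(T,\bd)=\sum_{v\in T^{(0)}}D_R(P_v(\bd))$ along the vertex partition $T^{(0)}=T_0^{(0)}\sqcup(T^{(0)}-T_0^{(0)})$, bound the ``branch'' part below by Lemma \ref{horocyclic vertex poly}, and recognize the ``stump'' part $\sum_{v\in T_0^{(0)}}D_R(P_v(\bd))$ as an instance of $D_R(T_0,\cdot)$ lying in the set over which $D_0$ is an infimum. Concretely, fix $\bd_{\calf}\ge\bb_{\calf}$ and $\bd_{\cale}\in\overline{\mathit{Ad}}(\bd_{\calf})$ and put $\bd=(\bd_{\cale},\bd_{\calf})$ (assuming, as is implicit for $D_R(T,\bd)$ to be defined, that $R$ is at most half of every frontier length in sight). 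Since $v_T\in T_0$, every $v\in T^{(0)}-T_0^{(0)}$ is distinct from $v_T$; and each edge of $\calf_0$ is either an edge of $\calf$ incident to $T_0^{(0)}$ or the parent edge $e_w\in\cale$ of some $w\in T^{(0)}-T_0^{(0)}$ whose parent vertex lies in $T_0^{(0)}$, so $\cale_0\cup\calf_0\subset\cale\cup\calf$ and $\bd$ restricts to a tuple $\bd'=(\bd_{\cale_0},\bd_{\calf_0})$ indexed by $\cale_0\cup\calf_0$. Because each $v\in T_0^{(0)}$ has the same cyclically ordered star of edges in the ambient graph of the $T_0$-problem as in that of the $T$-problem, carrying the same lengths under $\bd'$ as under $\bd$, one gets $P_v(\bd')=P_v(\bd)$ for every $v\in T_0^{(0)}$, and hence $\sum_{v\in T_0^{(0)}}D_R(P_v(\bd))=D_R(T_0,\bd')$.

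Next I would verify that $\bd'$ is legitimate for the $D_0$-infimum, i.e.\ that $\bd_{\calf_0}\ge\bb_{\calf_0}$ and $\bd_{\cale_0}\in\overline{\mathit{Ad}}(\bd_{\calf_0})$ in the sense of the rooted tree $T_0$. For the first: an edge of $\calf_0$ lying in $\calf$ has $\bd$-length at least its $\bb_{\calf}$-entry by hypothesis, while for $e=e_w\in\cale\cap\calf_0$ Lemma \ref{precompact} gives $\bd$-length $\ge b_e(\bd_{\calf})\ge b_e(\bb_{\calf})$, using that lemma's monotonicity of $b_e$ in $\bd_{\calf}$; and $\bb_{\calf_0}$ was defined to equal $b_e(\bb_{\calf})$ on exactly these edges. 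For the second: pass to a sequence $\bd_{\cale}^{(k)}\in\mathit{Ad}(\bd_{\calf})$ with $\bd_{\cale}^{(k)}\to\bd_{\cale}$, form the restrictions $\bd'^{(k)}=(\bd_{\cale_0}^{(k)},\bd_{\calf_0}^{(k)})$, and observe $\bd_{\cale_0}^{(k)}\in\mathit{Ad}(\bd_{\calf_0}^{(k)})$: criteria (1)--(2) of Definition \ref{admissible criteria} for $T_0$ hold because the relevant vertex polygons coincide with those for $T$, and criterion (3) holds because ``$w\in v-1$ in $T_0$'' implies ``$w\in v-1$ in $T$'', so the radius inequalities for $T_0$ are among those for $T$. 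Since then $\bd_{\cale_0}^{(k)}\in\overline{\mathit{Ad}}(\bd_{\calf_0}^{(k)})$ and $\bd_{\calf_0}^{(k)}\ge\bb_{\calf_0}$ (the same $b_e$-estimate applied to $\bd_{\cale}^{(k)}$), each $\bd'^{(k)}$ lies in the domain of the $D_0$-infimum, so $D_R(T_0,\bd'^{(k)})\ge D_0$; and because $P_v(\bd'^{(k)})=P_v(\bd^{(k)})\to P_v(\bd)\in\widetilde{\calAC}_{n_v}$ by Lemma \ref{admissible closure} and $D_R$ is continuous on $\widetilde{\calAC}_{n_v}$ by \DefectDerivative, passing to the limit gives $D_R(T_0,\bd')=\lim_k D_R(T_0,\bd'^{(k)})\ge D_0$.

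Finally I would assemble:
$$ D_R(T,\bd)=D_R(T_0,\bd')+\sum_{v\in T^{(0)}-T_0^{(0)}}D_R(P_v(\bd))\ \ge\ D_0+\sum_{v\in T^{(0)}-T_0^{(0)}}D_R(P_v(\bd)), $$
and then apply Lemma \ref{horocyclic vertex poly} to each $v\in T^{(0)}-T_0^{(0)}$ (all $\ne v_T$) to replace $D_R(P_v(\bd))$ by the strictly smaller $D_R(P_v^h(\bb_{\calf}))$, yielding the asserted strict inequality whenever $T_0\subsetneq T$ (if $T_0=T$ then the right side is $D_0$ and the statement is read with ``$\ge$'', or is simply vacuous).

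The step I expect to be the real work is the \emph{closure-compatibility} in the second paragraph: that restricting a point of $\overline{\mathit{Ad}}(\bd_{\calf})$ to the $T_0$-data lands in the domain of the $D_0$-infimum. The conceptual content is mild --- admissibility is local to stars of vertices, and the containment of ``$v-1$ in $T_0$'' inside ``$v-1$ in $T$'' makes every criterion only easier to meet --- but one must be careful that $\calf_0$ mixes genuine frontier edges of $T$ (fixed lengths) with former tree edges $e_w\in\cale$ whose lengths are among the $\bd_{\cale}$-coordinates and hence move along the approximating sequence; this is exactly why $\bb_{\calf_0}$ is built from $b_e(\bb_{\calf})$ on those edges and why the monotonicity clause of Lemma \ref{precompact} is the crucial input. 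A minor bookkeeping point is checking that $R$ stays below half of every length appearing in $\calf_0$; in the intended applications this follows from $R\le i(\cals)$ together with the ``outside-in'' estimate $b_e(\bb_{\calf})\ge\min\{b_f:f\in\calf\}$ of Remark \ref{outside-in}.
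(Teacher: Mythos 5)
Your proposal is correct and follows essentially the same route as the paper's proof: restrict $\bd$ to the $T_0$-data, use Lemma \ref{precompact} (including its monotonicity clause) to get $\bd_{\calf_0}\geq\bb_{\calf_0}$, observe that the vertex polygons for $v\in T_0^{(0)}$ are unchanged so the defect sum splits, bound the $T_0$ part by $D_0$, and finish with Lemma \ref{horocyclic vertex poly} on the remaining vertices. Your approximating-sequence/continuity argument for why the restricted data contributes at least $D_0$ is just a more careful rendering of the paper's one-line appeal to Lemma \ref{admissible closure} at that step (and your caveat about strictness when $T_0=T$ is a fair, minor observation).
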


\begin{proof}  A fixed pair $\bd_{\calf} = (d_e\,|e\in\calf) \geq\bb_{\calf}$ and $\bd_{\cale}=(d_e\,|\,e\in\cale)\in\overline{\mathit{Ad}}(\bd_{\calf})$ determines $\bd_{\calf_0}$ and $\bd_{\cale_0}$ simply by taking the appropriate entries of $\bd_{\calf}$ or $\bd_{\cale}$.  Lemma \ref{precompact} and the construction of $\bb_{\calf_0}$ then imply that $\bd_{\calf_0}\geq\bb_{\calf_0}$.

Taking $\bd_0 = (\bd_{\cale_0},\bd_{\calf_0})$, it is clear by Definition \ref{admissible criteria} and the construction of $\bd_{\cale_0}$ and $\bd_{\calf_0}$ that for each $v\in T_0$, $P_v(T_0,\bd_0) = P_v(T,\bd)$, where $\bd=(\bd_{\cale},\bd_{\calf})$.  Definition \ref{admissible criteria} thus implies:
$$ D_R(T,\bd) = D_R(T_0,\bd_0) + \sum_{v\in T^{(0)}-T^{(0)}_0} D_R(P_v(\bd)) $$
Lemma \ref{admissible closure} implies that $\bd_{\cale_0}\in\overline{\mathit{Ad}}(\bd_{\calf_0})$, since $\bd_{\cale}\in\bd_{\calf}$ by hypothesis, so $D_R(T_0,\bd_0)\geq d_0$ and the result follows from Lemma \ref{horocyclic vertex poly}.  \end{proof}

Proposition \ref{horocyclic tree} can be used in conjunction with the result below to give \textit{a priori} bounds.

\begin{lemma}\label{root defect} Let $T$ be a rooted tree with root vertex $v_T$, edge set $\cale$, and frontier $\calf$ such that each $v\in T^{(0)}$ is at least three-valent in $T\cup\bigcup\,\{e\,|\,(e,v)\in \calf\}$.  For $\bb_{\calf}\in(\mathbb{R}^+)^{\calf}$, let $\bb_{\cale} = (b_e(\bb_{\calf})\,|\,e\in\cale) \in \mathbb{R}^{\cale}$ and take $\bb=(\bb_{\cale},\bb_{\calf})$.  Enumerate the edges of $V$ containing $v_T$ as $e_0,\hdots,e_{n-1}$ so that $b_{e_0}$ is largest.  With $b_0\co (\mathbb{R}^+)^{n-1}\to\mathbb{R}^+$ as in \BCn, define\begin{align*}
  & B_{e_0} = \left\{\begin{array}{ll} b_0(b_{e_1},\hdots,b_{e_{n-1}}) & \mbox{if}\ b_{e_0} > b_0(b_{e_1},\hdots,b_{e_{n-1}}) \\ b_{e_0} & \mbox{otherwise}\end{array}\right.\end{align*}
For $R\geq 0$, let $M_R(v_T,\bb_{\calf}) = D_R(B_{e_0},b_{e_1},\hdots,d_{e_{n-1}})$.  Then $D_R(P_{v_T}(\bd_{\cale},\bd_{\calf})) \geq M_R(v_T,\bb_{\calf})$ for each $\bd_{\calf}\geq\bb_{\calf}$, and $\bd_{\cale}\in\overline{\mathit{Ad}}(\bd_{\calf})$.\end{lemma}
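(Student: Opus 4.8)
The plan is to peel off the root-vertex polygon, turn the claim into a statement about a single cyclic polygon with prescribed side-length lower bounds, and then invoke \CenteredBoundaryBound. Fix $\bd_{\calf}\geq\bb_{\calf}$ and $\bd_{\cale}\in\overline{\mathit{Ad}}(\bd_{\calf})$, write $\bd=(\bd_{\cale},\bd_{\calf})$, and enumerate the edges of $V$ at $v_T$ as $e_0,\dots,e_{n-1}$ as in the statement, so that the number $\beta_i$ (equal to $b_{e_i}(\bb_{\calf})$ when $e_i\in\cale$, and to the corresponding coordinate $b_{e_i}$ of $\bb_{\calf}$ when $e_i\in\calf$) satisfies $\beta_0\geq\beta_i$ for every $i$. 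Using Lemma \ref{precompact}---the inequality $b_e(\bd_{\calf})\leq d_e$ together with the monotonicity clause $b_e(\bd_{\calf})\geq b_e(\bb_{\calf})$, both of which pass to the closure $\overline{\mathit{Ad}}(\bd_{\calf})$ since ``$\,\geq\,$'' is a closed condition---and $\bd_{\calf}\geq\bb_{\calf}$ directly for the frontier edges, every side length $d_{e_i}$ of $P_{v_T}(\bd)$ satisfies $d_{e_i}\geq\beta_i$. Lemma \ref{admissible closure} gives $P_{v_T}(\bd)\in\widetilde{\calc}_{n_T}\cup\widetilde{\calBC}_{n_T}$. So the claim becomes purely one about a single polygon: a cyclic $n_T$-gon lying in $\widetilde{\calc}_{n_T}\cup\widetilde{\calBC}_{n_T}$, with side lengths coordinatewise at least $(\beta_0,\beta_1,\dots,\beta_{n-1})$ where $\beta_0$ is the largest of these bounds, has $R$-defect at least $D_R(B_{e_0},\beta_1,\dots,\beta_{n-1})$.

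This last assertion is \CenteredBoundaryBound. Its mechanism is the monotonicity of $D_R$ in each side length on the centered locus: by \DefectDerivative, $\partial D_R/\partial c_i=\cosh R\cdot\sqrt{\frac{1}{\cosh^2(c_i/2)}-\frac{1}{\cosh^2 J(P)}}\geq 0$ since $c_i/2\leq J(P)$ for a centered-or-boundary polygon. One therefore shrinks the side lengths toward their lower bounds along a path kept inside $\widetilde{\calc}_{n_T}\cup\widetilde{\calBC}_{n_T}$, which cannot increase $D_R$; such a path either reaches $(\beta_0,\beta_1,\dots,\beta_{n-1})$ or is stopped at the boundary $\widetilde{\calBC}_{n_T}$, which by \BCn\ (cf.\ \VerticalRay) is cut out by $d_{e_0}=b_0(\beta_1,\dots,\beta_{n-1})$, and the two alternatives in the definition $B_{e_0}=\min\{\beta_0,\,b_0(\beta_1,\dots,\beta_{n-1})\}$ record precisely which of these occurs. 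Applying \CenteredBoundaryBound\ to $P_{v_T}(\bd)$ then yields $D_R(P_{v_T}(\bd))\geq D_R(B_{e_0},\beta_1,\dots,\beta_{n-1})=M_R(v_T,\bb_{\calf})$.

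The genuine difficulty sits inside \CenteredBoundaryBound, namely keeping the side-shrinking deformation in the centered locus: shrinking a non-longest side of a centered polygon can destroy centeredness---already for a triangle $(a,a,c)$ with $c$ longest, decreasing $a$ eventually forces the center across the edge of length $c$---so the deformation must shrink sides in the right order and may have to stop early, exactly the phenomenon the case split in $B_{e_0}$ encodes. What remains to be verified in the present lemma is only the bound propagation of the first paragraph (the monotonicity clause of Lemma \ref{precompact}, applied on $\overline{\mathit{Ad}}(\bd_{\calf})$, together with the enumeration making $\beta_0$ maximal) and the identification of $M_R(v_T,\bb_{\calf})$ with the conclusion of \CenteredBoundaryBound.
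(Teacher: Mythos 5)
Your argument is correct and is essentially the paper's proof: propagate the lower bounds via Lemma \ref{precompact} (and $\bd_{\calf}\geq\bb_{\calf}$) to get $d_{e_i}\geq b_{e_i}$ and hence $d_{e_0}\geq B_{e_0}$, note $P_{v_T}(\bd)\in\widetilde{\calc}_{n_T}\cup\widetilde{\calBC}_{n_T}$ by Lemma \ref{admissible closure} and $(B_{e_0},b_{e_1},\hdots,b_{e_{n-1}})\in\widetilde{\calc}_{n_T}\cup\widetilde{\calBC}_{n_T}$ by construction, and invoke monotonicity of $D_R$ on the centered closure. The one correction is the citation: the monotonicity statement you need is \CenteredDefectBound\ (equivalently \Monotonicity), not \CenteredBoundaryBound\ --- the latter is the sharper estimate along the $\widetilde{\calBC}_3$ boundary that is reserved for the trivalent-root improvement in Remark \ref{tri root defect}, though your description of the underlying shrinking mechanism is otherwise accurate.
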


\begin{proof}  Given $\bd_{\cale}$ and $\bd_{\calf}$ as above, Lemma \ref{precompact} implies that $d_{e_i} \geq b_{e_i}$ for each $i\in\{0,\hdots,n-1\}$, and therefore also that $d_{e_0} \geq B_{e_0}$.  Since $P_{v_T}(\bd_{\cale},\bd_{\calf})\in \widetilde{\calc}_n\cup\widetilde{\calBC}_n$ by Lemma \ref{admissible closure}, and $(B_{e_0},b_{e_1},\hdots,b_{e_{n-1}})\in\widetilde{\calc}_n\cup\widetilde{\calBC}_n$ by construction, \CenteredDefectBound\ implies the result.\end{proof}

\begin{remark}\label{tri root defect}  With the hypotheses of Lemma \ref{root defect}, if $v_T$ is three-valent in $V$ and $b_{e_0} > b_0(b_{e_1},b_{e_2})$, the conclusion may be improved, using \CenteredBoundaryBound, taking:
$$ M_R(v_T,\bb_{\calf}) = \min\{D_R(b_{e_0},b_{e_1},b_2'),D_R(b_{e_0},b_1',b_{e_2})\}, $$
where $\cosh b_{e_0} = \cosh b_1'+\cosh b_{e_2} -1 = \cosh b_{e_1}+\cosh b_2' -1$.\end{remark}

Given a rooted tree $T$ with root vertex $v_T$, edge set $\cale$, and frontier $\calf$ such that each vertex of $T$ is at least three-valent in $T\cup\bigcup_{e\in\calf} e$, for $\bb_{\calf}\in(\mathbb{R}^+)^{\calf}$ and $R\geq 0$, the procedure below can be algorithmically implemented:\begin{enumerate}
  \item For each $e\in\cale$, compute $b_e(\bb_{\calf})$ as in Lemma \ref{precompact} (see Remark \ref{outside-in}).
  \item For each $v\in T^{(0)}-\{v_T\}$, compute $D_R(P_v^h(\bb_{\calf}))$ for $P_v^h(\bb_{\calf})$ as in Lemma \ref{horocyclic vertex poly}.
  \item Compute $M_R(v_T,\bb_{\calf})$ with Lemma \ref{root defect}, or if $v_T$ has valence three, with Remark \ref{tri root defect}.
  \item Let $D = M_R(v_T,\bb_{\calf}) + \sum_{v\in T^{(0)}-\{v_T\}} D_R(P_v^h(\bb_{\calf}))$.\end{enumerate}
By Proposition \ref{horocyclic tree} (taking $T_0 = \{v_T\}$) and Lemma \ref{root defect}, $D$ as defined above is a lower bound on $D_R(T,(\bd_{\cale},\bd_{\calf}))$ for any $\bd_{\calf}\geq \bb_{\calf}$, and $\bd_{\cale}\in \overline{\mathit{Ad}}(\bd_{\calf})$

Below we will describe how to improve the procedure above under the assumption that $D_R(T,\bd)$ attains its minimum at a point of $\mathit{Ad}(\bd_{\calf})$ satisfying one of the three criteria of Proposition \ref{two-edge}.  We will treat these cases separately.

\subsection{Case (\ref{non-roots near centered})\label{case non-roots near centered}: $P_v(\bd)\in\widetilde{\calBC}_{n_v}$ for all $v\in T^{(0)}-\{v_T\}$.}  Lemma \ref{precompact} implies that each $\bd_{\calf}\in\mathbb{R}^{\calf}$ determines a unique $\bd_{\cale}\in\overline{\mathit{Ad}}(\bd_{\calf})$ such that $\bd=(\bd_{\cale},\bd_{\calf})$ falls into this case.  For such $\bd$, \Monotonicity\ implies:

\begin{lemma}\label{non-roots near centered bound} Let $T$ be a rooted tree with root vertex $v_T$, edge set $\cale$, and frontier $\calf$ such that each $v\in T^{(0)}$ is at least three-valent in $T\cup\bigcup\,\{e\,|\,(e,v)\in \calf\}$.  Fix $\bb_{\calf} \in\mathbb{R}^{\calf}$, let $\bb_{\cale} = (b_e(\bb_{\calf})\,|\,e\in\cale)$, and for $v\in T^{(0)} - \{v_T\}$ define $P_v^b(\bb_{\calf})=P_v(\bb_{\cale},\bb_{\calf})\in\widetilde{\calBC}_{n_v}$.  Then for $R\geq 0$, $\bd_{\calf}\geq \bb_{\calf}$, $\bd_{\cale}\in\overline{\mathit{Ad}}(\bd_{\calf})$ and $\bd = (\bd_{\cale},\bd_{\calf})$ such that $P_v(\bd)\in\widetilde{\calBC}_{n_v}$ for all $v\in T^{(0)}-\{v_T\}$, $D_R(P_v(\bd)) > D_R(P_v^b(\bb_{\calf}))$ for each such $v$.\end{lemma}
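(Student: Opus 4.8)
The plan is to treat each vertex $v\in T^{(0)}-\{v_T\}$ separately and, for each, reduce the asserted inequality to a comparison via \Monotonicity\ of the radius-$R$ defects of two polygons of $\widetilde{\calBC}_{n_v}$ whose cyclically ordered side length tuples are ordered coordinatewise. So I would fix $\bd_{\calf}\geq\bb_{\calf}$, $\bd_{\cale}=(d_e\,|\,e\in\cale)\in\overline{\mathit{Ad}}(\bd_{\calf})$ and $\bd=(\bd_{\cale},\bd_{\calf})$ with $P_v(\bd)\in\widetilde{\calBC}_{n_v}$ for every $v\in T^{(0)}-\{v_T\}$, and first establish that $d_e\geq b_e(\bb_{\calf})$ for each $e\in\cale$. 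For this, Lemma \ref{admissible closure}(\ref{closure not centered}) certifies that $d_{e_v}$ is the largest entry of $P_v(\bd)$ for each such $v$; then the hypothesis that every $P_v(\bd)\in\widetilde{\calBC}_{n_v}$, combined with the characterization of the functions $b_e$ in Lemma \ref{precompact}, forces $d_e=b_e(\bd_{\calf})$ for every $e\in\cale$; and finally the monotonicity clause of Lemma \ref{precompact} (that $b_e$ is nondecreasing in its argument) together with $\bd_{\calf}\geq\bb_{\calf}$ gives $d_e=b_e(\bd_{\calf})\geq b_e(\bb_{\calf})$.

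Next, for a fixed $v\in T^{(0)}-\{v_T\}$ I would cyclically enumerate the edges of $V$ at $v$ as $e_0,\dots,e_{n_v-1}$ with $e_0=e_v$, so that $P_v(\bd)=(d_{e_0},\dots,d_{e_{n_v-1}})$, while by the definition of $P_v^b$ and of $\bb_{\cale}$ one has $P_v^b(\bb_{\calf})=(b_0,\dots,b_{n_v-1})$ with $b_i=b_{e_i}(\bb_{\calf})$ when $e_i\in\cale$ and $b_i$ the $e_i$-coordinate of $\bb_{\calf}$ when $e_i\in\calf$. The previous step handles the indices with $e_i\in\cale$ and the hypothesis $\bd_{\calf}\geq\bb_{\calf}$ handles those with $e_i\in\calf$, so $d_{e_i}\geq b_i$ for every $i$: the side length tuple of $P_v(\bd)$ dominates that of $P_v^b(\bb_{\calf})$ coordinate by coordinate. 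Both tuples lie in $\widetilde{\calBC}_{n_v}$ --- the first by hypothesis, the second because it equals $P_v(\bb_{\cale},\bb_{\calf})$, which by Lemma \ref{precompact} (applied with $\bd_{\calf}=\bb_{\calf}$) lies in $\widetilde{\calBC}_{n_v}$ with largest entry its $e_v$-coordinate. At this point \Monotonicity, which compares the radius-$R$ defects of two polygons of $\widetilde{\calBC}_n$ with coordinatewise-ordered side lengths, delivers $D_R(P_v(\bd))>D_R(P_v^b(\bb_{\calf}))$, finishing the argument for this $v$ and hence for all of $T^{(0)}-\{v_T\}$.

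I expect the only genuinely delicate point --- and the reason I would route the first step through Lemma \ref{admissible closure} rather than through Lemma \ref{precompact} alone --- to be the passage from $\mathit{Ad}(\bd_{\calf})$ to its closure $\overline{\mathit{Ad}}(\bd_{\calf})$: the defining property of $b_e$ in Lemma \ref{precompact} is phrased for tuples of $\widetilde{\calBC}_{n_v}$ with the largest entry in the first slot, so one must know that $P_v(\bd)$ retains $d_{e_v}$ as its largest entry in order to apply that characterization verbatim, and Lemma \ref{admissible closure}(\ref{closure not centered}) supplies exactly this. Everything else is a direct citation, and the overall shape of the argument parallels the proof of Lemma \ref{horocyclic vertex poly}.
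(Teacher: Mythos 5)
Your proposal is correct and follows essentially the same route as the paper, which proves this lemma by citing Lemma \ref{precompact} (the $\widetilde{\calBC}$ characterization forcing $d_e=b_e(\bd_{\calf})$, plus monotonicity of $b_e$ in $\bb_{\calf}$) and then \Monotonicity; you have simply spelled out the coordinatewise comparison and the "largest entry $d_{e_v}$" verification via Lemma \ref{admissible closure}(\ref{closure not centered}) that the paper leaves implicit.
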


Using Lemma \ref{non-roots near centered bound}, we may improve the basic algorithm in this case by replacing the computation of $D_R(P_v^h(\bb_{\calf}))$ in step (2) with that of $D_R(P_v^b(\bb_{\calf}))$, and in step (4) taking $D = M_R(v_T,\bb_{\calf})+\sum_{v\in T^{(0)}-\{v_T\}} D_R(P_v^b(\bb_{\calf}))$.

\subsection{Case (\ref{root not centered})\label{case root not centered}: $P_{v_T}(\bd)\in\widetilde{\calBC}_{n_T}$.}

Our main advantage in this case is the following improved version of Lemma \ref{root defect}.

\begin{lemma}\label{better root defect out tree} Let $T$ be a rooted tree with root vertex $v_T$, edge set $\cale$, and frontier $\calf$ such that each $v\in T^{(0)}$ is at least three-valent in $T\cup\bigcup\,\{e\,|\,(e,v)\in \calf\}$.  For $\bb_{\calf}\in(\mathbb{R}^+)^{\calf}$, let $\bb_{\cale} = (b_e(\bb_{\calf})\,|\,e\in\cale) \in \mathbb{R}^{\cale}$ and take $\bb=(\bb_{\cale},\bb_{\calf})$, and enumerate the edges of $V$ containing $v_T$ as $e_0,\hdots,e_{n-1}$.  With $b_0\co (\mathbb{R}^+)^{n-1}\to\mathbb{R}^+$ as in \BCn, for each $i$ define
$$ B_{e_i} = b_0(b_{e_0},\hdots,\hat{b}_{e_i},\hdots,b_{e_{n-1}})  $$
Then for $R\geq 0$, $\bd_{\calf}\geq\bb_{\calf}$, and $\bd_{\cale}\in\overline{\mathit{Ad}}(\bd_{\calf})$ such that $P_{v_T}(\bd_{\cale},\bd_{\calf})\in\widetilde{\calBC}_{n_T}$ has longest side dual to $e_i$, $D_R(P_{v_T}(\bd_{\cale},\bd_{\calf})) \geq D_R(b_{e_0},\hdots,B_{e_i},\hdots,b_{e_{n-1}})$.\end{lemma}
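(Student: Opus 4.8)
The plan is to bound the tuple $P_{v_T}(\bd) = (d_{e_0},\hdots,d_{e_{n-1}})$ representing the vertex polygon at $v_T$ --- where $d_{e_j}$ is the length of the geometric dual to $e_j$, $\bd = (\bd_{\cale},\bd_{\calf})$, and $n = n_T$ --- coordinatewise below by the explicit tuple $(b_{e_0},\hdots,B_{e_i},\hdots,b_{e_{n-1}})$, to observe that this latter tuple lies in $\widetilde{\calc}_n\cup\widetilde{\calBC}_n$, and then to apply the coordinatewise monotonicity of the radius-$R$ defect on $\widetilde{\calc}_n\cup\widetilde{\calBC}_n$ furnished by \CenteredDefectBound, exactly as in the proof of Lemma \ref{root defect}.

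For the coordinates $j\neq i$ the domination is the same as there: if $e_j\in\calf$ then $d_{e_j}\geq b_{e_j}$ because $\bd_{\calf}\geq\bb_{\calf}$; if $e_j\in\cale$ then $d_{e_j}\geq b_{e_j}(\bd_{\calf})$ by Lemma \ref{precompact} (the inequality cuts out a closed subset of $\mathbb{R}^{\cale}$ containing $\mathit{Ad}(\bd_{\calf})$, hence also $\overline{\mathit{Ad}}(\bd_{\calf})$), and the monotonicity of $b_{e_j}$ in $\bd_{\calf}$ --- the final assertion of Lemma \ref{precompact} --- upgrades this to $d_{e_j}\geq b_{e_j}(\bb_{\calf}) = b_{e_j}$. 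The new point is the $i$-th coordinate, and this is where I would use the hypothesis that $P_{v_T}(\bd)\in\widetilde{\calBC}_{n_T}$ with longest side dual to $e_i$: by the characterization of $b_0$ recalled from \BCn\ (the unique value making the tuple with that value first lie in $\overline{U}_n\cap\widetilde{\calBC}_n$, in which case it is the largest entry), cyclically reordering $P_{v_T}(\bd)$ so that $d_{e_i}$ comes first shows $d_{e_i} = b_0(d_{e_0},\hdots,\hat{d}_{e_i},\hdots,d_{e_{n-1}})$. Feeding the inequalities $d_{e_j}\geq b_{e_j}$ for $j\neq i$ into the monotonicity of $b_0$ (also from \BCn) then gives $d_{e_i} = b_0(d_{e_0},\hdots,\hat{d}_{e_i},\hdots,d_{e_{n-1}}) \geq b_0(b_{e_0},\hdots,\hat{b}_{e_i},\hdots,b_{e_{n-1}}) = B_{e_i}$. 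The same characterization of $b_0$ shows that $B_{e_i}$ is the largest entry of $(b_{e_0},\hdots,B_{e_i},\hdots,b_{e_{n-1}})$ and that, reordered with $B_{e_i}$ first, this tuple lies in $\overline{U}_n\cap\widetilde{\calBC}_n$; since $\widetilde{\calBC}_n$ is invariant under cyclic permutation, $(b_{e_0},\hdots,B_{e_i},\hdots,b_{e_{n-1}})\in\widetilde{\calBC}_n\subset\widetilde{\calc}_n\cup\widetilde{\calBC}_n$.

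Having both $P_{v_T}(\bd)$ (by hypothesis) and $(b_{e_0},\hdots,B_{e_i},\hdots,b_{e_{n-1}})$ in $\widetilde{\calc}_n\cup\widetilde{\calBC}_n$, with the former dominating the latter coordinatewise, \CenteredDefectBound\ gives $D_R(P_{v_T}(\bd))\geq D_R(b_{e_0},\hdots,B_{e_i},\hdots,b_{e_{n-1}})$, which is the assertion. I do not expect a serious obstacle: the one place demanding care is matching up the cyclic orders of the $n-1$ arguments of $b_0$ in the $d$- and $b$-tuples, so that the slotwise comparison justifying $d_{e_i}\geq B_{e_i}$ is legitimate, together with confirming --- as the paper does implicitly elsewhere --- that the centeredness conditions defining $\widetilde{\calc}_n$ and $\widetilde{\calBC}_n$ are insensitive to cyclic reindexing.
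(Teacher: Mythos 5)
Your argument is correct and is essentially the paper's: the paper proves this lemma in one line, citing Lemma \ref{precompact} and the monotonicity of $D_R$ (as in the proof of Lemma \ref{root defect}), which is exactly the coordinatewise-domination-plus-defect-monotonicity argument you spell out, with the $\widetilde{\calBC}_{n_T}$ hypothesis and the characterization/monotonicity of $b_0$ from \BCn\ supplying $d_{e_i}\geq B_{e_i}$. Your added care about cyclic reindexing and about passing from $\mathit{Ad}(\bd_{\calf})$ to its closure just fills in details the paper leaves implicit.
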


This follows from Lemma \ref{precompact} and \Monotonicity\ as in the proof of Lemma \ref{root defect}.  If the longest edge of $P_{v_T}(\bd)$ is dual to an element of $\cale$, we may further augment Lemma \ref{root defect}:

\begin{lemma}\label{better root defect in tree} Let $T$ be a rooted tree with root vertex $v_T$, edge set $\cale$, and frontier $\calf$ such that each $v\in T^{(0)}$ is at least three-valent in $T\cup\bigcup\,\{e\,|\,(e,v)\in \calf\}$.  For $\bd_{\calf}\in(\mathbb{R}^+)^{\calf}$ and $\bd_{\cale}\in\overline{\mathit{Ad}}(\bd_{\calf})$ such that $P_{v_T}(\bd_{\cale},\bd_{\calf})\in\widetilde{\calBC}_{n_T}$ has longest side dual to $e\in\cale$, $P_{v}(\bd_{\cale},\bd_{\calf})\in\widetilde{\calBC}_{n_{v}}$, where $v$ is the initial vertex of $e$.\end{lemma}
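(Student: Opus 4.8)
The plan is to first show that the hypotheses pin $v$ down as a child of $v_T$ in $T$, and then to trap the circumradius of $P_v(\bd)$ between two quantities that must coincide. Throughout write $\bd = (\bd_{\cale},\bd_{\calf})$. By Definition \ref{admissible criteria} the sides of $P_{v_T}(\bd)$ are dual to the edges of $V$ containing $v_T$, so the hypothesis that $e\in\cale$ is dual to a side of $P_{v_T}(\bd)$ forces $e$ to be an edge of $T$ having $v_T$ as an endpoint. Since $T$ is rooted at $v_T$, such an $e$ is $e_w$ for the vertex $w\ne v_T$ of $e$, and the initial vertex of $e$, namely $v$, equals $w$; hence $v\in v_T-1$.

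Next I would read off the radius of $P_{v_T}(\bd)$. Let $d_e$ be the length of the side $\gamma_e$ of $P_{v_T}(\bd)$ dual to $e$, which by hypothesis is its longest side. Because $P_{v_T}(\bd)\in\widetilde{\calBC}_{n_T}$, \InCenteredClosure\ identifies the center of $P_{v_T}(\bd)$ with the midpoint of $\gamma_e$, so $J(P_{v_T}(\bd)) = d_e/2$. Since $v\ne v_T$, Lemma \ref{admissible closure}(\ref{closure not centered}) gives $P_v(\bd)\in\widetilde{\calAC}_{n_v}-\widetilde{\calc}_{n_v}$ with largest entry $d_{e_v}=d_e$, so $P_v(\bd)$ is a cyclic $n_v$-gon whose longest side has length $d_e$. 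That side is a chord of the circle of radius $J(P_v(\bd))$ circumscribing $P_v(\bd)$, and the hyperbolic right triangle spanned by the center, one endpoint of the side, and its midpoint gives $\sinh(d_e/2) = \sinh(J(P_v(\bd)))\sin\theta$ for the relevant half central angle $\theta$; hence $d_e \le 2J(P_v(\bd))$, with equality exactly when $\gamma_{e_v}$ is a diameter. On the other hand $v\in v_T-1$, so Lemma \ref{admissible closure}(\ref{closure radius order}) gives $J(P_v(\bd)) \le J(P_{v_T}(\bd)) = d_e/2$.

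Combining the last two inequalities forces $J(P_v(\bd)) = d_e/2$, so the longest side of $P_v(\bd)$ is a diameter of its circumscribed circle and the center of $P_v(\bd)$ is the midpoint of that side. As this point lies on $\partial P_v(\bd)$ (consistent with $P_v(\bd)\notin\widetilde{\calc}_{n_v}$), this is exactly the condition $P_v(\bd)\in\widetilde{\calBC}_{n_v}$, which one reads off either from \InCenteredClosure\ in the converse direction or from the characterization of $\widetilde{\calBC}_{n_v}$ underlying \BCn. I do not expect a genuine obstacle here; the one point to check against the conventions of \cite{DeB_cyclic_geom} is precisely that membership in $\widetilde{\calBC}_{n_v}$ is equivalent to the longest side being a diameter of the circumscribed circle (equivalently to equality $\sinh(d_e/2)=\sinh J(P_v(\bd))$), and should only one direction of that equivalence be available, I would instead verify the defining inequality for $\widetilde{\calBC}_{n_v}$ at equality directly from the chord–radius relation above.
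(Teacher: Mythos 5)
Your argument is correct and is essentially the paper's proof: both establish $J(P_{v_T}(\bd)) = d_e/2$ from $P_{v_T}(\bd)\in\widetilde{\calBC}_{n_T}$ via \InCenteredClosure, combine the inequality $J(P_v(\bd))\le J(P_{v_T}(\bd))$ from Lemma \ref{admissible closure} with the lower bound $J(P_v(\bd))\ge d_e/2$ (which the paper cites from \RadiusFunction\ rather than rederiving from the chord--radius relation), and conclude via the converse direction of \InCenteredClosure. Your preliminary identification of $v$ as a child of $v_T$ and the closing remark about the characterization of $\widetilde{\calBC}_{n_v}$ are harmless elaborations of steps the paper leaves implicit.
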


\begin{proof}  Let $\bd = (\bd_{\cale},\bd_{\calf})$.  Since $P_{v_T}(\bd)\in\widetilde{\calBC}_{n_T}$ by hypothesis, \InCenteredClosure\ implies that $J(P_{v_T}(\bd)) = d_i/2$, where $d_i$ is the length of the geometric dual to $e_i$.  On the other hand, $P_{v_i}(\bd)$ also has longest side dual to $e_i$ by Lemma \ref{admissible closure}, which further implies that $J(P_{v_i}(\bd))\leq J(P_{v_T}(\bd))$.  Since $J(P_{v_0}(\bd))\geq d_i/2$ by \RadiusFunction, it is equal to $d_i/2$, and therefore the Lemma holds by \InCenteredClosure.\end{proof}

From Lemma \ref{precompact} and \Monotonicity\ we thus directly obtain:

\begin{corollary}\label{almost centered subtree}  With the hypotheses of Lemma \ref{better root defect out tree}, for $R\geq 0$, $\bd_{\calf}\geq \bb_{\calf}$, and $\bd_{\cale}\in\overline{\mathit{Ad}}(\bd_{\calf})$ such that $P_{v_T}(\bd_{\cale},\bd_{\calf})\in\widetilde{\calBC}_{n_T}$ has longest side dual to $e_i\in\cale$, $D_R(P_{v_i}(\bd))\geq D_R(P_{v_i}^b(\bb_{\calf}))$, where $\bd=(\bd_{\cale},\bd_{\calf})$, $v_i$ is the initial vertex of $e_i$, and $P_{v_i}^b(\bb_{\calf})$ is as in Lemma \ref{non-roots near centered bound}.\end{corollary}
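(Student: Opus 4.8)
The plan is to run essentially the same argument that proves Lemma \ref{non-roots near centered bound}, with Lemma \ref{better root defect in tree} supplying the extra input that forces $P_{v_i}(\bd)$ into $\widetilde{\calBC}_{n_{v_i}}$; the corollary is then a bookkeeping consequence of Lemmas \ref{better root defect in tree} and \ref{precompact} together with the monotonicity \Monotonicity\ of the radius-$R$ defect on $\widetilde{\calBC}$-tuples.

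First I would fix $\bd_{\calf}\geq\bb_{\calf}$ and $\bd_{\cale}\in\overline{\mathit{Ad}}(\bd_{\calf})$ as in the hypothesis, set $\bd=(\bd_{\cale},\bd_{\calf})$, and let $v_i$ be the initial vertex of $e_i$. Since $e_i\in\cale$ is an oriented edge of $T$ and, by Lemma \ref{tree components}, $v_T$ is never the initial vertex of such an edge, we have $v_i\in T^{(0)}-\{v_T\}$ and $e_i=e_{v_i}$ in the sense of Definition \ref{vertex poly prime}. Lemma \ref{better root defect in tree} then gives $P_{v_i}(\bd)\in\widetilde{\calBC}_{n_{v_i}}$, and criterion (\ref{closure not centered}) of Lemma \ref{admissible closure} identifies its longest side as the one dual to $e_{v_i}=e_i$. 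On the other side, $P_{v_i}^b(\bb_{\calf})=P_{v_i}(\bb_{\cale},\bb_{\calf})$ is, by the characterization of the $b_e$ in Lemma \ref{precompact}, a $\widetilde{\calBC}_{n_{v_i}}$-tuple whose largest entry is again the one dual to $e_i$; so the two tuples have the same length and the same distinguished coordinate.

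Next I would compare them coordinatewise. Enumerating the edges of $V$ at $v_i$ as $e_0,\hdots,e_{n-1}$ with $e_0=e_i$, the $j$-th entry of $P_{v_i}(\bd)$ is $d_{e_j}$ and of $P_{v_i}^b(\bb_{\calf})$ is $b_{e_j}$ (meaning the $\bb_{\calf}$-entry if $e_j\in\calf$, and $b_{e_j}(\bb_{\calf})$ if $e_j\in\cale$). If $e_j\in\calf$ then $d_{e_j}\geq b_{e_j}$ because $\bd_{\calf}\geq\bb_{\calf}$; if $e_j\in\cale$ (this includes $e_0=e_i$), then Lemma \ref{precompact} gives $d_{e_j}\geq b_{e_j}(\bd_{\calf})\geq b_{e_j}(\bb_{\calf})$, the last inequality by the monotonicity of $b_{e_j}$ in its argument asserted at the end of Lemma \ref{precompact}. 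Hence $P_{v_i}(\bd)$ dominates $P_{v_i}^b(\bb_{\calf})$ entry by entry, both lie in $\widetilde{\calBC}_{n_{v_i}}$ with longest side in position $0$, and \Monotonicity\ yields $D_R(P_{v_i}(\bd))\geq D_R(P_{v_i}^b(\bb_{\calf}))$, which is the assertion.

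I do not expect a genuine obstacle here: the only point needing care is that \Monotonicity\ be applied to a legitimate pair of $\widetilde{\calBC}$-tuples — equal length, distinguished (longest) side in matching cyclic position, and coordinatewise domination — and Lemmas \ref{better root defect in tree}, \ref{admissible closure}, and \ref{precompact} furnish exactly that.
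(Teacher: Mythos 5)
Your proof is correct and is essentially the paper's own argument: the paper derives this corollary directly from Lemma \ref{better root defect in tree} (which puts $P_{v_i}(\bd)$ in $\widetilde{\calBC}_{n_{v_i}}$ with longest side dual to $e_i$), the entrywise bounds $d_{e}\geq b_{e}(\bd_{\calf})\geq b_{e}(\bb_{\calf})$ from Lemma \ref{precompact}, and \Monotonicity. Your write-up just spells out the coordinatewise comparison that the paper leaves implicit.
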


\begin{remark}\label{better almost centered subtree}  With the hypotheses of Corollary \ref{almost centered subtree}, if $v_i$ is trivalent in $T\cup\bigcup\{e\,|\,(e,v)\in\calf\}$ and $b_{e_i}(\bb_{\calf}) < B_{e_i}$, then using \CenteredBoundaryBound\ as in Remark \ref{tri root defect} we have:
$$ D_R(P_{v_i}(\bd)) \geq\min\{D_R(B_{e_i},b_{f_1},b_2'),D_R(b_{e_0},b_1',b_{f_2})\} $$
Here $f_1$ and $f_2$ are the other edges containing $v_i$, $b_{f_1}$ and $b_{f_2}$ are as in Lemma \ref{non-roots near centered bound}, and $\cosh B_{e_i} = \cosh b_1'+\cosh b_{f_2} -1 = \cosh b_{f_1}+\cosh b_2' -1$.\end{remark}

In order to improve the basic algorithm in this case, enumerate the edges containing $v_T$ as $e_0,\hdots,e_{n-1}$, and in step (3) of the basic algorithm replace the computation of $M_R(v_T,\bb_{\calf})$ with those of $M_R^{(i)}(v_T,\bb_{\calf}) =D_R(b_{e_0},\hdots,B_{e_i},\hdots,b_{e_{n-1}})$ for each $i$ , where $B_{e_i}$ is as in Lemma \ref{better root defect out tree}.  It is useful now to divide into two subcases:\begin{description}
  \item[Case (\ref{root not centered})(A)]  In step (4) of the basic algorithm, replace $D$ with:
$$ D_A = \min\,\left\{M_R^{(i)}(v_T,\bb_{\calf})\,|\,e_i\in\calf\right\} + \sum_{v\in T^{(0)}-\{v_T\}} D_R(P_v^h(\bb_{\calf})) $$
  \item[Case (\ref{root not centered})(B)]  In step (2) of the basic algorithm, also compute $D_R(P_{v_i}^b(\bb_{\calf}))$ for each $i$ such that $e_i\in\cale$, where $v_i$ is the initial vertex of $e_i$, and in step (4) replace $D$ with:
$$ D_B = \min_{e_i\in\cale} \left\{M_R^{(i)}(v_T,\bb_{\calf}) + D_R(P_{v_i}^b(\bb_{\calf})) + \sum_{v\in T^{(0)} - \{v_T,v_i\}} D_R(P_v^h(\bb_{\calf}))\right\} $$
For each $i$ such that $v_i$ is trivalent, $D_R(P_{v_i}^b(\bb_{\calf})$ can be replaced by the computation from Remark \ref{better almost centered subtree} if $b_{e_i}(\bb_{\calf})< B_{e_i}$.\end{description}
By the results above, $D = \min\{D_A,D_B\}$ bounds $D_R(T,(\bd_{\cale},\bd_{\calf}))$ below for any $\bd_{\calf}\geq\bb_{\calf}$ and $\bd_{\cale}\in\overline{\mathit{Ad}}(\bd_{\calf})$.

\subsection{Case (\ref{equal radii})\label{case equal radii}: $J(P_v(\bd)) = J(P_{v_T}(\bd))$ for all $v\in T^{(0)}-\{v_T\}$.}  Here we have:

\begin{lemma}\label{centered equal radii}  Let $T$ be a rooted tree with root vertex $v_T$, edge set $\cale$, and frontier $\calf$ such that each $v\in T^{(0)}$ is at least three-valent in $T\cup\bigcup\,\{e\,|\,(e,v)\in \calf\}$.  For $\bd_{\calf}\in(\mathbb{R}^+)^{\calf}$ and $\bd_{\cale}\in\overline{\mathit{Ad}}(\bd_{\calf})$ such that $J(P_v(\bd)) = J(P_{v_T}(\bd))$ for each $v\in T^{(0)}-\{v_T\}$, where $\bd = (\bd_{\cale},\bd_{\calf})$, there exists $P\in\widetilde{\calc}_{|\calf|}\cup\widetilde{\calBC}_{|\calf|}$ such that $D_R(T,\bd) = D_R(P)$ for any $R\geq 0$.\end{lemma}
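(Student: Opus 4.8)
The plan is to exploit the geometric meaning of the hypothesis $J(P_v(\bd)) = J(P_{v_T}(\bd))$ for all $v \in T^{(0)} - \{v_T\}$: all the vertex polygons $P_v(\bd)$ comprising the centered dual $2$-cell $Q$ have the same circumradius, call it $J$. I would like to show that in this situation the polygons $P_v(\bd)$ fit together (after realizing them concretely in $\mathbb{H}^2$, as in Lemma \ref{admissible Voronoi} or directly via \UniqueParameters) into a single cyclic polygon $P$ inscribed in a circle of radius $J$, whose cyclically ordered side length collection is exactly $\bd_{\calf} = (d_e \mid e \in \calf)$. First I would set up the concrete picture: fix a circle $C$ of radius $J$ in $\mathbb{H}^2$ with center $o$, and for the root $v_T$ place a copy of $P_{v_T}(\bd)$ with center $o$, inscribed in $C$ (this uses $P_{v_T}(\bd) \in \widetilde{\calc}_{n_T} \cup \widetilde{\calBC}_{n_T}$ from Lemma \ref{admissible closure}, which is exactly the condition guaranteeing a cyclic polygon of the prescribed radius, possibly with its center on an edge). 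Then, proceeding outward along $T$ by induction on the partial order of Definition \ref{partial order}: whenever $w \in v-1$ and $e_w$ is the shared edge, the geometric dual $\gamma_w$ to $e_w$ is a common edge (equivalently a common chord of $C$) of $P_v(\bd)$ and $P_w(\bd)$ by Lemma \ref{vertex polygon decomp} and the construction in Lemma \ref{admissible Voronoi}; since both polygons have circumradius $J$ and $\gamma_w$ is a chord of $C$, I can attach the copy of $P_w(\bd)$ to the already-placed picture across $\gamma_w$, on the side of $\gamma_w$ away from $o$ (this is where $P_w(\bd)$ lies relative to its longest edge $d_{e_w}$, by criterion (\ref{closure not centered}) of Lemma \ref{admissible closure} and \LongestSide/\OneSide). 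Inductively the union of all these copies is inscribed in $C$.

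The key combinatorial point, which I would check next, is that this union is actually a \emph{simple} polygon $P$ — i.e. the copies have pairwise disjoint interiors and their union is a topological disk — with boundary consisting exactly of the geometric duals to the frontier edges $e \in \calf$, in cyclic order around $C$. Disjointness of interiors follows from Lemma \ref{re-tile} (the $P_v'$ retiling) together with the fact that $T(e_w,w) = \emptyset$-contribution disappears here: since all radii are equal, $T(e_w, w) \subset T(e_w, v)$ collapses onto the chord $\gamma_w$ (Lemma \ref{edge-vertex tri} gives $T(e,v) \subset T(e,w)$ when $e$ is non-centered, and equality of radii $J_v = J_w$ forces $T(e_w,v) = T(e_w,w)$, so both are the degenerate triangle = the chord $\gamma_w$). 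Hence $P_v' = P_v$ in this degenerate situation and $Q = \bigcup_{v} P_v$ is a genuine tiling by cyclic polygons all inscribed in $C$, whose union is the convex (indeed inscribed) polygon $P$ cut out by $C$ and the chords $\{\gamma_e \mid e \in \calf\}$. Then $P$ has cyclically ordered side length collection $\bd_{\calf}$, so $\bd_{\calf} \in \widetilde{\calAC}_{|\calf|}$, and $P$ is centered or boundary-centered according to whether $o$ is in its interior or on an edge, giving $\bd_{\calf} \in \widetilde{\calc}_{|\calf|} \cup \widetilde{\calBC}_{|\calf|}$ as claimed. Setting $P = P$ represented by $\bd_{\calf}$, the defect $D_R(P)$ is by \DefectFunction\ the area of $P$ outside the radius-$R$ disks at its vertices, which — since the chords $\gamma_e$ for $e \in \cale$ are interior to $P$ and each endpoint-disk of such a chord is split between the two adjacent $P_v$'s with no leftover (the "full sector" bookkeeping of \FullSector, applied with the degenerate triangles) — equals $\sum_{v \in T^{(0)}} D_R(P_v(\bd)) = D_R(T,\bd)$. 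This is really the same telescoping computation as in Lemma \ref{restriction defect}/Proposition \ref{2-cell defect}, now with the $T(e_w,w)$ terms contributing zero because they are degenerate, so I would cite those rather than redo it.

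The main obstacle I anticipate is the gluing/simplicity step: making precise that the inductively-attached copies of the $P_v(\bd)$ really do have disjoint interiors and assemble to an \emph{embedded} polygon inscribed in $C$, rather than overlapping or wrapping around $C$ more than once. The radius-order hypothesis has degenerated to equality here, so I cannot directly invoke the "increasing radius" monotonicity (Lemma \ref{increasing radius}) that powers the tree arguments elsewhere; instead I must argue that the chords $\gamma_e$, $e \in \cale$, partition the inscribed region bounded by the $\gamma_e$, $e \in \calf$, into exactly the cells $P_v(\bd)$. I expect this to follow from the abstract combinatorial structure of $T$ as a tree (each internal chord separates, and the dual tree of the cell decomposition of the inscribed polygon by the chords $\gamma_e$, $e \in \cale$, is forced to be $T$), together with the orientation data — $P_w(\bd)$ sits on the far side of $\gamma_w$ from $o$, so successive attachments move monotonically outward in the nesting of chords — but spelling this out carefully, and ruling out that the total angle subtended exceeds $2\pi$, is the delicate part. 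Once embeddedness is established, everything else is a citation of earlier results.
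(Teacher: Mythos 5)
Your overall route is the paper's: realize the $P_v(\bd)$ concretely in $\mathbb{H}^2$, glue them along the duals of the edges of $\cale$ into a single polygon inscribed in a circle of radius $J$ whose sides are the duals of the frontier edges, and add defects. But the justification you give for the two delicate points rests on a false geometric claim. Equality of radii does \emph{not} make the isosceles triangles over a shared chord degenerate: if $\gamma_w$ is the common side of length $d_{e_w}$, the triangles with base $\gamma_w$ and apexes at the two circumcenters each have two sides of length $J$, and when $d_{e_w}<2J$ they are genuine (non-degenerate) triangles. What equality of radii forces is that the two apexes coincide -- there are exactly two points at distance $J$ from both endpoints of $\gamma_w$, one on each side of it, and since $P_w(\bd)$ is non-centered with longest side $\gamma_w$ its center lies on the far side of $\gamma_w$ from $P_w(\bd)$, i.e.\ on the same side as the center of the piece already placed. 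So the triangles coincide with \emph{each other}, not with the chord; they collapse to $\gamma_w$ only in the extreme case $P_w(\bd)\in\widetilde{\calBC}$. Consequently the assertions ``$P_v'=P_v$'' and ``the $T(e_w,w)$ terms contribute zero because they are degenerate'' are unsupported. A second, related problem: Lemmas \ref{re-tile}, \ref{edge-vertex tri} and \ref{vertex polygon decomp} are statements about actual Voronoi/Delaunay configurations on a surface, and an arbitrary $\bd_{\cale}\in\overline{\mathit{Ad}}(\bd_{\calf})$ with all radii equal never arises that way (Lemma \ref{increasing radius} forces strict inequalities there), so they cannot be invoked to get disjointness of interiors in this abstract setting.

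The missing step -- that each attached copy is inscribed in the \emph{same} circle $C$, equivalently that its circumcenter is the common center $x_0$ -- is exactly what the paper proves, by induction attaching one outermost vertex polygon at a time. Since $P_{v_0}(\bd)\in\widetilde{\calAC}_n-\widetilde{\calc}_n$ has longest side $\gamma_0$, the triangle with base $\gamma_0$ and apex at the circumcenter of $P_{v_0}(\bd)$ meets $P_{v_0}(\bd)$ only in $\gamma_0$ (\OneSide, \IsoscelesDecomp) and hence lies on the $P_0$ side; it has two sides of length $J$, as does the triangle with apex $x_0$, which is contained in $P_0$ because $P_0\in\widetilde{\calc}\cup\widetilde{\calBC}$; being on the same side of $\gamma_0$ with the same side lengths, the two triangles coincide, so the circumcenter of $P_{v_0}(\bd)$ is $x_0$, and \PtsToPoly\ makes $P_0\cup P_{v_0}(\bd)$ a cyclic polygon of radius $J$. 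With this in hand your embeddedness worry largely dissolves (at each stage the partial union is a cyclic polygon containing $x_0$, and the new piece sits beyond one of its chords inside the same disk -- no ``wrapping'' can occur), and the defect identity is just $D_R(P_0\cup P_{v_0}(\bd))=D_R(P_0)+D_R(P_{v_0}(\bd))$, needing no \FullSector\ bookkeeping or degenerate triangles. So the plan is salvageable, but as written the argument for disjointness and for the defect sum does not stand.
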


\begin{proof}  For $\bd = (\bd_{\cale},\bd_{\calf})$, note that $P_{v_T}(\bd)\in\widetilde{\calc}_{n_T}\cup\widetilde{\calBC}_{n_T}$ by Lemma \ref{admissible closure}.  Now fix a subtree $T_0$ of $T$ with $v_T \in T_0$ and $T = T_0\cup e_0$ for some $e_0\in\cale$, and assume that the following holds for $T_0$:  there is a cyclic polygon $P_0$ in $\mathbb{H}^2$ that contains a copy of $P_v(\bd)$ for each $v\in T^{(0)}_0$, such that $P_0 = \bigcup_{v\in T^{(0)}_0} P_v(\bd)$ and $P_v(\bd)\cap P_w(\bd)$ contains more than one point if and only if $v$ and $w$ bound an edge $e$ of $T_0$, in which case $P_v(\bd)\cap P_w(\bd)$ is the geometric dual to $e$.

The edge set of $P_0$ is in one-to-one correspondence with the frontier $\calf_0$ of $T_0$ in $V$, and we will assume that $P_0$ has the same center $x_0$ and radius $J$ as $P_v(\bd)\subset P_0$ for each $v\in T^{(0)}_0$.  Thus in particular, $P_0\in \widetilde{\calc}_{|\calf_0|}\cup\widetilde{\calBC}_{|\calf_0|}$ by \InCenteredClosure, since this implies that $x_0\in P_{v_T}(\bd)\subset P_0$.

Let $\{v_0\} = T^{(0)}-T^{(0)}_0$, and enumerate the edges of $V$ containing $v_0$ as $e_0,\hdots,e_{n-1}$, where $e_0$ is described above.  Then $e_i\in\calf$ for each $i>0$, since $T = T_0\cup e_0$, and $\calf_0 = \{e_0\}\cup (\calf-\{e_i\}_{i=1}^{n-1})$.  Since $e_0$ is necessarily the first edge of the path in $T$ joining $v_0$ to $v_T$, $v_0$ is its initial vertex with the orientation from Lemma \ref{to the root}.

Since $e_0\in \calf_0$, $P_0$ has an edge corresponding to its geometric dual $\gamma_0$.  Arrange a copy of $P_{v_0}(\bd)$ so that it intersects $P_0$ in $\gamma_0$.  The isosceles triangle $T_0$ determined by $\gamma_0$ and the center of $P_{v_0}(\bd)$ has equal sides of length $J(P_{v_0}(\bd)) = J(P_{v_T}(\bd)) = J$ by hypothesis.  Furthermore, Lemma \ref{admissible closure} implies that $P_{v_0}(\bd)\in\widetilde{\calAC}_n - \widetilde{\calc}_n$ has longest side $\gamma_0$, so $\gamma_0 = T_0\cap P_{v_0}(\bd)$ by \IsoscelesDecomp.  It follows that $T_0$ intersects the interior of $P_0$.

On the other hand, the triangle determined by $\gamma_0$ and the center $x_0$ of $P_0$ has two sides of length $J$ and by \IsoscelesDecomp\ is contained in $P_0$, since $P_0\in \widetilde{\calc}_{|\calf_0|}\cup\widetilde{\calBC}_{|\calf_0|}$.  Since this triangle has the same side length collection as $T_0$, share $\gamma_0$ with it, and is on the same side of $\gamma_0$ it is identical to $T_0$. Therefore $x_0$ is the center of $P_{v_0}(\bd)$, so by \PtsToPoly, $P = P_0\cup P_{v_0}(\bd)$ is a cyclic polygon with center $x_0$ and radius $J$. 

If $w_0$ is the terminal endpoint of $e_0$, then $P_{w_0}(\bd)\subset P_0$ contains $\gamma_0$, so $\gamma_0 = P_{v_0}(\bd)\cap P_{w_0}(\bd)$ and $P$ satisfies the hypotheses for $T$ that $P_0$ satisfied for $T_0$.  It is easy to see that $D_R(P) = D_R(P_0)+D_R(P_{v_0}(\bd))$, so the result follows by an inductive argument.\end{proof}

The corollary below thus follows directly from \Monotonicity, and supplies the required lower bound without appeal to the basic algorithm.

\begin{corollary}\label{equal radii bound}  Let $T$ be a rooted tree with root vertex $v_T$, edge set $\cale$, and frontier $\calf$ such that each $v\in T^{(0)}$ is at least three-valent in $T\cup\bigcup\,\{e\,|\,(e,v)\in \calf\}$.  Fix $\bb_{\calf}\in(\mathbb{R}^+)^{\calf}$, and enumerate the edges in $\calf$ as $\{e_0,\hdots,e_{n-1}\}$ so that $b_{e_0}$ is maximal.  Define:
$$ B_{e_0} = \left\{\begin{array}{ll} b_0(b_{e_1},\hdots,b_{e_{n-1}}) & \mbox{if}\ b_{e_0} > b_0(b_{e_1},\hdots,b_{e_{n-1}}) \\ b_{e_0} & \mbox{otherwise}\end{array}\right. $$
Then for $R\geq 0$, $\bd_{\calf}\geq\bb_{\calf}$, $\bd_{\calf}\geq\bb_{\calf}$, and $\bd_{\cale}\in\overline{\mathit{Ad}}(\bd_{\calf})$ such that $J(P_v(\bd)) = J(P_{v_T}(\bd))$ for each $v\in T^{(0)}-\{v_T\}$, where $\bd=(\bd_{\cale},\bd_{\calf})$, $D_R(T,\bd) \geq D_R(B_{e_0},b_{e_1},\hdots,b_{e_{n-1}})$.\end{corollary}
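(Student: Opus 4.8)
The plan is to deduce this directly from Lemma~\ref{centered equal radii} and the single-polygon monotonicity statement \Monotonicity, so that no separate appeal to the basic algorithm is needed. Fix $\bd_{\calf}\geq\bb_{\calf}$ and $\bd_{\cale}\in\overline{\mathit{Ad}}(\bd_{\calf})$, write $\bd=(\bd_{\cale},\bd_{\calf})$, and suppose $J(P_v(\bd))=J(P_{v_T}(\bd))$ for every $v\in T^{(0)}-\{v_T\}$. Lemma~\ref{centered equal radii} then furnishes a cyclic polygon $P\in\widetilde{\calc}_{|\calf|}\cup\widetilde{\calBC}_{|\calf|}$ with $D_R(T,\bd)=D_R(P)$ for every $R\geq 0$. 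The first step is to extract from the inductive gluing in the proof of that lemma the precise side-length data of $P$: its edges are exactly the geometric duals of the edges $e\in\calf$, the dual to $e_i$ having length $d_{e_i}$ (the $e_i$-entry of $\bd_{\calf}$). Hence $P$ is represented by a cyclic arrangement of $(d_{e_0},\hdots,d_{e_{n-1}})$, where $n=|\calf|$; since $D_R$ is unchanged under permuting its arguments, we may as well take $P$ to be represented by $(d_{e_0},\hdots,d_{e_{n-1}})$ itself.

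Next I would carry out the entrywise comparison with $(B_{e_0},b_{e_1},\hdots,b_{e_{n-1}})$. From $\bd_{\calf}\geq\bb_{\calf}$ we get $d_{e_i}\geq b_{e_i}$ for all $i$; and by the defining dichotomy $B_{e_0}$ is either $b_{e_0}$ or the strictly smaller quantity $b_0(b_{e_1},\hdots,b_{e_{n-1}})$, so in all cases $d_{e_0}\geq b_{e_0}\geq B_{e_0}$. Thus the representative of $P$ dominates $(B_{e_0},b_{e_1},\hdots,b_{e_{n-1}})$ coordinatewise. It remains to see, exactly as in the proof of Lemma~\ref{root defect}, that $(B_{e_0},b_{e_1},\hdots,b_{e_{n-1}})\in\widetilde{\calc}_n\cup\widetilde{\calBC}_n$: since $b_{e_0}$ is maximal among the $b_e$, $e\in\calf$, and since $b_0(b_{e_1},\hdots,b_{e_{n-1}})\geq b_{e_i}$ for $i\geq 1$ by \BCn, the entry $B_{e_0}$ is the largest; then \BCn\ places $(b_0(b_{e_1},\hdots,b_{e_{n-1}}),b_{e_1},\hdots,b_{e_{n-1}})$ in $\widetilde{\calBC}_n$ in the first case, while \VerticalRay\ places $(b_{e_0},b_{e_1},\hdots,b_{e_{n-1}})$ in $\widetilde{\calc}_n\cup\widetilde{\calBC}_n$ in the second.

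Both tuples now lie in $\widetilde{\calc}_n\cup\widetilde{\calBC}_n$, with the one representing $P$ dominating the other, so \Monotonicity\ gives $D_R(P)\geq D_R(B_{e_0},b_{e_1},\hdots,b_{e_{n-1}})$, and combining with $D_R(T,\bd)=D_R(P)$ finishes the argument, since $\bd_{\calf}$ and $\bd_{\cale}$ were arbitrary subject to the stated constraints. I do not expect a genuine obstacle here: the substance sits in Lemma~\ref{centered equal radii} and \Monotonicity. The one point requiring care is the first step --- checking that the polygon produced by Lemma~\ref{centered equal radii} really has side lengths equal to the frontier duals $d_e$ ($e\in\calf$), with the mismatch between the ordering $(d_{e_0},\hdots,d_{e_{n-1}})$ and the cyclic order in which those duals actually occur around $P$ being immaterial because $D_R$ is symmetric in its arguments.
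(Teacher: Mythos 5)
Your proposal is correct and follows the same route as the paper: the paper's proof consists precisely of Lemma \ref{centered equal radii} followed by a direct appeal to \Monotonicity, comparing the resulting polygon $P\in\widetilde{\calc}_{|\calf|}\cup\widetilde{\calBC}_{|\calf|}$ (whose sides are the frontier duals, of lengths $d_e\geq b_e$) with $(B_{e_0},b_{e_1},\hdots,b_{e_{n-1}})\in\widetilde{\calc}_n\cup\widetilde{\calBC}_n$, exactly as in Lemma \ref{root defect}. You simply make explicit the details the paper leaves implicit (the side-length identification, the entrywise domination $d_{e_0}\geq b_{e_0}\geq B_{e_0}$, and the irrelevance of the cyclic ordering), all of which are handled correctly.
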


%%%%%%%%%%%%%%%
\section{Computations}\label{computations}
%%%%%%%%%%%%%%%

This section is devoted to applying our previous results to prove Theorem \ref{main}.  For $r_{\beta}$  as described in the theorem, $2.8298 < \cosh r_{\beta} < 2.8299$, and if $d_{\beta} = 2r_{\beta}$ then $15.0166 < \cosh d_{\beta} < 15.0167$.  Let $r_1$ and $d_1$ satisfy $\cosh r_1 = 2.8298$ and $\cosh d_1 = 15.0166$, respectively.  Then $r_1 < r_{\beta}$ and $d_1 < d_{\beta}$, and it is easy to show that $d_1 > 2r_1$.  

Table \ref{symm ctrd areas} records the radius-$r_1$ defect of the symmetric, centered $n$-gons $P_n(d_1)$ for $n = 3$ through $6$.  These computations use \CenteredDefectBound.  In each case we have truncated the result after five decimal places, so the actual defect value is greater than what is displayed.

\begin{table}[ht]\begin{center}\begin{tabular}{llllllll}
  $n$ & $D_{r_1}(P_n(d_1))$ & $n$ & $D_{r_1}(P_n(d_1))$ & $n$ & $D_{r_1}(P_n(d_1))$ & $n$ & $D_{r_1}(P_n(d_1))$\\ \hline
  3 & 0.12586 & 4 & 0.56593 & 5 & 1.22041& 6 & 2.00496 \smallskip\end{tabular}\end{center}
\caption{Radius-$r_1$ defects of highly symmetric polygons.}
\label{symm ctrd areas}
\end{table}

We will perform an analogous computation for centered dual $2$-cells, but initially focus only on those with five frontier edges.  To begin, let us note that an Euler characteristic computation implies:

\begin{remark}\label{edge plus 3}  If $T$ is a tree with edge set $\cale$ and frontier $\calf$, such that each vertex of $T$ is at least three-valent in $T\cup\{e\,|\,(e,v)\in\calf\ \mbox{for some}\ v\in T^{(0)}\}$, then $|\calf| \geq |\cale|+3$.\end{remark}

This implies in particular that for a $2$-cell $Q$ of the centered dual decomposition containing a component $T$ of $V^{(1)}_n$, if $Q$ has five edges then $T$ has at most two.  Carrying the same argument further, we find that the possibilities for such $Q$ are exactly those showed in Figure \ref{non-ctrd 5-edge}.  In the figure, Voronoi edges are dashed and black, and centered dual edges are solid and red.  We have labeled the possibilities by the corresponding components of $V^{(1)}_n$ (in bold), where subscripts describe the valence of each vertex, with the root vertex in bold.

\begin{figure}
\input{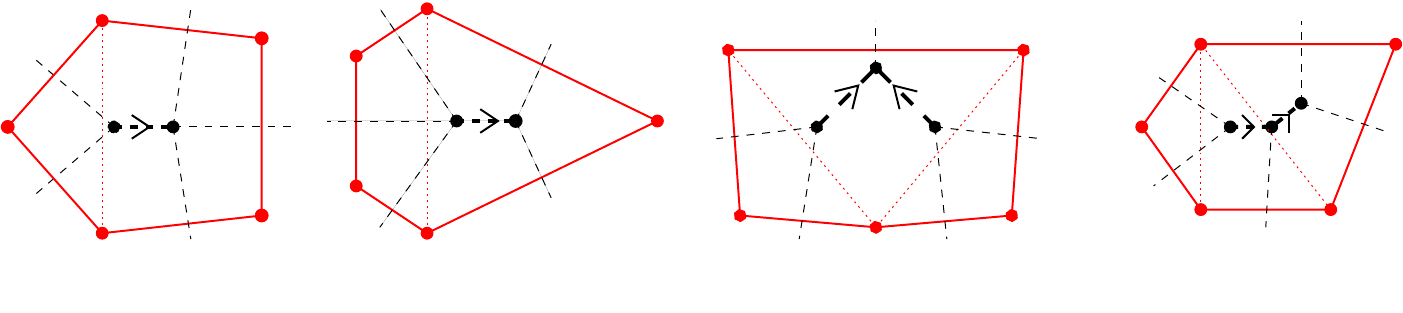_t}
\caption{Possibilities for (non-Delaunay) $5$-edged centered dual $2$-cells.}
\label{non-ctrd 5-edge}
\end{figure}

\begin{table}[ht]\begin{tabular}{llllll}
  & Basic & Case (\ref{non-roots near centered}) & Case (\ref{root not centered})(A) & Case (\ref{root not centered})(B) & Case (\ref{equal radii}) \\ \hline
  $D_{r_1}(T_{3,\mathbf{4}},\bb)$ & $1.00510$ & \fbox{$1.17816$} & $1.57569$ & N/A & N/A \\
  $D_{r_1}(T_{4,\mathbf{3}},\bb)$ & $1.63705$ & $1.77971$ & \fbox{$1.71113$} & N/A & N/A \\
  $D_{r_1}(T_{3,\mathbf{3},3},\bb)$ & $0.80915$ & \fbox{$1.15527$} & $1.28432$ & $1.38738$ & $1.22041$ \\
  $D_{r_1}(T_{3,3,\mathbf{3}},\bb)$ & \fbox{$1.24735$} & $1.56044$ & $1.38585$ & $1.38738$ & $1.22041$ \medskip\end{tabular}
\caption{$D_{r_1}(T,\bb)$ for the $T$ from Figure \ref{non-ctrd 5-edge}, where $\bb = (d_1,d_1,d_1,d_1,d_1)$.}
\label{non-ctrd 5-edge defects}\end{table}  

Table \ref{non-ctrd 5-edge defects} records the output of computer programs implementing the algorithms of Section \ref{bounds} for the trees of Figure \ref{non-ctrd 5-edge}.  (The programs are in the supplementary materials.)  Each tree falls under the purview of the improvements to the basic algorithm described in the second half of the section, by Lemma \ref{admissible quad} for the one-edged trees and Proposition \ref{two-edge} for the others.  Since Lemma \ref{admissible quad} does not allow Case (\ref{root not centered})(B) or (\ref{equal radii}) of Proposition \ref{two-edge}, we placed ``N/A''s in the corresponding table entries.  We have boxed the best bound for each tree: the basic algorithm's output or the minimum of the improvements' (whichever is larger).

\begin{corollary}\label{no non-centered five}  If a closed orientable hyperbolic surface $F$ of genus $2$ has injectivity radius at least $d_1/2$, where $\cosh d_1 = 15.0166$, at $x\in F$, then no two-cell of the centered dual tessellation of $F$ determined by $x$ has more than four edges. \end{corollary}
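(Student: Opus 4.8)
The plan is to argue by contradiction via an area estimate. Suppose some $2$-cell $Q$ of the centered dual decomposition of $F$ (determined by $\{x\}$) has at least five edges. Since $F$ has area $4\pi$ and the centered dual cells tile $F$, and since Proposition \ref{2-cell defect} (together with Lemma \ref{admissible Voronoi}) identifies $D_R(Q)$ with the area of $Q$ outside the embedded disk neighborhood of $x$ of radius $R = i(\{x\}) \geq d_1/2 > r_1$, the total defect $\sum_Q D_{r_1}(Q)$ over all centered dual cells $Q$ equals $4\pi - \mathrm{area}(B_{r_1}(x)) = 4\pi - 2\pi(\cosh r_1 - 1)$, which with $\cosh r_1 = 2.8298$ is strictly less than $4\pi - 2\pi(1.8298) \approx 1.067\pi \approx 3.352$. (One must first reduce to $R = r_1$: since $D_R$ is monotone in $R$ — larger disks give smaller leftover area — and $i(\{x\}) \geq d_1/2 > r_1$, using $R = r_1$ only lowers each $D_R(Q)$, which is the direction we want for a lower bound on the number needed, so it is safe to work with $r_1$ throughout.)

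The next step is a combinatorial one: enumerate the possibilities for a five-edged (or more) centered dual $2$-cell. By Lemma \ref{centered or not}, each centered dual cell is either a centered Delaunay polygon $P_v$ or the union $\bigcup_{v\in T^{(0)}} P_v$ over a component $T$ of $V^{(1)}_n$. A five-edged centered Delaunay polygon would be a centered cyclic pentagon, and by Corollary \ref{short symmetric poly} / the symmetry computations one checks its defect (the case $n=5$ in Table \ref{symm ctrd areas} gives $D_{r_1}(P_5(d_1)) = 1.22041$, but more relevantly any centered pentagon with all sides $\geq d_1$ has defect $\geq 1.22$, and a hexagon $\geq 2.00$). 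For the non-Delaunay case, Remark \ref{edge plus 3} gives $|\calf| \geq |\cale| + 3$, so a cell with $|\calf| = 5$ frontier edges has $|\cale| \leq 2$, i.e. the underlying tree $T$ has at most two edges; the finitely many combinatorial types (with vertex valences constrained to be $\geq 3$) are exactly $T_{3,\mathbf 4}$, $T_{4,\mathbf 3}$, $T_{3,\mathbf 3,3}$, $T_{3,3,\mathbf 3}$ shown in Figure \ref{non-ctrd 5-edge}. A cell with six or more edges only makes the defect larger (one can bound it below by discarding edges or by a monotonicity argument), so it suffices to treat $|\calf| = 5$.

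The heart of the proof is then the defect lower bounds for these four trees. For each, Proposition \ref{two-edge} (or Lemma \ref{admissible quad} for the one-edged trees $T_{3,\mathbf 4}$, $T_{4,\mathbf 3}$) tells us that the minimum of $D_{r_1}(T,\bd)$ over $\bd_{\cale}\in\overline{\mathit{Ad}}(\bd_{\calf})$, subject to $\bd_{\calf} \geq \bb_{\calf} = (d_1,\dots,d_1)$, occurs at a point satisfying one of the enumerated special cases, and the algorithms of Section \ref{bounds} — the basic algorithm and its improvements for Cases (\ref{non-roots near centered}), (\ref{root not centered})(A), (\ref{root not centered})(B), (\ref{equal radii}) — produce the rigorous lower bounds tabulated in Table \ref{non-ctrd 5-edge defects}. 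The boxed entries give $D_{r_1}(T_{3,\mathbf 4},\bb) > 1.17816$, $D_{r_1}(T_{4,\mathbf 3},\bb) > 1.71113$, $D_{r_1}(T_{3,\mathbf 3,3},\bb) > 1.15527$, $D_{r_1}(T_{3,3,\mathbf 3},\bb) > 1.24735$. In every case the defect of the five-edged cell exceeds $1.15$, so if $F$ had such a cell, the remaining area of $F$ outside it and outside $B_{r_1}(x)$ would still be bounded below — in fact one argues more simply: the five-edged cell alone contributes more than $1.15$ to a total defect budget, and the complement $F \setminus Q$ has positive area with its own defect contribution, and combining with the four-edged and smaller cells that must tile the rest, one obtains a contradiction with the total defect being $\leq 4\pi - 2\pi(\cosh r_1 - 1)$.

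The main obstacle I expect is \emph{not} the algebra of the individual defect bounds (those are delegated to the cited monotonicity and defect-derivative results from \cite{DeB_cyclic_geom} and to the algorithms of Section \ref{bounds}), but rather \emph{the bookkeeping that turns a single large-defect cell into a global contradiction}. One has to be careful that the cited injectivity-radius hypothesis $r \geq r_\beta$ (hence $\geq d_1/2$) really does force $R = i(\{x\}) \geq r_1$ with a strict-enough inequality for the numerics to close, and that when $Q$ has more than five edges one genuinely gets an \emph{even larger} lower bound rather than having to redo the enumeration — this requires either a monotonicity-in-edge-count argument or an Euler-characteristic observation that more edges force either more tree edges (handled inductively, but that is Corollary \ref{no non-centered five}'s job to set up the base case for later induction on edge number in the proof of Theorem \ref{main}) or a larger symmetric-polygon defect. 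I would isolate this as the technical crux and handle it by noting that each additional frontier edge adds at least one more vertex polygon to the union, and each vertex polygon has defect bounded below by $D_{r_1}(P_3^h(\dots)) > 0$ via Lemma \ref{horocyclic vertex poly}, so the bound only improves.
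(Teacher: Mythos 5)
Your overall strategy is the one the paper uses: bound the area of $F$ outside the embedded radius-$r_1$ disk, identify the defect of each centered dual $2$-cell with its contribution to that area via Proposition \ref{2-cell defect}, enumerate the five-edged possibilities (centered pentagon, or one of the four trees of Figure \ref{non-ctrd 5-edge}), and invoke Tables \ref{symm ctrd areas} and \ref{non-ctrd 5-edge defects}. However, two points need repair.

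First, your area budget is wrong. The complement of the radius-$r_1$ disk has area $4\pi - 2\pi(\cosh r_1 - 1) = 2\pi(2 - 1.8298) = 0.3404\pi < 1.07$, not ``$1.067\pi \approx 3.352$'' (you appear to have multiplied the correct figure $\approx 1.067$ by a spurious factor of $\pi$). This matters: with the correct budget, a \emph{single} cell of defect $>1.15$ already exceeds the total available area and the contradiction is immediate, which is exactly how the paper closes the argument. With your figure of $3.352$ no single cell suffices, and your fallback (``combining with the four-edged and smaller cells that must tile the rest'') is not carried out and is not needed once the arithmetic is fixed.

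Second, your reduction of the case of more than five edges to the five-edge case does not work as stated. The claim that ``each additional frontier edge adds at least one more vertex polygon'' is false: a frontier edge can be added by raising the valence of an existing vertex (e.g.\ a one-edged tree with root valence $5$ has six frontier edges but still only two vertex polygons), and in that situation the tree contains no subtree whose combinatorics \emph{and frontier in $V$} match a Figure \ref{non-ctrd 5-edge} tree, so Proposition \ref{horocyclic tree} cannot be applied against the Table \ref{non-ctrd 5-edge defects} bounds. The paper handles this with an explicit case analysis on the valence of the root vertex: if $v_T$ has valence at least $5$ then $D_{r_1}(P_{v_T}(\bd)) \geq D_{r_1}(P_5(d_1)) > 1.22$ alone suffices; if $v_T$ has valence $4$ with a valence-$\geq 4$ neighbor, or valence $3$ with certain configurations, the basic algorithm is re-run on a small subtree $T_0$ (yielding $1.8623$ and $2.46104$ respectively) and Proposition \ref{horocyclic tree} transfers the bound to $T$; only in the remaining cases does $T$ contain a subtree matching Figure \ref{non-ctrd 5-edge}. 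You need this case split (or some genuine monotonicity-in-valence lemma, which is not among the cited results) to finish.
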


\begin{proof}  If $F$ has injectivity radius at least $d_1/2$ at $x$, then since $r_1<d_1/2$ a hyperbolic disk $U$ of radius $r_1$ is embedded in $F$, centered at $x$.  The area of $U$ is $2\pi(\cosh r_1 - 1) > 2\pi\cdot 1.8298$, so the area of its complement in $F$ is less than $2\pi\cdot 0.1702 < 1.07$.

For each $2$-cell $P$ of the centered dual decomposition of $F$ determined by $\{x\}$, Proposition \ref{2-cell defect} implies that $D_{r_1}(P)$ is the area of $P - (P\cap U)$.  Let us first assume that $P$ has five edges.  Each is a geodesic arc that begins and ends at $x$, so its length is at least $d_1$.  If $P$ is centered, then \Monotonicity\ implies that $D_{r_1}(P) > D_{r_1}(P_n(d_1)) > 1.22$, by Table \ref{symm ctrd areas}.  This contradicts the fact that the total area complementary to $U$ in $F$ is less than $1.07$.

If $P$ contains a component $T$ of $V^{(1)}_n$, where $V$ is the Voronoi tessellation determined by $\{x\}$, then $T$ is one of the possibilities pictured in Figure \ref{non-ctrd 5-edge}.  Let $\calf$ be the frontier of $T$ in $V$.  Lemma \ref{admissible Voronoi} implies that $D_{r_1}(P) = D_{r_1}(T,(\bd_{\cale},\bd_{\calf}))$, where $\bd_{\calf}$ and $\bd_{\cale}\in\mathit{Ad}(\bd_{\calf})$ are defined there.  By the construction there and our hypothesis, $d_e \geq d_1$ for each $e$ with $(e,v)\in\calf$ for some $v$, so appealing to Table \ref{non-ctrd 5-edge defects}, we find that $D_{r_1}(P) > 1.15527$.  This again contradicts the fact that $F-U$ has area less than $1.07$.

By the above, no $2$-cell of the centered dual decomposition has five edges.  If a centered dual $2$-cell $Q$ with $n > 5$ edges is a centered Delaunay polygon, then by \Monotonicity\ $D_{r_1}(Q) = D_{r_1}(P_n(d_1))$.  This increases with $n$, by \CenteredDefectBound, so an appeal to Table \ref {symm ctrd areas} establishes a contradiction as above.  Now assume that $Q$ contains a component $T$ of $V^{(1)}_n$, let $\calf$ be the frontier of $T$ in $V^{(1)}$, and define $\bd_{\calf}$ and $\bd_{\cale}\in\mathit{Ad}(\bd_{\calf})$ as in Lemma \ref{admissible Voronoi}.

Again $d_e \geq d_1$ for each $e\in\cale$ or such that $(e,v)\in\calf$ for some $v$.  Thus if the root vertex $v_T$ of $T$ has valence at least $5$ in $V^{(1)}$ then: 
$$D_{r_1}(T,\bd) \geq D_{r_1}(P_{v_T}(\bd_{\cale},\bd_{\calf})) \geq D_{r_1}(P_5(d_1)) > 1.22 $$
If $v_T$ has valence four in $V^{(1)}$ and a vertex $v$ of $T$ adjacent to $v_T$ has valence at least four, let $T_0= e_v \subset T$ have frontier $\calf_0$ in $V^{(1)}$.  Applying the basic algorithm with $\bb_{\calf_0} = (d_1,\hdots,d_1)$ yields a bound of $1.8623$.  Proposition \ref{horocyclic tree} thus implies that $D_{r_1}(T,\bd) > 1.8623$.

If $v_T$ has valence three in $V^{(1)}$, so does a vertex $v$ adjacent to $v_T$ in $T$, and a vertex $w\neq v_T$ adjacent to $v$ in $T$ has valence at least four, applying the basic algorithm to $T_0=e_v\cup e_w$ with $\bb_{\calf_0} = (d_1,\hdots,d_1)$ yields a bound of $2.46104$, so Proposition \ref{horocyclic tree} implies that $D_{r_1}(T,\bd) > 2.46104$.  In all other cases $T$ has a subtree $T_0$, containing $v_T$, with the same combinatorics and frontier in $V^{(1)}$ as a tree from Figure \ref{non-ctrd 5-edge}, so as above $D_{r_1}(T,\bd) > 1.15$.  In all cases we obtain a contradiction.\end{proof}

\begin{lemma}\label{d_1 non-centered quad}  Let $T = e$ have frontier $\calf = \{(e_0,v),(e_1,v),(e_2,v_T),(e_3,v_T)\}$, where  $v_T$ is the root vertex and $v$ is the other.  For $d_{\beta}$ as in Example \ref{sharp DeBlois} and $\bd_{\calf}\geq (d_{\beta},d_{\beta},d_{\beta},d_{\beta})$, $D_{r_{\beta}}(T,(\bd_{\cale},\bd_{\calf})) > D_{r_{\beta}}(P_4(d_{\beta}))$ for each $\bd_{\cale} \in \mathit{Ad}(\bd_{\calf})$, where $r_{\beta} = d_{\beta}/2$.   \end{lemma}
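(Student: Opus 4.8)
Since $T$ has a single edge $e_T$ and both vertices $v$, $v_T$ are three‑valent in $V$, the plan is to apply Lemma \ref{admissible quad} to reduce the problem to one configuration, and then to estimate the defects of the two cyclic triangles $P_v(\bd)$ and $P_{v_T}(\bd)$ that make up the cell. Throughout I will use the identity $D_{r_\beta}(P_4(d_\beta)) = 2\,D_{r_\beta}(b_\beta,d_\beta,d_\beta)$, where $\cosh b_\beta = 2\cosh d_\beta - 1$ as in Example \ref{dull DeBlois}: this is the radius‑$r_\beta$ version of the diagonal decomposition of \HalfSymmQuadDefect, with $D_{r_\beta}$ additive over the diagonal through the center by Proposition \ref{2-cell defect} (equivalently \FullSector), legitimate because $r_\beta = d_\beta/2 \le \min\{d_e/2\}$. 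Note also that $\cosh b_\beta > \cosh d_\beta$, so $b_\beta > d_\beta$, and that $(b_\beta,d_\beta,d_\beta)\in\widetilde{\calBC}_3$ by \TriCenter. By Lemma \ref{admissible quad}, $\mathit{Ad}(\bd_{\calf})$ is an interval with left endpoint $d^-$, on which $D_{r_\beta}(T,\cdot)$ is strictly increasing, and $\bd = (d^-,\bd_{\calf})$ satisfies either (1) $P_v(\bd)\in\widetilde{\calBC}_3$ with $\mathit{Ad}(\bd_{\calf}) = [d^-,d^+)$, or (2) $P_{v_T}(\bd)\in\widetilde{\calBC}_{3}$ with $d^-$ not its largest entry and $\mathit{Ad}(\bd_{\calf}) = (d^-,d^+)$. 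Hence it suffices to prove a strict inequality $D_{r_\beta}(T,(d^-,\bd_{\calf})) > D_{r_\beta}(P_4(d_\beta))$ in case (1), and the non‑strict inequality $D_{r_\beta}(T,(d^-,\bd_{\calf})) \ge D_{r_\beta}(P_4(d_\beta))$ in case (2) (strictness on $\mathit{Ad}(\bd_{\calf})$ then comes from strict monotonicity).

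\textbf{Case (1).} Write $P_v(\bd) = (d^-, d_{e_0}, d_{e_1})$ and $P_{v_T}(\bd) = (d^-, d_{e_2}, d_{e_3})$, where $d^- = d_{e_T}$ is the length of the geometric dual of the shared edge $e_T$ and $d_{e_0},\dots,d_{e_3} \ge d_\beta$. Since $P_v(\bd)\in\widetilde{\calBC}_3$ has largest entry $d^-$, \TriCenter\ gives $\cosh d^- = \cosh d_{e_0} + \cosh d_{e_1} - 1 \ge 2\cosh d_\beta - 1 = \cosh b_\beta$, so $d^- \ge b_\beta$. Thus both $P_v(\bd)$ and $P_{v_T}(\bd)$ dominate $(b_\beta,d_\beta,d_\beta)$ coordinatewise. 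Applying \Monotonicity\ (with \CenteredDefectBound, and \DefectDerivative\ for strictness) on $\widetilde{\calc}_3\cup\widetilde{\calBC}_3$ gives $D_{r_\beta}(P_v(\bd)) \ge D_{r_\beta}(b_\beta,d_\beta,d_\beta)$ and $D_{r_\beta}(P_{v_T}(\bd)) \ge D_{r_\beta}(b_\beta,d_\beta,d_\beta)$; the latter is \emph{strict} because $\bd_{\cale} = d^-\in\mathit{Ad}(\bd_{\calf})$ forces $P_{v_T}(\bd)\in\widetilde{\calc}_3$, the open set, so $P_{v_T}(\bd)\neq(b_\beta,d_\beta,d_\beta)\in\widetilde{\calBC}_3$. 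Summing, $D_{r_\beta}(T,(d^-,\bd_{\calf})) > 2\,D_{r_\beta}(b_\beta,d_\beta,d_\beta) = D_{r_\beta}(P_4(d_\beta))$, as required. The essential point here is that the coordinate of $P_{v_T}(\bd)$ along the shared edge is bounded below by $b_\beta$, not merely by $d_\beta$.

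\textbf{Case (2) and the main obstacle.} Now $P_{v_T}(\bd) = (d^-, d_{e_2}, d_{e_3})\in\widetilde{\calBC}_3$ with $d^-$ not its largest entry; say $d_{e_2}\ge d_{e_3}$ and $d_{e_2}$ is largest, so \TriCenter\ gives $\cosh d_{e_2} = \cosh d^- + \cosh d_{e_3} - 1 \ge 2\cosh d_\beta - 1 = \cosh b_\beta$ (using $d^-, d_{e_3} \ge d_\beta$). Hence $P_{v_T}(\bd)$ again dominates $(b_\beta,d_\beta,d_\beta)$ coordinatewise, and \Monotonicity\ gives $D_{r_\beta}(P_{v_T}(\bd)) \ge D_{r_\beta}(b_\beta,d_\beta,d_\beta)$. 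It remains to bound $D_{r_\beta}(P_v(\bd)) \ge D_{r_\beta}(b_\beta,d_\beta,d_\beta)$. Here $P_v(\bd) = (d^-, d_{e_0}, d_{e_1})\in\widetilde{\calAC}_3 - \widetilde{\calc}_3$ has largest entry $d^- = d_{e_v}$, with $d_{e_0}, d_{e_1}\ge d_\beta$ (so also $d^- \ge \cosh^{-1}(\cosh d_{e_0}+\cosh d_{e_1}-1) \ge b_\beta$); since it is \emph{non‑centered}, \Monotonicity\ does not apply directly, and the naive coordinatewise‑domination argument fails precisely because, as observed in \cite{DeB_cyclic_geom}, the area of a non‑centered cyclic polygon decreases as its longest side grows. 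This is the crux of the proof. The intended fix is to combine \HorocyclicBound\ — which, for fixed shorter sides $d_{e_0},d_{e_1}$, minimizes $D_{r_\beta}(P_v(\bd))$ in the horocyclic limit $(h_0(d_{e_0},d_{e_1}), d_{e_0}, d_{e_1})\in\widetilde{\calHC}_3$ — with \CenteredBoundaryBound\ (used as in Remark \ref{tri root defect}) to reduce to checking $D_{r_\beta}$ at a single extremal triangle; if that reduction does not close cleanly, one instead runs the algorithm of Section \ref{bounds} on the one‑edged tree with $\bb_{\calf} = (d_\beta,d_\beta,d_\beta,d_\beta)$, invoking the Case (\ref{root not centered}) improvements built on \CenteredBoundaryBound\ exactly as in the proof of Corollary \ref{no non-centered five}, and verifies numerically (with the program in the supplementary materials) that the resulting lower bound exceeds $D_{r_\beta}(P_4(d_\beta))$. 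In either route, the sum $D_{r_\beta}(P_v(\bd)) + D_{r_\beta}(P_{v_T}(\bd)) \ge 2\,D_{r_\beta}(b_\beta,d_\beta,d_\beta) = D_{r_\beta}(P_4(d_\beta))$ follows, and strict monotonicity of $D_{r_\beta}(T,\cdot)$ on $\mathit{Ad}(\bd_{\calf})$ upgrades this to the strict inequality claimed for every $\bd_{\cale}\in\mathit{Ad}(\bd_{\calf})$.
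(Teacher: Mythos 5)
Your Case (1) is essentially the paper's own argument for that alternative of Lemma \ref{admissible quad}: there too one observes that $d^- = b_0(d_{e_0},d_{e_1}) \geq b_0(d_\beta,d_\beta)$, compares both triangles with $(b_0(d_\beta,d_\beta),d_\beta,d_\beta)$ via \Monotonicity, and uses \HalfSymmQuadDefect\ to write $D_{r_\beta}(P_4(d_\beta)) = 2D_{r_\beta}(b_0(d_\beta,d_\beta),d_\beta,d_\beta)$. Your source of strictness (that $P_{v_T}(\bd)$ lies in the open set $\widetilde{\calc}_3$, hence is not the boundary triangle) is a harmless variant of the paper's, which instead notes that $\cosh d_{e_0}+\cosh d_{e_1} < \cosh d_{e_2}+\cosh d_{e_3}$ forces one of $d_{e_2},d_{e_3}$ to exceed $d_\beta$ strictly.

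The gap is your Case (2). You correctly identify the crux --- $P_v(\bd)$ is non-centered, so \Monotonicity\ does not apply and its defect can fall \emph{below} $D_{r_\beta}(b_0(d_\beta,d_\beta),d_\beta,d_\beta)$; indeed the horocyclic lower bound of \HorocyclicBound\ sits strictly below that value, since the defect decreases as the longest side grows past $b_0$ --- but your concluding sentence, that ``in either route'' $D_{r_\beta}(P_v(\bd))+D_{r_\beta}(P_{v_T}(\bd)) \geq 2D_{r_\beta}(b_0(d_\beta,d_\beta),d_\beta,d_\beta)$, does not follow from anything you have written: the per-triangle inequality for $P_v(\bd)$ is exactly what fails, so any proof must show quantitatively that the root triangle's excess compensates for the non-root triangle's deficit. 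That is what the paper does. In this case the longest side of $P_{v_T}(\bd)$ is dual to a frontier edge, so the Case (\ref{root not centered})(A) refinement of \S\ref{bounds} applies: Lemma \ref{better root defect out tree} forces the root's longest side to be at least $b_0$ of its other two entries, and running the corresponding algorithm with $\bb_{\calf}=(d_1,d_1,d_1,d_1)$, $\cosh d_1 = 15.0166$, and $R = r_2$, $\cosh r_2 = 2.8299$, yields the explicit lower bound $0.74844$, which is then compared with $D_{r_\beta}(P_4(d_\beta)) < D_{r_1}(P_4(d_2)) < 0.56596$. Your proposal defers exactly this step to an unspecified numerical verification (prefaced by ``if that reduction does not close cleanly''), so as written the inequality in Case (2) is asserted rather than proved; to close the argument you must either perform that computation (as the paper does) or supply an analytic bound on the sum, not on each triangle separately.
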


\begin{proof}  We first note that if $\mathit{Ad}(\bd_{\calf})\neq\emptyset$, where $\bd_{\calf} = (d_0,d_1,d_2,d_3)$, then for $\bd_{\cale} = d_e\in\mathit{Ad}(\bd_{\calf})$, $P_{v}(\bd) = (d_e,d_0,d_1)\in\widetilde{\calAC}_3-\widetilde{\calc}_3$ and $P_{v_T}(\bd) = (d_e,d_2,d_3)\in\widetilde{\calc}_3$ by Definition \ref{admissible criteria}.  Thus \VerticalRay\ and \TriCenter\ imply that:
$$ \cosh d_0 + \cosh d_1 -1 =b_0(d_0,d_1) \leq \cosh d_e < b_0(d_2,d_3) = \cosh d_2 +\cosh d_3 - 1 $$
In particular, $\cosh d_0 + \cosh d_1 < \cosh d_2 + \cosh d_3$, so if $\bd_{\calf} \geq (d_{\beta},d_{\beta},d_{\beta},d_{\beta})$ then at least one of $d_2$ or $d_3$ is properly larger than $d_{\beta}$.

For $\bd_{\calf} \geq (d_{\beta},d_{\beta},d_{\beta},d_{\beta})$ with $\mathit{Ad}(\bd_{\calf})\neq \emptyset$, let us first suppose that the minimum of $D_{r_{\beta}}(T,(d_e,\bd_{\calf}))$ over $\overline{\mathit{Ad}}(\bd_{\calf})$ occurs at $(d^-,\bd_{\calf})$ satisfying Case (2) of Lemma \ref{admissible quad}.  The corresponding improvement of the basic algorithm, found in Section \ref{case root not centered}, outputs $0.74844$ given $R = r_2$ satisfying $\cosh r_2 = 2.8299$ and $\bb_{\calf} = (d_1,d_1,d_1,d_1)$.  Since $r_2>r_{\beta}$ and $d_1 < d_{\beta}$, it follows that $D_{r_{\beta}}(d_e,\bd_{\calf}) > 0.74844$ for each $d_e\in\mathit{Ad}(\bd_{\calf})$.  On the other hand, $D_{r_{\beta}}(P_4(d_{\beta})) < D_{r_1}(P_4(d_2)) < 0.56596$, where $\cosh d_2 = 15.0167$, so $D_{r_{\beta}}(d_e,\bd_{\calf}) >D_{r_{\beta}}(P_4(d_{\beta}))$ in this case.

Now suppose that the minimum of $D_{r_{\beta}}(T,(d_e,\bd_{\calf}))$ over $\overline{\mathit{Ad}}(\bd_{\calf})$ occurs at $(d^-,\bd_{\calf})$ satisfying Case (1) of Lemma \ref{admissible quad}.  Then $d^- = b_0(d_0,d_1) < b_0(d_2,d_3)$ by the first paragraph above, so $M_{r_{\beta}}(v_T,\bd_{\calf})$ as defined in Lemma \ref{root defect} is equal to $D_{r_{\beta}}(d^-,d_2,d_3)$.  Given $R = r_{\beta}$ and $\bd_{\calf}$, the corresponding improvement of the basic algorithm (from Section \ref{case non-roots near centered}) thus outputs $D_{r_{\beta}}(d^-,d_0,d_1) + D_{r_{\beta}}(d^-,d_2,d_3)$.

\HalfSymmQuadDefect\ implies that $D_{r_{\beta}}(P_4(d_{\beta})) = 2D_{r_{\beta}}(B_0,d_{\beta},d_{\beta})$, where $B_0 = b_0(d_{\beta},d_{\beta})$.  Since $\bd_{\calf}\geq (d_{\beta},d_{\beta},d_{\beta},d_{\beta})$, the monotonicity property of $b_0$ recorded in \BCn\ implies that $d^- \geq B_0$.  Therefore $(d^-,d_0,d_1) \geq (B_0,d_{\beta},d_{\beta})$ and $(d^-,d_2,d_3)\geq (B_0,d_{\beta},d_{\beta})$ in the sense of \Order, and by the first paragraph above the latter inequality is proper.  Thus \Monotonicity\ implies the lemma.\end{proof}

\begin{mainthm}\MainThm\end{mainthm}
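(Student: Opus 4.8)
The plan is to study the centered dual decomposition $P_c$ of $F$ determined by $\cals = \{x\}$, which has a single vertex since $\cals$ is a singleton, and to weigh the radius‑$R$ defects of its $2$-cells against the area of an embedded disk at $x$. Since $\cosh d_1 = 15.0166 < \cosh d_{\beta}$ and $\cosh r_1 = 2.8298 < \cosh r_{\beta}$, we have $d_1/2 < r_{\beta}$ and $r_1 < r_{\beta}$, so the hypothesis $r \ge r_{\beta}$ puts us in the situation of Corollary \ref{no non-centered five}: every $2$-cell of $P_c$ has at most four edges. An Euler characteristic count for the one-vertex complex $P_c$ on the genus-$2$ surface $F$ gives $E = C + 3$, where $C$ is the number of $2$-cells and $E$ the number of edges, and counting edge--$2$-cell incidences gives $3t + 4q = 2E$, where $t$ and $q = C - t$ are the numbers of triangular and quadrilateral $2$-cells; hence $t + 2q = 6$, so $(t,q) \in \{(6,0),(4,1),(2,2),(0,3)\}$. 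A triangular $2$-cell cannot contain a component of $V^{(1)}_n$, since by Remark \ref{edge plus 3} such a component (which has at least one edge) would force at least four edges on the $2$-cell; so by Lemma \ref{centered or not} it is a centered Delaunay triangle. A quadrilateral $2$-cell is either a centered Delaunay quadrilateral or contains a one-edge component of $V^{(1)}_n$ whose frontier has the shape $T_{3,\mathbf 3}$ of Figure \ref{non-ctrd 5-edge}, and in that case it is crossed by a non-centered edge.

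Next I would set up the defect budget. Fix the embedded radius-$r_{\beta}$ disk $U$ at $x$ (embedded because $r_{\beta} \le r = i(\cals)$, and $r_{\beta} \le d_e/2$ for every Delaunay edge $e$, each being a geodesic loop at $x$ of length at least $2r \ge d_{\beta}$). Proposition \ref{2-cell defect} gives $\sum_Q D_{r_{\beta}}(Q) = \mathrm{area}(F) - \mathrm{area}(U) = 4\pi - 2\pi(\cosh r_{\beta} - 1) = 2\pi(3 - \cosh r_{\beta})$; and applying Proposition \ref{2-cell defect} instead to the surface $F_{\beta}$ of Example \ref{sharp DeBlois}, whose Delaunay tessellation consists of four regular $d_{\beta}$-triangles and one regular $d_{\beta}$-quadrilateral by Corollary \ref{sharp DeBlois Delaunay}, yields $2\pi(3 - \cosh r_{\beta}) = 4 D_{r_{\beta}}(P_3(d_{\beta})) + D_{r_{\beta}}(P_4(d_{\beta}))$. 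Since every edge of $P_c$ has length at least $d_{\beta}$, \Monotonicity\ bounds the defect of a centered triangular $2$-cell below by $D_{r_{\beta}}(P_3(d_{\beta}))$ and of a centered quadrilateral below by $D_{r_{\beta}}(P_4(d_{\beta}))$, while Lemmas \ref{admissible Voronoi} and \ref{d_1 non-centered quad} bound the defect of a non-centered quadrilateral $2$-cell strictly below by $D_{r_{\beta}}(P_4(d_{\beta}))$. Using the numerical inequality $D_{r_{\beta}}(P_4(d_{\beta})) > 2 D_{r_{\beta}}(P_3(d_{\beta}))$ (computable via \CenteredDefectBound; compare Table \ref{symm ctrd areas}), types $(2,2)$ and $(0,3)$ are excluded, since they would give $\sum_Q D_{r_{\beta}}(Q) \ge 2 D_{r_{\beta}}(P_3(d_{\beta})) + 2 D_{r_{\beta}}(P_4(d_{\beta}))$ and $\ge 3 D_{r_{\beta}}(P_4(d_{\beta}))$ respectively, each exceeding $4 D_{r_{\beta}}(P_3(d_{\beta})) + D_{r_{\beta}}(P_4(d_{\beta})) = \sum_Q D_{r_{\beta}}(Q)$. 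The same estimate excludes a non-centered quadrilateral $2$-cell in type $(4,1)$, where the strict bound of Lemma \ref{d_1 non-centered quad} would give $\sum_Q D_{r_{\beta}}(Q) > 4 D_{r_{\beta}}(P_3(d_{\beta})) + D_{r_{\beta}}(P_4(d_{\beta})) = \sum_Q D_{r_{\beta}}(Q)$. Hence no $2$-cell of $P_c$ is non-centered, so $V^{(1)}_n = \emptyset$, every Voronoi edge and therefore every Delaunay edge is centered, and $P_c$ equals the Delaunay tessellation $P$; this proves the first assertion.

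It remains to treat the surviving types $(6,0)$ and $(4,1)$. Type $(6,0)$ is a triangulation (six triangles, nine edges, one vertex). In type $(4,1)$ the single quadrilateral is a centered Delaunay quadrilateral by the previous step, so the chain $2\pi(3 - \cosh r_{\beta}) = \sum_Q D_{r_{\beta}}(Q) \ge 4 D_{r_{\beta}}(P_3(d_{\beta})) + D_{r_{\beta}}(P_4(d_{\beta})) = 2\pi(3 - \cosh r_{\beta})$ is an equality throughout, and the strict monotonicity in \Monotonicity\ then forces each of the four triangles and the quadrilateral to be the regular cyclic polygon with all sides of length $d_{\beta}$. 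In particular every edge has length $d_{\beta}$; but by Lemma \ref{shortest edge centered} the shortest geodesic loop at $x$, of length $2r$, is a Delaunay edge, and all Delaunay edges have length at least $2r$, so $2r = d_{\beta}$ and $r = r_{\beta}$. Therefore if the Delaunay tessellation is not a triangulation then $r = r_{\beta}$, every edge has length $d_{\beta}$, and there is exactly one quadrilateral $2$-cell; conversely this configuration is realized on $F_{\beta}$ of Example \ref{sharp DeBlois} by Corollary \ref{sharp DeBlois Delaunay}. The delicate point, and the one I expect to be the main obstacle, is the equality analysis in type $(4,1)$: the margin there is exactly zero, so one cannot afford the rational approximations $r_1, d_1$ used for the other types but must argue at the exact radius $r_{\beta}$, using both the identity $2\pi(3-\cosh r_{\beta}) = 4 D_{r_{\beta}}(P_3(d_{\beta})) + D_{r_{\beta}}(P_4(d_{\beta}))$ and the $d_{\beta}$-sharp form of Lemma \ref{d_1 non-centered quad} to rule out the non-centered quadrilateral.
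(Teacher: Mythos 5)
Your proposal is correct and follows essentially the same route as the paper's proof: Corollary \ref{no non-centered five} to cap centered dual $2$-cells at four edges, the defect budget from Proposition \ref{2-cell defect} together with the exact identity $4\,D_{r_{\beta}}(P_3(d_{\beta}))+D_{r_{\beta}}(P_4(d_{\beta}))=4\pi-2\pi(\cosh r_{\beta}-1)$ coming from $F_{\beta}$, \Monotonicity\ for centered cells, and Lemma \ref{d_1 non-centered quad} to strictly exclude a non-centered quadrilateral. The only deviations are cosmetic: you enumerate the $(t,q)$ types up front and rule out two-or-more quadrilaterals via $D_{r_{\beta}}(P_4(d_{\beta}))>2D_{r_{\beta}}(P_3(d_{\beta}))$ rather than the paper's decimal budget comparison, and you spell out the equality analysis (all edges $d_{\beta}$, hence $r=r_{\beta}$) slightly more explicitly than the paper does.
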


\begin{proof} Suppose that a genus-two surface $F$ has injectivity radius at least $r_{\beta}$ at a point $x$.  Since $2r_{\beta} = d_{\beta} > d_1$, Corollary \ref{no non-centered five} implies that no $2$-cell of the centered dual tessellation determined by $\{x\}$ has more than four edges.  

A hyperbolic disk $U$ with radius $r_{\beta}$ is embedded in $F$ centered at $x$, and since $r_{\beta} > r_1$, the complementary area to $U$ in $F$ is less than $1.07$ (see the first paragraph of the proof of Corollary \ref{no non-centered five}).  Each edge of the centered dual decomposition has length at least $d_{\beta}$, so if $P$ is a centered quadrilateral $2$-cell of this decomposition then $P \geq P_4(d_{\beta})$ in the sense of \Order, and by \Monotonicity\  $D_{r_{\beta}}(P) \geq D_{r_{\beta}}(P_4(d_{\beta}))$. 

For a quadrilateral $2$-cell $P$ containing a component $T$ of $V^{(1)}_n$, where $V$ is the Voronoi tessellation determined by $\{x\}$, an argument like that for Remark \ref{edge plus 3} implies that $T$ and its frontier $\calf$ are as described in Lemma \ref{d_1 non-centered quad}.  The conclusion there and Lemma \ref{admissible Voronoi} thus imply that $D_{r_{\beta}}(P) > D_{r_{\beta}}(P_4(d_{\beta}))$ in this case.  

If $r_2$ satisfies $\cosh r_2 = 2.8299$ then $r_2 > r_{\beta}$, so since $d_{\beta} > d_1$ we have $D_{r_{\beta}}(P_4(d_{\beta})) > D_{r_2}(P_4(d_1)) > 0.56573$.  Thus by the above the centered dual tessellation has at most one quadrilateral $2$-cell, since $0.56573\cdot 2 = 1.13146 > 1.07$ and by Proposition \ref{2-cell defect}, $D_{r_{\beta}}(P)$ is the area of $P-(P\cap U)$ for each $2$-cell $P$.  

An Euler characteristic calculation now reveals that the centered dual tessellation of $F$ determined by $\{x\}$ consists of either six triangles or a quadrilateral and four triangles.  Recall that the tessellated surface $F_{\beta}$ of Example \ref{sharp DeBlois} has the latter combinatorics and all edges of length $d_{\beta}$.  By Corollary \ref{sharp DeBlois Delaunay} this is the Delaunay tessellation of $F_{\beta}$ determined by $\{x_{\beta}\}$, and since each polygon is centered by construction, it is also the centered dual tessellation.  Since $F_{\beta}$ has area $4\pi$, Proposition \ref{2-cell defect} thus gives:
$$  D_{r_{\beta}}(P_4(d_{\beta})) + 4\cdot D_{r_{\beta}}(P_3(d_{\beta})) = 4\pi - 2\pi(\cosh r_{\beta}-1) $$
If the centered dual tessellation of $F$ determined by $\{x\}$ has a quadrilateral $2$-cell $P$, let $T_1$, $T_2$, $T_3$, and $T_4$ be its triangular $2$-cells.  Remark \ref{edge plus 3} implies that no triangular $2$-cell of the centered dual decomposition contains a component of $V^{(1)}_n$; hence each is a centered Delaunay polygon.  Therefore since each edge of the centered dual tessellation has length at least $d_{\beta}$, $D_{r_{\beta}}(T_i) \geq D_{r_{\beta}}(P_3(d_{\beta}))$ for each $i$ by \Monotonicity.  By the above $D_{r_{\beta}}(P) \geq D_{r_{\beta}}(P_4(d_{\beta}))$, with strict inequality if $P$ contains a component of $V^{(1)}_n$.  In the latter case:\begin{align}\label{too big}
 D_{r_{\beta}}(P) + \sum_{i=1}^4 D_{r_{\beta}}(T_i) > 4\pi - 2\pi\cdot(\cosh r_{\beta} - 1) \end{align}
But this contradicts Proposition \ref{2-cell defect} and the fact that $F$ has area $4\pi$.

It follows that each $2$-cell of the centered dual decomposition determined by $\{x\}$ is centered, and hence that this is also the Delaunay tessellation.  If $F$ has injectivity radius greater than $r_{\beta}$ at $x$ then each edge of the Delaunay tessellation has length greater than $d_{\beta}$.  Thus if the Delaunay tessellation had quadrilateral component $P$ in this case, we would again have the inequality (\ref{too big}).  This is again a contradiction, and the theorem follows.\end{proof}

\section{Geometric consequences}\label{consequences}

This section describes the geometric consequences of Theorem \ref{main} for hyperbolic surfaces of genus $2$ that have large injectivity radius at some point.  It will be convenient to work with a model space that we may regard as parametrizing the set of pairs $(F,x)$, for $F$ in $\calm_2$ and $x\in F$, using the Delaunay tessellation determined by $\{x\}$.

\begin{definition}\label{to M_2}  We will say that an \textit{edge-pairing} is a fixed point-free involution $\iota\in S_{18}$.  For an edge-pairing $\iota$, define 
$$\calp_{\iota} = \left\{(d_0,\hdots,d_{17})\in(\widetilde{\calAC}_3)^6\,|\, d_i = d_{\iota(i)}\ \forall\ i,\ \sum_{j=0}^5 D_0(d_{3j},d_{3j+1},d_{3j+2}) = 4\pi\right\}$$
For $(d_0,\hdots,d_{17})\in\calp_{\iota}$ and $j\in\{0,\hdots,5\}$, let $T_j\subset\mathbb{H}^2$ be represented by $(d_{3j},d_{3j+1},d_{3j+2})$ in the sense of \CenteredSpace, with sides $\gamma_i$ such that $\ell(\gamma_i) = d_i$ for $i\in\{3j,3j+1,3j+2\}$.  Let $F_{\iota}(d_0,\hdots,d_{17})$ be obtained from $\bigsqcup_{j=0}^5 T_j$ by isometrically identifying $\gamma_i$ with $\gamma_{\iota(i)}$ for each $i$ so that $x_{i-1}\to x_{\iota(i)}$ and $x_i\to x_{\iota(i)-1}$ for each $i\in\{3j,3j+1,3j+2\}$, and induce a metric on $F_{\iota}(d_0,\hdots,d_{17})$ from those on the $T_j$ (in the sense of,say, \cite[Ch.~I.7]{BrH}).\end{definition}

Let us make a few initial observations:\begin{itemize}
\item  By the definition of $d_{\alpha}$ in Example \ref{sharp Boroczky}, $(d_{\alpha},\hdots,d_{\alpha})\in\calp_{\iota}$ for each edge-pairing $\iota$.
\item  The cellular isomorphism type of $F_{\iota}(d_0,\hdots,d_{17})$ does not depend on the choice of $(d_0,\hdots,d_{17})\in\calp_{\iota}$.  We say $\iota$ is \textit{one-vertex} if (say) $F_{\iota}(d_{\alpha},\hdots,d_{\alpha})$ has one vertex.
\item  For each one-vertex edge-pairing $\iota$ and $(d_0,\hdots,d_{17})\in\calp_{\iota}$, $F_{\iota}(d_0,\hdots,d_{17})$ is isometric to a closed, orientable hyperbolic surface of genus $2$.\end{itemize}

The final observation above follows from the fact that the triangles $T_j$ have total angle sum $2\pi$, so the single vertex of $F_{\iota}(d_0,\hdots,d_{17})$ has a neighborhood isometric to one in $\mathbb{H}^2$.  We may thus take $(d_0,\hdots,d_{17})\mapsto F_{\iota}(d_0,\hdots,d_{17})$ as defining a map $F_{\iota}\co\calp_{\iota}\to\calm_2$.  To show continuity of this map we will lift it to $\calt_2$.

\begin{definition}\label{to T_2}  For a one-vertex edge-pairing $\iota$, fix a maximal subtree $S$ of the one-skeleton of the abstract dual to $F_{\iota} = F_{\iota}(d_{\alpha},\hdots,d_{\alpha})$, and let $[\gamma_{i_0}],[\gamma_{i_1}],[\gamma_{i_2}],[\gamma_{i_3}]$ be the edges of $F_{\iota}^{(1)}$ not dual to edges of $S$.  Further fix $v\in\mathbb{H}^2$ and a geodesic ray $\delta$ from $v$. 

For $(d_0,\hdots,d_{17})\in\calp_{\iota}$ embed $T_0$ in $\mathbb{H}^2$ with its center at $v$ and $x_0 = \gamma_0\cap\gamma_1\in\delta$, and for $j>0$ embed $T_j$ in $\mathbb{H}^2$ so that $T_j\cap T_{j'} = \gamma_i$ for each $i\in\{3j,3j+1,3j+2\}$ such that $[\gamma_i]\subset F_{\iota}(d_0,\hdots,d_{17})$ is dual to an edge of $S$, where $\iota(i)\in\{3j',3j'+1,3j'+2\}$.  Let $O = \bigcup_{j=0}^5 T_j\subset\mathbb{H}^2$.  For $j\in\{0,1,2,3\}$ let $f_j\in\mathrm{Isom}^+(\mathbb{H}^2)$ satisfy $f(\gamma_{\iota(i_j)})=\gamma_{i_j}$ and $f(x_{\iota(i_j)}) = x_{i_j-1}$, and let $\widetilde{F}_{\iota}(d_0,\hdots,d_{17}) = (f_0,f_1,f_2,f_3)\subset(\mathrm{Isom}^+(\mathbb{H}^2))^4$.
\end{definition}

That an embedding of $T_0$ is prescribed by the choice of $v$ and $\delta$, given $(d_0,d_1,d_2)\in\widetilde{\calAC}_3$, follows from \PolyConvergence.  Since $S$ is a tree the embedding of $T_0$ and the criteria of Definition \ref{to T_2} determine embeddings of the other $T_j$.  Then $O = \bigcup_{j=0}^5 T_j$ is an octagon with edge set $\{\gamma_{i_j},\gamma_{\iota(i_j)}\}_{j=0}^3$, and the Poincar\`e polygon theorem implies that $\langle f_j\rangle_{j=0}^3$ is a discrete subgroup of $\mathrm{Isom}^+(\mathbb{H}^2)$ with fundamental domain $O$ and quotient isometric to $F_{\iota}(d_0,\hdots,d_{17})$.  

Fixing generators $g_0,g_1,g_2,g_3$ for $\pi_1 F$, an embedding $\mathrm{Hom}(\pi_1 F,\mathrm{Isom}^+(\mathbb{H}^2))\hookrightarrow (\mathrm{Isom}^+(\mathbb{H}^2))^4$ is given by $\rho\mapsto (\rho(g_0),\rho(g_1),\rho(g_2),\rho(g_3))$.  $\calt_2$ inherits the \textit{algebraic topology} as the subspace topology from this embedding, when it is identified with the set of discrete, faithful representations in $\mathrm{Hom}(\pi_1 F,\mathrm{Isom}^+(\mathbb{H}^2))$ (see \cite[\S 10.3]{FaMa}).  The paragraph above implies that $\widetilde{F}_{\iota}(d_0,\hdots,d_{17})\in\widetilde{\calt}_2$.  Therefore $(d_0,\hdots,d_{17})\mapsto \widetilde{F}_{\iota}(d_0,\hdots,d_{17})$ determines a map $\widetilde{F}_{\iota}\co\calp_{\iota}\to\calt_2$.  Since $F_{\iota}(d_0,\hdots,d_{17})$ is the quotient of $\mathbb{H}^2$ by $\langle f_j\rangle_{j=0}^3$, $\widetilde{F}_{\iota}$ lifts $F_{\iota}$.

\begin{lemma}\label{continuous model}  For each one-vertex edge-pairing $\iota$, $\widetilde{F}_{\iota}\co\calp_{\iota}\to\calt_2$ is continuous, where $\calt_2$ has the algebraic topology.\end{lemma}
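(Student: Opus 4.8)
The plan is to trace through the explicit construction in Definition \ref{to T_2} and observe that every step is continuous in $(d_0,\hdots,d_{17})\in\calp_\iota$. First I would recall that $\widetilde F_\iota$ is defined by building, for each point of $\calp_\iota$, an octagon $O=\bigcup_{j=0}^5 T_j$ in $\mathbb{H}^2$ from side-length data, then reading off the four edge-pairing isometries $f_0,f_1,f_2,f_3$; continuity of $\widetilde F_\iota$ means continuity of the map $(d_0,\hdots,d_{17})\mapsto (f_0,f_1,f_2,f_3)$ into $(\mathrm{Isom}^+(\mathbb{H}^2))^4$, which then gives continuity into $\calt_2$ with the algebraic topology because the latter is a subspace topology under the embedding $\mathrm{Hom}(\pi_1 F,\mathrm{Isom}^+(\mathbb{H}^2))\hookrightarrow(\mathrm{Isom}^+(\mathbb{H}^2))^4$.

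The core input is \PolyConvergence, which I would invoke to say that the embedding of $T_0$ in $\mathbb{H}^2$ with center $v$ and $x_0\in\delta$ depends continuously on $(d_0,d_1,d_2)\in\widetilde{\calAC}_3$ — that is, the positions of the three vertices $x_0,x_1,x_2$ of $T_0$ vary continuously. Next I would argue inductively along the maximal subtree $S$: each subsequent triangle $T_j$ is placed by an isometry of $\mathbb{H}^2$ uniquely determined by the requirement that it glue to an already-placed triangle $T_{j'}$ along the prescribed edge $\gamma_i$ with the prescribed vertex correspondence. The gluing isometry is an explicit function of the vertex positions of $T_{j'}$ and of the side lengths $(d_{3j},d_{3j+1},d_{3j+2})$ — concretely, it is the orientation-preserving isometry sending the two endpoints of $\gamma_{\iota(i)}$ (computed from $T_{j'}$) to the two endpoints of $\gamma_i$ in $T_j$ with the correct orientation — and the formation of such an isometry from two ordered pairs of points at equal distance is continuous. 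So by induction over the (finitely many) edges of $S$, all vertex positions $\{x_k\}$ of $O$ vary continuously with $(d_0,\hdots,d_{17})$. Finally, for $j\in\{0,1,2,3\}$ the isometry $f_j$ is the unique element of $\mathrm{Isom}^+(\mathbb{H}^2)$ carrying $\gamma_{\iota(i_j)}$ to $\gamma_{i_j}$ with $x_{\iota(i_j)}\mapsto x_{i_j-1}$; again this is a continuous function of the endpoints of those two edges, all of which have been shown to depend continuously on the data. Composing, $(d_0,\hdots,d_{17})\mapsto(f_0,f_1,f_2,f_3)$ is continuous.

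I do not expect a serious obstacle here — the statement is essentially a bookkeeping verification that a finite chain of continuous geometric operations (place a triangle from its side lengths; reflect/glue along an edge; extract a side-pairing isometry) composes to something continuous. The one point that needs a little care is making precise that "the unique orientation-preserving isometry taking an ordered pair of points $(p,q)$ to another ordered pair $(p',q')$ with $d(p,q)=d(p',q')$" depends continuously (indeed smoothly) on $(p,q,p',q')$; this is standard, e.g.\ by writing the isometry in coordinates on $\mathrm{PSL}_2(\mathbb{R})$, and I would state it as such rather than belabor it. A second minor point is that one must know a priori that the octagon $O$ closes up combinatorially the same way for all points of $\calp_\iota$ — but this is exactly the second bulleted observation after Definition \ref{to M_2}, that the cellular isomorphism type of $F_\iota(d_0,\hdots,d_{17})$ is independent of the chosen tuple, so the inductive placement along $S$ is well-defined throughout $\calp_\iota$. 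With those two remarks in hand the proof is just "continuity is preserved under composition," applied finitely many times.
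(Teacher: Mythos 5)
Your proposal is correct and follows essentially the same route as the paper's proof: induct outward along the maximal subtree $S$, using \PolyConvergence\ for the continuity of each triangle's vertices in its standard embedding and the fact that an orientation-preserving isometry of $\mathbb{H}^2$ is determined by, and varies continuously with, the images of two distinct points, applied both to the gluing moves and to extracting the $f_j$. No substantive difference from the paper's argument.
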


\begin{proof}  It is well known that an orientation-preserving isometry of $\mathbb{H}^2$ is determined by its values on distinct $x,y\in\mathbb{H}^2$, and that the isometry so-determined varies continuously with the destinations of $x$ and $y$.  The following claim thus implies the lemma: the vertices of the octagon $O$ from Definition \ref{to T_2} vary continuously with $(d_0,\hdots,d_{17})\in\calp_{\iota}$.  

We will show that the vertices of each $T_j$, embedded as prescribed in Definition \ref{to T_2}, vary continuously with $(d_0,\hdots,d_{17})$.  We use induction, outward on the tree $S$ from its vertex corresponding to $T_0$.  The base case follows directly from \PolyConvergence, which implies that the vertices of $T_0$ vary continuously with $(d_0,d_1,d_2)$.

At least one edge of $T_0$, say $\gamma_0$, is dual to an edge of $S$.  Then for $j$ such that $T_j$ contains $\gamma_{\iota(0)}$, we embed $T_j$ in $\mathbb{H}^2$ as prescribed in Definition \ref{to T_2} in two steps: first embed $T_j$ with center $v$, $x_{\iota(0)}\in\delta$, and $\gamma_{\iota(0)}\subset\calh$, then move it via an isometry so that $\gamma_{\iota(0)}=\gamma_0$ and $x_{\iota(0)} = x_1$.  By \PolyConvergence, the vertices of the initial embedding vary continuously with $(d_0,\hdots,d_{17})$, so by the base case and the observation at the beginning of this proof, the vertices of the second do as well.  The general inductive step is no more complicated.\end{proof}

Since the moduli space $\calm_2$ inherits its usual topology as the quotient of a discontinuous group action on $\calt_2$ (again see \cite[\S 10.3]{FaMa}), Lemma \ref{continuous model} implies that $F_{\iota}\co \calp_{\iota}\to\calm_2$ is  continuous for each one-vertex edge-pairing $\iota$.  

We now fix attention on $\calm_2^{(r_{\beta})}$, the set of $F\in\calm_2$ with injectivity radius at least $r_{\beta}$ at some $x\in F$, where $r_{\beta}$ is as defined in Example \ref{sharp DeBlois}.  Such pairs $(F,x)$ fall under the purview of Theorem \ref{main}.  We will begin by relating $\calm_2^{(r_{\beta})}$ to the model spaces $\calp_{\iota}$.

\begin{lemma}\label{edge upper bound}  For $r\in[r_{\beta},r_{\alpha}]$, where $r_{\alpha}$ is as in Example \ref{sharp Boroczky}, let $d_r = 2r$.  There is a unique $d_1(r)\in[d_r,b_0(d_r,d_r)]$, where $b_0$ is as in \BCn, so that:
$$ 4\pi = 4\cdot D_{0,3}(d_1(r)) + 2\cdot D_0(d_1(r),d_r,d_r) $$
This satisfies $d_1(r_{\beta}) = b_0(d_{\beta},d_{\beta})$, $d_1(r_{\alpha})=d_{\alpha}$, and $d_1(r)\in(d_r,b_0(d_r,d_r))$ for $r\in (r_{\beta},r_{\alpha})$.  For a one-vertex edge-pairing $\iota$ and $(d_0,\hdots,d_{17})\in\calp_{\iota}\cap (\widetilde{\calc}_3\cup\widetilde{\calBC}_3)^6$ such that $d_i\geq d_r$ for each $i$, $d_i \leq d_1(r)$ for each $i$.\end{lemma}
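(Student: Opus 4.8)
The plan is to first establish the existence, uniqueness, and boundary behavior of $d_1(r)$, then to deduce the final "$d_i \leq d_1(r)$" claim by a monotonicity/area argument. For the first part, fix $r\in[r_\beta,r_\alpha]$ and set $d_r = 2r$. Consider the function
$$ g(d) = 4\cdot D_{0,3}(d) + 2\cdot D_0(d,d_r,d_r) - 4\pi $$
on the interval $[d_r, b_0(d_r,d_r)]$. Here $(d,d_r,d_r)$ lies in $\widetilde{\calc}_3$ for $d = d_r$ (since $(d_r,d_r,d_r)\in\widetilde{\calc}_3$ by \CenteredDefectBound\ when $r\leq r_\alpha$, as $d_r \leq d_\alpha$) and in $\widetilde{\calBC}_3$ for $d = b_0(d_r,d_r)$ by \TriCenter, so $D_0(d,d_r,d_r)$ makes sense throughout. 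I would check that $g$ is strictly increasing in $d$: the derivative $\partial_d D_{0,3}(d)$ is positive by \DefectDerivative\ (together with the computation in Example \ref{sharp Boroczky}), and $\partial_d D_0(d,d_r,d_r) > 0$ for the same reason, using that $d$ is (weakly) the largest entry on this interval and $(d,d_r,d_r)\in\widetilde\calc_3\cup\widetilde\calBC_3$ so that the radius exceeds $d/2$. Strict monotonicity gives uniqueness. For existence I would evaluate $g$ at the endpoints: at $d = d_\alpha$ with $r = r_\alpha$ one has $g(d_\alpha) = 6 D_{0,3}(d_\alpha) - 4\pi = 0$ by the defining equation of Example \ref{sharp Boroczky} (noting $D_0(d_\alpha,d_\alpha,d_\alpha) = D_{0,3}(d_\alpha)$ and that $b_0(d_\alpha,d_\alpha)$ is an endpoint case degenerating appropriately), and at $d = b_0(d_\beta,d_\beta)$ with $r = r_\beta$ one has $g = 0$ by the rewriting of the defining equation for $d_\beta$ in Example \ref{dull DeBlois} (the identity $4\pi = 4 D_{0,3}(d_\beta) + 2 D_0(b_\beta,d_\beta,d_\beta)$ with $b_\beta = b_0(d_\beta,d_\beta)$). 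Since $d_r\leq d_\beta$ when $r\leq r_\beta$... — more carefully, I would argue that $g(d_r) \leq 0 \leq g(b_0(d_r,d_r))$ for all $r$ in the range by comparing to these two extreme cases and using monotonicity of the relevant defects in $d_r$ (via \Monotonicity\ and \Order), which forces a unique zero $d_1(r)$ in the interval. The boundary values $d_1(r_\beta) = b_0(d_\beta,d_\beta)$ and $d_1(r_\alpha) = d_\alpha$ then fall out, and the strict interior containment for $r\in(r_\beta,r_\alpha)$ follows because $g$ is strictly monotone and the two sign conditions are strict there.

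For the final sentence, let $\iota$ be a one-vertex edge-pairing and $(d_0,\dots,d_{17})\in\calp_\iota\cap(\widetilde\calc_3\cup\widetilde\calBC_3)^6$ with all $d_i\geq d_r$. I would argue by contradiction: suppose some $d_i > d_1(r)$, and among all indices pick one, say $d_{i_0}$, with $d_{i_0}$ maximal. Consider the triangle $T_{j_0}$ containing edge $\gamma_{i_0}$; its third-coordinate data is $(d_{3j_0},d_{3j_0+1},d_{3j_0+2})$ with all entries $\geq d_r$ and at least one entry ($d_{i_0}$) strictly exceeding $d_1(r)$. Now estimate $\sum_{j=0}^5 D_0(d_{3j},d_{3j+1},d_{3j+2})$ from below: every triangle has all three side lengths $\geq d_r$ and lies in $\widetilde\calc_3\cup\widetilde\calBC_3$, so by \Monotonicity\ and \Order\ each term is $\geq D_0(d_r,d_r,d_r) = D_{0,3}(d_r)$, and the distinguished triangle $T_{j_0}$ satisfies $D_0(\cdot) > D_0(d_r,d_r,d_1(r)) \geq D_0(d_1(r),d_r,d_r)$ with a strictly larger entry, hence a strictly larger defect. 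The key observation is that the configuration $(d_1(r),\dots,d_1(r))$ or the mixed configuration defining $d_1(r)$ is exactly the threshold where the area sum equals $4\pi$, so any strict increase in a side length of one triangle (keeping all others $\geq d_r$) pushes the sum strictly above $4\pi$, contradicting the constraint $\sum_j D_0(\cdots) = 4\pi$ in the definition of $\calp_\iota$. Making this comparison precise is the main point: I would show that the minimum of $\sum_j D_0(\cdots)$ subject to "all $d_i\geq d_r$, all triangles centered-or-boundary, at least one $d_i = d$" is a strictly increasing function of $d$ on $[d_r, \infty)$, achieved at the symmetric-up-to-one-long-edge configuration, and that its value at $d = d_1(r)$ is already $4\pi$. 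This uses \Monotonicity\ repeatedly together with the half-symmetric decomposition \HalfSymmQuadDefect\ style bookkeeping to locate the minimizing configuration.

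The main obstacle I expect is the last step: establishing that $d_1(r)$, defined by an equation involving only \emph{two} distinct side lengths ($d_1(r)$ and $d_r$) in a \emph{two}-triangle sub-pattern, genuinely serves as an upper bound for side lengths in an arbitrary six-triangle one-vertex pattern. The heart of this is a convexity/monotonicity fact about how the total defect $\sum_j D_0(\cdots)$ responds to enlarging a single edge while holding the "all lengths $\geq d_r$, all centered" constraint — one must be careful that enlarging $d_{i_0}$ does not somehow allow the \emph{other} two edges of $T_{j_0}$, or edges of the neighboring triangle sharing $\gamma_{i_0}$, to shrink below $d_r$ and thereby decrease their defect contributions. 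But that cannot happen precisely because the hypothesis pins every $d_i\geq d_r$ from the start, so the comparison is genuinely term-by-term: each of the five "other" triangles contributes at least $D_{0,3}(d_r)$, and $T_{j_0}$ contributes strictly more than it would with all three sides between $d_r$ and $d_1(r)$. I would also need the elementary fact, following an Euler characteristic count (cf.~Remark \ref{edge plus 3}), that a one-vertex edge-pairing pattern really does consist of six triangles glued along eighteen edge-identifications so that the defect-sum constraint is as written; this is already built into Definition \ref{to M_2}, so it can be cited rather than reproved.
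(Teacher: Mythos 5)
Your outline follows the paper's strategy, but the final step as you describe it does not close. Your term-by-term bound distinguishes only \emph{one} triangle containing the maximal edge: five triangles contribute at least $D_{0,3}(d_r)$ and $T_{j_0}$ contributes more than $D_0(d_1(r),d_r,d_r)$, so your lower bound on $\sum_j D_0(T_j)$ is essentially $5\cdot D_{0,3}(d_r)+D_0(d,d_r,d_r)$. Comparing with $4\pi = 4\cdot D_{0,3}(d_r)+2\cdot D_0(d_1(r),d_r,d_r)$ (this, with first term $D_{0,3}(d_r)$, is the equation the paper actually works with and the one consistent with the stated boundary values), your bound falls short of $4\pi$ by $D_0(d_1(r),d_r,d_r)-D_{0,3}(d_r)$, which is strictly positive whenever $d_1(r)>d_r$, i.e.\ for every $r<r_{\alpha}$; so for $d$ slightly larger than $d_1(r)$ you get no contradiction. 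The missing observation is the factor of two: if $d_{i_0}=\max_i d_i$ then $d_{\iota(i_0)}=d_{i_0}$ by the definition of $\calp_{\iota}$, and $i_0$ and $\iota(i_0)$ cannot lie in the same triple $\{3j,3j+1,3j+2\}$ because $\iota$ is one-vertex, so \emph{two distinct} triangles have a side of length $d$. Then \Monotonicity\ gives $\sum_j D_0(T_j)\geq 4\cdot D_{0,3}(d_r)+2\cdot D_0(d,d_r,d_r)$, which does exceed $4\pi$ once $d>d_1(r)$. Your worry about neighboring edges shrinking is a non-issue (all $d_i\geq d_r$ by hypothesis), but without the doubling your claim that the minimizing ``one-long-edge'' configuration already has defect sum $4\pi$ at $d=d_1(r)$ is false.

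A second, fixable inconsistency sits in the first half. You take the displayed equation literally and set $g(d)=4\cdot D_{0,3}(d)+2\cdot D_0(d,d_r,d_r)-4\pi$, but then verify the endpoint at $r=r_{\beta}$ using the identity $4\pi=4\cdot D_{0,3}(d_{\beta})+2\cdot D_0(b_0(d_{\beta},d_{\beta}),d_{\beta},d_{\beta})$. With your $g$, the value at $d=b_0(d_{\beta},d_{\beta})$ is $4\bigl[D_{0,3}(b_0(d_{\beta},d_{\beta}))-D_{0,3}(d_{\beta})\bigr]>0$, not $0$, so the root of your $g$ at $r=r_{\beta}$ is strictly smaller than $b_0(d_{\beta},d_{\beta})$ and the claimed boundary behavior (and the use of $d_1(r)$ in Lemma \ref{no local max}, Lemma \ref{yes local max}, and Theorem \ref{inj to cov}) would fail. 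The first term should be $4\cdot D_{0,3}(d_r)$, constant in the unknown; with that correction your monotonicity-plus-endpoint argument for existence, uniqueness, and the values at $r_{\beta}$ and $r_{\alpha}$ is essentially the paper's.
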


%If a closed, orientable hyperbolic surface $F$ of genus $2$ has injectivity radius $r$ at $x\in F$, then $\ell(\gamma)\leq d_1(r)$ for each edge $\gamma$ of the Delaunay tessellation of $F$ determined by $\{x\}$.\end{lemma}

\begin{proof}  For each $d\in [d_r,b_0(d_r,d_r)]$, \VerticalRay\ implies that $(d,d_r,d_r)\in\widetilde{\calc}_3\cup\widetilde{\calBC}_3$.   \DefectDerivative\ thus implies that $D_0(d,d_r,d_r)$ increases with $d$ on this interval, so:
$$ 6\cdot D_{0,3}(d_r) \leq 4\cdot D_{0,3}(d_r) + 2\cdot D_0(d,d_r,d_r) \leq 4\cdot D_{0,3}(d_r) + 2\cdot D_0(b_0(d_r,d_r),d_r,d_r) $$
Since $d_r\leq d_{\alpha}$, \DefectDerivative\ and the construction of $d_{\alpha}$ (see Example \ref{sharp Boroczky}) imply that $6\cdot D_{0,3}(d_r) \leq 4\pi$, with equality if and only if $d_r = d_{\alpha}$.  As we observed in Example \ref{dull DeBlois}, for $d_{\beta}$ as defined in Example \ref{sharp DeBlois}, \HalfSymmQuadDefect\ implies that $4\cdot D_{0,3}(d_{\beta}) + 2\cdot D_0(b_{\beta},d_{\beta},d_{\beta})=4\pi$, where $b_{\beta} = b_0(d_{\beta},d_{\beta})$.  Therefore since $d_{\beta}\leq d_r$, \DefectDerivative\ gives:
$$ 4\pi = 4\cdot D_{0,3}(d_{\beta}) + 2\cdot D_0(b_{\beta},d_{\beta},d_{\beta}) \leq 4\cdot D_{0,3}(d_r) + 2\cdot D_0(b_0(d_r,d_r),d_r,d_r), $$
with equality if and only if $d_r = d_{\beta}$.  The continuity and monotonicity of $D_0$ imply that $d_1(r)$ exists and is unique for each $r$.  Furthermore, by the above $d_r < d_1(r) < b_0(d_r,d_r)$ unless $r = r_{\beta}$ or $r = r_{\alpha}$, and $d_1(r_{\beta}) = b_{\beta}$ and $d_1(r_{\alpha}) = d_{\alpha}$.

For a one-vertex edge-pairing $\iota$ and $(d_0,\hdots,d_{17})\in\calp_{\iota}\cap (\widetilde{\calc}_3\cup\widetilde{\calBC}_3)^6$ such that $d_i\geq d_r$ for each $i$, \Monotonicity\ implies that $D_0(d_{3j},d_{3j+1},d_{3j+2}) \geq D_{0,3}(d_r)$ for each $j$ between $0$ and $5$, where $D_{0,3}$ is as in \CenteredDefectBound.  If $d = \max\{d_i\}_{i=0}^{17}$ then another application of \Monotonicity\ gives:
$$ \sum_{i=1}^6 D_0(T_i) \geq 4\cdot D_{0,3}(d_r) + 2\cdot D_0(d,d_r,d_r) $$
To justify the ``$2$'' above, note that if $d_i = d$ then $d_{\iota(i)} = d$ as well, and there is no $j$ such that $i,\iota(i)\in\{3j,3j+1,3j+2\}$ since $\iota$ is one-vertex.  If $d > d_1(r)$, then by construction (and \Monotonicity) it would follow that $\sum_{i=1}^6 D_0(T_i) > 4\pi$, contradicting $(d_0,\hdots,d_{17})\in\calp_{\iota}$.\end{proof}

\begin{lemma}\label{compact model}  If $F\in\calm_2$ has injectivity radius $r\geq r_{\beta}$ at $x$, then there is a one-vertex edge-pairing $\iota$ and $(d_0,\hdots,d_{17})\in\calp_{\iota}\cap (\widetilde{\calc}_3\cup\widetilde{\calBC}_3)^6\cap [d_{\beta},b_0(d_{\alpha},d_{\alpha})]^{18}$, such that $F$ is isometric to $F_{\iota}(d_0,\hdots,d_{17})$, taking $x$ to its vertex.\end{lemma}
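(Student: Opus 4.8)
The plan is to read off the desired decomposition directly from the Delaunay tessellation furnished by Theorem \ref{main}. Given $F\in\calm_2$ with injectivity radius $r\ge r_\beta$ at $x$, Theorem \ref{main} asserts that the Delaunay tessellation $P$ of $F$ determined by $\{x\}$ has all edges centered, and that $P$ is a triangulation unless $r=r_\beta$ and every edge has length $d_\beta$, in which case $P$ has a single quadrilateral $2$-cell $Q$ and its remaining cells are triangles. Since the only vertex is $x$, an Euler characteristic count ($1-E+F=-2$, with $2E=3F$ in the triangulated case) gives exactly six triangular $2$-cells in the first case, and four triangles together with $Q$ in the second. In the second case I subdivide: $Q$ is a \emph{centered} cyclic quadrilateral with all sides $d_\beta$, hence is the quadrilateral of Example \ref{sharp DeBlois} by \UniqueParameters, so by \HalfSymmQuadDefect\ (as recalled in Example \ref{dull DeBlois}) it has a diagonal through its center splitting it into two triangles, each with cyclically ordered side length collection $(b_\beta,d_\beta,d_\beta)$ where $\cosh b_\beta=2\cosh d_\beta-1$; as this diagonal joins $x$ to $x$ it adds no new vertex, so again $F$ is a union of six triangles meeting at $x$, with nine edges. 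Each of the six triangles is cyclic (a centered Delaunay cell, or a half of $Q$ whose three vertices lie on the circumscribed circle of $Q$), so by \UniqueParameters\ its cyclically ordered side length collection represents a point of $\widetilde{\calAC}_3$; moreover this point lies in $\widetilde{\calc}_3$ in the first instance (the cell is centered by the centeredness of its edges, via Lemma \ref{vertex polygon centered}) and in $\widetilde{\calBC}_3$ in the second (by \TriCenter, since $\cosh b_\beta=2\cosh d_\beta-1$).

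Next I extract the edge-pairing and parameters. Enumerate the six triangles $T_0,\dots,T_5$, list the three sides of $T_j$ in cyclic order as $\gamma_{3j},\gamma_{3j+1},\gamma_{3j+2}$ with vertices labelled as in Definition \ref{to M_2}, and set $d_i=\ell(\gamma_i)$. Each of the nine edges of $F$ is the image of exactly two of the eighteen edge-slots, so the transposition of the two slots of each edge defines a fixed point-free involution $\iota\in S_{18}$ with $d_i=d_{\iota(i)}$ for all $i$. Orienting all $T_j$ coherently (possible since $F$ is orientable), each edge is glued with reversed induced boundary orientation, which is exactly the identification convention of Definition \ref{to M_2}; hence $F$ is isometric to $F_\iota(d_0,\dots,d_{17})$ with $x$ going to the vertex. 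Since the radius-$0$ defect of a cyclic polygon is its area and the $T_j$ tile $F$, which has area $4\pi$ by Gauss--Bonnet, $\sum_{j}D_0(d_{3j},d_{3j+1},d_{3j+2})=4\pi$; together with $(d_0,\dots,d_{17})\in(\widetilde{\calAC}_3)^6$ and $d_i=d_{\iota(i)}$ this gives $(d_0,\dots,d_{17})\in\calp_\iota\cap(\widetilde{\calc}_3\cup\widetilde{\calBC}_3)^6$. Finally $\iota$ is one-vertex: the cellular isomorphism type of $F_\iota$ depends only on $\iota$, and $F_\iota(d_0,\dots,d_{17})=F$ has one vertex.

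It remains to confirm $d_i\in[d_\beta,b_0(d_\alpha,d_\alpha)]$ for each $i$. For the lower bound, every edge of $P$ is a geodesic loop at $x$, hence of length $\ge 2r\ge 2r_\beta=d_\beta$ (using Definition \ref{inj rad}); and in the quadrilateral case the added diagonal has length $b_\beta$ with $\cosh b_\beta=2\cosh d_\beta-1>\cosh d_\beta$, so $b_\beta>d_\beta$; thus $d_i\ge d_\beta$ for all $i$. For the upper bound, Lemma \ref{Boroczky} gives $r\le r_\alpha$, so $d_r\doteq 2r\in[d_\beta,d_\alpha]$ and $d_i\ge d_r$ for all $i$ by the preceding sentence (in the quadrilateral case $d_r=d_\beta<b_\beta$). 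Since $\iota$ is one-vertex and $(d_0,\dots,d_{17})\in\calp_\iota\cap(\widetilde{\calc}_3\cup\widetilde{\calBC}_3)^6$ with all $d_i\ge d_r$, Lemma \ref{edge upper bound} yields $d_i\le d_1(r)$ for each $i$; and $d_1(r)\le b_0(d_\alpha,d_\alpha)$ because $d_1(r_\beta)=b_0(d_\beta,d_\beta)$, $d_1(r_\alpha)=d_\alpha$, and $d_1(r)<b_0(d_r,d_r)$ for $r\in(r_\beta,r_\alpha)$, using that $b_0$ is nondecreasing (\BCn) and $d_r\le d_\alpha<b_0(d_\alpha,d_\alpha)$. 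This gives $(d_0,\dots,d_{17})\in[d_\beta,b_0(d_\alpha,d_\alpha)]^{18}$ and completes the proof.

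I expect the main obstacle to be the bookkeeping of the second paragraph: verifying that the abstract gluing data $F_\iota(d_0,\dots,d_{17})$ of Definition \ref{to M_2} --- including the cyclic ordering of the sides of each $T_j$ and the precise orientation convention $x_{i-1}\to x_{\iota(i)}$, $x_i\to x_{\iota(i)-1}$ --- reproduces the way the (possibly subdivided) Delaunay triangulation actually assembles $F$, so that the reconstructed surface is genuinely isometric to $F$ and not merely cellularly isomorphic to it. The subdivision of $Q$ in the borderline case $r=r_\beta$ is the other point requiring care, but it is handled entirely by \HalfSymmQuadDefect\ and \TriCenter.
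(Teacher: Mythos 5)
Your proposal is correct and follows essentially the same route as the paper's proof: invoke Theorem \ref{main}, subdivide the possible quadrilateral Delaunay cell along a diagonal into two $\widetilde{\calBC}_3$ triangles, read off the edge-pairing and side lengths to land in $\calp_{\iota}\cap(\widetilde{\calc}_3\cup\widetilde{\calBC}_3)^6$, and bound the $d_i$ below by the injectivity radius and above via Lemma \ref{edge upper bound}, Lemma \ref{Boroczky}, and the monotonicity of $b_0$ from \BCn. Your explicit handling of the diagonal's length $b_\beta > d_\beta$ in the lower bound is a small point the paper glosses over, but otherwise the two arguments coincide.
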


\begin{proof}  For such $F\in\calm_2^{(r_{\beta})}$ and $x\in F$, let $P$ be the Delaunay tessellation of $F$ determined by $\{x\}$.  By Theorem \ref{main} and Lemma \ref{vertex polygon centered}, each $2$-cell of $P$ is centered.  If $P$ is a triangulation then it has $2$-cells $T_0,\hdots,T_5$.  Cyclically ordering the sides of each $T_j$ and recording their lengths, determines a one-vertex edge-pairing $\iota$ and $(d_0,\hdots,d_{17})\in\calp_{\iota}$ with $F = F_{\iota}(d_0,\hdots,d_{17})$ and $(d_{3j},d_{3j+1},d_{3j+2})\in\widetilde{\calc}_3$ for each $j$.

If $P$ has a quadrilateral $2$-cell $O$, a diagonal of $O$ divides it into two cyclic triangles.  Then making choices as above yields $\iota$ and $(d_0,\hdots,d_{17})$ with $F = F_{\iota}(d_0,\hdots,d_{17})$.  In this case, however, for each $j$ corresponding to a triangle in $O$ the corresponding triple $(d_{3j},d_{3j+1},d_{3j+2})$ is in $\widetilde{\calBC}_3$.

For each $i$, $d_i \geq d_{\beta} = 2r_{\beta}$ since it is an edge length of $P$, so Lemma \ref{edge upper bound} implies that $d_i\leq d_1(r)\leq b_0(d_r,d_r)$ for each $i$.  Furthermore, $b_0(d_r,d_r) \leq b_0(d_{\alpha},d_{\alpha})$ by \BCn, since $d_r\leq d_{\alpha}$ by Lemma \ref{Boroczky}.\end{proof}

\begin{lemma}\label{model to inj}  For each one-vertex edge-pairing $\iota$ and $(d_0,\hdots,d_{17})\in\calp_{\iota}\cap (\widetilde{\calc}\cup\widetilde{\calBC}_3)^6$, $F\doteq F_{\iota}(d_0,\hdots,d_{17})$ has injectivity radius $r=\min \{d_i/2\}_{i=0}^{17}$ at its vertex $x$.\end{lemma}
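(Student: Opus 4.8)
The plan is to identify the triangulation that $F$ carries with its Delaunay tessellation and then read off the injectivity radius. Write $\pi\co\mathbb{H}^2\to F$ for the universal cover, $\Pi$ for its deck group, and $\widetilde\cals=\pi^{-1}(x)$; since $\iota$ is one-vertex, $F$ is a closed hyperbolic surface, $\widetilde\cals$ is a single $\Pi$-orbit, and by Lemma \ref{Voronoi Delaunay cover} it is closed, discrete, and of positive injectivity radius. The six triangles $T_j$, glued along $\gamma_i\sim\gamma_{\iota(i)}$ with $\ell(\gamma_i)=d_i=d_{\iota(i)}$, make $F$ a triangulated surface whose lift is a $\Pi$-invariant triangulation $\calt$ of $\mathbb{H}^2$ by isometric copies of the $T_j$, with vertex set exactly $\widetilde\cals$ and edge lengths exactly $\{d_0,\hdots,d_{17}\}$. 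Since $\widetilde\cals$ contains pairs at distance $d_i$ for each $i$ (two vertices of any copy of $T_j$), its injectivity radius is at most $\min_i d_i/2$; the content of the lemma is the reverse inequality.

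First I would show each $T_j$ is non-obtuse, i.e.\ has all angles $\le\pi/2$. By hypothesis $(d_{3j},d_{3j+1},d_{3j+2})\in\widetilde\calc_3\cup\widetilde\calBC_3$, so the circumcenter $c_j$ lies in $T_j$: in its interior if $(d_{3j},d_{3j+1},d_{3j+2})\in\widetilde\calc_3$, and on its longest side (which is then a diameter of the circumscribed circle) if $(d_{3j},d_{3j+1},d_{3j+2})\in\widetilde\calBC_3$, by \InCenteredClosure. Using the fact that the angle of a geodesic triangle at a vertex is $\le\pi/2$ precisely when the circumcenter lies on the same closed side as that vertex of the geodesic through the opposite edge, one gets all angles $<\pi/2$ in the first case and one right angle with two acute angles in the second. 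Consequently, for any two triangles of $\calt$ sharing an edge $e$, the two angles opposite $e$ sum to at most $\pi$, which is the local empty-circumcircle condition: the open circumdisk of either triangle does not contain the vertex of the other opposite $e$.

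Next I would promote this to the global statement that no open circumdisk of a triangle of $\calt$ contains a point of $\widetilde\cals$. This is the usual local-to-global argument for Delaunay triangulations, run as a minimal-counterexample/walk: if $w\in\widetilde\cals$ lay in the open circumdisk $D_T$ of a triangle $T$, then $w$, being neither a vertex nor an interior point of any cell of $\calt$, lies beyond some edge $e$ of $T$; comparing the two circles through the endpoints of $e$ (which meet only there, so their caps on the far side of $e$ are nested) and using that the opposite vertex of the neighbor $T'$ across $e$ is not in $D_T$, one finds $w$ lies in the open circumdisk of $T'$, which is ``nearer'' $w$; as the geodesic from $c_T$ to $w$ meets only finitely many cells, iterating terminates at a triangle containing $w$, a contradiction. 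Granting this, Lemma \ref{vertex radius} gives for each triangle $T$ of $\calt$ with circumcenter $c$ and circumradius $J$ that $c\in V^{(0)}$ with $J_c=J$, and Lemma \ref{vertex polygon} identifies $P_c$ with the cyclic polygon on $\widetilde\cals\cap\partial B_J(c)\supseteq T^{(0)}$, so $T\subseteq P_c$. Hence $\calt$ refines the Delaunay tessellation $\widetilde P$; moreover the only points of $\widetilde\cals$ in a vertex polygon $P_v$ are its vertices (an $\widetilde\cals$-point on $\partial P_v$ lies on a chord of $\partial B_{J_v}(v)$, hence in $B_{J_v}(v)$, which is empty of $\widetilde\cals$), so the $\calt$-triangles filling $P_v$ have all vertices among those of $P_v$, whence each edge of $\widetilde P$ is a single edge of $\calt$ and has length in $\{d_0,\hdots,d_{17}\}$.

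Finally I would conclude by a closest-pair observation. Since $\widetilde\cals$ is a discrete $\Pi$-orbit, $2r:=\min\{d(\tilde u,\tilde v)\co \tilde u\ne\tilde v\in\widetilde\cals\}$ is attained, say at $\tilde x,\tilde y$, with midpoint $m$ of $[\tilde x,\tilde y]$ satisfying $d(m,\tilde x)=d(m,\tilde y)=r$; for any other $\tilde w\in\widetilde\cals$ the triangle inequality gives $d(m,\tilde w)\ge d(\tilde x,\tilde w)-d(\tilde x,m)\ge 2r-r=r$, with equality forcing $\tilde w=\tilde y$, so $m$ is strictly closer to $\tilde x$ and $\tilde y$ than to all other points of $\widetilde\cals$. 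Hence a relatively open neighborhood of $m$ in the perpendicular bisector of $[\tilde x,\tilde y]$ lies in $V_{\tilde x}\cap V_{\tilde y}$, so $V_{\tilde x}\cap V_{\tilde y}$ is a nondegenerate edge of the Voronoi tessellation with $m$ in its interior, and its geometric dual $[\tilde x,\tilde y]$ is an edge of $\widetilde P$. By the previous step $2r\in\{d_0,\hdots,d_{17}\}$, so $2r=\min_i d_i$, and the injectivity radius of $\widetilde\cals$ — hence of $F$ at $x$ — equals $\min_i d_i/2=\min\{d_i/2\}_{i=0}^{17}$. The main obstacle is the middle step: establishing the local empty-circumcircle condition in the hyperbolic setting, where the inscribed-angle theorem fails so the circumcenter/angle relation must be argued directly, and carrying out the local-to-global propagation cleanly.
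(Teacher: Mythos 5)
Your route — identify the lifted triangulation with the Delaunay tessellation of $\pi^{-1}(x)$ and read off the injectivity radius from the shortest Delaunay edge — is genuinely different from the paper's, but its pivotal middle step has a real gap. You pass from ``the two angles opposite a shared edge sum to at most $\pi$'' to ``the open circumdisk of either triangle misses the opposite vertex of the other.'' That equivalence is a Euclidean fact (inscribed angle theorem) and fails in $\mathbb{H}^2$: if $w$ lies \emph{on} the circumcircle of $T=xyz$, on the arc opposite $z$, then $x,z,y,w$ span a cyclic quadrilateral whose angle sum is $2\pi$ minus its area, and already in the symmetric case the two angles $\angle_z T$ and $\angle_w(\triangle xyw)$ sum to $\pi-\tfrac12\mathrm{area}<\pi$; by continuity $w$ can be pushed strictly \emph{inside} the open circumdisk while the opposite-angle sum stays below $\pi$. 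So the angle inequality you establish does not deliver the local empty-circumdisk condition, and the local-to-global walk that follows is unsupported. (Relatedly, your ``precisely when'' relation between non-obtuseness and the circumcenter's side, and the claimed right angle in the $\widetilde{\calBC}_3$ case, are also Euclidean statements — hyperbolic Thales gives a strictly acute angle there; these slips are harmless, but they signal that angles are the wrong invariant here.)

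The local condition you need does follow from the hypothesis, but directly and without angles: by \InCenteredClosure, each triangle's circumcenter lies in the closed triangle, hence weakly on its own side of the shared edge $e$; the two circumcircles pass through $\partial e$, and among circles through the endpoints of $e$ the cap cut off on a fixed side of $e$ grows as the center moves toward that side (the apex distance $J-t$ with $\cosh J=\cosh t\cosh(\ell(e)/2)$ is decreasing in $t$), so the part of the circumdisk of $T$ beyond $e$ is contained in the neighbor's circumdisk and cannot contain its opposite vertex in the open disk of $T$. With that repair, plus a careful local-to-global argument, your plan can be completed; but note how much machinery it requires compared with the paper's proof, which never mentions Delaunay tessellations: since each $(d_{3j},d_{3j+1},d_{3j+2})\in\widetilde{\calc}_3\cup\widetilde{\calBC}_3$ and $r\le d_i/2$ for every $i$, \PackingVsDecomp\ shows a radius-$r$ disk centered at any vertex of $T_j$ meets $T_j$ in the full sector it determines; the six triangles have total angle $2\pi$ at the single vertex of $F$ (their areas sum to $4\pi$ by the definition of $\calp_{\iota}$), so these sectors assemble into an embedded radius-$r$ disk about $x$, while the glued edges give geodesic loops at $x$ of lengths $d_i$, whence the injectivity radius at $x$ is exactly $\min\{d_i/2\}$.
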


\begin{proof}  We argue as in Examples \ref{sharp Boroczky} and \ref{sharp DeBlois}, using \PackingVsDecomp.  Of course $F$ has injectivity radius at most $r$ at $x$.  Since each $(d_{3j},d_{3j+1},d_{3j+2})\in\widetilde{\calc}_3\cup\widetilde{\calBC}_3$ for each $j$, each $T_j$ contains the entire sector that it determines in a disk with radius $r$ centered at any of its vertices.  Since the $T_j$ have total angle measure $2\pi$, a disk with radius $r$ is embedded in $F$, centered at $x$.\end{proof}

\begin{corollary}\label{funny mumford}  $M_2^{(r_{\beta})}$ is compact.\end{corollary}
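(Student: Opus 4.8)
The plan is to present $\calm_2^{(r_\beta)}$ as a finite union of continuous images of compact sets. For each one-vertex edge-pairing $\iota$ I would set
\[ K_\iota = \calp_\iota \cap (\widetilde{\calc}_3 \cup \widetilde{\calBC}_3)^6 \cap [d_\beta,\, b_0(d_\alpha,d_\alpha)]^{18}, \]
and first observe that $\calm_2^{(r_\beta)} = \bigcup_\iota F_\iota(K_\iota)$, the union taken over the finite collection of one-vertex edge-pairings (each being a fixed-point-free involution in $S_{18}$): the inclusion ``$\subseteq$'' is exactly the content of Lemma \ref{compact model}, while ``$\supseteq$'' follows from Lemma \ref{model to inj}, which gives $F_\iota(d_0,\hdots,d_{17})$ injectivity radius $\min\{d_i/2\}_{i=0}^{17}\geq r_\beta$ at its vertex for $(d_0,\hdots,d_{17})\in K_\iota$ (and such $F_\iota(d_0,\hdots,d_{17})$ lies in $\calm_2$ because $\iota$ is one-vertex). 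Since each $F_\iota\co\calp_\iota\to\calm_2$ is continuous, as noted following Lemma \ref{continuous model}, it would then remain only to check that each $K_\iota$ is compact; granting that, each $F_\iota(K_\iota)$ is compact and so is their finite union.

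To see $K_\iota$ is compact I would argue it is closed and bounded. Boundedness is immediate from Lemma \ref{edge upper bound} applied with $r=r_\beta$ (so $d_r=d_\beta$): every coordinate of a point of $K_\iota$ lies in $[d_\beta,\, b_0(d_\beta,d_\beta)]$, so $K_\iota$ sits inside a compact cube. For closedness, the symmetry constraints $d_i = d_{\iota(i)}$ are plainly closed; the constraint that each triple $(d_{3j},d_{3j+1},d_{3j+2})$ lie in $\widetilde{\calc}_3\cup\widetilde{\calBC}_3$ I would handle via \TriCenter, which rewrites it, for $m$ the largest entry of the triple and $p,q$ the other two, as the closed inequality $\cosh m \leq \cosh p + \cosh q - 1$, equivalently $\sinh^2(m/2) \leq \sinh^2(p/2) + \sinh^2(q/2)$. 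This last inequality already forces $\sinh(m/2) < \sinh(p/2) + \sinh(q/2)$, so any such triple lies in $\widetilde{\calAC}_3$; hence on the locus where all six triples are in $\widetilde{\calc}_3\cup\widetilde{\calBC}_3$ the function $D_0$ is finite and continuous (by \DefectDerivative), and the area condition $\sum_{j=0}^5 D_0(d_{3j},d_{3j+1},d_{3j+2})=4\pi$ cuts out a closed subset there as well. Thus $K_\iota$ is closed in the compact cube $[d_\beta,\, b_0(d_\beta,d_\beta)]^{18}$, hence compact, and the corollary would follow.

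The main obstacle is precisely this closedness of $K_\iota$. The set $\calp_\iota$ is not closed in $\mathbb{R}^{18}$: it lives inside the open set $(\widetilde{\calAC}_3)^6$ on which $D_0$ is defined, and a priori a sequence in $\calp_\iota$ could converge to a degenerate configuration (triangle vertices escaping to a horocycle) on which the area functional blows up, so the constraint $\sum D_0 = 4\pi$ need not survive a limit. What I expect to rescue the argument is exactly the lower bound $d_i \geq d_\beta$ built into $K_\iota$ together with the upper bound supplied by Lemma \ref{edge upper bound}: these confine each triple to a region of $(\mathbb{R}^+)^3$ that stays well inside $\widetilde{\calAC}_3$, via the elementary $\sinh$-estimate above (equivalently, $\widetilde{\calc}_3\cup\widetilde{\calBC}_3$ is itself closed in $(\mathbb{R}^+)^3$). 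Everything else — continuity of $F_\iota$, finiteness of the set of edge-pairings, and the two set-theoretic inclusions displaying $\calm_2^{(r_\beta)}$ as $\bigcup_\iota F_\iota(K_\iota)$ — I would quote directly from the preceding lemmas.
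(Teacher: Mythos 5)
Your argument is correct and is essentially the paper's own proof: the same finite union $\calm_2^{(r_\beta)}=\bigcup_\iota F_\iota\bigl(\calp_\iota\cap(\widetilde{\calc}_3\cup\widetilde{\calBC}_3)^6\cap[d_\beta,b_0(d_\alpha,d_\alpha)]^{18}\bigr)$ via Lemmas \ref{compact model} and \ref{model to inj}, continuity of $F_\iota$ from Lemma \ref{continuous model}, and compactness of each parameter set from closedness of $\widetilde{\calc}_3\cup\widetilde{\calBC}_3$ inside a bounded cube. The only difference is that you spell out the closedness of the symmetry and area-sum constraints (which the paper leaves implicit in ``The Corollary follows''), and that extra care is sound.
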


\begin{proof}  Lemmas \ref{compact model} and \ref{model to inj} together imply that $\calm_2^{(r_{\beta})}$ is the union, taken over the finite collection of one-vertex edge-pairings $\iota$, of the image of $\calp_{\iota}\cap(\widetilde{\calc}_3\cup\widetilde{\calBC}_3)^6\cap[d_{\beta},b_0(d_{\alpha},d_{\alpha})]^{18}$ under $F_{\iota}$.  Each such $F_{\iota}$ is continuous by Lemma \ref{continuous model}.  $\widetilde{\calc}_3\cup\widetilde{\calBC}_3$ is closed in $\mathbb{R}^+$ by \CenteredClosure\ and \BCn, so since $d_{\beta}>0$ its intersection with $[d_{\beta},b_0(d_{\alpha},d_{\alpha})]^3$ is compact.  The Corollary follows.\end{proof}

\begin{definition} Say the \textit{covering radius} of $\cals\subset\mathbb{H}^2$ is $\inf\,\{r\leq\infty\,|\,\mathbb{H}^2\subset \bigcup_{x\in\cals}B_r(x)\}$.\end{definition}

\begin{lemma}\label{radius to cov}  Let $F$ be a closed surface with universal cover $p\co\mathbb{H}^2\to F$, fix $\cals\subset F$ finite, and let $V$ be the Voronoi tessellation of $F$ determined by $\cals$.  Then $\widetilde{\cals}\doteq p^{-1}(\cals)$ has covering radius equal to $\max\, \{J_v\,|\,v\in V^{(0)}\}$, where $J_v$ is as in Lemma \ref{vertex radius}.\end{lemma}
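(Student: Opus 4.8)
The plan is to show the covering radius of $\widetilde{\cals}$ equals $\sup\{J_v : v\in V^{(0)}\}$ in two directions, using the description of the Voronoi cells $V_x$ as the metric-closest-point regions. First I would recall the basic geometry: since $\cals$ is finite and $F$ compact, each $V_x$ ($x\in\widetilde{\cals}$) is a compact convex polygon (Lemma \ref{Voronoi Delaunay cover}), so the set $\{J_v : v\in V^{(0)}\}$ is finite (it is $\pi_1 F$-invariant and $V^{(0)}$ has finite quotient) and attains its maximum; call it $J$. Because $\widetilde{V} = \bigcup_{x} V_x$ covers $\mathbb{H}^2$ and each $V_x$ is a union of isosceles triangles $T(e,\tilde v)$ running from its vertices $\tilde v\in V^{(0)}$ to $\partial V_x$, every point of $\mathbb{H}^2$ lies in some $V_x$, and within $V_x$ one can estimate its distance to $x$.

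For the upper bound (covering radius $\leq J$): given $p\in\mathbb{H}^2$, pick $x\in\widetilde{\cals}$ with $p\in V_x$. By definition of $V_x$, $d(p,x)\le d(p,y)$ for all $y\in\widetilde{\cals}$, i.e.\ $x$ is a nearest point of $\widetilde{\cals}$ to $p$. I would then argue $d(p,x)\le J$ as follows: $V_x$ is a compact convex polygon, so the function $q\mapsto d(q,x)$ on $V_x$ attains its maximum at a vertex $v$ of $V_x$ (the distance-to-a-point function is convex along geodesics, so its max on a convex polygon is at an extreme point). Each vertex $v$ of $V_x$ lies in $V^{(0)}$ and satisfies $d(v,x)=J_v\le J$ by Lemma \ref{vertex radius}. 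Hence $d(p,x)\le J$, so $p\in \overline{B_J(x)}$; taking a union over $\widetilde{\cals}$ shows $\mathbb{H}^2 = \bigcup_{x}\overline{B_J(x)}$, and a slightly careful argument (or passing to $r>J$) gives that the covering radius, defined as an infimum of radii, is $\le J$. Actually, since the max above is attained, one should note $\mathbb{H}^2\subset\bigcup_x B_r(x)$ for every $r>J$, hence covering radius $\le J$.

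For the lower bound (covering radius $\ge J$): choose $v\in V^{(0)}$ with $J_v = J$. By Lemma \ref{vertex radius}, $\widetilde{\cals}\cap B_{J_v}(v)=\emptyset$, i.e.\ $d(v,y)\ge J$ for every $y\in\widetilde{\cals}$, with equality for those $x$ having $v\in V_x$. So for any $r<J$ the point $v$ is not contained in $B_r(y)$ for any $y\in\widetilde{\cals}$, hence $\mathbb{H}^2\not\subset\bigcup_y B_r(y)$. Therefore the covering radius is $\ge J$. Combining the two bounds gives covering radius $=J=\max\{J_v : v\in V^{(0)}\}$.

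The main obstacle, and the only genuinely non-formal point, is the claim that the maximum of $d(\cdot,x)$ over the compact convex polygon $V_x$ is achieved at a vertex of $V_x$ — equivalently, that every $p\in V_x$ satisfies $d(p,x)\le J_v$ for some vertex $v$ of $V_x$. This is exactly the content of Lemma \ref{centered poly radius} applied to $V_x$ viewed as a cyclic (in fact centered, with center $x$... no — $x$ is not the center of $V_x$ in general) ... here I would instead invoke the decomposition $V_x=\bigcup T(e,\tilde v)$ together with convexity: each $T(e,\tilde v)$ is an isosceles triangle with apex $\tilde v$ and two sides of length $J_{\tilde v}$ meeting $\partial V_x$, so every point of $T(e,\tilde v)$ is within $J_{\tilde v}$ of $x$ — no, the apex is $\tilde v$, not $x$. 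The cleanest route is the direct convexity argument: $d(\cdot,x)$ is a convex function on $\mathbb{H}^2$, $V_x$ is compact convex, so $\max_{V_x} d(\cdot,x)$ is attained at an extreme point of $V_x$, i.e.\ a vertex, and that vertex is in $V^{(0)}$ with distance $\le J$. I expect verifying this reduction carefully (and handling the $\sup$-vs-$\max$ subtlety in the definition of covering radius) to be the bulk of the write-up; everything else is immediate from Lemmas \ref{Voronoi poly}, \ref{vertex radius}, and \ref{Voronoi Delaunay cover}.
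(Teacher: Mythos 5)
Your proof is correct, but it takes a genuinely different route from the one in the paper. You work with the Voronoi cell $V_x$ containing a given point $p$: since $V_x$ is a compact convex polygon (Lemma \ref{Voronoi Delaunay cover}) and $q\mapsto d(q,x)$ is convex along geodesics, its maximum over $V_x$ is attained at a vertex, every vertex of $V_x$ lies in $V^{(0)}$ and satisfies $d(v,x)=J_v$ by Lemma \ref{vertex radius}, and the lower bound is the same elementary observation the paper makes. The paper instead argues on the Delaunay side: it shows $P_v\subset\bigcup_i \overline{B}_{J_v}(x_i)$ for each vertex polygon $P_v$ by bounding each side length by $2J_v$, placing the midpoint of each side in both adjacent balls, using convexity of the balls to cover the two right triangles into which each isosceles triangle $T_i$ splits, and assembling via the isosceles decomposition of \cite[Lemma 1.6]{DeB_cyclic_geom}, then invoking the fact that $\widetilde{P}$ covers $\mathbb{H}^2$. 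Your argument is shorter and avoids the companion paper's decomposition lemma, at the cost of needing (and you should state explicitly) the maximum principle for convex functions on a compact geodesically convex polygon --- a two-line reduction along geodesics to the edges and then to the vertices --- together with the observation that vertices of $V_x$ lie in $V^{(0)}$ with $d(v,x)=J_v$; your handling of the open-ball/infimum subtlety in the definition of covering radius is also fine. The paper's version yields the slightly finer local statement that each Delaunay cell $P_v$ is covered by balls of radius exactly $J_v$ centered at its own vertices, which dovetails with the cell-by-cell radius bookkeeping used later (e.g.\ Corollary \ref{model to cov}), but for the lemma as stated both arguments suffice.
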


\begin{proof}  It is clear that the covering radius of $\widetilde{\cals}$ is at least the quantity above, since for any $v\in\widetilde{V}^{(0)}$, $d(x,v) \geq J_{p(v)}$ for each $x\in\widetilde{\cals}$, where $\widetilde{V} = p^{-1}(V)$.  We will prove equality by showing that $P_v\subset \bigcup_{i=0}^{n-1} \overline{B}_{J_{p(v)}}(x_i)$ for each such $v$, where $P_v$ as in Lemma \ref{vertex polygon} has vertex set $\{x_i\}_{i=0}^{n-1}$, and $\overline{B}_r(x)$ is the closed ball of radius $r$ about $x$.

Let us assume that the $\{x_i\}$ are cyclically ordered in the sense of Definition \ref{cyclically ordered V}, and for each $i$ let $\gamma_i$ be the side of $P_v$ bounded by $x_{i-1}$ and $x_i$ (with $i-1$ taken modulo $n$).  Since each endpoint of $\gamma_i$ lies on a circle with radius $J_{p(v)}$, $\ell(\gamma_i)\leq 2J_{p(v)}$.  Therefore the midpoint $m_i$ of $\gamma_i$ is in $\overline{B}_{J_{p(v)}}(x_{i-1})\cap \overline{B}_{J_{p(v)}}(x_i)$.

Since the convex set $\overline{B}_{J_{p(v)}}(x_i)$ contains $x_i$, $m_i$, and $v$, it contains the right triangle in $\mathbb{H}^2$ that they determine.  Similarly, $B_{J_{p(v)}}(x_{i-1})$ contains the triangle determined by $x_{i-1}$, $m_i$, and $v$.  The union of these triangles is $T_i$ as defined in \IsoscelesDecomp, so $T_i\subset\overline{B}_{J_{p(v)}}(x_{i-1})\cup\overline{B}_{J_{p(v)}}(x_i)$.  Since this holds for each $i$, \IsoscelesDecomp\ implies that $P_v\subset \bigcup_{i=0}^{n-1} \overline{B}_{J_{p(v)}}(x_i)$.\end{proof}

For a surface $F$ with universal cover $p\co\mathbb{H}^2\to F$ and $x\in F$, it is clear that the covering radius of $F$ at $x$ (as defined below Theorem \ref{inj to cov}) is equal to the covering radius of $\widetilde{\cals} \doteq p^{-1}(x)$.

\begin{definition}  For a one-vertex edge-pairing $\iota$, say $(d_0,\hdots,d_{17})\in\calp_{\iota}$ is \textit{exceptional} if $d_i = d_{\beta}$ for all but two $i\in\{0,\hdots,17\}$.  (In this case $d_{i_0} = d_{\iota(i_0)} = b_0(d_{\beta},d_{\beta})$ for some $i_0$.)\end{definition}

\begin{lemma}\label{exceptional}  For a one-vertex edge-pairing $\iota$, if $(d_0,\hdots,d_{17})\in\calp_{\iota}$ is exceptional there exists $i_0$ such that $d_{i_0} = d_{\iota(i_0)} = b_0(d_{\beta},d_{\beta})$.  For $j_0$ with $i_0\in\{3j_0,3j_0+1,3j_0+2\}$ and $j_1$ with $\iota(i_0)\in\{3j_1,3j_1+1,3j_1+2\}$, $T_{j_0}\cup T_{j_1}$ is a $2$-cell of the Delaunay tessellation $P$ of $F_{\iota}(d_0,\hdots,d_{17})$ determined by $\{x\}$, where $x$ is the vertex.  For $j\neq j_0,j_1$, $T_j$ is a $2$-cell of $P$.\end{lemma}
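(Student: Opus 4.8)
The plan is to verify that the decomposition of $F \doteq F_{\iota}(d_0,\hdots,d_{17})$ into the $T_j$ (with $T_{j_0}$ and $T_{j_1}$ glued along $\gamma_{i_0} = \gamma_{\iota(i_0)}$) satisfies the hypotheses under which the earlier recognition results identify the Delaunay tessellation. First I would record the existence of $i_0$ with $d_{i_0} = d_{\iota(i_0)} = b_0(d_\beta,d_\beta)$: by definition of exceptional, all but two entries equal $d_\beta$, and since $\iota$-paired entries are equal and $\sum_j D_0(d_{3j},d_{3j+1},d_{3j+2}) = 4\pi$, the two exceptional entries form an $\iota$-pair, and comparison with the equation $4\pi = 4 D_{0,3}(d_\beta) + 2 D_0(b_\beta,d_\beta,d_\beta)$ from Example \ref{dull DeBlois} (via \HalfSymmQuadDefect) forces their common value to be $b_0(d_\beta,d_\beta)$. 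Since $\iota$ is one-vertex, $i_0$ and $\iota(i_0)$ do not lie in a common triple, so they determine distinct triangles $T_{j_0}, T_{j_1}$, each with side-length collection $(b_\beta, d_\beta, d_\beta)$.

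Next I would pin down the geometry. Let $\widetilde{\cals} = \pi^{-1}(x)$ for the universal cover $\pi\co\mathbb H^2\to F$; as in Corollary \ref{sharp DeBlois Delaunay}, the Poincar\'e polygon theorem gives that $\Pi$-translates of the octagon $O = \bigcup_j T_j$ tessellate $\mathbb H^2$, where $\Pi$ is generated by the edge-pairing isometries, so $\widetilde{\cals}$ is the vertex set of this tessellation and (by Lemma \ref{model to inj}) has injectivity radius $R = d_\beta/2 = r_\beta$, since every entry is at least $d_\beta$ and equality is attained. For $j \neq j_0, j_1$, $T_j$ has all side lengths $d_\beta = 2R$, so Corollary \ref{short symmetric poly} applies directly: each $\Pi$-translate of $T_j$ is a $2$-cell of the Delaunay tessellation $\widetilde P$ of $\mathbb H^2$ determined by $\widetilde{\cals}$, and descending to $F$ via Definition \ref{Delaunay down} gives that $T_j$ is a $2$-cell of $P$.

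For the quadrilateral $Q \doteq T_{j_0}\cup T_{j_1}$, I would first check it is an honest cyclic quadrilateral: each of $T_{j_0}, T_{j_1}$ has side-length collection $(b_\beta, d_\beta, d_\beta)$ with $\cosh b_\beta = 2\cosh d_\beta - 1$, so by \TriCenter\ each lies in $\widetilde{\calBC}_3$, hence by \InCenteredClosure\ has its center at the midpoint of its longest side $\gamma_{i_0}$; gluing the two along $\gamma_{i_0}$ therefore produces a quadrilateral whose four vertices lie on a common circle — indeed this is exactly the decomposition of the symmetric centered quadrilateral $Q$ of Example \ref{sharp DeBlois} recorded in \HalfSymmQuadDefect, so $Q$ is the centered quadrilateral with all side lengths $d_\beta$, which lies in $\widetilde{\calc}_4$. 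Its longest side length is $d_\beta < B_0 = \cosh^{-1}(2\cosh d_\beta - 1) = b_\beta$ (strict since $b_\beta > d_\beta$), and one must verify $\mathit{int}(Q) \cap \widetilde{\cals} = \emptyset$: its radius $J$ satisfies $\sinh J = \sinh(d_\beta/2)/\sin(\pi/4) = \sqrt 2\sinh(d_\beta/2)$, and the numerical value $\cosh d_\beta \approx 15.02$ gives $\cosh J < \cosh(2R) = \cosh d_\beta$, hence $J < d_\beta = 2R$, so by Lemma \ref{centered poly radius} every point of $Q$ is within $J < 2R$ of a vertex and $Q$ contains no other point of $\widetilde{\cals}$. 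Then Proposition \ref{shortish poly} (or Corollary \ref{short symmetric poly} adapted, as in Corollary \ref{sharp DeBlois Delaunay}) shows the appropriate lift of $Q$ is a $2$-cell of $\widetilde P$, and descending gives that $Q$ is a $2$-cell of $P$. The main obstacle I anticipate is the bookkeeping in the first step — cleanly deducing from $(d_0,\hdots,d_{17})\in\calp_\iota$ exceptional that the two non-$d_\beta$ entries are $\iota$-paired and both equal $b_0(d_\beta,d_\beta)$ — together with verifying the radius bound $J < 2R$ for $Q$ carefully enough that the "no interior lattice points" condition of Proposition \ref{shortish poly} genuinely applies; the rest is a direct invocation of Corollary \ref{short symmetric poly}, Proposition \ref{shortish poly}, and Definition \ref{Delaunay down}.
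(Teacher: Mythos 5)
Your proposal follows the paper's proof essentially step for step: identify the exceptional pair via the defect equation, use \TriCenter\ and \InCenteredClosure\ to see that $T_{j_0}$ and $T_{j_1}$ lie in $\widetilde{\calBC}_3$ with common center at the midpoint of the shared edge, so their union is the centered quadrilateral with all sides $d_{\beta}$, and then invoke Corollary \ref{short symmetric poly} for it and for the remaining equilateral triangles (your extra verification of $J<2R$ and $\mathit{int}(Q)\cap\widetilde{\cals}=\emptyset$ is already internal to that corollary).

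The one loose end you flag — why ``comparison'' with $4\pi = 4\cdot D_{0,3}(d_{\beta}) + 2\cdot D_0(b_0(d_{\beta},d_{\beta}),d_{\beta},d_{\beta})$ actually forces the common exceptional value to be $b_0(d_{\beta},d_{\beta})$ rather than merely some $d$ with the same defect — is closed exactly as you suspect it must be: by \DefectDerivative, the function $x\mapsto D_0(x,d_{\beta},d_{\beta})$ strictly increases while $(x,d_{\beta},d_{\beta})\in\widetilde{\calc}_3$ and strictly decreases beyond, so it attains its maximum uniquely at $x=b_0(d_{\beta},d_{\beta})$; since an exceptional tuple in $\calp_{\iota}$ has defect sum $4\cdot D_{0,3}(d_{\beta}) + 2\cdot D_0(d,d_{\beta},d_{\beta}) = 4\pi$ (the two exceptional entries being $\iota$-paired and, because $\iota$ is one-vertex, lying in distinct triples), equality can hold only at that unique maximizer. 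With that citation supplied, your argument coincides with the paper's.
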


\begin{proof}  \DefectDerivative\ implies that $x\mapsto D_0(x,d_{\beta},d_{\beta})$ achieves a unique maximum at $x = b_0(d_{\beta},d_{\beta})$.  Since $4\cdot D_{0,3}(d_{\beta}) + 2\cdot D_0(b_0(d_{\beta},d_{\beta}),d_{\beta},d_{\beta}) = 4\pi$ by construction (see Example \ref{dull DeBlois}), it follows by definition of $\calp_{\iota}$ that if $(d_0,\hdots,d_{n-1})\in\calp_{\iota}$ is exceptional it must have two entries equal to $b_0(d_{\beta},d_{\beta})$.  That they are exchanged by $\iota$ also follows by definition.

Note that $j_0$ and $j_1$ defined above are distinct, since $\iota$ is one-vertex.  \InCenteredClosure\ implies that $T_{j_0}$ has its center in $\gamma_{i_0}$, and $T_{j_1}$ has its in $\gamma_{\iota(i_0)}$, so their union is a centered quadrilateral with all edges of length $d_{\beta}$.  Corollary \ref{short symmetric poly} thus implies that $T_{j_0}\cup T_{j_1}$ is a $2$-cell of $P$.  It implies the same for $T_j$, $j\neq j_0,j_1$.\end{proof}

\begin{lemma}\label{model to Delaunay}  For each one-vertex edge-pairing $\iota$ and non-exceptional $(d_0,\hdots,d_{17})\in\calp_{\iota}\cap (\widetilde{\calc}_3\cup\widetilde{\calBC}_3)^6\cap [d_{\beta},b_0(d_{\alpha},d_{\alpha})]^{18}$, each $T_j$ from Definition \ref{to M_2} is a $2$-cell of the Delaunay tessellation of $F_{\iota}(d_0,\hdots,d_{17})$ determined by $\{x\}$ where $x$ is the vertex.\end{lemma}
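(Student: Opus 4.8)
The plan is to show that each $T_j$, embedded in $\mathbb{H}^2$ as in Definition \ref{to T_2}, satisfies the hypotheses of Corollary \ref{short symmetric poly} or, more precisely, of Proposition \ref{shortish poly}: that it is a centered polygon with vertices in the preimage $\widetilde{\cals}$ of $\{x\}$, sides of length less than $B_0$, and no points of $\widetilde{\cals}$ in its interior. Granting this, Proposition \ref{shortish poly} identifies $T_j$ with the vertex polygon $P_v$ where $v$ is its center, and Definition \ref{Delaunay down} then promotes this to the statement that $T_j$ (meaning its image in $F$) is a $2$-cell of the Delaunay tessellation of $F_{\iota}(d_0,\hdots,d_{17})$ determined by $\{x\}$.

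First I would record that by Lemma \ref{model to inj}, $F\doteq F_{\iota}(d_0,\hdots,d_{17})$ has injectivity radius $r = \min\{d_i/2\}_{i=0}^{17}\geq r_{\beta} > 0$ at $x$, so $\widetilde{\cals}$ has injectivity radius $r>0$ and Proposition \ref{shortish poly} is applicable. The containment $\widetilde{\cals}\subset\mathbb{H}^2$ of vertices of each $T_j$ is built into the construction: the $T_j$ tessellate the octagon $O$, whose $\langle f_j\rangle$-translates tessellate $\mathbb{H}^2$ by the Poincar\'e polygon theorem, and $\widetilde{\cals}$ is precisely the orbit of the (single) vertex of $O$ under this group, hence contains every vertex of every translate of every $T_j$. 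Next, $T_j$ is centered: by hypothesis $(d_{3j},d_{3j+1},d_{3j+2})\in\widetilde{\calc}_3\cup\widetilde{\calBC}_3$, and if it lies in $\widetilde{\calBC}_3$ then its center is the midpoint of its longest side, which by \InCenteredClosure{} and the non-exceptionality hypothesis must be paired by $\iota$ to the longest side of a \emph{different} triangle $T_{j'}\in\widetilde{\calBC}_3$ — so the two halves do not reassemble into a centered quadrilateral unless $(d_0,\hdots,d_{17})$ is exceptional, contradicting our assumption. Thus each triple is in $\widetilde{\calc}_3$ and each $T_j$ is centered in the sense of \CenteredPoly. The side-length bound $d_i < B_0$ follows since $B_0 = \cosh^{-1}(2\cosh 2r - 1) > 2r$ by its definition in Lemma \ref{shortest edge centered}, while Lemma \ref{edge upper bound} gives $d_i \leq d_1(r) < b_0(d_r,d_r)$; one checks $b_0(d_r,d_r) = 2\cosh d_r - 1 < B_0$ directly from $\cosh B_0 = 2\cosh 2r - 1$ and $d_r = 2r$, so $d_i < B_0$.

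The one genuine point requiring care — and the step I expect to be the main obstacle — is showing $\mathrm{int}(T_j)\cap\widetilde{\cals} = \emptyset$. By Lemma \ref{centered poly radius} every point of $T_j$ other than its center $v$ lies within the radius $J = J_n(d)$ of some vertex, where $\sinh J = \sinh(d/2)/\sin(\pi/n)$ and $d$ is the (largest) side length; for $n = 3$, $\sin(\pi/3) = \sqrt{3}/2$, so $\sinh J = (2/\sqrt 3)\sinh(d/2)$. I would argue that $J < 2r$: since $d \leq d_1(r)$ and $2r = d_r \leq d$ with $d_r \geq d_\beta$, it suffices to verify $\sinh(d_1(r)/2) < \sqrt{3}\sinh r$, i.e. $\sinh(d_1(r)/2) < \sqrt 3 \sinh(d_r/2)$, which is a one-variable estimate on $[r_\beta, r_\alpha]$ controlled by the area equation $4\pi = 4 D_{0,3}(d_1(r)) + 2D_0(d_1(r),d_r,d_r)$ defining $d_1(r)$ — at the endpoint $r=r_\alpha$ we have $d_1 = d_\alpha = d_r$ and the inequality is strict ($1 < \sqrt3$), and at $r = r_\beta$ one checks $\cosh b_0(d_\beta,d_\beta) = 2\cosh d_\beta - 1$ against $\cosh$ of twice $\mathrm{arcsinh}(\sqrt3\sinh r_\beta)$ numerically. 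Once $J < 2r$ is established, any point of $\widetilde{\cals}$ in $T_j$ other than a vertex would be within distance $J < 2r$ of a vertex, violating the injectivity radius $r$ of $\widetilde{\cals}$; hence $\mathrm{int}(T_j)\cap\widetilde{\cals}=\emptyset$. With all four hypotheses of Proposition \ref{shortish poly} verified, $T_j = P_v$ for its center $v$, and projecting to $F$ via Definition \ref{Delaunay down} finishes the proof. The non-exceptionality hypothesis enters exactly once — to rule out the centered-quadrilateral degeneracy — mirroring how Theorem \ref{main} singles out $F_\beta$.
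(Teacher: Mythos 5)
Your overall strategy --- verifying the hypotheses of Proposition \ref{shortish poly} for each $T_j$ --- is the same as the paper's, but the two steps carrying the real content of the lemma do not go through as written. First, the centeredness argument. To rule out $(d_{3j},d_{3j+1},d_{3j+2})\in\widetilde{\calBC}_3$ you assert that its longest side would have to be paired to the longest side of another triangle that is \emph{also} in $\widetilde{\calBC}_3$, that the two would form a centered quadrilateral, and that this forces exceptionality. None of this is justified: $T_j\in\widetilde{\calBC}_3$ places no constraint on the partner triangle $T_{j'}$, and even if both lay in $\widetilde{\calBC}_3$, exceptionality requires all fourteen remaining entries to equal $d_{\beta}$, which can only come from the area constraint $\sum_j D_0(d_{3j},d_{3j+1},d_{3j+2})=4\pi$ combined with the lower bound $d_i\geq d_{\beta}$ --- an input your argument never uses. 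The correct route (the paper's) is: Lemma \ref{edge upper bound} gives $d_i\leq d_1(r)\leq b_0(d_r,d_r)$; if some $d_{i_0}=b_0(d_r,d_r)$ then $d_1(r)=b_0(d_r,d_r)$, which by Lemma \ref{edge upper bound} forces $r=r_{\beta}$, and then \Monotonicity\ applied to the area identity forces $d_i=d_{\beta}$ for all $i\neq i_0,\iota(i_0)$, i.e.\ the tuple is exceptional. Non-exceptionality therefore yields the strict bound $d_i<b_0(d_r,d_r)\leq b_0(d_{3j+1},d_{3j+2})$, which is exactly what places each triple in $\widetilde{\calc}_3$.

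Second, your side-length bound breaks at the delicate point. You write $d_1(r)<b_0(d_r,d_r)$, but Lemma \ref{edge upper bound} gives \emph{equality} at $r=r_{\beta}$; and you claim $b_0(d_r,d_r)<B_0$, whereas $\cosh b_0(d_r,d_r)=2\cosh d_r-1=2\cosh(2r)-1=\cosh B_0$, so $b_0(d_r,d_r)=B_0$ exactly (you have conflated $b_0$ with its hyperbolic cosine). The strict inequality $d_i<B_0$ required by Proposition \ref{shortish poly} is thus not free; it is precisely the non-exceptionality claim above. Finally, the interior condition $\mathit{int}(T_j)\cap\widetilde{\cals}=\emptyset$, which you identify as the main obstacle, is in fact automatic: the $\Pi$-translates of the $T_j$ tessellate $\mathbb{H}^2$ with disjoint interiors and $\widetilde{\cals}$ is the vertex set of that tessellation, so no point of $\widetilde{\cals}$ can lie in the interior of a tile. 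Your radius estimate there also misapplies the formula $\sinh J=\sinh(d/2)/\sin(\pi/n)$, which holds only for the equilateral polygon; for a triple such as $(d,d_r,d_r)$ the radius is given by \TriDefectRadius\ and is strictly larger, so the claimed bound would need a different justification even if it were needed.
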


\begin{proof}  By Lemma \ref{model to inj}, $F \doteq F_{\iota}(d_0,\hdots,d_{17})$ has injectivity radius $r = \min\{d_i/2\}_{i=0}^{17}$ at its vertex $x$.  By hypothesis $d_r = 2r = \min\{d_i\}_{i=0}^{17}$ is at least $d_{\beta}$, so Lemma \ref{edge upper bound} implies that $d_i \leq d_1(r)\leq b_0(d_r,d_r)$ for each $i$.  If $d_{i_0} =  b_0(d_r,d_r)$ for some $i_0$, we claim that $(d_0,\hdots,d_{17})$ is exceptional.

If $d_{i_0} = b_0(d_r,d_r)$ for some $i_0$ then in particular $d_1(r) = b_0(d_r,d_r)$, so $r = r_{\beta}$ by Lemma \ref{edge upper bound}.  Fix $j_0$ such that $i_0\in\{3j_0,3j_0+1,3j_0+2\}$.  Since $\iota$ is one-vertex, $j_0\neq j_0'$ such that $\iota(i_0)\in\{3j_0',3j_0'+1,3j_0'+2\}$.  Applying \Monotonicity, we have:
$$ \sum_{j=0}^5 D_0(3j,3j+1,3j+2) \geq 4\cdot D_{0,3}(d_{\beta}) + D_0(b_0(d_{\beta},d_{\beta}),d_{\beta},d_{\beta}) = 4\pi $$
The latter equality is by construction (see Example \ref{sharp DeBlois}).  If there were $i\notin \{i_0,\iota(i_0)\}$ with $d_i>d_{\beta}$ then by \Monotonicity\ the above inequality would be strict, contradicting $(d_0,\hdots,d_{17})\in\calp_{\iota}$.  The claim follows.

For non-exceptional $(d_0,\hdots,d_{17})\in\calp_{\iota}$ the claim implies that $d_i < b_0(d_r,d_r)$ for each $i$. Thus $(d_{3j},d_{3j+1},d_{3j+2})\in\widetilde{\calc}_3$ for each $j$, since for instance $d_{3j} < b_0(d_r,d_r)\leq b_0(d_{3j+1},d_{3j+2})$ (the latter inequality follows from \BCn).  Since $B_0 = b_0(d_r,d_r)$ satisfies $\cosh B_0 = 2\cosh d_r -1$ by \TriCenter, Proposition \ref{shortish poly} implies for each $j$ that $T_j$ is a $2$-cell of the Delaunay tessellation determined by $\{x\}$.\end{proof}

\begin{corollary}\label{model to cov}  For each one-vertex edge-pairing $\iota$ define $J_{\iota}\co \calp_{\iota}\to\mathbb{R}^+$ by $J_{\iota}(d_0,\hdots,d_{17}) = \max\{J(d_{3j},d_{3j+1},d_{3j+2})\}_{j=0}^5$, where $J$ is as in \RadiusFunction.  For $(d_0,\hdots,d_{17})\in\calp_{\iota}\cap (\widetilde{\calc}\cup\widetilde{\calBC}_3)^6\cap[d_{\beta},b_0(d_{\alpha},d_{\alpha}]^{18}$, $F_{\iota}(d_0,\hdots,d_{17})$ has covering radius $J_{\iota}(d_0,\hdots,d_{17})$ at its vertex $x$.\end{corollary}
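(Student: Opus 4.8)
The plan is to compute the covering radius via Lemma \ref{radius to cov} and then read off the circumradii of the Delaunay cells from the description of the Delaunay tessellation supplied by Lemmas \ref{model to Delaunay} and \ref{exceptional}. Write $F = F_\iota(d_0,\hdots,d_{17})$, let $x$ be its vertex, and let $V$ be the Voronoi tessellation of $F$ determined by $\{x\}$. By the observation following Lemma \ref{radius to cov}, the covering radius of $F$ at $x$ equals the covering radius of the preimage of $x$ in $\mathbb{H}^2$, which by Lemma \ref{radius to cov} is $\max\{J_v\,|\,v\in V^{(0)}\}$. By Definition \ref{Delaunay down} together with Lemma \ref{vertex polygon}, the $2$-cells of the Delaunay tessellation of $F$ determined by $\{x\}$ are in bijection with $V^{(0)}$, the cell corresponding to $v$ being $P_v$, which has circumradius $J_v$; hence the covering radius of $F$ at $x$ is the largest circumradius among the Delaunay $2$-cells. (The map $J_\iota$ is well-defined since $\calp_\iota\subset(\widetilde{\calAC}_3)^6$ and $J$ is defined on $\widetilde{\calAC}_3$ by \RadiusFunction.)

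First I would dispose of the non-exceptional case. By Lemma \ref{model to Delaunay} the Delaunay $2$-cells are exactly $T_0,\hdots,T_5$, and $T_j$ is represented by $(d_{3j},d_{3j+1},d_{3j+2})$ in the sense of \CenteredSpace, so its circumradius is $J(d_{3j},d_{3j+1},d_{3j+2})$ by \RadiusFunction. Thus the covering radius is $\max\{J(d_{3j},d_{3j+1},d_{3j+2})\}_{j=0}^5 = J_\iota(d_0,\hdots,d_{17})$. For the exceptional case, Lemma \ref{exceptional} supplies $i_0$ with $d_{i_0} = d_{\iota(i_0)} = b_0(d_\beta,d_\beta)$ and distinct indices $j_0,j_1$ so that the Delaunay $2$-cells are $Q\doteq T_{j_0}\cup T_{j_1}$ together with the $T_j$ for $j\notin\{j_0,j_1\}$. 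The latter cells have circumradii $J(d_{3j},d_{3j+1},d_{3j+2})$ exactly as above. Since $Q$ is a cyclic quadrilateral, its circumscribed circle is the circle through the three vertices of $T_{j_0}$, so $Q$ and $T_{j_0}$ have equal circumradius; and $(d_{3j_0},d_{3j_0+1},d_{3j_0+2})$, a cyclic permutation of $(b_0(d_\beta,d_\beta),d_\beta,d_\beta)$, lies in $\widetilde{\calBC}_3$ by \TriCenter\ (as $\cosh b_0(d_\beta,d_\beta) = 2\cosh d_\beta -1$), so \InCenteredClosure\ shows the center of $T_{j_0}$ is the midpoint of its longest side and hence that the circumradius of $Q$ is $b_0(d_\beta,d_\beta)/2 = J(d_{3j_0},d_{3j_0+1},d_{3j_0+2})$. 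The same argument with $j_1$ in place of $j_0$ shows this also equals $J(d_{3j_1},d_{3j_1+1},d_{3j_1+2})$. Therefore the largest circumradius of a Delaunay $2$-cell is again $\max\{J(d_{3j},d_{3j+1},d_{3j+2})\}_{j=0}^5 = J_\iota(d_0,\hdots,d_{17})$.

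The one point requiring care — the main (mild) obstacle — is the exceptional case: there the Delaunay cell $Q$ amalgamates two of the triangles $T_j$, and one must check that $J_\iota$, defined as a maximum over all six triangles, still records $Q$'s circumradius. This works precisely because the two amalgamated triangles are among those of largest circumradius and a cyclic quadrilateral shares its circumscribed circle with either of its two half-triangles. Everything else is a direct assembly of Lemmas \ref{radius to cov}, \ref{vertex polygon}, \ref{model to Delaunay}, and \ref{exceptional}.
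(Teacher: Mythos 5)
Your proposal is correct and takes essentially the same route as the paper: the covering radius is computed via Lemma \ref{radius to cov} and the Voronoi--Delaunay correspondence of Lemma \ref{vertex polygon}, with Lemma \ref{model to Delaunay} handling the non-exceptional case and Lemma \ref{exceptional} the exceptional one. The only cosmetic difference is in the exceptional case, where you argue directly that the cyclic quadrilateral shares its circumscribed circle with either of its half-triangles (three points determine a circle), while the paper simply cites the identity $J(b_0(d_{\beta},d_{\beta}),d_{\beta},d_{\beta}) = J(d_{\beta},d_{\beta},d_{\beta},d_{\beta})$ from \HalfSymmQuadDefect.
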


\begin{proof}  For non-exceptional $(d_0,\hdots,d_{17})\in\calp_{\iota}\cap (\widetilde{\calc}\cup\widetilde{\calBC}_3)^6\cap[d_{\beta},b_0(d_{\alpha},d_{\alpha}]^{18}$ this follows from Lemmas \ref{radius to cov} and \ref{model to Delaunay}, using the bijective correspondence between the Voronoi tessellation's vertex set and the set of Delaunay $2$-cells described in Lemma \ref{vertex polygon}.  Recall in particular that for each $j$, $J(d_{3j},d_{3j+1},d_{3j+2})$ is the radius of $T_j$ by \RadiusFunction.  If $(d_0,\hdots,d_{17})$ is exceptional, Lemmas \ref{radius to cov} and \ref{exceptional} combine in the same way to give the conclusion, noting additionally that $J(b_0(d_{\beta},d_{\beta}),d_{\beta},d_{\beta}) = J(d_{\beta},d_{\beta},d_{\beta},d_{\beta})$ by \HalfSymmQuadDefect.\end{proof}

\begin{lemma}\label{no local max}  For a one-vertex edge-pairing $\iota$ and $(d_0,\hdots,d_{17})\in\calp_{\iota}\cap (\widetilde{\calc}_3\cup\widetilde{\calBC}_3)^6\cap [d_{\beta},b_0(d_{\alpha},d_{\alpha})]^{18}$, let $r = \min\{d_i/2\}$.  If for each $i$ $d_i < d_1(r)$, as defined in Lemma \ref{edge upper bound}, then $(d_0,\hdots,d_{n-1})$ deforms preserving $\min\{d_i\}_{i=0}^{17}$ but increasing $J_{\iota}(d_0,\hdots,d_{17})$.\end{lemma}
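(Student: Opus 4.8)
The plan is to exhibit an explicit one-parameter deformation in $\calp_{\iota}$ that fixes the minimal edge length $2r$ and strictly increases the circumradius of a triangle realizing $J_{\iota}$. It is convenient to use the nine $\iota$-orbits of the edges of $F_{\iota}(d_0,\hdots,d_{17})$ as coordinates: writing $d_o$ for the common length of the two edges in an orbit $o$, the set $\calp_{\iota}$ is cut out in $(\widetilde{\calAC}_3)^6$ by the single equation $\Sigma(\bd)\doteq\sum_{j=0}^5 D_0(d_{3j},d_{3j+1},d_{3j+2})=4\pi$, and deforming in orbit coordinates automatically respects the symmetry $d_i=d_{\iota(i)}$. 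Fix a triangle $T_{j^*}$ with $J(T_{j^*})=J_{\iota}$. Since $J_{\iota}=\max_j J(T_j)$ is a maximum of the functions $J(T_j)$, it suffices to find a path $\bd(t)$ in $\calp_{\iota}\cap(\widetilde{\calc}_3\cup\widetilde{\calBC}_3)^6$ with $\bd(0)=\bd$, with $d_o(\bd(t))\equiv 2r$ for every orbit currently at the minimum, and with $\frac{d}{dt}\big|_0 J(T_{j^*})(\bd(t))>0$; the upper box constraint $d_i\le b_0(d_{\alpha},d_{\alpha})$ is maintained automatically for small $t$, since the hypothesis $d_i<d_1(r)\le b_0(d_r,d_r)\le b_0(d_{\alpha},d_{\alpha})$ (using \BCn\ and $d_r\le d_{\alpha}$) is a strict inequality.

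Two preliminary observations. First, the longest edge $d_a$ of $T_{j^*}$ is strictly longer than $2r$: otherwise $T_{j^*}$ is equilateral of side $2r$, so $J(T_{j^*})=J_3(2r)$; but not every $d_i$ equals $2r$ (if so, $\Sigma=6D_{0,3}(2r)=4\pi$ would force $2r=d_{\alpha}$, whence $d_1(r)=d_1(r_{\alpha})=d_{\alpha}=2r$, contradicting $d_i<d_1(r)$), so some triangle has an edge exceeding $2r$, and then by \RadiusDeriv\ its circumradius exceeds $J_3(2r)$, contradicting maximality. Second, at least two $\iota$-orbits $o$ satisfy $d_o>2r$: if at most one orbit $o^*$ had $d_{o^*}>2r$, then the combinatorial type of $\iota$ is pinned down and $\Sigma$ reduces to $4D_{0,3}(2r)+2D_0(d_{o^*},2r,2r)$ (or, in the case that $\iota$ folds a single triangle along $o^*$, to $5D_{0,3}(2r)+D_0(d_{o^*},d_{o^*},2r)$); in the first case uniqueness in Lemma \ref{edge upper bound} forces $d_{o^*}=d_1(r)$, contradicting the hypothesis, and the folded case is excluded similarly by comparing with the defining equation of $d_1(r)$ and applying \Monotonicity.

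With these in hand, let $W\subset\mathbb{R}^9$ be the subspace of orbit-length variations that vanish on every orbit at the minimum; by the second observation $\dim W\ge 2$. On $W$ the differential $d\Sigma$ is a nonzero covector with nonnegative coefficients: by \DefectDerivative, $\partial\Sigma/\partial d_o$ is the sum over the triangles $T$ carrying an edge of $o$ of the nonnegative quantities $\sqrt{\cosh^{-2}(d_o/2)-\cosh^{-2}J(T)}$, and the coefficient at the orbit containing $d_a$ is positive. By \SmoothAnglesRadius\ and \RadiusDeriv\ the differential $d\,J(T_{j^*})$, restricted to $W$, is supported on the (at most three) non-minimal orbits meeting $T_{j^*}$, with a positive coefficient at the orbit of $d_a$. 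If these two covectors on $W$ are not proportional, then $\ker(d\Sigma|_W)$ is a hyperplane of $W$ on which $d\,J(T_{j^*})$ is not identically zero; choosing $v\in W$ with $d\Sigma(v)=0$ and $d\,J(T_{j^*})(v)>0$ and correcting to an honest curve by the implicit function theorem (legitimate since $d\Sigma|_W\ne 0$) produces the desired path, since $J_{\iota}\ge J(T_{j^*})$ with equality at $t=0$ forces $J_{\iota}(\bd(t))>J_{\iota}(\bd(0))$ for small $t>0$, while $\min_i d_i$ stays equal to $2r$.

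The remaining point, which I expect to be the main obstacle, is the non-proportionality of $d\,J(T_{j^*})|_W$ and $d\Sigma|_W$ (and the bookkeeping needed to keep the deforming triangles inside $\widetilde{\calc}_3\cup\widetilde{\calBC}_3$). Proportionality would force $\partial\Sigma/\partial d_o=0$ for every non-minimal orbit $o$ not meeting $T_{j^*}$, hence, by the formula above, both triangles carrying such an edge to lie in $\widetilde{\calBC}_3$ with $d_o$ as longest side. Since a $\widetilde{\calBC}_3$-triangle has a unique longest side, a finite case check on the combinatorial types of $\iota$ together with the identity defining $d_1(r)$ in Lemma \ref{edge upper bound} and \Monotonicity\ rules this out. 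An alternative to this bookkeeping is to note that when $T_{j^*}\in\widetilde{\calBC}_3$ one instead lengthens a shorter, non-minimal edge of $T_{j^*}$ (which still raises $J(T_{j^*})$ by \RadiusDeriv\ and moves $T_{j^*}$ into $\widetilde{\calc}_3$), and that lengthening the diameter-edge of any triangle forced into $\widetilde{\calBC}_3$ costs no area to first order and so supplies the slack needed in the constraint $\Sigma=4\pi$.
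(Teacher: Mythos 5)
Your overall strategy---treat the area condition $\Sigma(\bd)=4\pi$ as a constraint, restrict to variations supported on the non-minimal orbits, and find a constraint-preserving direction along which $J(T_{j^*})$ strictly increases---is morally the same first-order variation that the paper carries out, and your two preliminary observations (the longest edge of the maximal-radius triangle exceeds $2r$, and some orbit outside $\{i_1,\iota(i_1)\}$ exceeds $2r$) match the paper's opening step, with essentially the same appeal to the definition of $d_1(r)$ and \Monotonicity. But the proof is not complete: everything hinges on the non-proportionality of $d\Sigma|_W$ and $d\,J(T_{j^*})|_W$, and you explicitly defer this to ``a finite case check'' that you do not perform. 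Your sketch only constrains non-minimal orbits \emph{not} meeting $T_{j^*}$ (forcing both incident triangles into $\widetilde{\calBC}_3$, which you can indeed exclude since $d_o<d_1(r)\leq b_0(d_r,d_r)$); it says nothing about the case where every non-minimal orbit meets $T_{j^*}$, where $W$ is two- or three-dimensional and proportionality is a genuine possibility that must be ruled out by an actual inequality, not by combinatorics alone. This is precisely where the paper does real work: it fixes the direction (increase $d_{i_1}$, decrease a compensating $d_{i_0}$), writes the ODE for $d_{i_0}(t)$ forced by $\Sigma\equiv 4\pi$, and proves the quantitative estimate $|d_{i_0}'(t)|<1$ --- in the hard subcase ($j_1=j_0'$, $j_1'\neq j_0$) via a bespoke monotonicity claim for $x\mapsto \cosh^{-2}(x/2)-\cosh^{-2}J(x,d,d)$ on $\calc_3$ --- which combined with \RadiusDeriv\ is exactly what makes $J(T_{j_1})$ increase. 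No substitute for that computation appears in your proposal.

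Your fallback paragraph does not repair this. Lengthening the diameter edge of a triangle in $\widetilde{\calBC}_3$ ``at no first-order cost in area'' pushes that triangle out of $\widetilde{\calc}_3\cup\widetilde{\calBC}_3$ into the non-centered region (by \VerticalRay), so the resulting path leaves the set $\calp_{\iota}\cap(\widetilde{\calc}_3\cup\widetilde{\calBC}_3)^6$ over which the maximum of $J_{\iota}$ is taken in the proof of Theorem \ref{inj to cov}, and the lemma as you would prove it could not be applied there. To close the gap you must either carry out the omitted case analysis with explicit derivative estimates (at which point you have reconstructed the paper's proof), or find a genuinely different argument for the existence of an increasing, constraint-preserving, admissibility-preserving direction.
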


\begin{proof}  Fix $j_1$ such that $J_{\iota}(d_0,\hdots,d_{17}) = J(d_{3j_1},d_{3j_1+1},d_{3j_1+2})$ and $i_1\in\{3j_1,3j_1+1,3j_1+2\}$ such that $d_{i_1} = \max\{d_i\}_{i=3j_1}^{3j_1+2}$.  Let $j_1'$ be such that $\iota(i_1)\in \{3j_1',3j_1'+1,3j_1'+2\}$.  Since $\iota$ is one-vertex, $j_1\neq j_1'$.  There exists $i_0\notin\{i_1,\iota(i_1)\}$ with $d_{i_0} > d_r = 2r$, since otherwise by \Monotonicity\ and the definition of $d_1(r)$:
$$ \sum_{j=0}^5 D_0(d_{3j},d_{3j+1},d_{3j+2}) = 4\cdot D_{0,3}(d_r) + 2\cdot D_0(d_{i_1},d_r,d_r) < 4\pi $$
This would contradict $(d_0,\hdots,d_{17})\in\calp_{\iota}$.  Fix $j_0$ with $i_0\in\{3j_0,3j_0+1,3j_0+2\}$ and $j_0'$ with $\iota(i_0)\in\{3j_0',3j_0'+1,3j_0'+2\}$, and note as above that $j_0\neq j_0'$.  We will deform $(d_0,\hdots,d_{n-1})$ changing only $d_{i_0} = d_{\iota(i_0)}$ and $d_{i_1} = d_{\iota(i_1)}$.

Suppose first that $\{j_1,j_1'\}=\{j_0,j_0'\}$.  (In this case $T_{j_1}$ and $T_{j_1'}$ share edges corresponding to $\gamma_{i_1}$ and $\gamma_{i_0}$ in $F_{\iota}(d_0,\hdots,d_{17})$.)  Let $d'$ be the element of $\{d_{3j_1},d_{3j_1+1},d_{3j_1+2}\}$ not equal to $d_{i_1}$ or $d_{i_0}$, and let $d''$ be the corresponding element of $\{d_{3j_1'},d_{3j_1'+1},d_{3j_1'+2}\}$.  For small $t \geq 0$, we will take $d_{i_1}(t) = d_{\iota(i_1)}(t) = d_{i_1}+t$ and choose $d_{i_0}(t)=d_{\iota(i_0)}(t)$ so that $D_0(d_{i_1}+t,d_{i_0}(t),d') + D_0(d_{i_1}+t,d_{i_0}(t),d'')$ is constant.  By \DefectDerivative, $d_{i_0}(t)$ must satisfy:
$$  \frac{d}{dt}\left(d_{i_0}(t)\right) = -\frac{\sqrt{\frac{1}{\cosh^2((d_{i_1}+t)/2)}-\frac{1}{\cosh^2 J_1(t)}} + \sqrt{\frac{1}{\cosh^2((d_{i_1}+t)/2)} - \frac{1}{\cosh^2J_1'(t)}}}{\sqrt{\frac{1}{\cosh^2(d_{i_0}(t)/2)}-\frac{1}{\cosh^2J_1(t)}} + \sqrt{\frac{1}{\cosh^2(d_{i_0}(t)/2)}-\frac{1}{\cosh^2J_1'(t)}}}  $$
Above $J_1(t) = J(d_{i_1}+t,d_{i_0}(t),d')$ and $J_1'(t) = J(d_{i_1}+t,d_{i_0}(t),d'')$.  The existence theorem for ordinary differential equations implies that a unique differentiable function $d_{i_0}(t)$, defined on $[0,\epsilon)$ for some $\epsilon >0$, satisfies the equation above.  Using this equation we find that $d_{i_0}(t)$ decreases in $t$, and also that $|d_{i_0}'(t)| < 1$, since it follows that $d_{i_0}(t) < d_{i_1}+t$ for all $t\in [0,\epsilon)$.

Since $d_{i_0}(t) < d_{i_1}+ t$ for all $t > 0$, \RadiusDeriv\  implies that $\left|\frac{\partial J}{\partial d_{i_0}(t)}\right| < \left|\frac{\partial J}{\partial (d_{i_1}+t)}\right|$, and since $|d_{i_0}'(t)| < 1$ the chain rule implies that $J_1(t)$, and hence also $J_{\iota}$ increases with $t$ in this case.

There are three other possibilities for $\{j_1,j_1',j_0,j_0'\}$: one in which all four of its elements are distinct and two in which it has only three distinct elements (we do not distinguish the case $j_1 = j_0$ from $j_1 = j_0'$, or $j_1' = j_0$ from $j_1' = j_0'$).  In each case we change each element of this set by increasing $d_{i_1}$ and decreasing $d_{i_0}$, leaving all other entries constant while keeping the defect sum unchanged.  

As long as $i_0\notin\{3j_1,3j_1+1,3j_1+2\}$ (equivalently, $j_1\notin\{j_0,j_0'\}$) it is clear by \RadiusDeriv\ that $J_{\iota}$ increases with $t$, so it remains only to consider the case that $j_1 = j_0'$ but $j_1' \neq j_0$.  Taking $d_{i_1}(t) = d_{i_1}+t$, in this case $d_{i_0}(t)$ must satisfy the following differential equation: 
$$  \frac{d}{dt}\left(d_{i_0}(t)\right) = -\frac{\sqrt{\frac{1}{\cosh^2((d_{i_1}+t)/2)}-\frac{1}{\cosh^2 J_1(t)}} + \sqrt{\frac{1}{\cosh^2((d_{i_1}+t)/2)} - \frac{1}{\cosh^2J_1'(t)}}}{\sqrt{\frac{1}{\cosh^2(d_{i_0}(t)/2)}-\frac{1}{\cosh^2J_1(t)}} + \sqrt{\frac{1}{\cosh^2(d_{i_0}(t)/2)}-\frac{1}{\cosh^2J_0(t)}}}  $$
Here $J_1(t) = J(d_{i_0}(t),d_{i_1}+t,d')$, $J_1'(t) = J(d_{i_1}+t,d_{3j_1'+1},d_{3j_1'+2})$ (assuming for simplicity that $\iota(i_1)=3j_1'$), and $J_0(t) = J(d_{i_0}(t),d_{3j_0+1},d_{3j_0+2})$ (assuming that $i_0 = 3j_0$).  We may assume that all entries not in $\{d_{3j_1},d_{3j_1+1},d_{3j_1+2}\}$ equal $d_r$, since otherwise replacing $d_{i_0}$ by an entry not in the set above allows appeal to another case.  Thus in this case $J_0(t) = J(d_{i_0}(t),d_r,d_r)$ and $J_1'(t) = J(d_{i_1}+t,d_r,d_r)$.  

Unlike the first case we considered, it is not immediately obvious here that $|d_{i_0}'(t)| < 1$: the problem is that the defect derivative function $\sqrt{\frac{1}{\cosh^2(d/2)} - \frac{1}{\cosh^2 J}}$ decreases in $d$ but increases in $J$, and \RadiusDeriv\ implies that $J_0(t) < J_1(t)$.  However we have the following:

\begin{claim}  For fixed $d > 0$ and $x$ such that $(x,d,d) \in \calc_3$, the function
$$ x\mapsto \frac{1}{\cosh^2(x/2)} - \frac{1}{\cosh^2 J(x,d,d)} $$
decreases in $x$.  \end{claim}

\begin{proof}[Proof of claim]  Simplifying the formula of \TriDefectRadius\  gives:
$$ \sinh J(x) = \frac{2\sinh^2(d/2)}{\sqrt{4\sinh^2(d/2) - \sinh^2 (x/2)}} $$
Let us take $X = \cosh^2(x/2)$ and $D = \cosh^2(d/2)$.  Inserting the formula above into the function in question, after some more simplification we obtain:
$$ \frac{1}{X} - \frac{1}{\cosh^2 J(x,d,d)} = \frac{[2D - 1 - X]^2}{X[(2D-1)^2 - X]} = \left[ \frac{2D-1}{X} -1\right]\left[1 - \frac{2(D-1)(2D-1)}{(2D-1)^2-X}\right] $$
By \TriCenter, $(x,d,d)\in\calc_3$ if and only if $X < 2D-1$, thus for such $x$ the functions in brackets on the right-hand side are positive-valued.  Since they also clearly decrease with $X$, their product is  decreasing.  Since $X$ increases with $x$, the claim is proved.  \end{proof}

Since $d_{i_0}(t) < d_{i_1}+t$, the claim implies that $|d_{i_0}'(t)|<1$, so as in the first case considered it follows that $J(d_{i_0}(t),d_{i_1}+t,d')$, and hence also $J_{\iota}$ increases with $t$.  In each case above we have thus produced deformations through $\calp_{\iota}$ so that no entries change but $d_{i_0} = d_{\iota(i_0)}$ and $d_{i_1}=d_{\iota(i_1)}$ change.  Furthermore, $d_{i_1}$ increases with $t$ and $d_{i_0}(0) = d_{i_0} > d_r$, so $d_0(t)>d_r = \min\{d_i\}_{i=0}^{17}$ for $t$ near $0$.\end{proof}

\begin{lemma}\label{yes local max}  For each $r\in[r_{\beta},r_{\alpha}]$ and one-vertex edge-pairing $\iota$ there exists $(d_0,\hdots,d_{17})\in\calp_{\iota}\cap (\widetilde{\calc}_3\cup\widetilde{\calBC}_3)^6$ with $r =\min\{d_i/2\}_{i=0}^{17}$ and $d_{i_0} = d_1(r)$ for some $i_0$, where $d_1(r)$ is as in Lemma \ref{edge upper bound}.  At each such point:
$$ \sinh J_{\iota}(d_0,\hdots,d_{17}) = \frac{2\sinh^2 r}{\sqrt{4\sinh^2 r - \sinh^2(d_1(r)/2)}} $$\end{lemma}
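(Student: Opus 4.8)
The plan is to first produce one explicit point of $\calp_\iota$ of the required form, and then show that \emph{every} point of the prescribed form necessarily has the same simple side-length pattern, so that $J_\iota$ is computed from the circumradius formulas for a cyclic triangle. The engine behind the rigidity is the equality case of the monotonicity of area in side lengths.

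\textbf{Existence.} Fix a one-vertex edge-pairing $\iota$ and write $d_r = 2r$. Since $\iota$ has no fixed points, its restriction to the three edges of any one triangle cannot be a permutation of those three edges (there is no fixed-point-free involution of a $3$-element set); hence some triangle $T_{j_0}$ has an edge $i_0$ with $\iota(i_0)$ lying in a different triangle $T_{j_0'}$, $j_0 \ne j_0'$. Set $d_{i_0} = d_{\iota(i_0)} = d_1(r)$ and $d_i = d_r$ for all other $i$. Then $d_i = d_{\iota(i)}$ for all $i$; each equilateral triple $(d_r,d_r,d_r)$ lies in $\widetilde{\calc}_3\subset\widetilde{\calAC}_3$ by \CenteredDefectBound\ and \Ad; and $(d_1(r),d_r,d_r)\in\widetilde{\calc}_3\cup\widetilde{\calBC}_3\subset\widetilde{\calAC}_3$ by \VerticalRay, since $d_r\le d_1(r)\le b_0(d_r,d_r)$ by Lemma \ref{edge upper bound}. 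Finally $\sum_{j}D_0(T_j) = 4\,D_{0,3}(d_r) + 2\,D_0(d_1(r),d_r,d_r) = 4\pi$ by the defining equation of $d_1(r)$, so $(d_0,\hdots,d_{17})\in\calp_\iota\cap(\widetilde{\calc}_3\cup\widetilde{\calBC}_3)^6$. Since $d_1(r)\ge d_r$ we have $\min\{d_i/2\}=r$ and $d_{i_0}=d_1(r)$, so this point meets all the hypotheses.

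\textbf{Rigidity.} Let $(d_0,\hdots,d_{17})$ be \emph{any} point of $\calp_\iota\cap(\widetilde{\calc}_3\cup\widetilde{\calBC}_3)^6$ with $\min\{d_i/2\}=r$ and $d_{i_0}=d_1(r)$ for some $i_0$. Then $d_i\ge d_r$ for all $i$, so Lemma \ref{edge upper bound} gives $d_r\le d_i\le d_1(r)$ for every $i$. Let $j_0$ and $j_0'$ index the triangles containing $i_0$ and $\iota(i_0)$; they are distinct since $\iota$ is one-vertex. By \Monotonicity\ (and \Order) we have $D_0(T_j)\ge D_{0,3}(d_r)$ for each $j$, while $D_0(T_{j_0})\ge D_0(d_1(r),d_r,d_r)$ and $D_0(T_{j_0'})\ge D_0(d_1(r),d_r,d_r)$, because each of $T_{j_0}, T_{j_0'}$ has one side already equal to $d_1(r)$ and its remaining sides $\ge d_r$. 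Summing and comparing with $\sum_j D_0(T_j) = 4\pi = 4\,D_{0,3}(d_r) + 2\,D_0(d_1(r),d_r,d_r)$ forces equality in every one of these inequalities. Since each side of a cyclic polygon is strictly shorter than twice its circumradius, \DefectDerivative\ shows that $D_0$ is \emph{strictly} increasing in each side length; hence $T_j$ is equilateral with side $d_r$ for every $j\ne j_0,j_0'$, and $T_{j_0}$ and $T_{j_0'}$ have side lengths exactly $(d_1(r),d_r,d_r)$.

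\textbf{Computing $J_\iota$.} By the rigidity step, $J_\iota(d_0,\hdots,d_{17})$ is the larger of $J(d_r,d_r,d_r)$ and $J(d_1(r),d_r,d_r)$. By \CenteredDefectBound\ (and \RadiusFunction), $\sinh J(d_r,d_r,d_r) = \sinh(d_r/2)/\sin(\pi/3) = 2\sinh r/\sqrt{3}$. The computation of \TriDefectRadius, simplified as in the proof of Lemma \ref{no local max} with the two equal sides of length $d_r$ so that $d_r/2 = r$ plays the role of ``$d/2$'' and $d_1(r)$ the role of ``$x$'', gives
$$ \sinh J(d_1(r),d_r,d_r) = \frac{2\sinh^2 r}{\sqrt{4\sinh^2 r - \sinh^2(d_1(r)/2)}}, $$
and this remains valid when $(d_1(r),d_r,d_r)\in\widetilde{\calBC}_3$, where both sides equal $\sinh(d_1(r)/2)$. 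From $d_1(r)\ge d_r = 2r$ we get $\sinh^2(d_1(r)/2)\ge\sinh^2 r$, so $4\sinh^2 r - \sinh^2(d_1(r)/2)\le 3\sinh^2 r$ and the displayed quantity is $\ge 2\sinh r/\sqrt{3} = \sinh J(d_r,d_r,d_r)$; also $d_1(r)\le b_0(d_r,d_r)$ gives (via \TriCenter) $\sinh^2(d_1(r)/2)\le 2\sinh^2 r$, so the radical is positive. Therefore $J_\iota = J(d_1(r),d_r,d_r)$ and the asserted formula follows.

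\textbf{Main obstacle.} The only subtle point is the rigidity step: one must be careful that the equality case of the monotonicity inequality genuinely pins down all eighteen side lengths (not merely the existence of one nice configuration), which is why strict monotonicity from \DefectDerivative\ is invoked. A secondary bookkeeping point is keeping straight which side of $(d_1(r),d_r,d_r)$ plays the role of ``$x$'' in the \TriDefectRadius\ circumradius formula.
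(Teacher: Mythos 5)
Your proposal is correct and follows essentially the same route as the paper: an explicit configuration with two paired sides of length $d_1(r)$ and all others $d_r$ gives existence, the combination of Lemma \ref{edge upper bound} with \Monotonicity\ forces every such point to have this side-length pattern, and the formula then comes from \TriDefectRadius. Your added details (the strict-monotonicity argument pinning down the equality case, and the check that $J(d_1(r),d_r,d_r)\geq J(d_r,d_r,d_r)$ so that it is indeed the maximum defining $J_{\iota}$) simply make explicit steps the paper treats as a ``direct application.''
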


\begin{proof}  We may for instance take $d_0 = d_{\iota(0)} = d_1(r)$ and $d_i = d_r\doteq 2r$ for each $i\neq 0,\iota(0)$.  Then $(d_0,\hdots,d_{n-1})\in\calp_{\iota}\cap (\widetilde{\calc}_3\cup\widetilde{\calBC}_3)^6$ by definition.  The computation of $J_{\iota}$ above is a direct application of \TriDefectRadius, noting that Lemma \ref{edge upper bound} and \Monotonicity\ imply that if $r =\min\{d_i/2\}_{i=0}^{17}$ and $d_{i_0} = d_1(r)$ for some $i_0$ then $d_i = d_r$ for all $i\neq i_0,\iota(i_0)$.\end{proof}

\begin{proof}[Proof of Theorem \ref{inj to cov}] For each $r\in [r_{\beta},r_{\alpha}]$ and one-vertex edge-pairing $\iota$, $J_{\iota}$ is a continuous function on $\calp_{\iota}$, being the maximum of functions which are themselves continuous by \SmoothAnglesRadius.  Hence it attains a maximum on the following compact subset: 
$$\bigcup_{j=0}^{17}\calp_{\iota}\cap(\widetilde{\calc}_3\cup\widetilde{\calBC}_3)^6\cap \left([d_{r},b_0(d_{r},d_{r})]^j\times\{d_r\}\times[d_{r},b_0(d_{r},d_{r})]^{17-j}\right)$$  
By Lemma \ref{model to inj}, this consists of those $(d_0,\hdots,d_{17})\in\calp_{\iota}\cap(\widetilde{\calc}_3\cup\widetilde{\calBC}_3)^6$ such that $F_{\iota}(d_0,\hdots,d_{17})$ has injectivity radius $r$ at its vertex $F$.  Lemma \ref{no local max} implies that $J_{\iota}$ attains its maximum at such a point as described in Lemma \ref{yes local max}, and the maximum is as described there.  Since $d_1(r)\leq b_0(d_r,d_r)$ by Lemma \ref{edge upper bound}, and $B_0 = b_0(d_r,d_r)$ satisfies $\sinh (B_0/2) = \sqrt{2}\sinh r$ by \TriDefectRadius, a simplification gives $\sinh J_{\iota}(d_0,\hdots,d_{17}) \leq \sqrt{2}\sinh r$ for each $(d_0,\hdots,d_{17})$ in the set above.  The result now follows directly from Lemma \ref{compact model} and Corollary \ref{model to cov}.
\end{proof}

\bibliographystyle{plain}
\bibliography{volumes}

\end{document}